\definecolor{winered}{rgb}{0.6,0,0}
\definecolor{lessblue}{rgb}{0,0,0.7}
\newcommand{\myitem}[2]{\item[\rm(#2)]\def\@currentlabel{#2}\label{#1}}
\def\@tocline#1#2#3#4#5#6#7{
\begingroup
  \par
    \parindent\z@ \leftskip#3 \relax \advance\leftskip\@tempdima\relax
                  \rightskip\@pnumwidth plus 4em \parfillskip-\@pnumwidth
    \ifcase #1 
       \vskip 0.6em \hskip 0em 
       \or
       \or \hskip 0em 
       \or \hskip 1em 
    \fi%
    %
    #6
    %
    \nobreak\relax{\leavevmode\leaders\hbox{\,.}\hfill}
    \hbox to\@pnumwidth {\@tocpagenum{#7}}
  \par
\endgroup
}
 \def\l@section{\@tocline{0}{0pt}{0pc}{}{}}
\renewcommand{\tocsection}[3]{%
  \indentlabel{\@ifnotempty{#2}{ 
    \ignorespaces\bfseries{#2. #3}}}
  \indentlabel{\@ifempty{#2}{\ignorespaces\bfseries{#3}}{}} 
    \vspace{1.5pt}}
\renewcommand{\tocsubsection}[3]{%
  \indentlabel{\@ifnotempty{#2}{
    \ignorespaces#2. #3}}
  \indentlabel{\@ifempty{#2}{\ignorespaces #3}{}}
    \vspace{1.5pt}}
\renewcommand{\tocsubsubsection}[3]{%
  \indentlabel{\@ifnotempty{#2}{
    \ignorespaces#2. #3}}
  \indentlabel{\@ifempty{#2}{\ignorespaces #3}{}}
    \vspace{1.5pt}}
\def\@nomenstarted{0}
\newlength{\@nomenoldtabcolsep}
\newcommand{\nomenstart}
  {%
    \def\@nomenstarted{1}%
    \setlength{\@nomenoldtabcolsep}{\tabcolsep}%
    \setlength{\tabcolsep}{3.5pt}%
    \begin{longtable}{p{0.11\textwidth} p{0.86\textwidth}}
  }
\newcommand{\nomenitem}[2]{%
    \ifcase\@nomenstarted%
      \or 
      \or \\ 
    \fi%
    #1\,{\leavevmode\leaders\hbox{\,.}\hfill} & #2%
    \def\@nomenstarted{2}%
  }%
\newcommand{\nomenend}
  {\\%
      \end{longtable}%
      \setlength{\tabcolsep}{\@nomenoldtabcolsep}%
      \def\@nomenstarted{0}%
  }
\newcommand{\BIG}{\bBigg@{3.5}}
\newcommand{\vast}{\bBigg@{4}}
\newcommand{\Vast}{\bBigg@{5}}
\newcommand{\VAST}[1]{\bBigg@{#1}}
\numberwithin{equation}{section}
\numberwithin{figure}{section}
\newtheorem{thm}{Theorem}[section]
\newtheorem{prop}[thm]{Proposition}
\newtheorem{lemma}[thm]{Lemma}
\newtheorem{cor}[thm]{Corollary}
\newtheorem*{qu}{Question}
\newtheorem*{thm*}{Theorem}
\newtheorem*{prop*}{Proposition}
\newtheorem*{cor*}{Corollary}
\newtheorem*{conj*}{Conjecture}
\newtheorem*{qu*}{Question}
\theoremstyle{definition}
\newtheorem{definition}[thm]{Definition}
\theoremstyle{remark}
\newtheorem{rmk}[thm]{Remark}
\newcommand{\fakephantomsection}{%
  \Hy@MakeCurrentHref{\@currenvir.\the\Hy@linkcounter}
  \Hy@raisedlink{\hyper@anchorstart{\@currentHref}\hyper@anchorend}%
  \Hy@GlobalStepCount\Hy@linkcounter%
}
\newcommand{\mc}{\mathcal}
\newcommand{\cC}{\mc C}
\newcommand{\cD}{\mc D}
\newcommand{\cE}{\mc E}
\newcommand{\cF}{\mc F}
\newcommand{\cL}{\mc L}
\newcommand{\cO}{\mc O}
\newcommand{\cS}{\mc S}
\newcommand{\cX}{\mc X}
\newcommand{\cY}{\mc Y}
\newcommand{\ms}{\mathscr}
\newcommand{\sS}{\ms S}
\newcommand{\C}{\mathbb{C}}
\newcommand{\N}{\mathbb{N}}
\newcommand{\R}{\mathbb{R}}
\newcommand{\Z}{\mathbb{Z}}
\newcommand{\ran}{\operatorname{ran}}
\renewcommand{\Re}{\operatorname{Re}}
\newcommand{\mathspan}{\operatorname{span}}
\newcommand{\supp}{\operatorname{supp}}
\newcommand{\aug}{{\rm aug}}
\newcommand{\eps}{\epsilon}
\newcommand{\hra}{\hookrightarrow}
\newcommand{\la}{\langle}
\newcommand{\ol}{\overline}
\newcommand{\pa}{\partial}
\newcommand{\dd}{{\mathrm d}}
\newcommand{\ra}{\rangle}
\newcommand{\spec}{\operatorname{spec}}
\newcommand{\wt}{\widetilde}
\newcommand{\xra}{\xrightarrow}
\DeclarePairedDelimiter{\abs}{|}{|}
\DeclarePairedDelimiter{\norm}{\|}{\|}
\DeclarePairedDelimiter{\jbr}{\langle}{\rangle}
\DeclarePairedDelimiter{\iprod}{\langle}{\rangle}
\DeclarePairedDelimiter{\set}{\{}{\}}
\newcommand*{\qms}{\mathcal{X}}
\newcommand*{\es}{\mathcal{Y}}
\newcommand*{\chfun}{\mathbf{1}}
\newcommand*{\dirlap}[1]{\laplace_{#1}^{\rm D}}
\newcommand{\pfstep}[1]{\(\bullet\)\ \underline{\textit{#1}}}
\newcommand{\ff}{\mathrm{ff}}
\newcommand{\cp}{{\mathrm{c}}}
\newcommand{\CI}{\cC^\infty}
\newcommand{\CIc}{\cC^\infty_\cp}
\newcommand{\openbigpmatrix}[1]
  {%
    \def\@bigpmatrixsize{#1}%
    \addtolength{\arraycolsep}{-#1}%
    \begin{pmatrix}%
  }
\newcommand{\closebigpmatrix}
  {%
    \end{pmatrix}%
    \addtolength{\arraycolsep}{\@bigpmatrixsize}%
  }
\newlength{\enummargin}\setlength{\enummargin}{1.5em}
\newcommand{\usref}[1]{{\upshape\ref{#1}}}
\newcommand*{\fwbw}[1]{\expandafter\@fwbw\csname c@#1\endcsname}
\newcommand*{\@fwbw}[1]{\ifcase #1 \or {\rm fw}\or {\rm bw}\fi}
\AddEnumerateCounter{\fwbw}{\@fwbw}
\begin{document}

\title{The Asymptotic Behavior of Simple Eigenvalues of Particle-in-Well Systems}

\begin{abstract}
    The particle in a well in dimension one is a classical problem in quantum mechanics. We study higher-dimensional analogues of the problem, where the well is a smooth domain in $\mathbb{R}^d$. We show that simple eigenvalues and eigenfunctions of the corresponding Schr\"odinger operator depend smoothly on the square root $h$ of the inverse depth of the well and provide an explicit first-order expansion of the eigenvalues at $h=0$.

    Our proof consists of two steps. In the first step, we construct $\mathcal{O}(h^\infty)$ quasimodes (approximate eigenfunctions) on a resolution of $[0,1)_h\times\mathbb{R}^d$ which allows us to capture fine structure near the boundary of the well. The second step corrects these quasimodes to true eigenfunctions via a fixed point argument.
\end{abstract}

\date{\today}


\author{Peter Hintz}
\address{Department of Mathematics, ETH Z\"urich, R\"amistrasse 101, 8092 Z\"urich, Switzerland}
\address{Department of Mathematics, Pennsylvania State University, 54 McAllister St, State
College,\newline PA 16801, United States}
\email{peter.hintz@math.ethz.ch}
\email{phintz@psu.edu}

\author{Aaron Moser}
\address{Department of Mathematics, ETH Z\"urich, R\"amistrasse 101, 8092 Z\"urich, Switzerland}
\address{Department of Mathematics, Massachusetts Institute of Technology, 77 Massachusetts
Avenue,\newline Cambridge, MA 02139-4307, United States}
\email{aaron.moser@math.ethz.ch}
\email{maaron@mit.edu}

\maketitle


\section{Introduction}
\label{SI}

\subsection{Setup and main results}

Fix a bounded smooth domain \(\Omega\subset \R^d\). For \(h > 0\), we define the
\emph{particle-in-well operator} on \(\Omega\) to be
\[
  P_h\coloneqq -\laplace + h^{-2}\chfun_{\Omega^\complement},
\]
where \(\laplace = \sum_{i=1}^d \partial_{x^i}^2\) is the
negative semidefinite Laplace operator on \(\R^d\) and \(\chfun_{\Omega^\complement}\) is multiplication
with the characteristic function of the complement \(\Omega^\complement\) of $\Omega$. This operator is an unbounded self-adjoint operator on $L^2(\R^d)$ with domain $H^2(\R^d)$ (and quadratic form domain $H^1(\R^d)$). The spectrum of \(P_h\) in the interval \([0,h^{-2})\) is discrete (Lemma~\ref{LemmaSpec}), so $\spec(P_h)\cap[0,\frac12 h^{-2}]$ consists of \(N_h\in\N\) eigenvalues.
\[
    \lambda_1^h \le\lambda_2^h\le \cdots \le \lambda_{N_h}^h.
\]
Write \(-\dirlap{\Omega}\) for the Dirichlet Laplacian on \(\Omega\), that is, the
self-adjoint extension of the operator \(-\laplace\) on \(\CIc(\Omega)\) to \(H^2(\Omega)\cap H^1_0(\Omega)\), and denote its eigenvalues by
\[
    0 < \lambda_1^{\rm D}\le\lambda_2^{\rm D}\le\lambda_3^{\rm D}\le \cdots.
\]
The corresponding eigenfunctions satisfy $u_n\in\CI_0(\bar\Omega):=\{u\in\CI(\bar\Omega)\colon u|_{\pa\Omega}=0\}$. It is easy to show that $N_h\to\infty$ as $h\to 0$, and that, moreover, for all $n\in\N$,
\begin{equation}
\label{EqIConv}
  \lambda_n^h \to \lambda_n^{\rm D},\quad h\to 0.
\end{equation}
(See Proposition~\ref{PropEVConv}.) Our main result shows that eigenvalues $\lambda_n^h$ converging to a simple Dirichlet eigenvalue are, in fact, smooth down to $h=0$:

\begin{thm}[Smooth dependence of simple eigenvalues on \(h\)]\label{thm:smooth_evs}
    Let \(n\in\N\) be such that $\lambda_n^{\rm D}$ is a simple eigenvalue of $-\dirlap{\Omega}$. Let \(h_n\) be such that \(N_{h_n}\ge n\). Then there exist $h_n'\in(0,h_n]$ and a smooth function $\lambda_n\colon[0,h'_n]\to(0,\infty)$ such that \(\lambda_n(0) = \lambda_n^{\rm D}\) and \(\lambda_n(h)=\lambda_n^h\in \spec(P_h)\).
\end{thm}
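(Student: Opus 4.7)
The plan is to follow the two-step strategy announced in the abstract: first construct a family of quasimodes $(\tilde\lambda(h),\tilde u(h))$ smooth in $h\in[0,h_n']$ with $\tilde\lambda(0)=\lambda_n^{\rm D}$, $\tilde u(0)=u_n$ and $(P_h-\tilde\lambda(h))\tilde u(h)=\cO(h^\infty)$ in $L^2(\R^d)$, and then correct these to a genuine eigenpair via a contraction-mapping argument that uses the simplicity of $\lambda_n^{\rm D}$.

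The obstruction to smoothness at $h=0$ is twofold: the operator $P_h$ itself fails to be smooth down to $h=0$ because of the $h^{-2}\chfun_{\Omega^\complement}$ term, and, more subtly, true eigenfunctions develop a boundary layer at $\partial\Omega$ of width $\sim h$ (the exterior tail decays like $e^{-d(\cdot,\partial\Omega)/h}$). Both features concentrate on $\{h=0\}\cap\partial\Omega\subset[0,1)_h\times\R^d$, and I would work on a blow-up of this submanifold, on which $h$ and $s:=r/h$ --- with $r$ a signed defining function for $\partial\Omega$ positive inside $\Omega$ and $s$ ranging over $\R$ --- are smooth coordinates near the new front face. On this resolved space I would seek a matched asymptotic expansion
\[
    \tilde\lambda(h)\sim\lambda_n^{\rm D}+h\lambda_1+h^2\lambda_2+\cdots,\qquad \tilde u(h)\sim u_n+h u_1(x)+h^2 u_2(x)+\cdots
\]
in the bulk of $\Omega$, matched with a boundary-layer expansion $\tilde u(h)\sim\sum_{j\ge 1}h^j v_j(s,y)$ where $y$ are tangential coordinates on $\partial\Omega$. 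At each order the bulk equation is $(-\laplace-\lambda_n^{\rm D})u_j=F_j$ with Dirichlet boundary data on $\partial\Omega$ forced by the polynomial growth of the $v_j$ as $s\to+\infty$; since $\lambda_n^{\rm D}$ is simple, the Fredholm alternative uniquely fixes the constant $\lambda_j$ so that the data is compatible, and $u_j$ is then determined modulo $u_n$ (which I pin down by a normalization). The boundary-layer equations are inhomogeneous versions of the one-dimensional particle-in-infinite-well model $(-\partial_s^2+\chfun_{s<0})v_j=G_j$, solvable explicitly by matching exponential decay as $s\to-\infty$ with polynomial growth as $s\to+\infty$; at leading order $v_1(s,y)=(\partial_r u_n|_{\partial\Omega})(y)\cdot\phi(s)$ with $\phi(s)=e^s$ for $s<0$ and $\phi(s)=s+1$ for $s\ge 0$, which produces the explicit first correction $\lambda_1$ as a constant multiple of $\int_{\partial\Omega}(\partial_\nu u_n)^2\,dS/\|u_n\|_{L^2(\Omega)}^2$. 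Borel summation of the two series, glued with a partition of unity on the resolved space, yields smooth representatives $\tilde\lambda$ and $\tilde u$ realising the formal asymptotics, with the claimed $\cO(h^\infty)$ residual.

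For the correction step, the convergence \eqref{EqIConv} together with the simplicity of $\lambda_n^{\rm D}$ implies that for $h$ small enough there is a unique simple eigenvalue of $P_h$ in a fixed neighbourhood of $\lambda_n^{\rm D}$, with associated rank-one spectral projector $\Pi_n^h$; since $\tilde\lambda(h)$ lies $\cO(h^\infty)$-close to this eigenvalue, the reduced resolvent $R_h:=\bigl((P_h-\tilde\lambda(h))(1-\Pi_n^h)\bigr)^{-1}$ is bounded $L^2\to H^2$ with norm uniform in $h$. Writing $u^h=\tilde u(h)+w$, $\lambda^h=\tilde\lambda(h)+\mu$ and splitting $(P_h-\lambda^h)u^h=0$ into its $\Pi_n^h$- and $(1-\Pi_n^h)$-components gives a scalar equation determining $\mu$ and a Banach-space fixed-point problem for $w$, both driven by the $\cO(h^\infty)$ quasimode residual. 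A contraction on a small ball produces unique corrections $(\mu(h),w(h))$ that are themselves $\cO(h^\infty)$, so that $(\tilde\lambda(h)+\mu(h),\,\tilde u(h)+w(h))$ remains smooth down to $h=0$ and supplies the required function $\lambda_n$.

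I expect the main obstacle to be the quasimode construction: realising the formal matched expansions by functions that are genuinely smooth on the resolved space (not merely asymptotic), with the error controlled in a norm strong enough to drive the subsequent contraction. This needs careful bookkeeping across the front face between polynomial and exponential terms in $s$, and the exponentially small exterior piece must be organised to fit into the smooth structure on the blow-up. By contrast, the fixed-point step becomes routine once a uniform reduced-resolvent estimate is available; that estimate follows from the simplicity of $\lambda_n^{\rm D}$ together with the spectral convergence established in Proposition~\ref{PropEVConv}.
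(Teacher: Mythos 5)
Your proposed decomposition of the proof (blow up $\partial\Omega\times\{0\}$, construct quasimodes by iterated solution of a boundary-layer model operator, Borel-sum, then correct by a fixed point) matches the paper's overall strategy. The quasimode step is essentially right, though you describe it in the language of classical matched asymptotics; on the resolved space the ``matching'' is automatic because the expansions at the front face and at $\widetilde\Omega$ share a corner, which is why the paper can work with a single Taylor series on $\tilde M$ rather than two matched expansions. Your model solution $\phi(s)=e^s$ for $s<0$, $s+1$ for $s\ge 0$ is the homogeneous solution of $-\partial_s^2+\chfun_{s<0}$ that matches linear growth, whereas the paper parametrizes the first correction differently (as the $\hat\rho$-constant-on-$\Omega$ Green's function $G$ added to the extended Dirichlet eigenfunction); these bookkeepings agree and both yield $\lambda_1=-\|\partial_\nu u_n\|_{L^2(\partial\Omega)}^2$.

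There are, however, two genuine gaps in your correction step.

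First, the claim that the reduced resolvent $R_h=\bigl((P_h-\tilde\lambda(h))(1-\Pi_n^h)\bigr)^{-1}$ is bounded $L^2\to H^2$ uniformly in $h$ is false. Elliptic regularity for $-\Delta$ gives $\|u\|_{H^2}\lesssim\|\Delta u\|_{L^2}+\|u\|_{L^2}$, and writing $-\Delta u=(1-\Pi_n^h)f+(\tilde\lambda-h^{-2}\chfun_{\Omega^\complement})u$ produces a constant of order $h^{-2}$. Only $R_h:L^2\to L^2$ is uniformly bounded; the $H^2$-norm of eigenfunctions of $P_h$ genuinely blows up like $h^{-3/2}$ because of the exterior boundary layer $e^{-\rho/h}$. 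The standard $L^2/H^2$ framework is therefore not adequate for the contraction, and neither is it adequate for the regularity of the limit. This is exactly what forces the paper into the $h$-dependent weighted (``0''-type) Sobolev spaces of \S\ref{STCorrectingQuasimodes}: the derivatives tested are $\rho_\ff\rho_{\Omega^\complement}\partial$, which degenerate at $\partial\Omega$ in the right way, and the uniform invertibility of the Grushin-augmented operator $P^{\mathrm{aug}}_h$ is proved by gluing model estimates at $\widetilde\Omega$, $\ff$, and $\widetilde\Omega^\complement$. Using the spectral projector $\Pi_n^h$ instead of the Grushin pairing $\iprod{\cdot,u_0^\sharp}$ also hides the additional difficulty that the smoothness of $\Pi_n^h$ in $h$ down to $h=0$ is a priori unknown and essentially equivalent to what you are trying to prove.

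Second, the conclusion ``$(\mu(h),w(h))$ are $\cO(h^\infty)$, so $\bigl(\tilde\lambda(h)+\mu(h),\tilde u(h)+w(h)\bigr)$ remains smooth down to $h=0$'' is a non-sequitur. A function of pointwise size $\cO(h^\infty)$ need not be even once differentiable at $h=0$ (consider $e^{-1/h}\sin(e^{1/h})$). Smoothness in $h$ has to be proved separately, and this is the content of Step~3 in \S\ref{SsTPf}: one differentiates the fixed-point equation in $h\partial_h$, observes that the resulting system has the same structure, and re-runs the uniform estimates to deduce that $h\partial_h v_h$ and $h\partial_h\mu_h$ again satisfy $\cO(h^\infty)$ bounds, and so on inductively. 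Without this bootstrapping the smoothness assertion of the theorem is not established. So while your two-step scaffolding is correct, the fixed-point step is far from routine and would require, at minimum, replacing the $L^2\to H^2$ claim with uniform estimates in $h$-dependent weighted spaces and supplying the separate argument for $h$-regularity.
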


If \(d = 1\), then operators such as $P_h$ are well-known from the ``particle in a well'' model in standard undergraduate quantum mechanics. In this model, it is well-known that for any fixed $n\in\N$ the $n$-th eigenvalue \(\lambda_n^h\) of $P_h$ exists when \(h\) is small enough, and furthermore $\lambda_n^h\to\lambda_n^{\rm D}$ as \(h\to 0\). One can show a similar result for \(d\)-dimensional balls using explicit computations involving Bessel functions. See Appendices \ref{SAppIntervals}, \ref{SAppDisks}, and \ref{SAppBalls} for more details. Our Theorem~\ref{thm:smooth_evs} generalizes these special cases, for simple eigenvalues, to arbitrary bounded smooth domains.

We can compute the first order Taylor expansion of $\lambda_n(h)$ explicitly:

\begin{prop}[First order expansion of the eigenvalue]\label{PropIFirstOrderExp}
    Let \(n\in\N\) be such that \(\lambda_n^{\rm D}\) is a simple eigenvalue of \(-\dirlap{\Omega}\) with
    corresponding \(L^2\)-normalized eigenfunction \(u_n\in \CI_0(\bar\Omega)\). Then $\lambda_n^h$ is simple for all sufficiently small $h$, and
    \begin{equation}
    \label{EqIFirstOrderExp}
        \lambda_n^h = \lambda_n^{\rm D} - h\norm{\partial_{\nu} u_n}_{L^2(\pa\Omega)}^2 +
        \cO(h^2)\quad\text{as }h\to 0.
    \end{equation}
\end{prop}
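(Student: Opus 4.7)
The plan is to reduce the eigenvalue shift $\lambda_n^h - \lambda_n^{\rm D}$ to a boundary integral via Green's formula and then to identify the leading boundary trace of the eigenfunction through a matched asymptotics / boundary layer analysis. \textbf{Step 1 (Green's identity).} By Theorem~\ref{thm:smooth_evs} — more precisely, by the quasimode-plus-fixed-point machinery promised in the introduction, which produces eigenfunctions along with eigenvalues — one obtains a smooth family of $L^2(\R^d)$-normalized eigenfunctions $\phi_h$ of $P_h$ with $\phi_h \to u_n$ as $h\to 0$, so in particular $\lambda_n^h$ is simple for small $h$. On $\Omega$, $\phi_h$ satisfies $-\laplace \phi_h = \lambda_n^h \phi_h$, and Green's formula paired against $u_n$ (using $u_n|_{\pa\Omega}=0$) gives the exact identity
\begin{equation}\label{EqProposalGreen}
    (\lambda_n^h - \lambda_n^{\rm D})\int_\Omega \phi_h u_n\,dx = \int_{\pa\Omega}\phi_h\,\pa_\nu u_n\,d\sigma,
\end{equation}
with $\nu$ the outward unit normal. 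Since $\phi_h \to u_n$ in $L^2(\R^d)$, the factor on the left tends to $1$, so the problem reduces to computing $\phi_h|_{\pa\Omega}$ to leading order in $h$.

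\textbf{Step 2 (Boundary layer).} In boundary normal coordinates $(\rho,y)$ near $\pa\Omega$, with $\rho$ the signed distance (positive outside $\Omega$), $P_h$ takes the form $-\pa_\rho^2 + h^{-2}\chfun_{\rho>0}$ plus tangential and lower-order terms. Rescaling $\tau = \rho/h$ gives the leading-order inner ODE
\[
    -\pa_\tau^2 A + \chfun_{\tau>0}\, A = 0,
\]
whose bounded solutions are $A(\tau)=c\, e^{-\tau}$ for $\tau\ge 0$ and $A(\tau)=c(1-\tau)$ for $\tau\le 0$. Matching to the interior expansion $\phi_h \approx u_n(\rho,y) = (\pa_\nu u_n)(y)\,\rho + \cO(\rho^2)$ as $\tau \to -\infty$ forces $c(y) = -h\,\pa_\nu u_n(y)$, and hence
\[
    \phi_h|_{\pa\Omega}(y) = -h\,\pa_\nu u_n(y) + \cO(h^2) \quad \text{in } L^2(\pa\Omega).
\]
This is exactly what the quasimode construction on the resolution of $[0,1)_h\times\R^d$ supplies as the leading boundary trace, and the fixed-point correction to a true eigenfunction preserves it up to an $\cO(h^\infty)$ error.

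\textbf{Step 3 (Conclusion).} Plugging into \eqref{EqProposalGreen} and dividing by $\int_\Omega \phi_h u_n = 1 + \cO(h)$ yields
\[
    \lambda_n^h - \lambda_n^{\rm D} = -h\int_{\pa\Omega}(\pa_\nu u_n)^2\,d\sigma + \cO(h^2) = -h\,\norm{\pa_\nu u_n}_{L^2(\pa\Omega)}^2 + \cO(h^2),
\]
as claimed. The main obstacle is Step~2: one needs not merely the weak bound $\phi_h|_{\pa\Omega} = \cO(h)$ (essentially an energy estimate), but the pointwise-identified leading coefficient in $L^2(\pa\Omega)$ together with a genuine $\cO(h^2)$ remainder. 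Furnishing this precision is exactly the purpose of the polyhomogeneous expansion of $\phi_h$ on the blown-up $[0,1)_h\times\R^d$ space advertised in the abstract; once that is in hand, Steps~1 and~3 are routine.
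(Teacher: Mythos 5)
Your proposal is correct, and it reaches the same formula by a route that is genuinely different from the paper's. The paper (\S\ref{SsExplicitFirstOrder}) computes $\lambda_1$ \emph{inside} the quasimode iteration: it writes the first-order ansatz $\tilde u_{1,0}=\tilde u_0+h\,\beta^*\cE\,w$, $\tilde\lambda_1=\lambda_0+h\lambda_1$, invokes the Grushin solvability condition from Lemma~\ref{LemGrushinProblemQM}, i.e.\ $\lambda_1 u_0 + f \perp u_0$ with $f=(-\laplace-\lambda_0)g$ and $g=F_\ff^\chi((\partial_\nu u_0)G)$, and then a Green's-identity calculation on $g$ (not on the true eigenfunction) in~\eqref{EqQFirstOrderComp} gives $\lambda_1=-\norm{\pa_\nu u_0}_{L^2(\pa\Omega)}^2$. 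In this way the paper never needs a true eigenfunction at this stage — only Lemma~\ref{LemmaQM} plus simplicity and convergence to upgrade $\tilde\lambda_n$ to $\lambda_n^h+\cO(h^\infty)$. Your argument instead applies the exact Hadamard-type identity~\eqref{EqProposalGreen} to the \emph{true} eigenfunction $\phi_h$, reducing the task to identifying $\phi_h|_{\pa\Omega}$ to one order in $h$; you then read this off from the model problem on $\ff$. The boundary-layer profile you construct — $c\,e^{-\tau}$ for $\tau\ge 0$ glued $\cC^1$ to $c(1-\tau)$ for $\tau\le 0$ — is exactly what the paper's $\beta^*\cE u_0 + h F_\ff^\chi\hat u$ with $\hat u=-(\pa_\nu u_0)G$ looks like near $\ff$, so your matching is consistent with Proposition~\ref{PropQM} and the resulting trace $\phi_h|_{\pa\Omega}=-h\,\pa_\nu u_n+\cO(h^2)$ is indeed supplied by Theorem~\ref{ThmIEigenfn} (or already by the quasimode $\tilde u_n$ together with $\phi_h-\tilde u_n=\cO(h^\infty)$). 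What each approach buys: the paper's version is self-contained within the iteration and does not require the fixed-point theorem of \S\ref{STCorrectingQuasimodes} at all; yours gives a cleaner one-line ``perturbation formula'' once the eigenfunction asymptotics are in hand, but it depends on more machinery (the true eigenfunction, or at least the quasimode and the $\cO(h^\infty)$ closeness of the eigenfunction to it). Two small remarks: (i) the phrase ``bounded solutions'' in Step~2 is a slight abuse — the branch $c(1-\tau)$ is linear, not bounded; what you mean is bounded (decaying) for $\tau\to+\infty$ and matched linear growth for $\tau\to-\infty$; (ii) the simplicity of $\lambda_n^h$ should be attributed not to Theorem~\ref{thm:smooth_evs} (which is proved afterwards) but to Proposition~\ref{PropEVConv} together with the simplicity of $\lambda_n^{\rm D}$, as the paper does.
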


If \(\Omega=B_a(0)\subset\R^d\) is a ball of radius \(a > 0\), Proposition~\ref{PropIFirstOrderExp} has an explicit form. Let \(n\in\N\) and suppose \(\lambda_n^{\rm D}\) is an eigenvalue of \(-\dirlap{\Omega}\) with corresponding eigenfunction \(u_n\in\CI_0(\bar\Omega)\). Rellich's identity~\cite{Rellich1940} for eigenvalues on balls states that
\begin{equation}\label{EqBallFirstOrder}
    \lambda_n^{\rm D} = \frac{1}{4}\frac{\int_{\pa B_a(0)}(\partial_\nu
    u_n)^2\partial_\nu(\abs{x}^2)\odif{x}}{\norm{u_n}_{L^2(B_a(0))}^2}.
\end{equation}
Since \(\partial_\nu(\abs{x}^2) = 2a\), Proposition~\ref{PropIFirstOrderExp} implies that on balls each simple eigenvalue \(\lambda_n^{\rm D}\) spawns an eigenvalue $\lambda_n^h$ of $P_h$ with
\begin{equation}
\label{EqBallCorr}
    \lambda_n^h = \lambda_n^{\rm D} - \frac{2\lambda_n^{\rm D}}{a}h + \cO(h^2) \quad\text{as }h\to 0.
\end{equation}
Figure~\ref{FigEigvalConv} shows the first nine eigenvalues \(\lambda_j^h\), \(j=1,\dots, 9\), of $P_h$ in comparison to the corresponding \(\lambda_j^{\rm D}\), \(j=1,\dots, 9\), for \(\Omega=(-2, 2)\) and \(\Omega=B_2(0)\subset \R^2\). Figure~\ref{FigEigvalDiff} shows the difference $\lambda_j^D-\lambda_j^h$ and compares it to the $\cO(h)$ correction term in~\eqref{EqBallCorr}. Note here that while in the one-dimensional case all eigenvalues have multiplicity \(1\), this is not the case for the two-dimensional ball. However, due to the symmetry of the domain, the particle-in-well operator and the Laplace operator share the same multiplicity structure of their eigenvalues.

The monotonicity of the eigenvalues $\lambda_j^h$ of $P_h$ as we let $h\to 0$ observed in Figure~\ref{FigEigvalConv} holds in general (Lemma~\ref{LemmaEVMono}).

Our second main result concerns the family of $L^2$-normalized eigenfunctions of $P_h$ with eigenvalue $\lambda_n^h$. The notation for blow-ups and lifts used in the following result is recalled in~\S\ref{SsBlowup}.

\begin{thm}[Behavior of the eigenfunctions as $h\to 0$]
\label{ThmIEigenfn}
  Define $\tilde M=[\R_x^d\times[0,1)_h;\pa\Omega\times\{0\}]$, and denote the lifts of $\Omega\times[0,1)$ and $\Omega^\complement\times[0,1)$ by $I$ and $E$, respectively; denote moreover the lifts of $\Omega\times\{0\}$ and $\Omega^\complement\times\{0\}$ by $\tilde\Omega\cong\Omega$ and $\tilde\Omega^\complement\cong\Omega^\complement$, respectively. (See Figure~\usref{fig:blowup}.) Let \(n\in\N\) be such that \(\lambda_n^{\rm D}\) is a simple eigenvalue of \(-\dirlap{\Omega}\). Then there exists a continuous function $u\colon\tilde M\to\R$ with the following properties:
  \begin{enumerate}
  \item\label{ItIEigenfnTrue} for all sufficiently small $h>0$, the restriction of $u$ to $\R^d\times\{h\}\cong\R^d$ is an $L^2$-normalized eigenfunction of $P_h$ (which by elliptic regularity lies in $H^2(\R^d)$) corresponding to the eigenvalue $\lambda_n^h$;
  \item $u|_{I^\circ}$ extends smoothly to $I$, and $u|_{E^\circ}$ extends smoothly to $E$;
  \item $u|_{\tilde\Omega}=u_n$ is the $n$-th Dirichlet eigenfunction;
  \item $u$ vanishes to infinite order at $\tilde\Omega^\complement$ and vanishes rapidly as $|x|\to\infty$, uniformly as $h\to 0$.
  \end{enumerate}
\end{thm}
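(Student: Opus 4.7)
The plan is the two-step strategy announced in the abstract. First, I would construct a smooth quasimode $\tilde u_h$ on the resolved space $\tilde M$ and a polynomial approximate eigenvalue $\tilde\lambda_n^h\sim\lambda_n^{\rm D}+h\lambda_n^{(1)}+\ldots$ satisfying $(P_h-\tilde\lambda_n^h)\tilde u_h=r_h$ with $r_h=\cO(h^\infty)$ in suitable norms, and arrange that $\tilde u_h$ already possesses the structural properties (2)--(4). Second, I would correct $\tilde u_h$ to a true eigenfunction using the uniform spectral gap around $\lambda_n^h$ afforded by the simplicity of $\lambda_n^{\rm D}$.

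The quasimode is built by matched asymptotic expansions on the blown-up manifold. In the interior of $\Omega$ I postulate $u_h\sim\sum_k h^k u_n^{(k)}$ with $u_n^{(0)}=u_n$; the correctors $u_n^{(k)}$ solve $(-\laplace-\lambda_n^{\rm D})u_n^{(k)}=\sum_{j=1}^k\lambda_n^{(j)}u_n^{(k-j)}$ with prescribed nonzero Dirichlet traces determined by matching to the boundary layer. Near $\pa\Omega\times\{0\}$ I use projective coordinates $(y,\sigma,h)$ on the front face ff, with $\sigma=s/h$ and $s$ the signed distance to $\pa\Omega$, positive outside $\Omega$; the leading fiber model is $(-\pa_\sigma^2+\chfun_{\{\sigma>0\}})v^{(1)}=0$ subject to $C^1$-matching at $\sigma=0$, decay as $\sigma\to+\infty$, and the matching condition $v^{(1)}\sim\sigma\,\pa_\nu u_n(y)$ as $\sigma\to-\infty$ coming from the interior Taylor expansion. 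Its explicit solution
\[
v^{(1)}(y,\sigma)=\pa_\nu u_n(y)\cdot\begin{cases}\sigma-1,&\sigma\le 0,\\-e^{-\sigma},&\sigma\ge 0,\end{cases}
\]
decays rapidly on the exterior side of ff and forces $u_n^{(1)}|_{\pa\Omega}=-\pa_\nu u_n$. A Green's identity argument, pairing the equation for $u_n^{(1)}$ with $u_n$, then yields $\lambda_n^{(1)}=-\norm{\pa_\nu u_n}_{L^2(\pa\Omega)}^2$, consistent with Proposition~\ref{PropIFirstOrderExp}. Iterating this scheme at every order in $h$ and Borel-summing across the faces $\tilde\Omega$, $\tilde\Omega^\complement$, and ff produces $\tilde u_h$ smooth on each of $I$ and $E$, vanishing to infinite order at $\tilde\Omega^\complement$ and rapidly at spatial infinity, with residual $r_h=\cO(h^\infty)$.

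For the correction step, Theorem~\ref{thm:smooth_evs} and uniqueness of Taylor coefficients yield $|\lambda_n^h-\tilde\lambda_n^h|=\cO(h^\infty)$, and simplicity of $\lambda_n^{\rm D}$ provides a spectral gap of $P_h$ around $\lambda_n^h$ uniform for small $h$. Letting $\Pi_h$ denote the rank-one spectral projector at $\lambda_n^h$, the equation $(P_h-\lambda_n^h)v_h=-\Pi_h^\perp r_h$ with $\Pi_h v_h=0$ has a unique solution with $\norm{v_h}_{L^2}=\cO(h^\infty)$ via the uniformly bounded reduced resolvent, and a final adjustment by a multiple of $\tilde u_h$ followed by normalization yields a true $L^2$-normalized eigenfunction $u_h$ close to $\tilde u_h$. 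The principal difficulty will be promoting the $L^2$-smallness of $v_h$ to smoothness on $I$ and $E$ with infinite-order vanishing at $\tilde\Omega^\complement$, since the resolvent only delivers $L^2$-bounds a priori. I would address this by pushing the quasimode construction to arbitrary order at every face of $\tilde M$, so that $r_h$ vanishes to infinite order at each boundary hypersurface, and then propagating regularity and decay via elliptic regularity of $-\laplace-\lambda$ inside $\Omega$ together with uniform exponential decay estimates of the form $e^{-c\,\dist(\cdot,\pa\Omega)/h}$ for $(-\laplace+h^{-2}-\lambda)^{-1}$ outside $\Omega$. Continuity of $u$ across the front face is then inherited from the matched leading profile $v^{(1)}$, completing the proof.
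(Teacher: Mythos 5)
Your two-step plan — construct an $\cO(h^\infty)$ quasimode on the blown-up space $\tilde M$, then correct it to a true eigenfunction — is the same global strategy as the paper. The quasimode part is essentially right: your matched-asymptotics bookkeeping (interior correctors with nonzero Dirichlet traces, inner profile $v^{(1)}$ growing linearly as $\sigma\to-\infty$) is equivalent, after reshuffling which term carries the boundary layer, to the paper's ansatz $\tilde u_0 = \beta^*\cE u_0 + h\,F_\ff^\chi\bigl(-(\pa_\nu u_n)G\bigr)$ with $G(\hat\rho)=\min(1,e^{-\hat\rho})$; your Green's-identity computation of $\lambda_n^{(1)}$ agrees with~\eqref{EqQFirstOrderComp}. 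The real gap is in the correction step, and you have, to your credit, named it yourself.

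The proposed correction via the reduced resolvent $(P_h-\lambda_n^h)^{-1}\Pi_h^\perp$ produces $v_h$ with $\|v_h\|_{L^2}=\cO(h^\infty)$, but this tells you nothing about properties (2)--(4): smoothness on $I$ and on $E$ down to the front face, infinite-order vanishing at $\tilde\Omega^\complement$, and uniform rapid decay in $x$. The appeal to ``elliptic regularity of $-\laplace-\lambda$ inside $\Omega$ plus exponential decay outside'' does not close this gap, because what is needed are estimates that are \emph{uniform in $h$} across the $\cO(h)$-neighborhood of $\pa\Omega$, where neither interior elliptic regularity nor the exterior semiclassical estimates apply with uniform constants. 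This is precisely what the paper spends \S\ref{SsTUnif} on: it identifies the correct $h$-dependent function spaces built from the vector fields $\rho_\ff\rho_{\Omega^\complement}\partial$ (interpolating between $\rho\partial$ near $\wt\Omega$, $h\partial$ near $\wt\Omega^\complement$, and $\jbr{\hat\rho}\partial_{\hat\rho}$ on $\ff$), proves inversion of the three model operators (Propositions~\ref{PropTOmega}, \ref{PropTOmegaC}, \ref{PropTff}) in these spaces, and glues them into a uniform estimate for the augmented operator (Proposition~\ref{PropTFull}). There is also a circularity in your write-up: you invoke Theorem~\ref{thm:smooth_evs} to get $|\lambda_n^h-\tilde\lambda_n^h|=\cO(h^\infty)$, but that theorem is not available independently — in the paper, Theorems~\ref{thm:smooth_evs} and \ref{ThmIEigenfn} are proved simultaneously by Theorem~\ref{ThmT}. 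You can repair the $\cO(h^\infty)$-closeness by citing Lemma~\ref{LemmaQM} together with Proposition~\ref{PropEVConv} and simplicity of $\lambda_n^{\rm D}$, but smoothness of $h\mapsto\lambda_n^h$ still has to be \emph{proved}, and the paper does so by treating $(v,\mu)$ as coupled unknowns in the nonlinear system~\eqref{EqTFin} solved via contraction mapping, then bootstrapping $h$-regularity by differentiating the equation in $h$ (the end of \S\ref{SsTPf}). None of this is present in your sketch, so as written the proposal does not establish properties (1)--(4) — nor the joint $(x,h)$-smoothness implicit in (2) — and ``continuity across $\ff$ is inherited from $v^{(1)}$'' is not justified, since the correction $v_h$ could a priori ruin it.
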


Parts of Theorem~\ref{ThmIEigenfn} can easily be phrased without blow-ups, in particular: one can select $L^2$-normalized eigenfunctions $u_n^h$ in such a way that, for sufficiently small $h_0>0$, the function $(0,h_0)\times\Omega\ni(h,x)\mapsto u_n^h(x)$ extends to a smooth function on $[0,h_0)\times\Omega$ which restricts to $h=0$ as the $n$-th Dirichlet eigenfunction; and similarly $u_n^h|_{\Omega^\complement}$ extends to a smooth function on $[0,h_0)\times\Omega^\complement$ which vanishes to infinite order at $h=0$.

\begin{figure}
    \centering
    \begin{subfigure}{0.75\textwidth}
        \centering
        \includegraphics[width=\textwidth]{./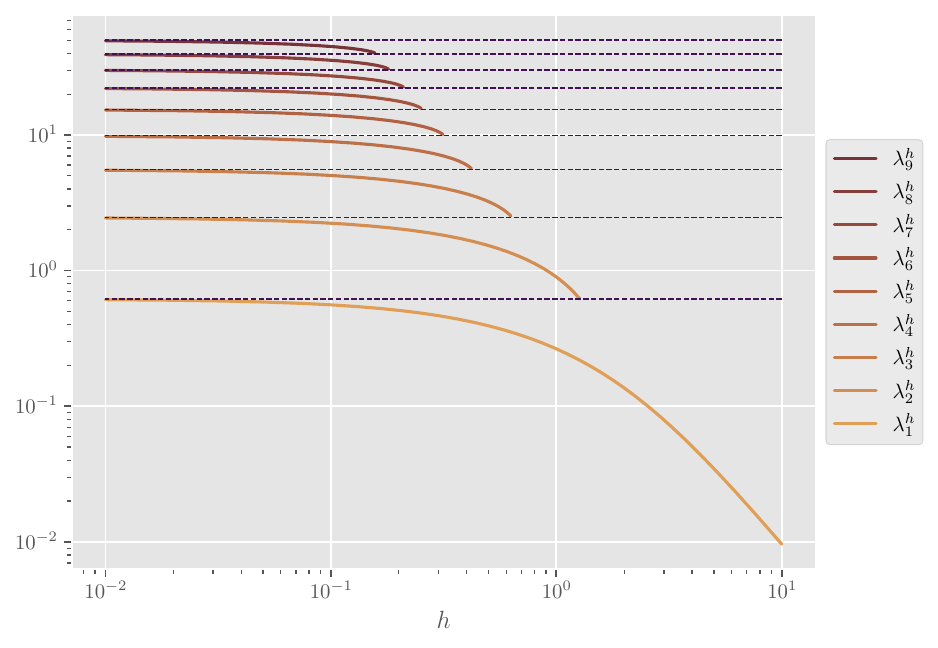}
        \caption{\(\Omega = (-2, 2)\subset \R\).}\label{FigEigvalConv1D}
    \end{subfigure}
    \hfill
    \begin{subfigure}{0.75\textwidth}
        \centering
        \includegraphics[width=\textwidth]{./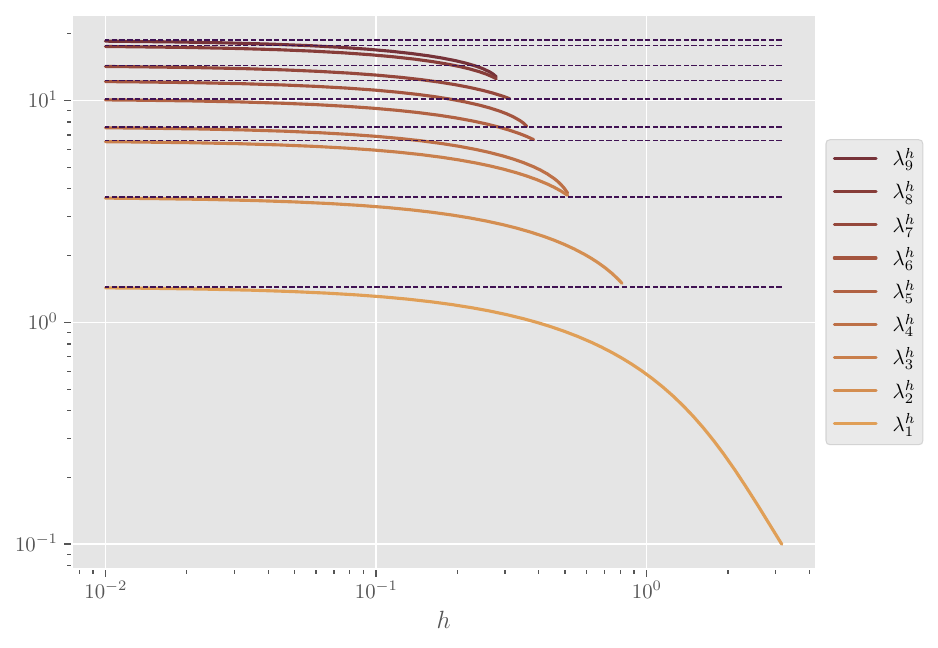}
        \caption{\(\Omega = B_2(0)\subset \R^2\).}\label{FigEigvalConv2D}
    \end{subfigure}
    \caption{
        Plots of the functions \(h\mapsto \lambda_j^h\), \(j=1,\dots, 9\), for balls of radius 2 in
        dimensions 1 and 2. The dashed lines represent the corresponding Dirichlet eigenvalues
        \(\lambda_j^{\rm D}\), \(j=1,\dots, 9\), on \(\Omega\). The eigenvalues are determined as
        the zeros of secular functions, see Appendices~\ref{SAppIntervals} and \ref{SAppDisks}.
        Decreasing $h$ increases the potential barrier and thus allows for more and more eigenvalues
        to appear. Intriguingly, every eigenvalue \(\lambda_j^h\) with \(j > 1\) starts its
        existence (at the maximal value of $h$) as an eigenvalue \(\lambda_i^{\rm D}\) with \(i <
        j\). In the one-dimensional case we always have \(j=i+1\) but in the two-dimensional case we
        have \(j = i + 1\) for \(j = 2, 3\), \(j=i+2\) for \(j=4, 5, 6, 7, 8\), and $j=i+3$ for $j =
        9$. Incidentally, the indices \(j=4, 9\) are the only indices among the ones considered for
        which the eigenvalue \(\lambda_j^{\rm D}\) has multiplicity \(1\), all others having
        multiplicity \(2\).
    }
\label{FigEigvalConv}
\end{figure}

\begin{figure}[!ht]
    \centering
    \begin{subfigure}{0.75\textwidth}
        \centering
        \includegraphics[width=\textwidth]{./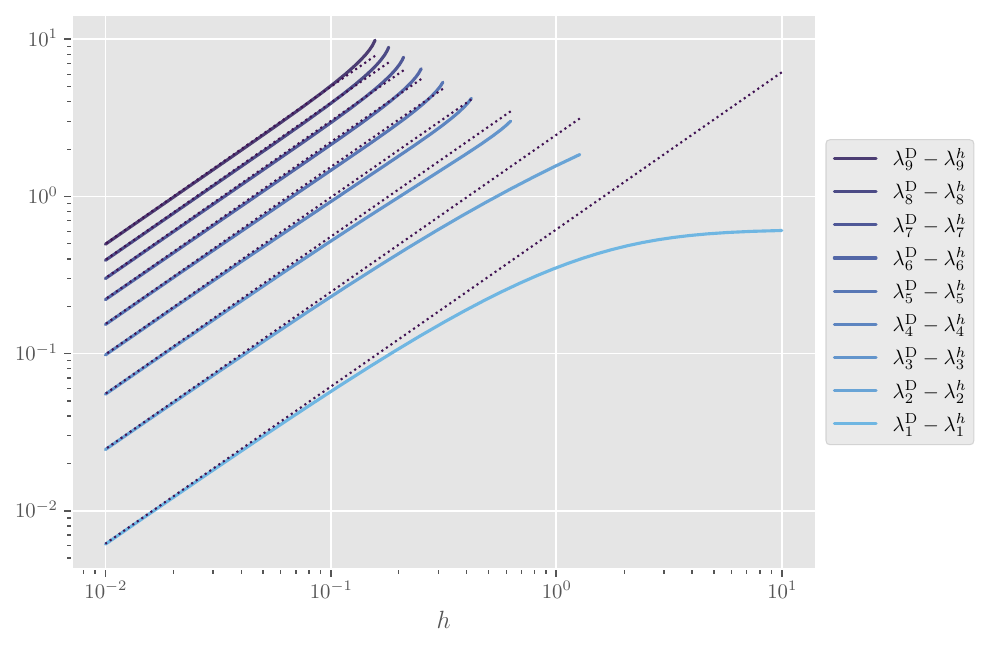}
        \caption{\(\Omega = (-2, 2)\subset \R\).}\label{FigEigvalDiff1D}
    \end{subfigure}
    \hfill
    \begin{subfigure}{0.75\textwidth}
        \centering
        \includegraphics[width=\textwidth]{./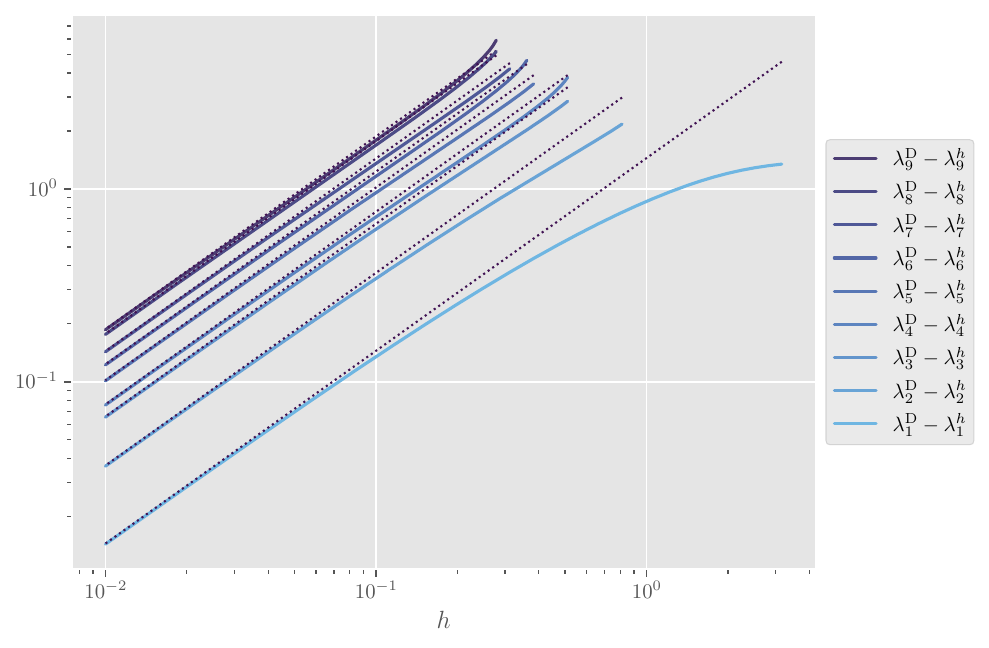}
        \caption{\(\Omega = B_2(0)\subset \R^2\).}\label{FigEigvalDiff2D}
    \end{subfigure}
    \caption{
        Plots of the functions \(h\mapsto \lambda_j^{\rm D} - \lambda_j^h\), \(j=1,\dots, 9\), for
        balls of radius 2 in dimensions 1 and 2. The dotted lines represent the corresponding expected
        first order term given by \(h\mapsto \lambda_j^{\rm D}h\), \(j=1,\dots, 9\), see
        \eqref{EqBallCorr}.}\label{FigEigvalDiff}
\end{figure}
\newpage

\begin{rmk}[Smooth potentials]
\label{RmkIPot}
  If $V\in\CI(\bar\Omega)$ is a real-valued potential, our methods apply, with purely notational changes, also to the operator $-\Delta+V\chfun_\Omega+h^{-2}\chfun_{\Omega^\complement}$, and yield Theorems~\ref{thm:smooth_evs} and \ref{ThmIEigenfn}, \emph{mutatis mutandis}.
\end{rmk}

Our proof of Theorem~\ref{ThmIEigenfn} consists of two steps. In the first step (\S\ref{SConstructionQuasimodes}), we present an algorithmic and (in principle) explicit procedure for the construction of increasingly accurate \emph{quasimodes}:

\begin{thm}[Family of quasimodes]\label{thm:exquasimodes}
    Let \(n\in\N\) and suppose \(\lambda_n^{\rm D}\) is a simple eigenvalue of \(-\dirlap{\Omega}\). Then there exist a continuous function \(\tilde{u}_n\) on $(0,1)_h\times\R^d_x$ and a function \(\tilde\lambda_n\in \CI([0,1))\) with the following properties:
    \begin{enumerate}
        \item\label{ItIQM1} the function \((\tilde u_n)_h:=\tilde{u}_n(h,\cdot)\) depends smoothly on \(h\);
        \item\label{ItIQM2} for each fixed \(h\), the function \((\tilde{u}_n)_h = \tilde{u}_n(\cdot, h)\) is
            continuous and compactly supported in \(x\), and it is smooth on \(\Omega\) and
            \((\Omega^\complement)^\circ\);
        \item\label{ItIQM3} we have \(\norm{(\tilde{u}_n)_h}_{L^2(\R^d)} = 1 +\cO(h)\) and $\tilde\lambda_n(0)=\lambda_n^{\rm D}$;
        \item\label{ItIQM4} the pair \((\tilde{u}_n, \tilde\lambda_n)\) is an $\cO(h^\infty)$-quasimode (relative to $H^{-1}(\R^d)$), i.e.,
        \[
            \norm{(P_h - \tilde\lambda_n(h)) \tilde{u}_n}_{H^{-1}(\R^d)} = \cO(h^{\infty}).
        \]
    \end{enumerate}
\end{thm}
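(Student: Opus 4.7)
The plan is to build the quasimode as a formal power series in $h$ on the resolution $\tilde M$, solve the resulting hierarchy order by order via matched asymptotic expansions, and then upgrade the formal series to a genuine smooth function via Borel summation.

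I would first introduce, in a tubular collar of $\pa\Omega$, the signed distance coordinate $r$ (negative in $\Omega$, positive in $\Omega^\complement$) and tangential coordinates $\omega\in\pa\Omega$, together with the projective coordinate $y=r/h$ on the front face of $\tilde M$. With $-\laplace = -\pa_r^2 - H(r,\omega)\pa_r - \laplace_\omega(r,\omega)$ (where $\laplace_\omega(0,\cdot)$ is the Laplacian on $\pa\Omega$), one finds
\[
  h^2(P_h-\lambda) = L_0 + h L_1 + h^2 L_2 + \cdots,\qquad L_0 = -\pa_y^2 + \chfun_{y>0},\quad L_1 = -H(0,\omega)\pa_y,
\]
with $L_2,L_3,\dots$ built from the curvature coefficients, the boundary Laplacian, and the expansion $\tilde\lambda_n=\sum_j h^j\lambda_j$. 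I would then postulate an interior piece $u^\I(x,h) = \sum_j h^j u^\I_j(x)$ on $\Omega$ and a boundary-layer piece $u^E(y,\omega,h) = \sum_j h^j w_j(y,\omega)$ with each $w_j$ decaying rapidly as $y\to+\infty$, and require continuity and matching of normal derivatives across $\pa\Omega$ at every order in $h$ (sufficient to render the delta-function error from the jump of $\pa_r$ of size $\cO(h^\infty)$ in $H^{-1}$).

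Next, I would run the induction. At order $h^0$, matching combined with decay forces $w_0\equiv 0$ and $u^\I_0|_{\pa\Omega}=0$, so $(\lambda_0,u^\I_0)=(\lambda_n^{\rm D},u_n)$ by simplicity. At order $h^n$ for $n\ge 1$: for $y<0$ the coefficient $w_n(y,\omega)$ is forced to be the polynomial $\sum_{k=0}^n \tfrac{y^k}{k!}\pa_\nu^k u^\I_{n-k}|_{\pa\Omega}$ coming from the Taylor expansion of the interior lift in $r$; for $y>0$ the ODE $(-\pa_y^2+1)w_n = R_n^E$, with $R_n^E$ a known source built from $w_0,\dots,w_{n-1}$ and $\lambda_0,\dots,\lambda_{n-1}$, has a unique decaying solution of the form $v_n^p(y,\omega)+c_n(\omega)e^{-y}$. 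Imposing continuity and $C^1$-matching at $y=0$ yields two linear equations that determine both $c_n(\omega)$ and the Dirichlet boundary datum $u^\I_n|_{\pa\Omega}$ from previously constructed data. The interior Dirichlet problem
\[
    (-\laplace-\lambda_n^{\rm D})u^\I_n = \lambda_n u_n + \sum_{j=1}^{n-1}\lambda_j u^\I_{n-j}\quad\text{on }\Omega,\qquad u^\I_n|_{\pa\Omega}\text{ prescribed},
\]
is then a Fredholm problem; testing against $u_n$ and using Green's identity produces a compatibility condition which, thanks to the simplicity of $\lambda_n^{\rm D}$, uniquely determines $\lambda_n$, while the remaining freedom (adding a multiple of $u_n$) is fixed by $\int_\Omega u^\I_n u_n = 0$ for $n\ge 1$. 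A direct check gives $\lambda_1 = -\|\pa_\nu u_n\|_{L^2(\pa\Omega)}^2$, matching Proposition~\ref{PropIFirstOrderExp}.

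Having produced formal series $\{u^\I_j\},\{w_j\},\{\lambda_j\}$, I would invoke Borel summation to realize them as smooth functions $u^\I(x,h)$ and $u^E(y,\omega,h)$ with the prescribed asymptotic expansions, and then glue them via cutoffs supported respectively in $\Omega$ and in a collar of $\pa\Omega$, extending by zero at infinity. Commutators with the cutoffs produce contributions on regions where the boundary-layer profile $e^{-r/h}$ is exponentially small in $h$, hence $\cO(h^\infty)$ in every seminorm; combined with the $\cO(h^\infty)$ residuals on both sides of $\pa\Omega$ and the $\cO(h^\infty)$ derivative jumps across $\pa\Omega$ (which contribute $\cO(h^\infty)$ in $H^{-1}$), this yields $\|(P_h-\tilde\lambda_n)\tilde u_n\|_{H^{-1}(\R^d)} = \cO(h^\infty)$, while $\|(\tilde u_n)_h\|_{L^2} = 1+\cO(h)$ is immediate from $u^\I_0=u_n$. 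The main obstacle will be the systematic bookkeeping of the matching at the front face at each order---reconciling the polynomial-in-$y$ interior profile with the decaying-exponential exterior one, and verifying that the Fredholm compatibility really closes the recursion by pinning down $\lambda_n$ uniquely; a secondary technical point is that the Borel summation must be performed so as to respect the smooth structure on $\tilde M$ uniformly in $(y,\omega)$ on the front face.
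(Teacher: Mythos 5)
Your proposal is a classical matched-asymptotic-expansion version of what the paper does via blow-up on $\tilde M$, and the core algebra is the same: at each order you solve an interior Fredholm (Grushin-type) problem on $\Omega$ whose compatibility condition (Green's identity against $u_n$) pins down $\lambda_n$, and you solve a one-dimensional ODE with $L_0=-\pa_y^2+\chfun_{y>0}$ on the front face; Borel summation then upgrades the formal series. Your first-order check $\lambda_1=-\|\pa_\nu u_n\|^2_{L^2(\pa\Omega)}$ agrees with Proposition~\ref{PropIFirstOrderExp}. The main difference is organizational: the paper never separates an ``inner'' expansion from an ``outer'' one. Instead, it builds a single function $\tilde v_k$ on the resolved space $\tilde M$ at each step, adding $h^k\beta^*\cE w_k$ (interior correction) and $h^{k+1}F_\ff^\chi\hat u$ (front-face correction, deliberately chosen to be \emph{constant} for $\hat\rho<0$ via Lemma~\ref{LemmaModelSol}); the weighted spaces $\cX,\cY$ then track all the $\rho_\Omega,\rho_\ff,\rho_{\Omega^\complement}$ vanishing/growth orders automatically, and the Borel summation uses cutoffs $\varphi(h/\alpha_k)$ in $h$ only, so no matching of inner/outer profiles in any ``intermediate'' region is ever needed. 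In your scheme, the bookkeeping you flag as the main obstacle is precisely what the $\tilde M$-framework automates: your ``boundary-layer'' polynomial for $y<0$ is best understood as a bookkeeping device encoding the corner Taylor coefficients of the interior piece (it should \emph{not} be added as an independent contribution, since the formal Taylor series need not converge to $u^\I$), and the gluing by cutoffs in $x$ only produces $\cO(h^\infty)$ commutator errors for the exterior cutoff (where the profile is $e^{-r/h}$); for the interior side it is cleaner to let the quasimode in $\Omega$ literally be $u^\I$ with no cutoff, which is what the paper's $\beta^*\cE$ accomplishes. With that interpretation, your route is correct and arguably more elementary for a reader unfamiliar with blow-up analysis, at the cost of more ad hoc bookkeeping; the paper's approach buys uniformity and clean statements (e.g.\ the crisp membership $\tilde u_k-\tilde u_{k-1}\in\rho_\Omega^k\rho_\ff^{k+1}\rho_{\Omega^\complement}^\infty\cX$) that feed directly into the fixed-point argument of \S\ref{STCorrectingQuasimodes}.
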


In fact, $\tilde u_n$ will have the same properties as $u$ in Theorem~\ref{ThmIEigenfn} except of course for property~\eqref{ItIEigenfnTrue} in Theorem~\ref{ThmIEigenfn}, but it is in addition compactly supported in $x$. Essentially by the self-adjointness of $P_h$, the existence of such quasimodes implies the existence of eigenvalues $\cO(h^\infty)$-close to \(\tilde\lambda_n(h)\) (see Lemma~\ref{LemmaQM}); since $\lambda_n^h=\lambda_n^{\rm D}+o(1)$ is the unique eigenvalue of $P_h$ close to $\lambda_n^{\rm D}$ for sufficiently small $h>0$ (due to the simplicity of $\lambda_n^{\rm D}$ and the convergence~\eqref{EqIConv} of all eigenvalues), this implies $\lambda_n^h=\tilde\lambda_n(h)+\cO(h^\infty)$.

The construction of $\tilde u_n$ proceeds via the construction of its Taylor expansion on the manifold with corners $\tilde M$ of Theorem~\ref{ThmIEigenfn}; the equations one needs to solve at each step involve $-\dirlap{\Omega}-\lambda_n^{\rm D}$ (on $\Omega$) as well as the model operator $-\partial_{\hat\rho}^2+H(\hat\rho)$ (on $\R_{\hat\rho}\times\pa\Omega$) that describes the fine structure of $P_h$ near $\pa\Omega$. The explicit computation of the first term in the small-$h$ expansion of $\tilde u_n$ yields Proposition~\ref{PropIFirstOrderExp}.

The technique used in this article to construct quasimodes closely resembles the methods explained
in the very instructive lecture notes by Daniel Grieser~\cite{Grieser2017}. We refer the reader
to these notes for a broader overview of the technique, its applications, and the
surrounding literature.

At this point, we can already deduce that $P_h$ has an eigenvalue $\lambda_n^h$ that differs from a polynomial of degree $N$ in $h$, with first two terms given by~\eqref{EqIFirstOrderExp}, by an error of size $\cO(h^{N+1})$. But since functions $[0,1)\to\R$ of pointwise size $\cO(h^\infty)$ need not even be continuously differentiable at $h=0$, this is insufficient for obtaining the smoothness of $\lambda_n(h)=\lambda_n^h$ asserted in Theorem~\ref{thm:smooth_evs}. The second step (\S\ref{STCorrectingQuasimodes}) of the proof of Theorem~\ref{thm:smooth_evs} (and also of Theorem~\ref{ThmIEigenfn}) resolves this issue: starting with the quasimode from Theorem~\ref{thm:exquasimodes}, we solve a coupled (nonlinear) system of equations, of elliptic character, for the $\cO(h^\infty)$ corrections to the approximate eigenfunctions and approximate eigenvalues. (See~\eqref{EqTEq2} for a first version of this system.) Since this system becomes singular in $\cO(h)$-neighborhoods of $\pa\Omega$, considerable care is required in the setup of the appropriate $h$-dependent function spaces for uniform elliptic estimates to hold, and for the contraction mapping principle to become applicable.

\subsection{Related literature}

The asymptotic analysis of eigenvalues of Schr\"odinger operators has a long tradition in
mathematics and physics. However, the mathematical literature on the particular problem treated in
this article appears to be fairly sparse. If however \(h^{-2}\chfun_{\Omega^\complement}\)
is replaced by a smooth potential with a well, there is a large body of literature on a wide range
of semiclassical asymptotics. Particularly noteworthy in this context is the work of
Helffer--Sj\"ostrand \cite{HelfferSjostrandWells1, HelfferSjostrandWells2, HelfferSjostrandWells3}
which develops an extensive theory of the semiclassical spectral behavior of Schr\"odinger operators
with smooth potential wells. The relevant techniques are typically based on microlocal analysis and
cannot be easily extended to singular potentials. For linear Schrödinger operators with non-smooth
potentials, we mention the results for
\(\cC^1\)-potentials in dimension \(1\) given in \cite{Zou2024}, where the potentials are assumed
to vanish on an interval and grow quadratically at infinity. Even more singular potentials in the
form of $\delta$-like potentials have been considered as models for leaky quantum graphs; see the
review paper \cite{Exner2008} for more details on their spectral properties and references. The
scattering pendant for Schrödinger operators with $\delta$-potentials has been studied by Galkowski
in \cite{Galkowski2019b}.

There is a considerable body of work on the scattering theory of finite well potentials for
intervals and the two-dimensional ball. In particular, connecting to the spectral theory, the
articles \cite{Nussenzveig1959} and \cite{MeetenDochertyMorozov2019} provide some interesting
numerical studies on the behavior of poles of the scattering matrix as well as the scattering
resonances, and the transition of eigenvalues to resonances as the well becomes more shallow. The
mathematical treatment thereof has been subject of a more recent paper
\cite{ChristiansenDatchevGriffin2024} in which they provide fine information on the transition of
eigenvalues to resonances for a well in the shape of a disk in dimension two.

We remark that the eigenvalue problem for \(P_h\) can be restated as something akin to a
transmission problem for an inclusion of a shape \(\Omega\): the problem is to find \(\lambda\)
(depending on $h$ and close to a Dirichlet eigenvalue $\lambda_n^{\rm D}$ of $\Omega$) such that
\begin{equation*}
    \begin{cases}
        (-\laplace - \lambda)u_1 = 0 & \text{in }\Omega,\\
        (-\laplace + h^{-2} - \lambda)u_2 = 0 & \text{in }\Omega^\complement,\\
        u_1 = u_2 & \text{on }\pa\Omega,\\
        \pa_\nu u_1 = \pa_\nu u_2 & \text{on }\pa\Omega,\\
        u_2(x) \to 0 & \text{as }\abs{x}\to \infty.
    \end{cases}
\end{equation*}
In this context, we would like to highlight another work of Galkowski on the distribution of
scattering resonances for transmission problems \cite{Galkowski2019}. Note, however, that in our
case the speed of light on $\Omega^\complement$ depends on the value of $\lambda$, adding an
additional nonlinearity that is not present in usual transmission problems. Another related
spectral problem is presented in the article \cite{GodinKoralovVainberg2025}, in which the authors
investigate the spectral properties of elliptic operators with inhomogeneous coefficients with a
view towards understanding wave propagation in high-contrast media.

In physical applications, the particle-in-well system is a useful simplified model to understand
quantum dots and semiconductors. Since the body of literature on both of these topics is rather
large, we shall give here just a few examples of publications that use particle-in-well systems
directly. In the context of quantum dots, some studies on the validity and predictive power of
finite potential wells have been given in \cite{Nosaka1991, PellegriniMatteiMazzoldi2005,
JeongShin2018}. On the side of semiconductors, the book \cite{Shik1998} and the
article \cite{PresillaSjostrand1996} model a semiconductor heterostructures using
finite well potentials.

There are numerous other interesting mathematical questions that one can ask about higher
dimensional particle-in-well systems, one example is the question of the critical energy of such
systems in dimension $d\ge 3$, see for instance~\cite[Example~5]{Fefferman1983}, or whether one can
show an analogue of a quantitative Faber--Krahn inequality for \(\lambda_1^h(\Omega)\) in the spirit
of the one proved in \cite{BrascoDePhilippisVelichkov2015} for \(\lambda_1^{\rm
D}(\Omega)\).\footnote{Indeed, a qualitative Faber--Krahn inequality for \(\lambda_1^h(\Omega)\) can
readily be proved by the usual rearrangement arguments.}

\subsection{Outline}
We begin in \S{\ref{SSpectralTheory}} by proving some basic spectral properties of \(P_h\). In \S{\ref{SResolvingBoundary}}, we recall the concept of blow-ups, used here to resolve the behavior of quasimodes and eigenfunctions in $h$-neighborhoods of $\pa\Omega$. We then use the resolved space $\tilde M$ introduced in Theorem~\ref{ThmIEigenfn} to construct
quasimodes in \S{\ref{SConstructionQuasimodes}}. By solving a nonlinear problem we can then correct
quasimodes and approximate eigenvalues to true eigenfunctions and eigenvalues in \S{\ref{STCorrectingQuasimodes}}. In
\S{\ref{SRemarksMultiplicity}}, we collect some remarks on the issues arising in attempts to generalize Theorem~\ref{thm:smooth_evs} to the case that the starting point $\lambda_n^{\rm D}$ has multiplicity $\geq 2$. The remaining Appendices~\ref{SAppIntervals}--\ref{SAppBalls} concern explicit computations for intervals, disks, and balls.

\subsection{Notation}
\begin{itemize}
\item We write $A\lesssim B$ if there exists a constant \(C\), independent of \(A,B\), such that $A\leq C B$. If $A,B$ depend on \(h\), we require in addition that \(C\) is \emph{independent of \(h\)}.
\item We write $A\sim B$ if both $A\lesssim B$ and $B\lesssim A$ hold.
\item For two normed vector spaces $V,W$, we write $\cL(V,W)$ for the space of bounded linear operators from $V$ to $W$. We moreover write $\cL(V):=\cL(V,V)$.
\end{itemize}

\section{Spectral theory}
\label{SSpectralTheory}

Throughout this section, it suffices to assume that $\Omega$ is open and bounded; no smoothness is needed unless explicitly specified. We recall that $P_h = -\Delta + h^{-2}\chfun_{\Omega^\complement}$ on $\R^d$ for $h>0$.

\begin{lemma}[Invertibility]
\label{LemmaInvH1}
  \(P_h\colon H^1(\R^d)\to H^{-1}(\R^d)\) is invertible, and so is \(P_h\colon H^2(\R^d)\to L^2(\R^d)\).
\end{lemma}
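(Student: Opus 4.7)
The plan is to reduce the first invertibility to coercivity of the associated sesquilinear form and apply Lax--Milgram, then bootstrap to the $H^2 \to L^2$ statement via Fourier-side elliptic regularity on $\R^d$. Concretely, on $H^1(\R^d)\times H^1(\R^d)$ I would consider
\begin{equation*}
  a_h(u,v) := \int_{\R^d} \nabla u\cdot \nabla v\,\dd x + h^{-2}\!\int_{\Omega^\complement} u v\,\dd x,
\end{equation*}
which is continuous (with norm $\le 1+h^{-2}$) and is the weak form of $P_h$. Everything then hinges on establishing coercivity $a_h(u,u) \geq c_h \|u\|_{H^1}^2$.

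The step I expect to be the core of the argument is this coercivity estimate, since no Poincar\'e inequality is directly available on $\R^d$. I would fix $R > 0$ with $\Omega \subset B_R$ and invoke the Poincar\'e--Wirtinger inequality on the bounded Lipschitz domain $B_{2R}$: for $u \in H^1(B_{2R})$ and any $E \subset B_{2R}$ of positive measure, $\|u - \bar{u}_E\|_{L^2(B_{2R})} \lesssim \|\nabla u\|_{L^2(B_{2R})}$. Taking $E = B_{2R}\setminus \ol\Omega$ (of positive measure because $\Omega$ is bounded and open) and estimating the average by $|\bar u_E| \lesssim \|u\|_{L^2(E)}$ should yield
\begin{equation*}
  \|u\|_{L^2(\Omega)}^2 \lesssim \|\nabla u\|_{L^2(\R^d)}^2 + \|u\|_{L^2(\Omega^\complement)}^2.
\end{equation*}
Since $\|u\|_{L^2(\R^d)}^2$ is the sum of the two terms $\|u\|_{L^2(\Omega)}^2$ and $\|u\|_{L^2(\Omega^\complement)}^2$, combining with the obvious bound $a_h(u,u) \geq \|\nabla u\|_{L^2}^2 + h^{-2}\|u\|_{L^2(\Omega^\complement)}^2$ and absorbing constants gives coercivity with some $c_h > 0$. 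Lax--Milgram then delivers a bounded inverse $P_h^{-1}\colon H^{-1}(\R^d) \to H^1(\R^d)$.

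For the second half, given $f \in L^2(\R^d)$, set $u := P_h^{-1}f \in H^1(\R^d)$. Rewriting $-\laplace u = f - h^{-2}\chfun_{\Omega^\complement} u$, the right-hand side lies in $L^2(\R^d)$, and since $-\laplace \colon H^2(\R^d) \to L^2(\R^d)$ is an isomorphism on the boundaryless space $\R^d$ (by Plancherel), $u \in H^2(\R^d)$ follows, yielding the second invertibility. Modulo the Poincar\'e--Wirtinger detour, which handles the absence of decay hypotheses on $u$ near $\pa\Omega$ and the lack of a Dirichlet boundary condition there, everything else is routine.
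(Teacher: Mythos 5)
Your proposal follows essentially the same strategy as the paper: establish $\norm{u}_{L^2(\Omega)}^2 \lesssim \norm{\nabla u}_{L^2}^2 + \norm{u}_{L^2(\Omega^\complement)}^2$, deduce coercivity of the quadratic form of $P_h$ on $H^1(\R^d)$, conclude $H^1\to H^{-1}$ invertibility (the paper derives the coercivity estimate $\norm{u}_{H^1}\lesssim\norm{P_h u}_{H^{-1}}$ directly and observes that this implies invertibility, which is Lax--Milgram in disguise), and then bootstrap to $H^2\to L^2$ by elliptic regularity on $\R^d$. The only substantive difference is the device used for the Poincar\'e-type inequality: you invoke the Poincar\'e--Wirtinger inequality on $B_{2R}$ with the average taken over a positive-measure subset $E\subset\Omega^\complement$, whereas the paper applies the $H^1_0$-Poincar\'e inequality to $\chi u$ for a compactly supported cutoff $\chi$ equal to $1$ near $\bar\Omega$ and absorbs the commutator $[\nabla,\chi]u$ using $\supp\nabla\chi\subset\Omega^\complement$. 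Both are valid and of comparable difficulty (one small remark: the paper's constant is manifestly uniform in $h\in(0,1]$, which it exploits elsewhere, while your $c_h$ degenerates as $h\to 1$; for the lemma itself this does not matter).

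One step is misstated, though easily repaired: $-\laplace\colon H^2(\R^d)\to L^2(\R^d)$ is \emph{not} an isomorphism. On the Fourier side the putative inverse is multiplication by $|\xi|^{-2}$, which is unbounded near $\xi=0$; equivalently, $0$ lies in the spectrum of $-\laplace$ on $L^2(\R^d)$, so the range of $-\laplace$ is dense but not closed. What you actually need is the elliptic regularity statement that $u\in L^2(\R^d)$ together with $\laplace u\in L^2(\R^d)$ forces $u\in H^2(\R^d)$: by Plancherel both $\hat u$ and $|\xi|^2\hat u$ lie in $L^2$, hence $(1+|\xi|^2)\hat u\in L^2$. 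Equivalently, rewrite the equation as $(1-\laplace)u = u + f - h^{-2}\chfun_{\Omega^\complement}u \in L^2(\R^d)$ and use that $1-\laplace\colon H^2(\R^d)\to L^2(\R^d)$ \emph{is} an isomorphism. With that correction your argument is complete.
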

\begin{proof}
  Observe first that
  \begin{equation}
  \label{EqSpecPoin}
    \norm{u}_{H^1(\R^d)}^2 \lesssim \norm{\nabla u}_{L^2}^2 + \norm{\chfun_{\Omega^\complement}u}_{L^2}^2,\quad u\in H^1(\R^d).
  \end{equation}
  Indeed, if \(\chi\in\CIc(\R^d)\) equals \(1\) near \(\bar\Omega\), then the Poincar\'e inequality on a ball containing \(\supp\chi\) implies
  \[
    \norm{\chi u}_{L^2} \lesssim \norm{\nabla(\chi u)}_{L^2} \leq \norm{\nabla u}_{L^2} + \norm{[\nabla,\chi]u}_{L^2} \lesssim \norm{\nabla u}_{L^2} + \norm{\chfun_{\Omega^\complement}u}_{L^2}
  \]
  since \(\supp[\nabla,\chi]\subset\Omega^\complement\). Since \(\norm{(1-\chi)u}_{L^2}\lesssim\norm{\chfun_{\Omega^\complement}u}_{L^2}\),~\eqref{EqSpecPoin} follows. Therefore,
  \[
      \norm{u}_{H^1(\R^d)}^2 \lesssim \iprod{\nabla u,\nabla u}_{L^2} +
      \iprod{u,\chfun_{\Omega^\complement}u}_{L^2} = \iprod{P_h u,u}_{L^2} \leq \norm{P_h u}_{H^{-1}(\R^d)}\norm{u}_{H^1(\R^d)},
  \]
  so \(\norm{u}_{H^1(\R^d)}\lesssim\norm{P_h u}_{H^{-1}(\R^d)}\). This implies the invertibility of \(P_h\colon H^1(\R^d)\to H^{-1}(\R^d)\).

  Finally, given \(f\in L^2(\R^d)\), let \(u=P_h^{-1}f\in H^1(\R^d)\); then \(-\Delta u=f-h^{-2}\chfun_{\Omega^\complement}u\in L^2(\R^d)\) gives \(u\in H^2(\R^d)\) by elliptic regularity.
\end{proof}

\begin{lemma}[Self-adjointness and pure point part of spectrum]
\label{LemmaSpec}
  The operator \(P_h\) is self-adjoint on \(L^2(\R^d)\) with domain \(H^2(\R^d)\). Its spectrum is
  contained in \([0,\infty)\). Moreover, the spectrum in \([0,h^{-2})\) is discrete.
\end{lemma}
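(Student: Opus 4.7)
The statement has three parts, and I would handle them in order of increasing difficulty.

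For \emph{self-adjointness}, the plan is to write $P_h = -\Delta + V_h$ where $V_h = h^{-2}\chfun_{\Omega^\complement}$ is a bounded, real-valued multiplication operator, hence bounded self-adjoint on $L^2(\R^d)$. Since $-\Delta$ is known to be self-adjoint on $H^2(\R^d)$, Kato--Rellich with relative bound $0$ (or simply the fact that a bounded self-adjoint perturbation does not change the domain) gives self-adjointness of $P_h$ on $H^2(\R^d)$. This is entirely routine.

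For \emph{non-negativity of the spectrum}, I would note that for $u\in H^2(\R^d)$,
\[
  \langle P_h u, u\rangle_{L^2} = \|\nabla u\|_{L^2}^2 + h^{-2}\|\chfun_{\Omega^\complement} u\|_{L^2}^2 \geq 0.
\]
Combined with self-adjointness, this forces $\spec(P_h)\subset[0,\infty)$. (Invertibility of $P_h$ on $L^2$, already established in Lemma~\ref{LemmaInvH1}, additionally gives $0\notin\spec(P_h)$, though this is not needed for the statement.)

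The main content is \emph{discreteness of the spectrum in $[0,h^{-2})$}, which I would deduce from Weyl's theorem on the invariance of essential spectrum under relatively compact perturbations. The idea is to write
\[
  P_h = \bigl(-\Delta + h^{-2}\bigr) - h^{-2}\chfun_{\Omega},
\]
and show that multiplication by $h^{-2}\chfun_\Omega$ is relatively compact with respect to $-\Delta$. To see this, let $B\subset\R^d$ be an open ball containing $\bar\Omega$; then $\chfun_\Omega(-\Delta+1)^{-1}$ factors through the inclusion $H^2(B)\hookrightarrow L^2(B)$ (since $(-\Delta+1)^{-1}\colon L^2(\R^d)\to H^2(\R^d)$ and $\chfun_\Omega$ vanishes outside $B$), and the latter is compact by Rellich--Kondrachov. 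Hence $\chfun_\Omega$ is $(-\Delta)$-compact, so by Weyl's theorem
\[
  \spec_{\mathrm{ess}}(P_h) = \spec_{\mathrm{ess}}(-\Delta + h^{-2}) = [h^{-2},\infty).
\]
Consequently $\spec(P_h)\cap[0,h^{-2})$ consists only of isolated eigenvalues of finite multiplicity, which is precisely the assertion of discreteness.

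The main obstacle, if any, is the relative compactness verification, which hinges on the boundedness of $\Omega$ (so that $\chfun_\Omega$ has compact support); everything else is a direct appeal to standard self-adjoint perturbation theory. No smoothness of $\pa\Omega$ is used, consistent with the remark preceding the lemma.
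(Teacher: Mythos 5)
Your proof is correct, and it takes a genuinely different route from the paper's. For self-adjointness, you appeal to Kato--Rellich (or just the stability of self-adjointness domains under bounded symmetric perturbations), whereas the paper proves it from scratch by showing that $P_h\pm i$ is invertible, reducing this to $I\pm iP_h^{-1}$ being invertible and using the symmetry of $P_h^{-1}$, which the paper can do because it has just established invertibility of $P_h\colon H^2\to L^2$ in Lemma~\ref{LemmaInvH1}. For discreteness, you decompose $P_h=(-\Delta+h^{-2})-h^{-2}\chfun_\Omega$, verify that $\chfun_\Omega$ is $(-\Delta)$-compact via Rellich--Kondrachov on a bounded ball, and invoke Weyl's theorem to pin down $\spec_{\mathrm{ess}}(P_h)=[h^{-2},\infty)$, which in fact gives slightly more than the lemma asks for. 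The paper instead constructs a Fredholm parametrix $(P_h-\lambda(1-\chi))^{-1}$ for $P_h-\lambda$ when $\Re\lambda<h^{-2}$, deduces Fredholm index $0$, and finishes with the analytic Fredholm theorem. Your approach is shorter because it outsources the work to two standard black boxes; the paper's approach is self-contained (building only on its own Lemma~\ref{LemmaInvH1}) and its parametrix method generalizes more directly to variable-coefficient or non-self-adjoint settings where Weyl's theorem would not apply as cleanly. One small remark: the paper does not explicitly verify $\spec(P_h)\subset[0,\infty)$ in its proof (it is clear from the quadratic form $\iprod{P_h u,u}\geq 0$), so your explicit line there is harmless but slightly more than what the paper bothers to write.
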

\begin{proof}
  We need to show that \(P_h\pm i\colon H^2(\R^d)\to L^2(\R^d)\) is invertible, or (by Lemma~\ref{LemmaInvH1}) equivalently that
  \begin{equation}
  \label{EqSpecInv}
    (P_h\pm i)\circ P_h^{-1}=I\pm i P_h^{-1}\colon L^2(\R^d)\to L^2(\R^d)\ \text{is invertible.}
  \end{equation}
  Now, \(P_h^{-1}\) is symmetric on \(L^2(\R^d)\): for \(f,g\in L^2(\R^d)\), set \(u=P_h^{-1}f\), \(v=P_h^{-1}g\in H^2(\R^d)\), then
  \[
      \iprod{P_h^{-1}f,g}_{L^2} = \iprod{u,P_h v}_{L^2} = \iprod{P_h u,v}_{L^2} =
      \iprod{f,P_h^{-1}g}_{L^2}.
  \]
  Therefore, \((I\pm i P_h^{-1})f=0\) implies \(0=\iprod{(I\pm i P_h^{-1})f,f}_{L^2}=\norm{f}_{L^2}^2\pm i\iprod{ P_h^{-1}f,f}_{L^2}\). The first term is real, the second term is imaginary, so both vanish, and hence \(f=0\). Since \(I\pm i P_h^{-1}\) has closed range and its adjoint is \(I\mp i P_h^{-1}\),~\eqref{EqSpecInv} follows.

  Next, let \(\chi\in\CIc(\R^d)\) be equal to \(1\) near \(\bar\Omega\). Consider \(\lambda\in\C\) with \(\Re\lambda<h^{-2}\). Then
  \[
    P_h - \lambda(1-\chi) \colon H^2(\R^d)\to L^2(\R^d)\ \text{is invertible,}
  \]
  as follows by similar arguments as in the proof of Lemma~\ref{LemmaInvH1}. Therefore,
  \[
    (P_h-\lambda)\circ(P_h-\lambda(1-\chi))^{-1} = I - R_1,\quad
    (P_h-\lambda(1-\chi))^{-1}\circ(P_h-\lambda) = I - R_2,
  \]
  where
  \[
    R_1 = \lambda\chi(P_h-\lambda(1-\chi))^{-1},\quad
    R_2 = (P_h-\lambda(1-\chi))^{-1}\lambda\chi.
  \]
  By Rellich, \(R_1\) is compact on \(L^2(\R^d)\) and \(R_2\) is compact on \(H^2(\R^d)\). Therefore,
  \(P_h-\lambda\colon H^2(\R^d)\to L^2(\R^d)\) is Fredholm. Since it is invertible for \(\lambda=0\),
  its index is \(0\). The analytic Fredholm theorem finishes the proof.
\end{proof}

We shall pinpoint bounded eigenvalues \(\lambda_h\) of \(P_h\) by constructing quasimodes. However, our
quasimodes \(u_h\) will \emph{not} lie in the domain \(H^2(\R^d)\) of \(P_h\); they will, however, satisfy
\(\norm{u_h}_{L^2(\R^d)}=1+\cO(h)\) and \(\norm{(P_h-\lambda_h)u_h}_{H^{-1}(\R^d)}\leq C_N h^N\). We
proceed to show that this suffices to guarantee an actual eigenvalue nearby:

\begin{lemma}[Quasimodes and spectrum]
\label{LemmaQM}
  Let \(B>0\). There exist $h_0>0$ and \(A>0\) such that the following holds for all \(h\in(0,h_0]\) and
  \(\lambda\in\R\) with \(|\lambda|\leq B\). If \(u_0\in H^1(\R^d)\), \(\norm{u_0}_{L^2}=1\), and
  \(\norm{(P_h-\lambda)u_0}_{H^{-1}(\R^d)}\leq c\in[0,(2 A)^{-1})\), then there exists an eigenvalue
  \(\lambda+\mu\) of \(P_h\) with \(|\mu|\leq 2 c A\).
\end{lemma}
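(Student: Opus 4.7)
The plan is the standard quasimode-to-spectrum argument via the spectral theorem for the self-adjoint operator $P_h$ on $L^2(\R^d)$, with one extra step needed because the quasimode error is measured in $H^{-1}$ rather than $L^2$. Let $\delta := \operatorname{dist}(\lambda, \spec(P_h))$; I may assume $\delta > 0$, since otherwise $\lambda$ itself is in the spectrum and (as argued at the end) is automatically an eigenvalue for small $h$. The argument reduces to proving the uniform resolvent-type bound
\[
    \norm{u_0}_{L^2(\R^d)} \le C_1 \bigl(1 + \tfrac{B+1}{\delta}\bigr) \norm{(P_h - \lambda)u_0}_{H^{-1}(\R^d)}
\]
for some constant $C_1$ independent of $h \in (0, 1]$ and $\lambda \in [-B, B]$: applying it with $\norm{(P_h-\lambda)u_0}_{H^{-1}}\le c$ gives $1 \le C_1(1 + (B+1)/\delta) c$, and taking $A$ large enough (for instance $A := C_1 (B+2)$) one verifies that the hypotheses $c \le (2A)^{-1}$ and $\delta > 2 c A$ together contradict this inequality, forcing $\delta \le 2 c A$.

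To prove the key bound I would factor through $P_h + 1$. Set $f := (P_h - \lambda) u_0 \in H^{-1}$ and $g := (P_h + 1)^{-1} f \in H^1$, which exists with a uniform estimate $\norm{g}_{H^1} \le C_1 \norm{f}_{H^{-1}}$ by a minor modification of the proof of Lemma~\ref{LemmaInvH1} (the Poincar\'e-type inequality~\eqref{EqSpecPoin} gives coercivity of $P_h + 1$ on $H^1$, uniformly in $h \le 1$). Then $(P_h - \lambda)(u_0 - g) = (\lambda + 1) g \in L^2$, and the identity
\[
    -\Delta(u_0 - g) = (\lambda + 1) g - (h^{-2}\chfun_{\Omega^\complement} - \lambda)(u_0 - g) \in L^2
\]
upgrades $u_0 - g \in H^1$ to $H^2 = D(P_h)$ by interior elliptic regularity (exactly as in the last paragraph of the proof of Lemma~\ref{LemmaInvH1}). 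The spectral theorem then applies directly to $u_0 - g$:
\[
    \norm{u_0 - g}_{L^2} \le \tfrac{1}{\delta} \norm{(P_h - \lambda)(u_0 - g)}_{L^2} = \tfrac{|\lambda + 1|}{\delta} \norm{g}_{L^2},
\]
and combining with $|\lambda + 1| \le B + 1$, $\norm{g}_{L^2} \le \norm{g}_{H^1} \le C_1 \norm{f}_{H^{-1}}$, and the triangle inequality $\norm{u_0}_{L^2} \le \norm{g}_{L^2} + \norm{u_0 - g}_{L^2}$ yields the displayed estimate.

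Finally, to identify the resulting spectral point $\lambda' \in \spec(P_h)$ with $|\lambda' - \lambda| \le 2 c A$ as a genuine eigenvalue, observe that $\lambda' \ge 0$ by Lemma~\ref{LemmaSpec} and $|\lambda'| \le B + 2 c A \le B + 1$ given $c \le (2A)^{-1}$. Since Lemma~\ref{LemmaSpec} also ensures the spectrum in $[0, h^{-2})$ is discrete, fixing $h_0 > 0$ small enough that $h_0^{-2} > B + 1$ places $\lambda'$ in the discrete part of the spectrum for every $h \le h_0$, so $\lambda'$ is an eigenvalue. The only step that requires any real care is the $H^{-1} \to L^2$ resolvent bound above; everything else is routine bookkeeping.
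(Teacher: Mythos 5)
Your proof is correct, but it departs from the paper's argument in a way worth noting. The paper argues by contradiction: assuming no eigenvalue lies within $b=2cA$ of $\lambda$, it uses self-adjointness to get $\norm{(P_h-\lambda)^{-1}}_{L^2\to L^2}\le b^{-1}$, then bootstraps via the uniform $H^2$-elliptic estimate (through $(P_h+i)^{-1}$) to a bound on $(P_h-\lambda)^{-1}\colon L^2\to H^1$, and finally passes to $H^{-1}\to L^2$ by duality before applying the result to $f=(P_h-\lambda)u_0$. You instead regularize the quasimode directly: setting $g=(P_h+1)^{-1}f\in H^1$, you observe $(P_h-\lambda)(u_0-g)=(\lambda+1)g\in L^2$, so $u_0-g\in H^2=D(P_h)$ and the spectral theorem applies \emph{to $u_0-g$}, yielding the bound on $\norm{u_0}_{L^2}$ without a duality step. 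The two arguments are morally cousins — both trade Sobolev regularity through a reference operator $(P_h+i)$ or $(P_h+1)$ — but yours avoids the duality pivot and gives the $H^{-1}\to L^2$ estimate by an explicit decomposition rather than by abstract adjoint bounds. A side benefit of your presentation is that it explicitly addresses, in the final paragraph, why the spectral point found at distance $\le 2cA$ is actually an \emph{eigenvalue}: this relies on $|\lambda'|\le B+1<h_0^{-2}$ placing it in the discrete part of $\spec(P_h)$ from Lemma~\ref{LemmaSpec}. The paper's phrase ``By self-adjointness, this implies $\norm{(P_h-\lambda)^{-1}}_{L^2\to L^2}\le b^{-1}$'' silently uses the same fact (no eigenvalues in $[\lambda-b,\lambda+b]$ is equivalent to no spectrum there only because that interval is below $h^{-2}$), so your more careful bookkeeping is a genuine small improvement in exposition.
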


We shall apply this with \(c=C_N h^N\) (for all \(N\), in fact), yielding an eigenvalue
\(\cO(h^N)\)-close to \(\lambda\).

\begin{proof}[Proof of Lemma~\usref{LemmaQM}]
  Write \(b\coloneqq 2 c A\). Suppose the conclusion is false. By self-adjointness, this implies
  \(\norm{(P_h-\lambda)^{-1}}_{L^2\to L^2}\leq b^{-1}\). Now for all \(u\in H^2(\R^d)\),
  \begin{align*}
    \norm{u}_{H^2(\R^d)} &= \norm{(P_h+i)^{-1}(P_h+i)u}_{H^2(\R^d)} \\
      &\leq C_1\norm{(P_h+i)u}_{L^2(\R^d)} \\
      &\leq C_1\norm{P_h u}_{L^2(\R^d)} + C_1\norm{u}_{L^2(\R^d)} \\
      &\leq A\norm{(P_h-\lambda)u}_{L^2(\R^d)} + A\norm{u}_{L^2(\R^d)} \\
      &\leq (A+A b^{-1})\norm{(P_h-\lambda)u}_{L^2(\R^d)}.
  \end{align*}
  (Here \(A\) depends on \(B\).) We conclude that \(\norm{(P_h-\lambda)^{-1}}_{L^2\to H^2}\leq A+A b^{-1}\), and therefore
  \[
    \norm{(P_h-\lambda)^{-1}}_{L^2\to H^1} \leq A+A b^{-1}.
  \]
  We claim that also
  \[
    \norm{(P_h-\lambda)^{-1}}_{H^{-1}\to L^2} \leq A+A b^{-1}.
  \]
  This can be seen as follows: let \(f\in H^{-1}(\R^d)\), then
  \begin{align*}
    \norm{(P_h-\lambda)^{-1}f}_{L^2(\R^d)} &= \sup_{\norm{g}_{L^2(\R^d)}=1} |\iprod{(P_h-\lambda)^{-1}f,g}_{L^2}| \\
      &= \sup_{\norm{g}_{L^2(\R^d)}=1} |\iprod{f,(P_h-\lambda)^{-1}g}_{L^2}| \\
      &\leq \norm{f}_{H^{-1}(\R^d)} \sup_{\norm{g}_{L^2(\R^d)}=1}\norm{(P_h-\lambda)^{-1}g}_{H^1(\R^d)} \\
      &\leq (A+A b^{-1})\norm{f}_{H^{-1}(\R^d)}.
  \end{align*}
  But for \(f\coloneqq (P_h-\lambda)u_0\), this reads \(1\leq(A+A b^{-1})c=\frac12+c A<1\), a contradiction.
\end{proof}

This spectral consideration is useful if one is content with locating the spectrum just using the quasimode construction. We complement this abstract result with a direct proof, given in \S\ref{STCorrectingQuasimodes}, where we construct, directly, the actual eigenfunctions and eigenvalues for the particular operator \(P_h\).

\subsection{Basic properties of the eigenvalues of \texorpdfstring{$P_h$}{the particle-in-well operator}}

Let $n\in\N$. The $n$-th Dirichlet eigenvalue of $\Omega$ can then be described using the min-max formula
\begin{equation}
\label{EqEVVar}
  \lambda_n^{\rm D} = \min_V \max_{\substack{u\in V\\ \norm{u}_{L^2}=1}} \int_\Omega |\nabla u|^2\,\dd x,
\end{equation}
where the minimum is taken over all $n$-dimensional subspaces $V\subset H^1_0(\Omega)$. Suppose $n$ and $h$ are such that $\lambda_n^{\rm D}\leq\frac12 h^{-2}$. Note now that the expression
\begin{equation}
\label{EqEVMinMax}
  \min_V \max_{\substack{u\in V\\ \norm{u}_{L^2}=1}} \int_{\R^d} |\nabla u|^2 + h^{-2}\chfun_{\Omega^\complement}|u|^2\,\dd x,
\end{equation}
where $V\subset H^1(\R^d)$ is $n$-dimensional, is smaller than $\lambda_n^{\rm D}\leq\frac12 h^{-2}$ since $n$-dimensional subspaces of $H^1_0(\Omega)\subset H^1(\R^d)$ are allowed competitors. Therefore,~\eqref{EqEVMinMax} computes the $n$-th eigenvalue of $P_h$.

\begin{lemma}[Monotonicity]
\label{LemmaEVMono}
  Let $n\in\N$. Then there exists $h_n>0$ such that the operator $P_h$ has at least $n$ eigenvalues $\leq\frac12 h^{-2}$ for all $h\in(0,h_n]$. Moreover, for $0<h_-<h_+\leq h_n$, we have
  \begin{equation}
  \label{EqEVMono}
    \lambda_n^{h_+} \leq \lambda_n^{h_-} \leq \lambda_n^{\rm D}.
  \end{equation}
\end{lemma}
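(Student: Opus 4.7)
The plan is to use the min-max principle together with the discreteness statement of Lemma~\ref{LemmaSpec} and the quadratic-form inequality already written out in~\eqref{EqEVMinMax}.

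First, I would choose $h_n \in (0,1]$ so small that $\lambda_n^{\rm D} \leq \tfrac12 h_n^{-2}$; concretely $h_n \coloneqq \min\{1,(2\lambda_n^{\rm D})^{-1/2}\}$ works. For any $h \in (0,h_n]$ we have $\lambda_n^{\rm D} \leq \tfrac12 h^{-2}$, so the hypothesis preceding~\eqref{EqEVMinMax} in the text is met. Restricting the $n$-dimensional trial spaces $V$ in~\eqref{EqEVMinMax} to subspaces of $H^1_0(\Omega) \subset H^1(\R^d)$ (on which the $h^{-2}\chfun_{\Omega^\complement}$ term vanishes identically) and comparing to~\eqref{EqEVVar} shows that the $n$-th min-max value of the quadratic form of $P_h$ is at most $\lambda_n^{\rm D} \leq \tfrac12 h^{-2}$. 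By Lemma~\ref{LemmaSpec} the spectrum of $P_h$ in $[0,h^{-2})$ is purely discrete, so the $n$-th min-max value must coincide with the $n$-th eigenvalue (counted with multiplicity), which is thus $\leq \tfrac12 h^{-2}$. This yields both the existence of at least $n$ eigenvalues below $\tfrac12 h^{-2}$ and the inequality $\lambda_n^h \leq \lambda_n^{\rm D}$.

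For the monotonicity in $h$, I would observe that for $0 < h_- < h_+ \leq h_n$ the potential satisfies $h_+^{-2}\chfun_{\Omega^\complement} \leq h_-^{-2}\chfun_{\Omega^\complement}$ pointwise, so the quadratic forms obey
\[
  q_{h_+}(u) \coloneqq \int_{\R^d} |\nabla u|^2 + h_+^{-2}\chfun_{\Omega^\complement}|u|^2\,\dd x
  \leq q_{h_-}(u),\qquad u\in H^1(\R^d).
\]
Taking the min-max of both sides over $n$-dimensional subspaces $V\subset H^1(\R^d)$ gives $\lambda_n^{h_+} \leq \lambda_n^{h_-}$, again because both min-max values fall inside the discrete part of the respective spectra (using $\lambda_n^{h_\pm} \leq \lambda_n^{\rm D} \leq \tfrac12 h_\pm^{-2} < h_\pm^{-2}$).

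There is no real obstacle beyond bookkeeping. The only point requiring a little care is the identification of the $n$-th min-max value of the quadratic form with the $n$-th eigenvalue: this requires that the min-max value lies strictly below the bottom of the essential spectrum of $P_h$. That bottom is $h^{-2}$ by Lemma~\ref{LemmaSpec}, and our choice of $h_n$ is exactly what guarantees the necessary separation $\lambda_n^h \leq \lambda_n^{\rm D} \leq \tfrac12 h^{-2}$.
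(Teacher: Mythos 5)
Your proof is correct and takes essentially the same route as the paper: the paper establishes the bound $\lambda_n^h\leq\lambda_n^{\rm D}$ and the identification of~\eqref{EqEVMinMax} with the $n$-th eigenvalue in the paragraph preceding the lemma, and then proves monotonicity in the lemma itself by the same comparison of quadratic forms. The only (immaterial) imprecision is the assertion that the bottom of the essential spectrum equals $h^{-2}$; Lemma~\ref{LemmaSpec} only gives $\sigma_{\rm ess}(P_h)\subset[h^{-2},\infty)$, which is all the min-max argument needs.
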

\begin{proof}
  Only the monotonicity in $h$ needs to be proved still. Let thus $V$ be an $n$-dimensional subspace of $H^1(\R^d)$ which minimizes $\max_{u\in V,\ \norm{u}_{L^2}=1} \int_{\R^d}|\nabla u|^2+h_+^{-2}\chfun_{\Omega^\complement}|u|^2\,\dd x$. Since the integral increases when $h_+$ is replaced by $h_-$,~\eqref{EqEVMono} follows.
\end{proof}

\begin{prop}[Convergence]
\label{PropEVConv}
  Assume that $\Omega$ is $\cC^1$. Let $n\in\N$. Then
  \[
    \lim_{h\to 0} \lambda_n^h = \lambda_n^{\rm D}.
  \]
\end{prop}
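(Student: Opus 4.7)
The upper bound $\limsup_{h\to 0}\lambda_n^h \leq \lambda_n^{\rm D}$ is free from Lemma~\ref{LemmaEVMono}. The plan is therefore to establish the matching lower bound $\liminf_{h\to 0}\lambda_n^h \geq \lambda_n^{\rm D}$ via the min-max formula~\eqref{EqEVMinMax} together with a compactness argument that transports $n$-dimensional competitor subspaces in $H^1(\R^d)$ back into $H^1_0(\Omega)$.

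For each small $h>0$, let $u_1^h,\dots,u_n^h\in H^2(\R^d)$ denote $L^2$-orthonormal eigenfunctions of $P_h$ corresponding to $\lambda_1^h\leq\dots\leq\lambda_n^h$. From $Q_h(u_j^h)=\lambda_j^h\leq \lambda_n^{\rm D}$ we immediately read off the two bounds
\[
  \|\nabla u_j^h\|_{L^2(\R^d)}^2 \leq \lambda_n^{\rm D},\qquad
  \|\chfun_{\Omega^\complement}u_j^h\|_{L^2(\R^d)}^2 \leq h^2\lambda_n^{\rm D}.
\]
Thus $(u_j^h)_h$ is bounded in $H^1(\R^d)$, and the second estimate says that these functions are asymptotically concentrated in $\Omega$ in the $L^2$ sense. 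Along a subsequence $h\to 0$, I would extract weak $H^1(\R^d)$-limits $u_j$; by the Rellich--Kondrachov theorem (applied on a fixed bounded ball containing $\Omega$, outside of which the $L^2$-mass of $u_j^h$ is controlled by $h^2\lambda_n^{\rm D}$) these limits are strong in $L^2(\R^d)$. In particular $u_j\equiv 0$ on $\Omega^\complement$ and $\langle u_i,u_j\rangle_{L^2}=\delta_{ij}$, so $u_1,\dots,u_n$ are linearly independent. Using that $\Omega$ is $\mathcal{C}^1$ (hence has the Lipschitz extension property), $H^1(\R^d)$-functions vanishing a.e.\ outside $\Omega$ belong to $H^1_0(\Omega)$, so $V:=\mathrm{span}(u_1,\dots,u_n)$ is an $n$-dimensional subspace of $H^1_0(\Omega)$.

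To convert this into the desired eigenvalue inequality, for an arbitrary $u=\sum c_j u_j\in V$ with $\sum|c_j|^2=1$, I form the companion vector $u^h:=\sum c_j u_j^h$. Orthonormality gives $\|u^h\|_{L^2}=1$, and the eigenfunction property together with mutual orthogonality yields
\[
  \int_{\R^d}|\nabla u^h|^2\,\dd x \;\leq\; Q_h(u^h) \;=\; \sum_j |c_j|^2\lambda_j^h \;\leq\; \lambda_n^h.
\]
Since $u^h\to u$ strongly in $L^2(\R^d)$ and weakly in $H^1(\R^d)$, lower semicontinuity of the Dirichlet energy gives $\int_\Omega|\nabla u|^2\,\dd x \leq \liminf_{h\to 0}\lambda_n^h$. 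Taking the supremum over $u\in V$ with $\|u\|_{L^2}=1$ and invoking the min-max characterization~\eqref{EqEVVar} for $\lambda_n^{\rm D}$ applied to the competitor $V\subset H^1_0(\Omega)$, I obtain
\[
  \lambda_n^{\rm D} \;\leq\; \max_{\substack{u\in V\\ \|u\|_{L^2}=1}} \int_\Omega|\nabla u|^2\,\dd x \;\leq\; \liminf_{h\to 0}\lambda_n^h,
\]
completing the proof.

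The only genuinely delicate point in this plan is the identification $u_j\in H^1_0(\Omega)$: one needs both the strong $L^2$-convergence (which relies on the functions being essentially supported in a fixed bounded set, guaranteed by the $h^2$-decay of the exterior $L^2$-mass) and the structural fact that an $H^1(\R^d)$-function vanishing outside a $\mathcal{C}^1$ domain lies in $H^1_0$ of that domain. This is where the regularity assumption on $\partial\Omega$ enters; everything else is a routine application of weak compactness, Rellich, and min-max.
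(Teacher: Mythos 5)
Your proof is correct, but it proceeds by a genuinely different route from the paper's. The paper works with arbitrary test functions: it enlarges the domain to an $\eta$-neighborhood $\Omega_\eta$, shows that $\lambda_n^{\rm D}(\Omega_\eta)\to\lambda_n^{\rm D}(\Omega)$ as $\eta\to 0$ --- this is where the $\cC^1$ hypothesis enters, via a boundary-normal translation map $H^1_0(\Omega_\eta)\to H^1_0(\Omega)$ --- and then shows that multiplying a minimizing $n$-dimensional subspace for the particle-in-well min-max by a cutoff supported in $\Omega_\eta$ costs only $\cO(h)$ in the Dirichlet energy. Your proof instead extracts the first $n$ eigenfunctions of $P_h$, uses the uniform $H^1$ bound together with the $\cO(h^2)$ exterior $L^2$-mass estimate and Rellich to pass to weak $H^1$ / strong $L^2$ limits, and identifies the limiting span as an $n$-dimensional subspace of $H^1_0(\Omega)$; here the $\cC^1$ hypothesis enters via the classical fact that an $H^1(\R^d)$ function vanishing almost everywhere outside a Lipschitz domain restricts to an element of $H^1_0$ of that domain. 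Both routes are valid. The paper's gives an explicit $\eps$--$h$ estimate without invoking compactness, whereas yours is arguably more structural and cleanly localizes the role of boundary regularity, at the cost of a subsequence extraction (harmless here, since by Lemma~\ref{LemmaEVMono} the limit $\lim_{h\to 0}\lambda_n^h$ exists, so all subsequential limits agree). One small remark on wording: the fact you invoke is not really the Lipschitz \emph{extension} property but rather the trace characterization $H^1_0(\Omega)=\{u\in H^1(\Omega): u|_{\pa\Omega}=0\}$ for $\cC^1$ domains together with the continuity of the trace from $H^1(\R^d)$; the conclusion you draw is nonetheless correct.
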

\begin{proof}
  Lemma~\ref{LemmaEVMono} implies that $\lim_{h\to 0}\lambda_n^h\leq\lambda_n^{\rm D}$. Let $\eps>0$; we need to prove that
  \begin{equation}
  \label{EqEVConv}
    \exists\,h>0 \colon \lambda_n^{\rm D}(\Omega)\leq\lambda_n^h+\eps,
  \end{equation}
  where, for clarity, we write $\lambda_n^{\rm D}(\Omega)$ for the $n$-th Dirichlet eigenvalue of $\Omega$.

  For small $\eta>0$, denote by $\Omega_\eta\subset\R^d$ the domain containing $\Omega$ which contains all points at distance $<\eta$ from $\Omega$. If $\eta$ is sufficiently small, then
  \begin{equation}
  \label{EqEVConv2}
    \lambda_n^{\rm D}(\Omega) \leq \lambda_n^{\rm D}(\Omega_\eta) + \frac{\eps}{2}.
  \end{equation}
  Note that the inequality $\lambda_n^{\rm D}(\Omega_\eta)\leq\lambda_n^{\rm D}(\Omega)$ follows from the variational characterization~\eqref{EqEVVar} since $H^1_0(\Omega)\subset H^1_0(\Omega_\eta)$. For the converse, one utilizes a map $\Psi_\eta\colon H^1_0(\Omega_\eta)\to H^1_0(\Omega)$ defined by localizing elements of $H^1_0(\Omega_\eta)$ to $\cC^1$ coordinate charts near points of $\pa\Omega$ which straighten out $\pa\Omega$ and translating by an amount $\eta$ in the inward normal direction, while in coordinate charts in the interior of $\Omega$ one does nothing. (If $\Omega$ is star-shaped around $0$, one can define a map $H^1_0(\Omega_\eta)\to H^1_0(\Omega)$ via pullback by scaling.) As $\eta\to 0$, we have $\int_\Omega|\nabla(\Psi_\eta u)|^2\,\dd x\to\int_{\Omega_\eta}|\nabla u|^2\,\dd x$ for $u\in H^1_0(\Omega_\eta)$. Applying this to $u$ which form a basis of a minimizing subspace $V\subset H^1_0(\Omega_\eta)$ for $\lambda_n^{\rm D}(\Omega_\eta)$ gives~\eqref{EqEVConv2}.

  Next, let $\chi_\eta\in\cC^1(\R^d;[0,1])$ be a function which equals $1$ on $\Omega$ and $0$ outside of $\Omega_\eta$. Write $C:=\norm{\nabla\chi_\eta}_{L^\infty}$. Let $V\subset H^1(\R^d)$ be an $n$-dimensional subspace achieving the minimum in~\eqref{EqEVMinMax}. (That is, $V$ is spanned by the first $n$ eigenfunctions of $P_h$.) Then for $u\in V$ with $\norm{u}_{L^2}=1$, we have
  \[
    \int_{\Omega^\complement} |u|^2\,\dd x\leq h^2\lambda_n^h.
  \]
  Consider then $v:=\chi_\eta u\in H^1_0(\Omega_\eta)$. Clearly $\norm{v}_{L^2}\leq\norm{u}_{L^2}\leq 1$, but also
  \[
    \int_{\R^d} |v|^2\,\dd x = 1 - \int_{\R^d} (1-\chi_\eta^2)|u|^2\,\dd x \geq 1 - \int_{\Omega^\complement} |u|^2\,\dd x\geq 1-h^2\lambda_n^h,
  \]
  so $v$ is almost $L^2$-normalized. Furthermore, we have $\nabla v=\chi_\eta\nabla u+u\nabla\chi_\eta$, so $|\nabla v|^2\leq(1+\delta)|\nabla u|^2+(1+\delta^{-1})|u|^2|\nabla\chi_\eta|^2$ for every $\delta>0$; in view of $\supp\nabla\chi_\eta\subset\Omega^\complement$, this gives
  \[
    \int_{\Omega_\eta} |\nabla v|^2\,\dd x \leq (1+\delta)\int_{\R^d} |\nabla u|^2\,\dd x + (1+\delta^{-1})C^2\int_{\Omega^\complement} |u|^2\,\dd x \leq (1+\delta)\lambda_n^h + (1+\delta^{-1})C^2 h^2\lambda_n^h.
  \]
  For the $L^2$-normalized function $v_0:=v/\|v\|_{L^2}$, we thus have
  \[
    \int_{\Omega_\eta} |\nabla v_0|^2\,\dd x\leq\lambda_n^h \frac{(1+\delta)+(1+\delta^{-1})C^2 h^2}{1-h^2\lambda_n^h}.
  \]
  Fixing first $\delta>0$ sufficiently small and then $h>0$ sufficiently small, this is bounded by $\lambda_n^h+\frac{\eps}{2}$. But the space $\{\chi_\eta u\colon u\in V\}\subset H^1_0(\Omega_\eta)$ is an $n$-dimensional competitor for~\eqref{EqEVVar} on $\Omega_\eta$; therefore,
  \[
    \lambda_n^{\rm D}(\Omega_\eta) \leq \lambda_n^h+\frac{\eps}{2}.
  \]
  Combining this with~\eqref{EqEVConv2} proves~\eqref{EqEVConv}.
\end{proof}

\section{Resolving the boundary}
\label{SResolvingBoundary}

\subsection{Normal coordinates}\label{ssec:fermi}
Let \(\delta > 0\) and denote by \(N_\delta \pa\Omega\) a \(\delta\)-neighborhood of \(\pa\Omega\), that is,
\[
    N_\delta\pa\Omega = \set{x\in\R^d\mid \operatorname{dist}(x, \pa\Omega) < \delta}.
\]
Let \(\nu\colon\pa\Omega\to \R^d\) denote the outward-pointing unit normal at \(\pa\Omega\).
We define the map
\[
    \begin{alignedat}{3}
        \Phi_\delta\colon (-2\delta, 2\delta)&{}\times{}&&\pa\Omega&&\to \R^d,\\
        (\rho&, &&y) &&\mapsto y + \rho\nu(y).
    \end{alignedat}
\]
Because we assume that \(\Omega\) has smooth boundary, \(\Phi_\delta\) is a diffeomorphism when $\delta>0$ is small enough (see, e.g., \cite{Gray2003}); we fix such a small
\[
  \delta>0
\]
for the remainder of the article. This induces \emph{normal coordinates} $\rho,y$ in a $\delta$-neighborhood near \(\pa\Omega\). In $N_\delta\pa\Omega$, \(\rho\) is the signed distance from \(\pa\Omega\), negative in \(\Omega\), positive in
\(\Omega^\complement\). We modify $\rho$ outside of $N_\delta\pa\Omega$ such that $\rho$ is smoothed out outside of $N_\delta\pa\Omega$ (where $\rho$ is the signed distance) to a globally smooth function on \(\R^d\) which is \(\leq-\delta\), resp.\ \(\geq\delta\), outside a \(\delta\)-neighborhood of \(\partial\Omega\) in \(\Omega\), resp.\
\(\Omega^\complement\).

On \((-\delta, \delta)_\rho \times \pa\Omega\) and in local coordinates \(y\) on \(\pa\Omega\), the dual of the Euclidean metric has the warped product form $\pa_\rho\otimes\pa_\rho+g^{j k}(\rho,y)\pa_{y^j}\otimes\pa_{y^k}$ by the Gauss lemma. Therefore, using the Einstein summation convention,
\[
\laplace = \frac{1}{g(\rho,y)}\partial_\rho g(\rho,y)\partial_\rho + \frac{1}{g(\rho,y)}\partial_{y^j}\bigl(g(\rho,y)g^{j k}(\rho,y)\partial_{y^k}\bigr),\quad
\laplace_{\partial\Omega}=\frac{1}{g(0,y)}\partial_{y^j}\bigl(g(0,y)g^{j k}(0,y)\partial_{y^k}\bigr);
\]
here \((g^{j k})\) is a smooth positive definite matrix and \(g(\rho,y)=(\det(g^{j k}))^{-1/2}\). We rewrite this as
  \begin{equation}
  \label{EqTApproxDelta}
    \laplace = \partial_\rho^2 + a(\rho,y)\partial_\rho + g^{j k}(\rho,y)\partial_{y^j}\partial_{y^k} + b^l(\rho,y)\partial_{y^l},\quad
    \laplace_{\partial\Omega} = g^{j k}(0,y)\partial_{y^j}\partial_{y^k} + b^l(0,y)\partial_{y^l}.
  \end{equation}
for smooth \(a,b^l\).

\subsection{Blow-up of the boundary}
\label{SsBlowup}

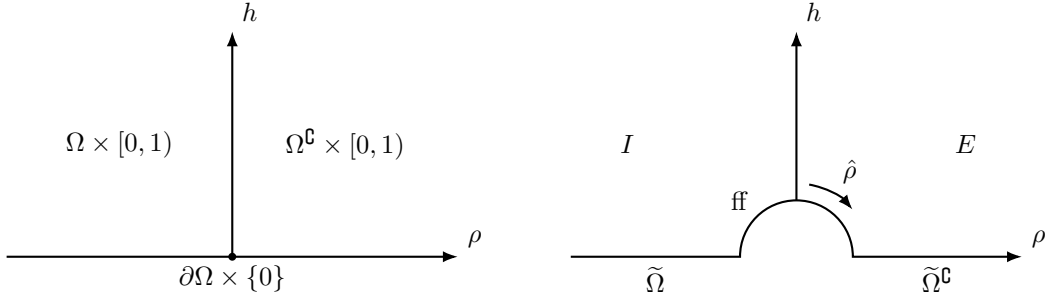
\begin{figure}
    \centering
    \begin{tikzpicture}[scale=1.5]
        \draw[thick, -{Latex[length=2mm]}] (-2.0, 0.0) -- (2.0, 0.0) node[anchor=south west] {\(\rho\)};
        \draw[thick, -{Latex[length=2mm]}] (0.0, 0.0) -- (0.0, 2.0) node[anchor=south west] {\(h\)};
        \fill [black, inner sep=3pt] (0, 0) circle (1pt) node[anchor=north] {\(\pa\Omega\times\set{0}\)};
        \node at (-1.0, 1.0) {\(\Omega\times[0,1)\)};
        \node at (1.0, 1.0) {\(\Omega^\complement\times[0,1)\)};

        \begin{scope}[shift={(5.0, 0.0)}]
            \draw[thick, -{Latex[length=2mm]}] (-2.0, 0.0) -- node[below, midway] {\(\wt\Omega\)} (-0.5, 0.0) arc
                [start angle=180, end angle=0, radius=0.5] --  node[below, midway]
                {\(\wt\Omega^\complement\)} (2.0, 0.0) node[anchor=south west] {\(\rho\)};
            \draw[thick, -{Latex[length=2mm]}] (0.0, 0.5) -- (0.0, 2.0) node[anchor=south west] {\(h\)};
            \node at (-0.5, 0.5) {\(\ff\)};
            \node at (-1.5, 1.0) {\(I\)};
            \node at (1.5, 1.0) {\(E\)};
            \draw[thick, -{Latex[length=2mm]}] ({0.65 * cos(80)}, {0.65 * sin(80)}) arc (80:40:0.65) node[anchor=south
                west, midway] {\(\hat{\rho}\)};
        \end{scope}
    \end{tikzpicture}
    \caption{We blow up the boundary \(\pa\Omega\times\set{0}\) in $\R^d\times[0,1)_h$ to pass from the  total space on the
        left to the blown-up space \(\tilde{M}\) on the right.}\label{fig:blowup}
\end{figure}

We will capture the expected analytically singular behavior of quasimodes and eigenfunctions by working with analytically simpler functions on a geometrically more complex space; we accomplish this using the method of real blow-up. We shall here introduce this method for our context. (For a detailed discussion, see \cite{MelroseDiffOnMwc}.)

We begin with the total space \(M\coloneqq \R^d_x\times [0,1)_h\). This is a manifold with boundary
\(\R^d\times \set{0}\). As we expect singular behavior on \(\pa\Omega\times\set{0}\), we consider
the blow-up
\[
    \tilde{M} \coloneqq [\R^d_x\times [0,1)_h;\pa\Omega\times \set{0}]
\]
together with the blow-down map
\[
    \beta \colon \tilde{M}\to M.
\]
Roughly speaking, $\tilde M$ is obtained from $M$ by replacing $\pa\Omega\times\{0\}$ by the collection $\ff$ of (non-strictly) inward pointing unit vectors, and $\beta$ is the identity on $(\R^d\times[0,1))\setminus(\pa\Omega\times\{0\})$ and the base projection on $\ff$. Local coordinates on $\tilde{M}$ near the interior of this \emph{front face} $\ff$ are $h\geq 0$, $\hat\rho=\frac{\rho}{h}\in\R$, and $y\in\pa\Omega$. In this manner, we can, on $\tilde{M}$, resolve the behavior of functions in $\cO(h)$-neighborhoods of $\pa\Omega$.

In more detail, let us describe the manifold with corners $\tilde M$ (see Figure~\ref{fig:blowup}) explicitly by providing a few overlapping coordinate charts that cover it:
\begin{enumerate}
\item $\R^d\times(0,1)$, with coordinates $(x,h)$;
\item $\Omega\times[0,1)$, again with coordinates $(x,h)$. This covers the interior of the set labeled $I$ in Figure~\ref{fig:blowup};
\item $\Omega^\complement\times[0,1)$, again with coordinates $(x,h)$. This covers the interior of the set labeled $E$ in Figure~\ref{fig:blowup};
\item $(-2,2)\times\pa\Omega\times[0,\frac{\delta}{2})$, with coordinates $(\hat\rho,y,h)$ corresponding to a point $(x,h)=(\Phi_\delta(h\hat\rho,y),h)$ in the other coordinate systems. This covers a neighborhood of the central part of $\ff$;
\item $[0,1)\times[0,\delta)\times\pa\Omega$ (twice), with coordinates $(\rho_0,\rho_\ff,y)$ corresponding to a point $(x,h)=(\Phi_\delta(\pm\rho_\ff,y),\rho_0\rho_\ff)$. Using the notation in Figure~\ref{fig:blowup}, this covers a neighborhood of the corner between $\ff$ and $\tilde\Omega$ (`$-$' sign), resp.\ $\tilde\Omega^\complement$ (`$+$' sign).
\end{enumerate}

In each of the coordinate systems, the blow-down map $\beta$ simply outputs the $(x,h)$-coordinates of the input point. The front face of $\tilde M$ is now rigorously defined as \(\ff \coloneqq \beta^{-1}(\pa\Omega\times \set{0})\). We may identify \(\ff\) with the space \(\ol{\R_{\hat{\rho}}}\times \pa\Omega\)
with \(\hat{\rho} = \rho / h\), where \(\rho\) is as in \S\ref{ssec:fermi}, and
\[
  \ol\R := [-\infty,\infty]
\]
is the radial compactification of $\R$: this can be defined as the closed interval $[-1,1]$, with a point $z\in(-1,1)$ identified with the real number $\tan(\frac{\pi z}{2})$. (Thus, on $\ol{\R_{\hat\rho}}\setminus\{0\}$, the function $1/\hat\rho$ is smooth and vanishes simply at $\pm\infty\in\ol\R$.)

We denote the map which restricts continuous functions on $\tilde{M}$ to $\ff$ by
\[
    R_{\ff}\colon \cC^0(\tilde{M})\to \cC^0(\ff),\quad u \mapsto u|_{\ff}.
\]
Fix a smooth cutoff function \(\chi\in \CIc(\R)\) with \(\supp \chi \subset (-\delta, \delta)\) and
\(\chi\equiv 1\) on \([-\frac{\delta}{2}, \frac{\delta}{2}]\). We then define a right-inverse
\(F_\ff^\chi\colon \cC^0(\ff)\to \cC^0(\tilde{M})\) of \(R_{\ff}\) by
\begin{equation}
\label{EqExtMap}
    F_\ff^\chi f (y + \rho \nu(y), h) = f\left(\frac{\rho}{h}, y\right)\chi(\rho),\quad f\in
    \CIc(\ff).
\end{equation}

For a set \(S\subset \R^d\times [0,1)\) we define its \emph{lift} by
\[
    \beta^* S\coloneqq \begin{cases}
        \beta^{-1}(S) & \text{if }(\pa\Omega\times \set{0})\cap S = \emptyset,\\
        {\rm cl}\bigl(\beta^{-1}(S\setminus (\pa\Omega\times \set{0}))\bigr) & \text{else,}
    \end{cases}
\]
where ${\rm cl}$ denotes the closure of a set in $\tilde M$. We define the \emph{lift \(\beta^*f\) of a function} \(f\in \cC^0(\R^d\times [0,
1)\setminus(\pa\Omega\times\set{0}))\) as follows: For \(m\in \tilde{M}\), set
\[
    \beta^*f(m) = \begin{cases}
        f(x, h) & \text{if }(x, h)\notin \pa\Omega\times\set{0}\text{ and } \beta(m) = (x,h),\\
        \lim_{h\to 0}f(y + h\hat{\rho}\nu(y), h) & \text{if }m = (y, \hat{\rho})\in\ff,
    \end{cases}
\]
provided the limit exists in the latter case. Thus, $\beta^*f$ is the continuous extension (if it exists) of $f|_{(\R^d\times[0,1))\setminus(\pa\Omega\times\{0\})}$ to $\tilde M$.

We label the lifted inner, exterior, and boundary parts of \(\Omega\times[0,1)\) as follows:
\[
    I \coloneqq \beta^*(\bar\Omega\times [0,1)), \quad E \coloneqq \beta^*(\Omega^\complement\times [0,1)),\quad
    B \coloneqq \beta^*(\pa\Omega\times [0,1)).
\]
We furthermore write
\[
    \widetilde\Omega \coloneqq \beta^*(\set{0}\times \Omega),\quad 
    \widetilde\Omega^\complement \coloneqq \beta^*(\set{0}\times \Omega^\complement).
\]

We moreover use the notation \(\rho_\Omega\), \(\rho_{\ff}\), \(\rho_{\Omega^\complement}\) for defining functions of \(\widetilde\Omega\), \({\ff}\), \(\widetilde\Omega^\complement\), respectively. (We recall here that a defining function of an embedded boundary hypersurface $H$ of a manifold with corners is a smooth function $\rho\geq 0$ such that $H=\rho^{-1}(0)$ and $\dd\rho\neq 0$ on $H$.) For definiteness,
we moreover demand that for \(h<\delta/8\),
\begin{equation}
\label{EqIDefFn}
  \rho_\Omega=1\ \text{for}\ \rho\geq-h,\qquad
  \rho_{\ff}=1\ \text{for}\ |\rho|\geq\delta,\qquad
  \rho_{\Omega^\complement}=1\ \text{for}\ \rho\leq h.
\end{equation}

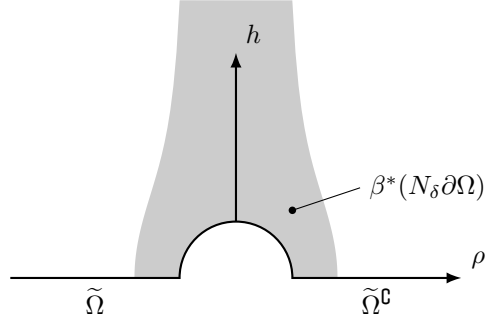
\begin{figure}
    \centering
    \begin{tikzpicture}[scale=1.5]
        \draw[thick, -{Latex[length=2mm]}] (-2.0, 0.0) -- node[below, midway] {\(\wt\Omega\)} (-0.5, 0.0) arc
            [start angle=180, end angle=0, radius=0.5] --  node[below, midway]
            {\(\wt\Omega^\complement\)} (2.0, 0.0) node[anchor=south west] {\(\rho\)};
        \draw[thick, -{Latex[length=2mm]}] (0.0, 0.5) -- (0.0, 2.0) node[anchor=south west] {\(h\)};
        \fill[opacity=0.2]
            plot [domain=0:0.5*pi-0.2,samples=25] ({ 0.5 * cos(\x r) + 0.4}, {0.5 * sin(\x r) + 0.4 * tan(\x r)})
         -- plot [domain=0.5*pi-0.2:0,samples=25] ({-0.5 * cos(\x r) - 0.4}, {0.5 * sin(\x r) + 0.4 * tan(\x r)})
         -- (-0.5, 0) -- (-0.5, 0.0) arc [start angle=180, end angle=0, radius=0.5] -- cycle;
     \draw (0.5, 0.6) node[circle, fill, inner sep=1pt] {} -- (1.1, 0.8) node[right] {\(\beta^*(N_\delta\pa\Omega)\)};
    \end{tikzpicture}
    \caption{The allowed region for functions supported near the boundary is shaded.}\label{fig:suppbndry}
\end{figure}

\subsection{The Schr\"odinger operator on the resolved space}

The operator family \((0,1)\ni h\mapsto P_h = \laplace + h^{-2}\chfun_{\Omega^\complement}\) that we
are interested in can be viewed as a single differential operator \(P\) acting on smooth functions
\(u\in \CI(\R^d\times (0,1))\) as
\[
    (Pu)(\cdot, h) = P_h(u(\cdot, h)).
\]
While the operator family \(P_h\) degenerates as \(h\to 0\), its structure in the limit $h\to 0$ becomes clearer if we consider the lift of $P$ to $\tilde M$. More precisely, we will compute the lift of $h^2 P$ and its restriction to $\ff$ in the coordinates \((\hat{\rho}, y,h)\).
Thus, for points \((y + \rho \nu(y), h)\in \tilde{M}\) with \(y\in\pa\Omega\) and \(\rho\) as well as
\(h\) sufficiently small, we can use \eqref{EqTApproxDelta} and change coordinates
\((\rho,y)\mapsto (\hat{\rho}, y)=(\rho/h,y)\) to find
\begin{equation}
\label{EqSchrResolved}
    \begin{aligned}
        h^2P &= h^2\bigl( -h^{-2}\pa_{\hat\rho}^2 - a(\rho,y)h^{-1}\pa_{\hat\rho} - g^{j
        k}(\rho,y)\partial_{y^j}\partial_{y^k} - b^l(\rho,y)\partial_{y^l} +
        h^{-2}\chfun_{\set{\hat\rho \ge 0}}(\hat\rho, y)\bigr)\\
        &= -\partial_{\hat{\rho}}^2 + H(\hat{\rho}) - ha(\rho,y)\pa_{\hat\rho} - h^2g^{j k}(\rho,y)\partial_{y^j}\partial_{y^k} - h^2b^l(\rho,y)\partial_{y^l},
    \end{aligned}
\end{equation}
where \(H\) is the Heaviside step function defined by \(H=1\) if its argument is \(\ge 0\) and \(H=0\) else.
Formally restricting this to \(h = 0\) we obtain the differential operator
\[
    \hat{P} = -\partial_{\hat{\rho}}^2 + H(\hat{\rho}),
\]
which acts on smooth functions on \(\ff\). We call this the \emph{model operator at \(\ff\)}. It is a family of operators parameterized by the base point $y\in\pa\Omega$, but since it does not actually have any $y$-dependence, we shall occasionally tacitly regard it as an operator on functions of $\hat\rho$ only.

\section{Construction of quasimodes on the resolved space}
\label{SConstructionQuasimodes}

We will construct quasimodes as elements of the following space of functions:

\begin{definition}[Quasimode space]
\label{DefQMSpace}
    We write \(\qms\) for the space of functions \(u\) on \(\tilde{M}\) with the following properties:
    \begin{enumerate}
        \item \(u\) is continuous;
        \item \(u\) is supported on \(I \cup \beta^*(N_\delta\pa\Omega\times [0,1))\);
        \item \(u|_{I^\circ}\in \cC^\infty(I)\) and \(u|_{E^\circ}\in \cC^\infty(E)\), i.e., $u|_{I^\circ}$ has a continuous extension to $I$ which defines a smooth function on $I$, similarly for $u|_{E^\circ}$.
    \end{enumerate}
    We moreover write \(\qms_{\ff} \coloneqq R_{\ff} \qms\) for the space of restrictions of elements of $\cX$ to $\ff$. For an element \(u\in\qms\), we write \(u_h\) for its restriction to the \(h\)-level set.
\end{definition}

Thus, elements of $\qms$ are smooth in the interior and exterior and match across the interface $B$, but the interior and exterior normal derivatives may differ.

Error terms arising in the quasimode construction will feature $\delta$-distributions at $\pa\Omega$. (For example, upon twice differentiating the extension by $0$ of a Dirichlet eigenfunction $u$ in $\rho$, one obtains $\delta(\rho)$ times the normal derivative of $u$.) More precisely, we denote by \(\delta_B\) the \(\delta\)-distribution on \(B\subset \tilde{M}\), i.e.,
\[
    \iprod{\delta_B, \varphi} = \int_0^1\int_{\pa\Omega} \varphi(0, y, h)\odif{y}\odif{h}\qquad
    \forall\varphi\in \CIc(\tilde{M}).
\]
This means that in coordinates \((\hat\rho, y, h)\) near \(\ff\) we have \(\delta_B(\hat\rho, y, h) =
\delta_0(\hat\rho)\). Carefully note that this differs from the \(\delta\)-distribution \(\delta_0(\rho)\) on a level set of \(h\) in view of \(h^{-1}\delta_0(\hat\rho)\). Thus, \(\delta_B\) is \emph{not} the lift to
\(\tilde{M}\) of the distribution \(\delta_{\pa\Omega\times[0,1)}\) on \(M\); rather, it is the lift of
\(h\delta_{\pa\Omega\times [0,1)}\).

\begin{definition}[Error space]
\label{DefESpace}
    We write \(\es\) for the space of distributions \(v\in \cD'(\tilde{M})\) (the dual space of $\CIc(\tilde{M})$) which can be written as a sum \(v=u_s + f\delta_B\) where \(u_s\) is a function with
    \(u_s|_{I^\circ}\in \cC^\infty(I)\) and \(u_s|_{E^\circ}\in \cC^\infty(E)\), and \(f\in
    \CI(B)\). We write $\cY_E$ for the subspace of $\cY$ consisting only of distributions with support in $E$.
\end{definition}

Thus, elements of $\es$ are smooth in the interior and exterior but need not be continuous across the interface, and in fact may feature a $\delta$-distributional singularity there.

 There is a restriction map, which we shall again call \(R_{\ff}\), that restricts elements of
 \(\es\) to \(\ff\). The image \(\es_{\ff} = R_{\ff}\es\) is then the set of
 distributions \(u + g\delta_0\) where \(u\) is a function with \(u|_{\pm \hat{\rho} > 0} \in
 \cC^\infty(\ff\setminus\set{\pm \hat{\rho}<0})\) (i.e., smoothness from the left and the right), \(g\in \CI(\pa\Omega)\), and \(\delta_0\)
 is the delta distribution on \(\ff\) at $\{\hat\rho=0\}\subset\ff=\ol{\R_{\hat{\rho}}}\times\pa\Omega$. We similarly write
 \[
   \es_{E,\ff} = R_\ff\es_E
 \]
 for the space of distributions which are sums of $\delta$ distributions at $\hat\rho=0$ and smooth functions on $E$, and which vanish on $I^\circ$.

In order to deal with weights, we introduce notation such as
\[
  \rho_\ff^k\cX := \{ \rho_\ff^k u\colon u\in\cX \},\quad
  h^j\cY := \{ h^j v\colon v\in\cY\},
\]
similarly for weights which are functions of $\rho_\Omega$ and $\rho_{\Omega^\complement}$; of particular importance for us will be weights of the form $\rho_\Omega^k\rho_{\Omega^\complement}^N$ for $k,N\in\N_0$. Note that multiplication by such weights maps $\cX\to\cX$ and $\cY\to\cY$.

We choose to define $\qms$ and $\es$ in this way so that their elements do not feature any $h$-weights at the boundary hypersurfaces of $\tilde M$; this makes the bookkeeping later on more transparent. The small price to pay is that $P$ does not map $\cX\to\cY$; instead:

\begin{lemma}[Mapping properties of $P$]
\label{LemmaPMap}
  We have $P\colon\qms\to\rho_\ff^{-2}\rho_{\Omega^\complement}^{-2}\es$. Moreover,
  \[
    R_{\ff}\bigl(h^2 P u\bigr) = \hat P(R_\ff u),\quad u\in\qms.
  \]
  More generally, $P\colon\rho_\Omega^a\rho_\ff^b\rho_{\Omega^\complement}^c\cX\to\rho_\Omega^a\rho_\ff^{b-2}\rho_{\Omega^\complement}^{c-2}\cY$, and $R_\ff(h^{2-b}P u)=\hat P(R_\ff(h^{-b}u))$.
\end{lemma}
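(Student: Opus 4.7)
The plan is to decompose $P=-\Delta+h^{-2}\chfun_{\Omega^\complement}$ and analyze its action on $u\in\qms$ in three regions of $\tilde M$: the interior $I^\circ$, the exterior $E^\circ$, and across the interface $B$. On $I^\circ$ and $E^\circ$ the restrictions of $u$ are smooth and $P$ is a smooth differential operator in each, so the regular part of $Pu$ is smooth on $I$ and on $E$ separately. The potential term $h^{-2}\chfun_{\Omega^\complement}u$ is supported on $E$ and smooth there; since $h=\rho_\ff\rho_{\Omega^\complement}$ up to a smooth positive factor near the corner $\ff\cap\tilde\Omega^\complement$ (with each factor bounded away from zero along the boundary hypersurface it does not define), the prefactor $h^{-2}$ contributes exactly the weight $\rho_\ff^{-2}\rho_{\Omega^\complement}^{-2}$. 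For the Laplacian at $\ff$, the expansion \eqref{EqSchrResolved} gives $h^2(-\Delta)=-\pa_{\hat\rho}^2+hL_h$ in coordinates $(\hat\rho,y,h)$ with $L_h$ a smooth-coefficient differential operator in $(\hat\rho,y)$ depending smoothly on $h$, so the regular part of $-\Delta u$ carries weight $h^{-2}\sim\rho_\ff^{-2}$ at the interior of $\ff$.

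Next I would account for the $\delta_B$ contribution. Since $u$ is continuous across $B$ but $\pa_\rho u$ may jump, distributionally $-\pa_\rho^2 u$ contains $-[\pa_\rho u]\delta_0(\rho)$, and using $\delta_0(\rho)=h^{-1}\delta_B$ this produces a term $-h^{-1}f\delta_B$ with $f=[\pa_\rho u]\in\CI(B)$. The prefactor $h^{-1}$ is bounded by $\rho_\ff^{-2}\rho_{\Omega^\complement}^{-2}$ on the support of $\delta_B$ (using $h\sim\rho_\ff\rho_{\Omega^\complement}$ near the corners and $h\sim\rho_\ff$ along the interior of $B$), so this piece lies in $\rho_\ff^{-2}\rho_{\Omega^\complement}^{-2}\es$ as well, yielding $Pu\in\rho_\ff^{-2}\rho_{\Omega^\complement}^{-2}\es$.

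The restriction identity $R_\ff(h^2 Pu)=\hat P(R_\ff u)$ follows directly from \eqref{EqSchrResolved}: the $hL_h$ term vanishes when restricted to $h=0$, and the remaining piece is exactly $\hat P=-\pa_{\hat\rho}^2+H(\hat\rho)$ applied to $R_\ff u$, with the $\delta_0(\hat\rho)$-jump contribution produced on each side matching up. The weighted restriction formula $R_\ff(h^{2-b}Pu)=\hat P(R_\ff(h^{-b}u))$ then follows from the observation that $P$ acts slicewise in $h$ (contains no $\pa_h$) and therefore commutes with multiplication by $h^{-b}$, so $h^{2-b}Pu=h^2 P(h^{-b}u)$; applying the unweighted identity to $h^{-b}u$ in place of $u$ gives the claim.

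Finally, the weighted mapping $P\colon\rho_\Omega^a\rho_\ff^b\rho_{\Omega^\complement}^c\qms\to\rho_\Omega^a\rho_\ff^{b-2}\rho_{\Omega^\complement}^{c-2}\es$ follows by writing $u=\rho_\Omega^a\rho_\ff^b\rho_{\Omega^\complement}^c v$ with $v\in\qms$ and expanding $Pu=\rho_\Omega^a\rho_\ff^b\rho_{\Omega^\complement}^c Pv+[-\Delta,\rho_\Omega^a\rho_\ff^b\rho_{\Omega^\complement}^c]v$, noting that the commutator with the multiplication operator $h^{-2}\chfun_{\Omega^\complement}$ vanishes. The main term lies in the target by the unweighted case, and the commutator, involving one or two derivatives of the smooth weight $\rho_\Omega^a\rho_\ff^b\rho_{\Omega^\complement}^c$, produces subleading contributions that fit into $\rho_\Omega^a\rho_\ff^{b-2}\rho_{\Omega^\complement}^{c-2}\es$. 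The main technical obstacle is the bookkeeping of weights at the corners $\ff\cap\tilde\Omega$ and $\ff\cap\tilde\Omega^\complement$, where two defining functions vanish simultaneously; working in corner coordinates $(\rho_0,\rho_\ff,y)$ with $h=\rho_0\rho_\ff$ and using that $P$ carries no $\pa_h$ derivatives confirms that no weight is lost at $\tilde\Omega$ or at $\tilde\Omega^\complement$ beyond what is already accounted for by the $h^{-2}$ potential and the Laplacian's $\rho_\ff^{-2}$ singularity at $\ff$.
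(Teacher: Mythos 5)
Your argument is correct and takes essentially the same route as the paper: region-by-region analysis using the coordinate charts, the expansion \eqref{EqSchrResolved} near $\ff^\circ$, the jump in $\pa_\rho u$ as the source of the $\delta_B$ term (converted via $\delta_0(\rho)=h^{-1}\delta_B$), the corner coordinates $(\rho,\hat h=h/\rho,y)$, and the key observation that $P$ has no $\pa_h$ so conjugation by powers of $h$ is free. The one genuine organizational difference is in the weighted statement. The paper divides by $h^b$ to reduce immediately to $b=0$, leaving only the weights $\rho_\Omega^a\rho_{\Omega^\complement}^c$ to track, and then verifies the mapping property by direct coordinate computations (e.g.\ $h\pa_\rho=\hat h(\rho\pa_\rho-\hat h\pa_{\hat h})$ near $\ff\cap\wt\Omega^\complement$). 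You instead peel off the full weight $W=\rho_\Omega^a\rho_\ff^b\rho_{\Omega^\complement}^c$ and compute the commutator $[-\Delta,W]$. This works — one can check that near each corner the commutator terms come with an extra factor of $\hat h$ or $\hat h^2$, hence land in the target space — but it requires you to carry all three weight exponents through the corner bookkeeping simultaneously, whereas the paper's reduction removes $b$ once and for all. You should also state the unweighted conclusion in the interior a bit more carefully: $Pu|_{I^\circ}$ is \emph{not} smooth on $I$ (it blows up like $\rho_\ff^{-2}$ at the corner $\ff\cap\tilde\Omega$ because $\pa_\rho=-\pa_{\rho_\ff}+(\rho_0/\rho_\ff)\pa_{\rho_0}$ in corner coordinates); the claim is that $\rho_\ff^2\rho_{\Omega^\complement}^2\,Pu|_{I^\circ}$ is smooth on $I$, which you implicitly use later but state imprecisely at the start.
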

\begin{proof}
  We directly consider the weighted version. By dividing by $h^b$ and replacing $(a,c)$ by $(a-b,c-b)$, we can reduce to the case $b=0$.

  For $\rho<-\delta$, resp.\ $\rho>\delta$ (where we can take $\rho_\ff=1$ and $\rho_\Omega=h$, $\rho_{\Omega^\complement}=1$, resp.\ $\rho_\Omega=1$, $\rho_{\Omega^\complement}=h$), the operator $P$, resp.\ $h^2 P$ has smooth coefficients as a differential operator on $\R^d\times[0,1)$. It remains to study $P$ near $\ff$. Near $\ff^\circ$ (where we can take $\rho_\Omega=\rho_{\Omega^\complement}=1$), we pass to the coordinates $\hat\rho=\rho/h\in\R$, $y\in\pa\Omega$, $h\in[0,1)$ in which $h^2 P$ was computed in~\eqref{EqSchrResolved}. The action of $\pa_{\hat\rho}^2$ on continuous functions that are smooth down to $\hat\rho=0$ from the left and right produces a multiple of $\delta(\hat\rho)$. The remaining terms of $h^2 P$, when acting on $u$, produce continuous functions that are smooth from the left and right.

  It remains to consider a neighborhood of the boundary of $\ff$. We only show the computations near the corner $\ff\cap\wt\Omega^\complement$ where we use the coordinates $\rho\geq 0$, $y\in\pa\Omega$, $\hat h=h/\rho\geq 0$, and we can take $\rho_\ff=\rho$, $\rho_{\Omega^\complement}=\hat h$. Then $\rho_\ff^2\rho_{\Omega^\complement}^2=h^2$, and it remains to note that $h\pa_\rho$ (in $h,\rho$ coordinates) equals $\hat h(\rho\pa_\rho-\hat h\pa_{\hat h})$ (in $\hat h,\rho$ coordinates), and thus $h^2\pa_\rho^2$ maps the space $\rho_{\Omega^\complement}^c\CI([0,1)_\rho\times\pa\Omega\times[0,1)_{\hat h})$ into itself; similarly for all other terms of $h^2 P$.
\end{proof}

Recalling the extension map~\eqref{EqExtMap}, we also note:

\begin{lemma}[Behavior under \(F_\ff^\chi\)]\label{LemmaExtSpaces}
    Let \(\cS\in \set{\qms, \es,\es_E}\). If \(f\in \cS_\ff\), then \(F_\ff^\chi f\in \cS\). The same statements hold for weighted spaces $\rho_\Omega^a\rho_{\Omega^\complement}^b\cS$.
\end{lemma}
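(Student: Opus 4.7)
My plan is to verify the lemma by a direct check in each of the three coordinate charts on \(\tilde M\) covering a neighborhood of \(\supp(F_\ff^\chi f)\): the interior chart \((\hat\rho,y,h)\) near \(\ff^\circ\), and the two corner charts \((\rho_0,\rho_\ff,y)\) near \(\ff\cap\widetilde\Omega\) (with \(\rho=-\rho_\ff\), \(h=\rho_0\rho_\ff\), so \(\hat\rho=-1/\rho_0\)) and near \(\ff\cap\widetilde\Omega^\complement\) (analogous with \(\rho=+\rho_\ff\)). Since \(\supp\chi\subset(-\delta,\delta)\), no other charts are needed.

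First I would dispatch the support condition: one has \(\supp(F_\ff^\chi f)\subset\{|\rho|<\delta\}\subset\beta^*(N_\delta\pa\Omega\times[0,1))\), which gives the required support for \(\qms\); and if in addition \(f\in\es_{E,\ff}\) is supported in \(\{\hat\rho\geq 0\}\), then the definition forces \(F_\ff^\chi f\) to be supported in \(\{\rho\geq 0\}\cap\{|\rho|<\delta\}\subset E\).

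Next, the regularity check. In the interior chart, the definition~\eqref{EqExtMap} reads \(F_\ff^\chi f(\hat\rho,y,h)=f(\hat\rho,y)\chi(h\hat\rho)\): this is smooth in \(h\), inherits the smoothness/continuity of \(f\), and restricts at \(h=0\) to \(f\). In each corner chart, \(F_\ff^\chi f(\rho_0,\rho_\ff,y)=f(\mp 1/\rho_0,y)\chi(\mp\rho_\ff)\); smoothness up to \(\rho_0=0\) will follow from the fact that \(f\in\cS_\ff=R_\ff\cS\) is by definition the restriction of an element of \(\cS\), whose smoothness on \(I\) (resp.\ \(E\)) translates in the corner coordinates precisely to smoothness in \(\rho_0\). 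This handles \(\cS=\qms\). For \(\cS\in\{\es,\es_E\}\), I would decompose \(f=u+g\,\delta_0(\hat\rho)\) with \(u\) the function part and \(g\in\CI(\pa\Omega)\), apply the argument above to \(u\), and use \(\chi(0)=1\) together with the identification of \(\delta_B\) with \(\delta_0(\hat\rho)\) in the interior chart to obtain \(F_\ff^\chi(g\,\delta_0(\hat\rho))=g(y)\chi(\rho)\,\delta_B\), whose smooth coefficient \(g(y)\chi(\rho)\) restricts to \(B\) as an element of \(\CI(B)\).

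The weighted case will reduce to the unweighted one using~\eqref{EqIDefFn}: near \(\supp(F_\ff^\chi f)\), the defining functions \(\rho_\Omega\) and \(\rho_{\Omega^\complement}\) depend only on \(\rho\) and \(h\), and in the corner chart each of them factors as \(\rho_0\) times a smooth positive function whose restriction to \(\rho_\ff=0\) agrees with the corresponding factorization of \(\rho_\Omega|_\ff\) on \(\ff\). So after dividing a weighted \(f\) by the \(\ff\)-weight and invoking the unweighted result, the ratio of ambient weight to \(\ff\)-weight is a smooth positive factor and is absorbed into \(\cS\). The main (and really only) obstacle is bookkeeping: correctly matching the smoothness of \(f\) at the corners of \(\ff=\ol\R\times\pa\Omega\) (encoded via the radial compactification at \(\hat\rho=\pm\infty\)) with smoothness of elements of \(\cS\) in the corner coordinates on \(\tilde M\). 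No conceptual difficulty arises, since \(F_\ff^\chi\) is essentially extension by invariance in the \(h\)-direction, cut off by \(\chi\).
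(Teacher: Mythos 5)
Your proof is correct and takes the same direct approach as the paper, which simply notes that the result follows from the definition of \(F_\ff^\chi\); you are spelling out the chart-by-chart verification that the paper leaves implicit.
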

\begin{proof}
  This follows directly from the definition of $F_\ff^\chi$.
\end{proof}

\subsection{Solving the model problem on \texorpdfstring{$\ff$}{the front face}}
\label{SsModelff}

Recall that the model operator on \(\ff\) is given by $\hat{P} \coloneqq -\partial_{\hat{\rho}}^2 + H(\hat{\rho})$. We first note that the function
\[
  G(\hat\rho) := \begin{cases} 1, & \hat\rho\leq 0, \\ e^{-\hat\rho}, & \hat\rho>0, \end{cases}
\]
satisfies $\hat P G=\delta(\hat\rho)$. (Carefully note, however, that $\hat P$ does not have constant coefficients, so convolution with $G$ does not yield an inverse of $\hat P$.) The following result solves the model problem for the kinds of right-hand sides that will arise in the quasimode construction:

\begin{lemma}[Solution of the model problem on $\ff$]
\label{LemmaModelSol}
  Let $\hat f\in\rho_{\Omega^\complement}^\infty\cY_{E,\ff}$. Then there exists a solution $\hat u\in\rho_{\Omega^\complement}^\infty\cX_\ff$ of $\hat P\hat u=\hat f$, and $\hat u(\hat\rho,y)$ is constant for $\hat\rho<0$ for every fixed $y\in\pa\Omega$. If $\hat f(\hat\rho,y)=\hat g(y)\delta(\hat\rho)$, then $\hat u(\hat\rho,y)=\hat g(y)G(\hat\rho)$.
\end{lemma}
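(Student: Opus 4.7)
The plan is to split $\hat f$ into its smooth and $\delta$-distributional parts and solve each by elementary ODE in $\hat\rho$ for each fixed $y$. By the definition of $\cY_{E,\ff}$, we may write
\[
  \hat f(\hat\rho, y) = \hat f_s(\hat\rho, y) + \hat g(y)\,\delta(\hat\rho),
\]
where $\hat g \in \CI(\pa\Omega)$ and $\hat f_s$ is smooth, supported in $\hat\rho \ge 0$, and rapidly decaying as $\hat\rho \to +\infty$ (the content of the weight $\rho_{\Omega^\complement}^\infty$ on the exterior side of $\ff$, since $\rho_{\Omega^\complement}\sim 1/\hat\rho$ there).

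For the $\delta$-part, I verify directly that $\hat P G = \delta(\hat\rho)$: on each open half-line $G$ solves the homogeneous equation (trivially on $\hat\rho<0$, and $(-\pa_{\hat\rho}^2+1)e^{-\hat\rho}=0$ on $\hat\rho>0$), while the jump $G'(0^+) - G'(0^-) = -1 - 0 = -1$ produces $-\pa_{\hat\rho}^2 G = \delta(\hat\rho)$. Hence $\hat g(y)G(\hat\rho)$ gives the final assertion of the lemma: it is continuous, constant for $\hat\rho<0$, smooth from each side, and exponentially (hence Schwartz) decaying at $+\infty$. This reduces the problem to solving $\hat P \hat u_s = \hat f_s$ with $\hat u_s$ constant on $\hat\rho<0$ and Schwartz at $+\infty$.

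For fixed $y$, the absence of any $\delta$-part in $\hat f_s$ forces both $\hat u_s$ and $\pa_{\hat\rho}\hat u_s$ to be continuous across $\hat\rho=0$, lest $-\pa_{\hat\rho}^2 \hat u_s$ pick up a spurious $\delta$ or $\delta'$; combined with constancy on $\hat\rho\le 0$, this requires $\pa_{\hat\rho}\hat u_s(0^+)=0$. On $\hat\rho>0$, the equation $-u''+u=\hat f_s(\cdot,y)$ has a unique rapidly decaying solution (e.g.\ convolve $\hat f_s$ with the Green's function $\tfrac12 e^{-|\hat\rho-\hat\rho'|}$ on $[0,\infty)$ and add the multiple of $e^{-\hat\rho}$ needed to enforce $u'(0)=0$); a short computation then gives the matching value
\[
  \hat u_s(\hat\rho,y) = \int_0^\infty e^{-\hat\rho'}\hat f_s(\hat\rho',y)\,d\hat\rho' \qquad \text{for } \hat\rho \le 0,
\]
which is continuous across $0$. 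Smooth dependence on $y$ is inherited from that of $\hat f_s$ and $\hat g$; smoothness from the left and right of $\hat\rho=0$ is manifest; and Schwartz decay of $\hat u_s$ together with all its derivatives as $\hat\rho\to+\infty$ follows from the Schwartz decay of $\hat f_s$ via the explicit formula, combined with the identity $\pa_{\hat\rho}^2 \hat u_s = \hat u_s - \hat f_s$ which inductively propagates rapid decay to all derivatives. There is no real obstacle here—the lemma is in essence a careful bookkeeping wrapper around an elementary Green's-function computation for the constant-coefficient ODEs on each side of the interface.
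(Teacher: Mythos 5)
Your proof is correct, and it follows a route that is recognizably different from the one in the paper, even though both are elementary ODE arguments. The paper extends the smooth part of $\hat f$ (restricted to $\hat\rho>0$) to a Schwartz function on all of $\R$, solves the constant-coefficient equation globally via the Fourier transform, restricts to $[0,\infty)$, multiplies by $\chfun_{(0,\infty)}$, and only at the end cancels the resulting $\delta$- and $\delta'$-errors at $\hat\rho=0$ (simultaneously absorbing the $\delta$-part of $\hat f$) by adding the explicit kernel elements $H(-\hat\rho)$ and $G$; the constancy of the solution in $\hat\rho<0$ drops out afterward. You instead peel off the $\delta$-part first via $\hat g G$, then observe that matching a constant solution on $\hat\rho<0$ against a solution of $(-\pa_{\hat\rho}^2+1)\hat u_s=\hat f_s$ on $\hat\rho>0$, without generating spurious interface distributions, amounts to a Neumann condition at $\hat\rho=0$, which you solve directly with the half-line Green's function. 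Both routes produce the same solution, and it is unique within the stated class: the only rapidly decaying kernel element $A e^{-\hat\rho}$ on $\hat\rho>0$ has derivative $-A$ at $0$, so the Neumann/constancy matching forces $A=0$. Your version is somewhat more explicit — you exhibit the matching constant $\int_0^\infty e^{-\hat\rho'}\hat f_s\,\odif{\hat\rho'}$ rather than burying it in the constants $a,b,c$ that the paper manipulates — whereas the paper's version avoids setting up a boundary-value Green's function at the cost of an extension step plus a final cleanup of interface $\delta$-distributions.
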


Since the kernel of $\hat P$ is spanned by $\chfun_{(-\infty,0)}+\cosh(\hat\rho)\chfun_{(0,\infty)}$ and $\hat\rho\chfun_{(-\infty,0)}+\sinh(\hat\rho)\chfun_{(0,\infty)}$ (times smooth functions of $y$), it is easy to see that $\hat u$ must be the \emph{unique} solution of $\hat P\hat u=\hat f$ that remains bounded as $|\hat\rho|\to\infty$.

\begin{proof}[Proof of Lemma~\usref{LemmaModelSol}]
  We drop the $y$-dependence from the notation. The strategy is to find a particular solution of $\hat P\hat u_+=\hat f$ for $\hat\rho>0$ and then add suitable kernel elements on both sides of $\hat\rho=0$ to produce the desired $\delta$-distributional contribution at $\hat\rho=0$. Write thus $\hat f(\hat\rho)=\hat v(\hat\rho)+c\delta_0(\hat\rho)$ where $\hat v|_{\hat\rho>0}\in\CI([0,\infty])$ and $\hat v|_{\hat\rho<0}=0$.

  Extend $\hat v|_{\hat\rho>0}$ to a Schwartz function $\hat v'\in\sS(\R_{\hat\rho})$. Then we can solve $(-\pa_{\hat\rho}^2+1)\hat u_+'=\hat v'$ for $\hat u_+'\in\sS(\R_{\hat\rho})$ using the (inverse) Fourier transform $\cF$ via $\hat u_+'=\cF^{-1}(\xi^2+1)^{-1}\cF\hat v'$. Therefore, $\hat u_+:=\hat u_+'|_{[0,\infty)}$ is smooth and rapidly decaying as $\hat\rho\to\infty$, and hence defines an element $\hat u_+\in(1+\hat\rho)^{-\infty}\CI([0,\infty]_{\hat\rho})=\rho_{\Omega^\complement}^\infty\CI([0,\infty]_{\hat\rho})$.

  Let now $\hat u':=\chfun_{(0,\infty)}\hat u_+\in\rho_\Omega\rho_{\Omega^\complement}^\infty\cX_\ff$. Since this is smooth from the left and from the right at $0$, we have
  \[
    \hat P\hat u' = \hat v + a\delta(\hat\rho) + b\delta'(\hat\rho)
  \]
  for some constants $a,b\in\R$. Now $\hat P H(-\hat\rho)=\delta'(\hat\rho)$, so
  \[
    \hat P(\hat u'-b H(-\hat\rho)) = \hat v + a\delta(\hat\rho) = \hat f + (a-c)\delta(\hat\rho).
  \]
  Finally, since $\hat P G=\delta(\hat\rho)$, we find
  \[
    \hat P\bigl( \hat u'-b H(-\hat\rho) - (a-c)G \bigr) = \hat f.
  \]
  The sought-after solution is thus $\hat u:=\hat u'-b H(-\hat\rho) - (a-c)G$. (Note here that $H(-\hat\rho),G\in\rho_{\Omega^\complement}^\infty\cX_\ff$ indeed, and they are constant for $\hat\rho<0$.)
\end{proof}

We next prove a result that controls the extension of $\hat u$ to $\tilde M$. Let $\cE$ denote the extension by $0$, mapping bounded functions on $\Omega$ to bounded functions on $\R^d$.

\begin{lemma}[Inversion at $\ff$]
\label{LemmaExt}
  Let $\hat u\in\rho_{\Omega^\complement}^\infty\cX_\ff$ be such that $\hat u(\hat\rho,y)$ is constant for $\hat\rho<0$ for each $y\in\pa\Omega$. Then
  \[
    P(F_\ff^\chi\hat u) - h^{-2}F_\ff^\chi(\hat P\hat u) \in \beta^* \cE\bigl(\CI(\bar\Omega\times[0,1))\bigr) + \rho_\ff^{-1}\rho_{\Omega^\complement}^\infty\cY_E.
  \]
\end{lemma}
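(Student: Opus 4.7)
The plan is to compute $h^2 P(F_\ff^\chi\hat u)$ explicitly in the normal coordinates $(\rho,y,h)$ near $\ff$ using~\eqref{EqSchrResolved}. Setting $w:=F_\ff^\chi\hat u=\hat u(\rho/h,y)\chi(\rho)$, the product and chain rules give
\[
h^2 Pw=\chi(\rho)(\hat P\hat u)(\rho/h,y)+R,
\]
where the leading term matches the continuous part of $F_\ff^\chi(\hat P\hat u)$ and $R$ collects six explicit correction terms: two from derivatives of $\chi$ (the $\chi'$- and $\chi''$-terms), two from the first-order part $-h\,a\,\pa_{\hat\rho}$ of $h^2 P$, and two from its tangential part $-h^2 g^{jk}\pa_{y^j}\pa_{y^k}-h^2 b^l\pa_{y^l}$. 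Dividing by $h^2$ yields an explicit formula for the error $E:=P(F_\ff^\chi\hat u)-h^{-2}F_\ff^\chi(\hat P\hat u)$ as a finite sum of products (smooth coefficient)$\times$(derivative of $\hat u$ evaluated at $(\rho/h,y)$), possibly multiplied by $h^{-1}$. A first consistency check: the only potential $\delta_B$-contributions come from the distributional jump of $\pa_\rho^2 w$ at $\rho=0$ and from the $\delta_0(\hat\rho)$-part of $\hat P\hat u$; both are governed by the common jump $(\pa_{\hat\rho}\hat u)(0^+,y)$ (using $\pa_{\hat\rho}\hat u(0^-,y)=0$ from the hypothesis), and after accounting for the identity $F_\ff^\chi(g(y)\delta_0(\hat\rho))=h\,g(y)\delta_B$ they cancel exactly. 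Hence $E$ is genuinely a function on $\tilde M\setminus B$, and I can analyze it region by region.

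On $\rho<0$ the hypothesis $\hat u(\hat\rho,y)=u_0(y)$ for $\hat\rho<0$ forces $\pa_{\hat\rho}\hat u(\rho/h,y)=0$, so both $h^{-1}$-singular summands in $E$ vanish identically. The surviving terms ($\chi''\hat u$, $a\chi'\hat u$, and $\chi\cdot[g^{jk}\pa_{y^j}\pa_{y^k}\hat u+b^l\pa_{y^l}\hat u]$) involve only $u_0(y)$, its $y$-derivatives, and smooth $(\rho,y)$-coefficients, and are therefore smooth, $h$-independent functions on $N_\delta\pa\Omega\cap\bar\Omega$ that extend smoothly up to $\pa\Omega$ from the inside and vanish for $\rho\leq-\delta$. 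After extension by zero via $\cE$ to $\R^d$ and lifting by $\beta^*$, they assemble into an element of $\beta^*\cE(\CI(\bar\Omega\times[0,1)))$ (the allowed jump across $B$ absorbs the nonvanishing boundary values at $\pa\Omega$).

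On $\rho>0$ all remaining summands of $E$ are supported in $E$. The key geometric input is that $h=\rho_\ff\rho_{\Omega^\complement}$ on $E$ up to a smooth positive factor, as read off from the coordinate charts of~\S\ref{SsBlowup} (near the corner $\ff\cap\wt\Omega^\complement$ one uses $(\rho,y,\hat h=h/\rho)$, in which $\rho_\ff\sim\rho$ and $\rho_{\Omega^\complement}\sim\hat h$). Since $\hat u\in\rho_{\Omega^\complement}^\infty\cX_\ff$, both $\hat u$ and its $\hat\rho$- and $y$-derivatives evaluated at $(\rho/h,y)$ lie in $\rho_{\Omega^\complement}^\infty\cY_E$ on $E$. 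Hence the lone $h^{-1}$-singular term $-(a/h)\chi(\rho)(\pa_{\hat\rho}\hat u)(\rho/h,y)$ belongs to $\rho_\ff^{-1}\rho_{\Omega^\complement}^{-1}\cdot\rho_{\Omega^\complement}^\infty\cY_E=\rho_\ff^{-1}\rho_{\Omega^\complement}^\infty\cY_E$, while the other summands carry no $h^{-1}$ factor and sit in $\rho_{\Omega^\complement}^\infty\cY_E$. The $\chi'$- and $\chi''$-terms in this region are moreover supported in $\rho\in[\delta/2,\delta]$, where $\hat\rho=\rho/h\geq\delta/(2h)$, so the rapid $\hat\rho$-decay of $\hat u$ makes them $\cO(h^\infty)$. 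Summing produces the second summand of the lemma. The principal bookkeeping obstacle throughout is precisely the $\delta_B$-cancellation and the conversion of $h^{-1}$ into $\rho_\ff^{-1}\rho_{\Omega^\complement}^{-1}$ near the corners of $\tilde M$; both are dictated by the geometric structure encoded in the blow-up.
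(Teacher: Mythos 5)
Your proof is correct, but it takes a more explicit route than the paper's. The paper argues structurally: it writes $P(F_\ff^\chi\hat u)=[P,\chi]\hat u+\chi P\hat u$, observes that $[P,\chi]\hat u$ is a smooth function on $M$ supported away from $\pa\Omega$ (hence splits directly into the two target spaces), and places $\chi P\hat u$ in $\rho_\ff^{-2}\rho_{\Omega^\complement}^\infty\cY_E$ via the abstract mapping properties of Lemma~\ref{LemmaPMap}; the extra power of $\rho_\ff$ is then gained by noting $R_\ff(h^2\chi P\hat u)=\hat P\hat u$, so the subtraction of $h^{-2}F_\ff^\chi(\hat P\hat u)$ kills the leading behavior at $\ff$. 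You instead expand $h^2 P$ termwise via~\eqref{EqSchrResolved}, isolate the six correction terms, and handle each region directly. The termwise route buys transparency: it makes the $\delta_B$-cancellation visible and, notably, it exhibits the nonvanishing interior contribution $-\chi(g^{jk}(u_0)_{y^j y^k}+b^l(u_0)_{y^l})-u_0\chi''-a u_0\chi'$ (the tangential part acting on the $\hat\rho$-constant but generally $y$-dependent value $u_0(y)=\hat u|_{\hat\rho<0}$), whereas the paper's assertion that "$\chi P\hat u$ is supported in $E$" glosses over exactly this piece, which your argument correctly places in $\beta^*\cE(\CI(\bar\Omega\times[0,1)))$. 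The structural route is in turn shorter and reuses Lemmas~\ref{LemmaPMap} and~\ref{LemmaExtSpaces} rather than recomputing. One small normalization slip in your consistency check: with the paper's convention $\delta_B=\delta_0(\hat\rho)$ (so $\delta_B=h\,\delta_{\pa\Omega\times[0,1)}$ fiberwise), one has $F_\ff^\chi\bigl(g(y)\delta_0(\hat\rho)\bigr)=g(y)\delta_B$, not $h\,g(y)\delta_B$; the claimed cancellation holds regardless, since on each $h$-slice both $\delta$-contributions equal $-h^{-1}(\pa_{\hat\rho}\hat u)(0^+,y)\,\delta_{\pa\Omega}$.
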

\begin{proof}
  Since the function $(h,\rho,y)\mapsto\hat u(\rho/h,y)$ is smooth on
  $N_\delta\pa\Omega\times[0,1)_h$, we have $[P,\chi]\hat u\in\beta^*\CI(\R^d\times[0,1))$ (and this vanishes near $\pa\Omega$), which we can write as a sum of a smooth function on $\bar\Omega\times[0,1)$ (vanishing outside $\bar\Omega$) and a smooth function on $\ol{\Omega^\complement}\times[0,1)$ (vanishing outside $\ol{\Omega^\complement}$ and vanishing to infinite order at $h=0$); the first summand lies in $\beta^*\cE\CI(\bar\Omega\times[0,1))$, the second a fortiori in $\rho_\ff^{-1}\rho_{\Omega^\complement}^\infty\es_E$.

  Next, since $P$ annihilates constants on $I$, $\chi P\hat u$ is supported in $E$, and thus $\chi
  P\hat u\in\rho_\ff^{-2}\rho_{\Omega^\complement}^\infty\cY_E$ by Lemma~\ref{LemmaPMap}. By the
  same lemma, the restriction of $h^2\chi P\hat u$ to $\ff$ is given by $\hat P\hat u$. In view of
  $\hat P\hat u\in\rho_{\Omega^\complement}^\infty\cY_{E,\ff}$, we conclude that we not only have
  $P(F_\ff^\chi\hat u)-h^{-2}F_\ff^\chi(\hat P\hat
  u)\in\beta^*\cE\CI(\bar\Omega\times[0,1))+\rho_\ff^{-2}\rho_{\Omega^\complement}^\infty\cY_E$, but
  in fact the power of $\rho_\ff$ of the second summand is improved to $-1$. (We use here that
  $F_\ff^\chi\colon\rho_{\Omega^\complement}^\infty\cY_{E,\ff}\to\rho_{\Omega^\complement}^\infty\cY_E$,
  see Lemma~\ref{LemmaExtSpaces}.)
\end{proof}

\subsection{Iterative construction of quasimodes for simple eigenvalues}

The iteration procedure will proceed in two steps for each order of
\(h\). Suppose we have generated a quasimode of order \(h^k\). First, we eliminate the order \(h^k\)
error on \(\widetilde\Omega\) by a standard perturbation argument based on inverting $-\dirlap{\Omega}-\lambda_0$ (modulo the cokernel) where we write $\lambda_0$ for the simple Dirichlet eigenvalue we are starting with; getting rid of the cokernel yields the order $h^k$ correction to the eigenvalue. Adding to the current quasimode the extension by $0$ of this correction term on $\Omega$, the remaining error after this first step is singular at \(B\). We then use the inverse of the model operator $\hat P$ from the previous section to eliminate this error (which is of order $h^{k-1}$) at $\ff$.

The elimination of error terms at $\widetilde\Omega$ will be based on:

\begin{lemma}[Grushin problem on \(\Omega\)]\label{LemGrushinProblemQM}
    Let \(\lambda_0\) be a simple eigenvalue of \(-\dirlap{\Omega}\) with corresponding \(L^2\)-normalized
    eigenfunction \(u_0\in \CI_0(\bar\Omega)\). Then the operator
    \[
        \begin{pmatrix}
            -\dirlap{\Omega} - \lambda_0 & u_0\\
            \iprod{\cdot, u_0} & 0
        \end{pmatrix}\colon \CI_0(\bar\Omega)\oplus \C \to \CI(\bar\Omega)\oplus \C
    \]
    is bijective. In other words, for all \(f\in \CI(\bar\Omega)\) and \(c\in \C\) the system of equations
    \[
      \left\{
      \begin{alignedat}{1}
            (-\dirlap{\Omega} - \lambda_0)u + \gamma u_0 &= f, \\
            \iprod{u, u_0} &= c
      \end{alignedat}
      \right.
    \]
    has a unique solution \((u, \gamma)\in \CI_0(\bar\Omega)\oplus \C\).
\end{lemma}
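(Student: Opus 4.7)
The plan is to treat this as a standard Grushin/Lyapunov--Schmidt problem: the operator $-\dirlap{\Omega}-\lambda_0$ on $\CI_0(\bar\Omega)\to\CI(\bar\Omega)$ fails to be invertible precisely because of the one-dimensional kernel $\C u_0$ and (by self-adjointness) one-dimensional cokernel spanned by $u_0$, and the extra row and column in the matrix operator are designed exactly to compensate for this. The key ingredients I will invoke are: simplicity of $\lambda_0$, the self-adjointness of $-\dirlap{\Omega}$ (so kernel $=$ orthogonal complement of range), the Fredholm property of $-\dirlap{\Omega}-\lambda_0\colon H^2(\Omega)\cap H^1_0(\Omega)\to L^2(\Omega)$, and elliptic regularity up to the smooth boundary $\pa\Omega$.

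For injectivity, suppose $(-\dirlap{\Omega}-\lambda_0)u+\gamma u_0=0$ and $\iprod{u,u_0}=0$. Pairing the first equation with $u_0$ in $L^2(\Omega)$ and using self-adjointness of $-\dirlap{\Omega}$ (with Dirichlet conditions, valid since $u\in\CI_0(\bar\Omega)$),
\[
    0=\iprod{(-\dirlap{\Omega}-\lambda_0)u,u_0}+\gamma\iprod{u_0,u_0}=\iprod{u,(-\dirlap{\Omega}-\lambda_0)u_0}+\gamma=\gamma.
\]
Hence $(-\dirlap{\Omega}-\lambda_0)u=0$, so simplicity of $\lambda_0$ forces $u=c u_0$ for some $c\in\C$, and then $0=\iprod{u,u_0}=c$, so $u=0$.

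For surjectivity, given $f\in\CI(\bar\Omega)$ and $c\in\C$, set $\gamma\coloneqq\iprod{f,u_0}$. Then $f-\gamma u_0\perp u_0$ in $L^2(\Omega)$, so it lies in the range of $-\dirlap{\Omega}-\lambda_0\colon H^2(\Omega)\cap H^1_0(\Omega)\to L^2(\Omega)$ by the Fredholm alternative. Pick any $u'$ in this domain with $(-\dirlap{\Omega}-\lambda_0)u'=f-\gamma u_0$; elliptic regularity up to the boundary (with smooth data $f-\gamma u_0\in\CI(\bar\Omega)$ and smooth $\pa\Omega$) upgrades $u'$ to an element of $\CI_0(\bar\Omega)$. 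Finally set $u\coloneqq u'+(c-\iprod{u',u_0})u_0\in\CI_0(\bar\Omega)$; then $(-\dirlap{\Omega}-\lambda_0)u+\gamma u_0=f$ is unchanged (since $u_0$ lies in the kernel) and $\iprod{u,u_0}=c$ by construction.

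The only substantive point beyond bookkeeping is the elliptic regularity step, but this is standard for smooth bounded domains and the Dirichlet Laplacian; once one knows the solution lies in every $H^s(\Omega)$ with vanishing trace, Sobolev embedding delivers $\CI_0(\bar\Omega)$.
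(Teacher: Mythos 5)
Your proof is correct and follows essentially the same route as the paper's: decompose via the Fredholm alternative (self-adjointness gives cokernel spanned by $u_0$), solve the reduced problem, invoke elliptic regularity up to the smooth boundary for $\CI_0(\bar\Omega)$-membership, and fix the constraint $\iprod{u,u_0}=c$ by adding a suitable multiple of $u_0$. You are slightly more explicit than the paper in verifying injectivity separately and in noting that the preimage $u'$ must be corrected by a multiple of $u_0$ to satisfy the second equation (the paper tacitly chooses $g\perp u_0$), but the underlying argument is identical.
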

\begin{proof}
    Fix \(f\in \CI(\bar\Omega)\) and \(c\in\C\).
    By the usual elliptic theory, there exists an orthogonal decomposition (with respect to the
    \(L^2\)-inner product) of \(\CI(\bar\Omega)\) as
    \[
        \CI(\bar\Omega) = \ker_{\CI_0(\bar\Omega)}\left(-\dirlap{\Omega} - \lambda_0\right) \oplus
        \ran_{\CI_0(\bar\Omega)}\left(-\dirlap{\Omega} - \lambda_0\right).
    \]
    Hence, we can uniquely write
    \[
        f = b u_0 + (-\dirlap{\Omega} - \lambda_0) g
    \]
    for some \(g\in \CI_0(\bar\Omega)\) and \(b\in \C\). Put \(u = g + cu_0\) and \(\gamma = b\).
\end{proof}

\begin{prop}[Existence and regularity of quasimodes]\label{PropQM}
    Let \(\lambda_0\) be a simple eigenvalue of \(-\dirlap{\Omega}\) with corresponding
    \(L^2\)-normalized eigenfunction \(u_0\in \CI_0(\bar\Omega)\). Then there exist sequences
    \(\tilde{u}_k\in \rho_{\Omega^\complement}^\infty\qms\) and \(\tilde\lambda_k\in \CI([0,1))\), $k\in\N_0$, with the following properties:
    \begin{enumerate}
        \item\label{ItQM1} $\tilde\lambda_0(0)=\lambda_0$, $\tilde u_0|_{\wt\Omega}=u_0$;
        \item\label{ItQM2} $(P - \tilde\lambda_k)\tilde{u}_k \in h^{k+1} \beta^*\cE\bigl(\CI(\bar\Omega\times[0,1))\bigr) + \rho_\ff^k\rho_{\Omega^\complement}^\infty\cY_E$ where $\cE$ denotes extension by $0$;
        \item\label{ItQM3} for $k\geq 1$, we have $\tilde\lambda_k-\tilde\lambda_{k-1}\in h^k\CI([0,1)_h)$ and $\tilde u_k-\tilde u_{k-1}\in \rho_\Omega^k\rho_\ff^{k+1}\rho_{\Omega^\complement}^\infty\cX$.
    \end{enumerate}
\end{prop}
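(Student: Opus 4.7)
I proceed by induction on $k$. Each inductive step splits into two sub-steps: a \emph{Grushin correction} at $\wt\Omega$ using Lemma~\ref{LemGrushinProblemQM}, and a \emph{model correction} at $\ff$ using Lemmas~\ref{LemmaModelSol} and~\ref{LemmaExt}.

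\textbf{Base case.} Set $\tilde\lambda_0 := \lambda_0$. Using $u_0|_{\pa\Omega}=0$ and the jump of the normal derivative, a distributional computation in normal coordinates gives $(P-\lambda_0)(\beta^*\cE u_0) = (\pa_\nu u_0)\delta_0(\rho)$, which on $\tilde M$ equals $h^{-1}(\pa_\nu u_0)\delta_B$ and is singular at $\ff$. I apply Lemma~\ref{LemmaModelSol} with $\hat f=(\pa_\nu u_0)\delta_0(\hat\rho)$ to obtain $\hat u_0=(\pa_\nu u_0)G(\hat\rho)$, and set $\tilde u_0 := \beta^*\cE u_0 - h F_\ff^\chi\hat u_0 \in \rho_{\Omega^\complement}^\infty\qms$. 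Using Lemma~\ref{LemmaExt} together with the distributional identity $F_\ff^\chi\delta_0(\hat\rho) = h\delta_0(\rho)$, the leading singularities cancel; invoking $h\sim\rho_\Omega\rho_\ff\rho_{\Omega^\complement}$ to absorb the $\rho_\ff^{-1}$ remainder into $\rho_\Omega\rho_{\Omega^\complement}$ yields
\[
  (P-\lambda_0)\tilde u_0 \in h\beta^*\cE\CI(\bar\Omega\times[0,1)) + \rho_{\Omega^\complement}^\infty\es_E.
\]
Property~\eqref{ItQM1} holds because the correction has a factor of $h$, hence vanishes at $\wt\Omega$.

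\textbf{Inductive step.} Given $(\tilde u_k, \tilde\lambda_k)$, write $(P-\tilde\lambda_k)\tilde u_k = h^{k+1}f_I + g_E$ and expand $f_I = f_I(\cdot, 0) + h\tilde f_I$. In sub-step~(A), Lemma~\ref{LemGrushinProblemQM} applied to $(f_I(\cdot, 0), 0)$ yields $(w, \gamma)\in\CI_0(\bar\Omega)\times\R$; I set $\tilde\lambda_{k+1} := \tilde\lambda_k + h^{k+1}\gamma$ and $\tilde u_{k+1}' := \tilde u_k - h^{k+1}\beta^*\cE w$. The identity $(P-\tilde\lambda_k)\beta^*\cE w = \beta^*\cE((-\Delta-\tilde\lambda_k)w) + (\pa_\nu w)\delta_0(\rho)$ (from the jump of the normal derivative of $\beta^*\cE w$), combined with the Grushin equation and $\tilde\lambda_k-\lambda_0\in h\CI$, reduces the $I$-error to $h^{k+2}\beta^*\cE\CI$ while introducing an $E$-contribution $-h^k(\pa_\nu w)\delta_B$; this combines with $g_E$ into $\tilde g_E = \rho_\ff^k g'$ with $g'\in\rho_{\Omega^\complement}^\infty\es_E$. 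In sub-step~(B), I set $\hat g := g'|_\ff\in\rho_{\Omega^\complement}^\infty\es_{E,\ff}$, invoke Lemma~\ref{LemmaModelSol} to obtain $\hat u\in\rho_{\Omega^\complement}^\infty\qms_\ff$ with $\hat P\hat u=\hat g$, and set $\tilde u_{k+1} := \tilde u_{k+1}' - h^{k+2}F_\ff^\chi\hat u$. By Lemma~\ref{LemmaExt}, $P(h^{k+2}F_\ff^\chi\hat u)$ equals $h^k F_\ff^\chi\hat g$ modulo a remainder in $h^{k+2}\beta^*\cE\CI + \rho_\ff^{k+1}\rho_{\Omega^\complement}^\infty\es_E$; since $g'-F_\ff^\chi\hat g$ vanishes at $\ff$, we have $\tilde g_E - h^k F_\ff^\chi\hat g\in\rho_\ff^{k+1}\rho_{\Omega^\complement}^\infty\es_E$, establishing~\eqref{ItQM2} for $k+1$.

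\textbf{Weights and main obstacle.} Property~\eqref{ItQM3} for $\tilde\lambda$ is immediate. For the eigenfunction, the model correction $h^{k+2}F_\ff^\chi\hat u$ trivially lies in $\rho_\Omega^{k+2}\rho_\ff^{k+2}\rho_{\Omega^\complement}^\infty\qms$; the Grushin correction $h^{k+1}\beta^*\cE w$ lies in $\rho_\Omega^{k+1}\rho_\ff^{k+2}\rho_{\Omega^\complement}^\infty\qms$, where the extra factor of $\rho_\ff$ beyond the naive $h^{k+1}\sim\rho_\Omega^{k+1}\rho_\ff^{k+1}\rho_{\Omega^\complement}^{k+1}$ comes from the vanishing $w|_{\pa\Omega}=0$. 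The most delicate bookkeeping is verifying that the new $\delta_B$ contribution from sub-step~(A) has infinite-order vanishing at $\wt\Omega^\complement$, as required to place it in $\rho_{\Omega^\complement}^\infty\es_E$ rather than a space with only finite $\rho_{\Omega^\complement}$-decay: this follows from the geometric observation that in $\tilde M$ the lifted boundary $B$ meets $\ff$ only along $\{\hat\rho=0\}$ and is therefore disjoint from both $\wt\Omega$ and $\wt\Omega^\complement$, so the support of $\delta_B$ is automatically bounded away from those faces.
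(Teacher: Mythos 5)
Your proof follows the paper's strategy essentially verbatim: an inductive two-part step, first a Grushin correction at $\wt\Omega$ via Lemma~\ref{LemGrushinProblemQM}, then a model correction at $\ff$ via Lemmas~\ref{LemmaModelSol} and~\ref{LemmaExt}, with exactly the same weight bookkeeping; the observation that the extra $\rho_\ff$ in~\eqref{ItQM3} comes from $w|_{\pa\Omega}=0$, and that $\delta_B$ is disjoint from $\wt\Omega^\complement$, matches the paper's discussion.

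There is one small but real slip in sub-step~(B). You write $\tilde g_E = \rho_\ff^k g'$, set $\hat g := g'|_\ff$, and then claim that since $g'-F_\ff^\chi\hat g$ vanishes at $\ff$, the new $E$-error $\tilde g_E - h^k F_\ff^\chi\hat g$ lies in $\rho_\ff^{k+1}\rho_{\Omega^\complement}^\infty\es_E$. But $\tilde g_E - h^k F_\ff^\chi\hat g = \rho_\ff^k g' - h^k F_\ff^\chi\hat g$ does \emph{not} factor as $h^k(g'-F_\ff^\chi\hat g)$: the functions $h$ and $\rho_\Omega\rho_\ff\rho_{\Omega^\complement}$ differ by a smooth positive, but in general nonconstant, factor $c$, and the leading coefficient of the difference at $\ff$ is $(1-c^k|_\ff)\hat g$, which need not vanish. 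Since your correction is $h^{k+2}F_\ff^\chi\hat u$, the decomposition of $\tilde g_E$ must use the \emph{same} factor $h^k$, i.e.\ $\hat g := R_\ff(h^{-k}\tilde g_E)$ (well-defined since $h^{-k}\tilde g_E\in\rho_{\Omega^\complement}^\infty\es_E$), so that $\tilde g_E - h^k F_\ff^\chi\hat g = h^k\bigl(h^{-k}\tilde g_E - F_\ff^\chi\hat g\bigr)$ visibly vanishes at $\ff$; this is precisely the paper's choice $\hat f = R_\ff(h r_{\rm e})$. With that one-line change, the rest of your bookkeeping goes through as written.
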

\begin{proof}
  We cannot directly work with the spaces $\cX$ and $\cY$ since they are too imprecise for the iterative procedure below.\footnote{For example, the first error in~\eqref{EqErr1st} lies in $\rho_\Omega^\infty\rho_\ff^{-1}\rho_{\Omega^\complement}^\infty\cY$ and can be solved away using an element of $h\cdot\rho_{\Omega^\complement}^\infty\cX$ via Lemma~\ref{LemmaModelSol}. Only using the mapping properties of Lemma~\ref{LemmaPMap}, the remaining error would be an element $f\in\rho_\Omega\rho_{\Omega^\complement}^\infty\cY$. Using this information, the leading order error at $\wt\Omega$, i.e., the restriction of $h^{-1}f$ to $\wt\Omega$, is only known to lie in $\rho_\ff^{-1}\CI(\bar\Omega)$, which is thus singular at $\pa\Omega$; solving this away requires the introduction of logarithmic terms. The upshot is that one can still construct $\cO(h^\infty)$-quasimodes when working exclusively with the simpler $\cX,\cY$ spaces, at the expense of having to allow for logarithmic terms in the approximate eigenvalues and eigenfunctions.} Instead, for the quasimode construction we shall work with the smaller space of functions on $\tilde M$ which on $I$ are pullbacks of smooth functions on $\bar\Omega\times[0,1)$ while on $E$ they are smooth on $\tilde M$ and vanish to infinite order at $\wt\Omega^\complement$.

  \pfstep{Initial step: $k=0$.} We first compute
  \begin{equation}
  \label{EqErr1st}
    (P-\lambda_0)(\beta^*(\cE u_0)) = (\partial_\nu u_0)\delta(\rho) = h^{-1}(\pa_\nu u_0)\delta_B.
  \end{equation}
  Indeed, the only nonzero contribution arises from $\pa_\rho^2$ in~\eqref{EqTApproxDelta}. We solve this away by appealing to Lemma~\ref{LemmaModelSol} with $\hat g=\pa_\nu u_0$. Let thus $\hat u(\hat\rho,y)=-\pa_\nu u_0(y)G(\hat\rho)$. We then have
  \begin{align*}
    &(P-\lambda_0)\Bigl(\beta^*(\cE u_0) + h F_\ff^\chi\hat u \Bigr) \\
    &\qquad = h^{-1}(\pa_\nu u_0)\delta(\hat\rho) - h\lambda_0 F_\ff^\chi\hat u + h\bigl( P(F_\ff^\chi\hat u)-h^{-2}F_\ff^\chi(\hat P\hat u)\bigr) + h^{-1}F_\ff^\chi(\hat P\hat u).
  \end{align*}
  Since $\hat P\hat u=-(\pa_\nu u_0)\delta(\hat\rho)$, the first and last term cancel. The third term lies in $h\beta^*\cE\CI(\bar\Omega\times[0,1))+\rho_{\Omega^\complement}^\infty\cY_E$ by Lemma~\ref{LemmaExt}. In the second term, note that $F_\ff^\chi\hat u|_I$ is an $h$-independent smooth function on $\bar\Omega$. The exterior part $F_\ff^\chi\hat u|_E$ lies in $\rho_{\Omega^\complement}^\infty\cY_E$. Upon setting
  \[
    \tilde u_0 := \beta^*(\cE u_0) + h F_\ff^\chi\hat u,
  \]
  we have therefore arranged property~\eqref{ItQM2}. Note that property~\eqref{ItQM1} is valid since $F_\ff^\chi\hat u$ is bounded near $\wt\Omega$, and hence $h F_\ff^\chi\hat u$ vanishes at $\wt\Omega$.

  \pfstep{Iteration step: $k\geq 1$. Part I: improvement at $\wt\Omega$.} The error terms we wish to solve away are the leading order terms at $\wt\Omega$ and $\ff$ of
  \[
    \tilde f_{k-1} := h^{-k}(P-\tilde\lambda_{k-1})\tilde u_{k-1} \in \beta^*\cE\bigl(\CI(\bar\Omega\times[0,1))\bigr) + \rho_\ff^{-1}\rho_{\Omega^\complement}^\infty\cY_E.
  \]
  We start with the ansatz
  \[
    \tilde u_{k,0} := \tilde u_{k-1} + h^k\beta^*\cE w,\quad
    \tilde\lambda_k := \tilde\lambda_{k-1} + h^k\mu,
  \]
  where we need to determine $w\in\CI_0(\bar\Omega)$ and $\mu\in\R$. We first work over $\Omega\times[0,1)_h$. For $h>0$, we compute
    \begin{equation}\label{EqOmegaError}
        \begin{split}
        h^{-k}(P_h - \tilde{\lambda}_k)(\tilde{u}_{k, 0})_h &= h^{-k}(-\laplace -
            \tilde\lambda_{k-1})(\tilde{u}_{k-1})_h + (-\laplace -
            \lambda_0)w - \mu u_0\\
            &\quad - \mu((\tilde{u}_{k-1})_h - u_0)- (\tilde\lambda_{k-1} - \lambda_0 + \mu h^k)w.
        \end{split}
    \end{equation}
    We would like the right-hand side to be of order $h$.

    Since \(\tilde\lambda_{k-1} - \lambda_0 = \cO(h)\), we have \((\tilde\lambda_{k-1} - \lambda_0 + \mu h^k)w\in h\CI(\Omega\times[0,1))\). Moreover, we have \(\tilde{u}_{k-1} - u_0\in h\CI\), so \(\mu (\tilde{u}_{k-1} - u_0)\in h\cC^\infty\) as well. Therefore, setting \(f\coloneqq \tilde f_{k-1}|_{\wt\Omega}\in\CI(\bar\Omega)\), the equation~\eqref{EqOmegaError} becomes
    \[
        \begin{split}
            h^{-k}(P - \tilde\lambda_k)\tilde{u}_{k, 0} &= f + (-\laplace - \lambda_0)w - \mu u_0 + h\CI(\Omega\times[0,1)).
        \end{split}
    \]
    Thus, we must choose $w,\mu$ such that they satisfy
    \begin{equation}\label{EqGrushinIterationStep}
        \begin{pmatrix}
            -\dirlap{\Omega} - \lambda_0 & u_0\\
            \iprod{\cdot, u_0} & 0
            \end{pmatrix}\begin{pmatrix}
            w\\ -\mu
            \end{pmatrix} = \begin{pmatrix}
            f\\ 0
        \end{pmatrix}.
    \end{equation}
    By Lemma~\ref{LemGrushinProblemQM} such \(w\) and \(\mu\) exist. For these $w,\mu$ then (and thus $\tilde u_{k,0}$ and $\tilde\lambda_k$), we now return to doing computations on all of $\tilde M$. We then have
    \begin{equation}
    \label{EqErrStep1}
      h^{-k}(P-\tilde\lambda_k)\tilde u_{k,0} = r_{\rm i} + r_{\rm e},\quad r_{\rm i}\in h\beta^*\cE\bigl(\CI(\bar\Omega\times[0,1))\bigr),\ r_{\rm e}\in\rho_\ff^{-1}\rho_{\Omega^\complement}^\infty\cY_E.
    \end{equation}
    Indeed, the only term not accounted for in the calculation on $\Omega\times[0,1)$ arises when the term $-h^{-k}\pa_\rho^2$ of $h^{-k}P$ differentiates $h^k\beta^*\cE w$ and produces a $\delta$-distribution at $\hat\rho=0$; this $\delta$-distribution is equal to $\beta^*(\pa_\nu w)\delta(\rho)=h^{-1}\beta^*((\pa_\nu w)\delta_B)$ and can thus be put into the term $r_{\rm e}$.

    We have now improved the error term as $h\to 0$ in $I$.

  \pfstep{Iteration step: $k\geq 1$. Part II: improvement at $\ff$.} Next, we eliminate the error $r_{\rm e}$ (supported in $E$) in~\eqref{EqErrStep1} to leading order at $\ff$. To do this, set
    \[
      \hat f := R_\ff(h r_{\rm e}) \in \rho_{\Omega^\complement}^\infty\cY_{E,\ff}
    \]
    and use Lemma~\ref{LemmaModelSol} to find
    \begin{equation}
    \label{EqStep2hatu}
      \hat u \in \rho_{\Omega^\complement}^\infty\cX_\ff,\ \text{constant for $\hat\rho<0$ for each $y\in\pa\Omega$, such that}\ \hat P\hat u=-\hat f.
    \end{equation}
    We then compute
    \begin{align*}
      h^{-k}(P-\tilde\lambda_k)(\tilde u_{k,0}+h^{k+1}F_\ff^\chi\hat u) &= r_{\rm i} + h^{-1}\bigl(h r_{\rm e} + F_\ff^\chi(\hat P\hat u)\bigr) \\
        &\qquad + h\bigl(P(F_\ff^\chi\hat u) - h^{-2}F_\ff^\chi(\hat P\hat u)\bigr) - h\tilde\lambda_k F_\ff^\chi\hat u.
    \end{align*}
    Since the distribution $h r_{\rm e}+F_\ff^\chi(\hat P\hat u)=h r_{\rm e}-F_\ff^\chi\hat f\in\rho_{\Omega^\complement}^\infty\cY_{E}$ vanishes at $\ff$ by definition of $\hat f$, it lies in $\rho_\ff\rho_{\Omega^\complement}^\infty\cY_{E}=h\rho_{\Omega^\complement}^\infty\cY_{E}$, and therefore the second term lies in $\rho_{\Omega^\complement}^\infty\cY_{E}$. The third term on the right lies in $h\beta^*\cE(\CI(\bar\Omega\times[0,1)))+\rho_{\Omega^\complement}^\infty\cY_E$ by Lemma~\ref{LemmaExt}. For the last term, we use that $F_\ff^\chi\hat u$ is the sum of a term in $\beta^*\cE(\CI(\bar\Omega\times[0,1)))$ (arising by restriction to $I^\circ$) and a term that is smooth in $E$ and vanishes to infinite order at $\Omega^\complement$; thus $F_\ff^\chi\hat u\in\beta^*\cE(\CI(\bar\Omega\times[0,1)))+\rho_{\Omega^\complement}^\infty\cY_E$, and multiplication by $h\tilde\lambda_k$ produces an extra power of $h$. Setting
    \[
      \tilde u_k := \tilde u_{k,0} + h^{k+1}F_\ff^\chi\hat u,
    \]
    we have thus arranged for $h^{-k}(P-\tilde\lambda_k)\tilde u_k\in h\beta^*\cE(\CI(\bar\Omega\times[0,1)))+\rho_{\Omega^\complement}^\infty\cY_E$, and thus property~\eqref{ItQM2} holds for the value $k$.

    Since
    \begin{equation}
    \label{EqStepComplete}
      \tilde u_k-\tilde u_{k-1} = h^k\beta^*\cE w + h^{k+1}F_\ff^\chi\hat u
    \end{equation}
    with $w\in\CI_0(\bar\Omega)$ and $\hat u$ as in~\eqref{EqStep2hatu}, we also conclude the validity of property~\eqref{ItQM3}.
\end{proof}

\begin{rmk}[Choices]
  In \eqref{EqGrushinIterationStep} we chose the constant \(c\) in Lemma~\ref{LemGrushinProblemQM} to be \(0\). This choice is arbitrary, and as we shall show now, it does not matter. Indeed, take \(c\in\R\) arbitrary and suppose we solve
    \[
        \begin{pmatrix}
            -\dirlap{\Omega} - \lambda_0 & u_0\\
            \iprod{\cdot, u_0} & 0
            \end{pmatrix}\begin{pmatrix}
            v\\ -\mu
            \end{pmatrix} = \begin{pmatrix}
            f\\ c
        \end{pmatrix}.
    \]
    A solution of this system of equations is given by \(v = w + c u_0\) and \(\mu\), where \(w\) and \(\mu\) are a solution of \eqref{EqGrushinIterationStep}. For example then, instead of having the first order expansion \(\tilde{u}_0 + hw + \cO(h^2)\) on $\Omega\times[0,1)$, we now instead have
    \[
        \tilde{u}_0 + h(w + cu_0) + \cO(h^2) = (1 + ch)(\tilde{u}_0 + hw + \cO(h^2)).
    \]
    Different choices of the constants $c$ in each iteration step thus amount to taking the solution constructed above and multiplying it by a polynomial in $h$ with leading order term $1$.
\end{rmk}

\begin{rmk}[Precision]
\label{}
  The above proof gives a bit more: the formula~\eqref{EqStepComplete} shows that $\tilde u_k-\tilde u_{k-1}$ is the sum of an element of $h^k\cE\bigl(\CI(\bar\Omega\times[0,1))\bigr)$ and an element of $\rho_\ff^{k+1}\rho_{\Omega^\complement}^\infty\cX$ with support in $E$. We could elect to keep track of this information and thus (using the arguments that are still to follow) produce quasimodes and subsequently true eigenfunctions of $P_h$ which can be written as the sum of an element of $\cE\bigl(\CI(\bar\Omega\times[0,1)))$ and an element of $\rho_\ff\rho_{\Omega^\complement}^\infty\cX$ with support in $E$. We leave the details to the interested reader.
\end{rmk}

\subsection{Proof of Theorem~\ref{thm:exquasimodes}}

Now that we have constructed quasimodes of arbitrary order, we shall combine them to obtain a
quasimode of infinite order. This utilizes an argument based on the proof of Borel's lemma, see, e.g., \cite[Theorem~1.2.6]{Ho1}.

Let \(n\in\N\) and suppose \(\lambda_n^{\rm D}\) is a simple eigenvalue of \(-\dirlap{\Omega}\) with
corresponding \(L^2\)-normalized eigenfunction \(u_n\in \CI_0(\bar\Omega)\). Denote by \(\tilde{v}_k\in \rho_{\Omega^\complement}^\infty\qms\) and
\(\tilde\mu_k\in \CI([0,1))\) the quasimodes constructed in Proposition~\ref{PropQM}. Let
\(\varphi\in \CIc([0,\infty))\) be such that $\varphi=1$ on $[0,1]$ and $\varphi=0$ on $[2,\infty)$. Let \((\alpha_k)_{k\in\N}\) be a sequence, yet to be determined, tending to \(0\) as
\(n\to\infty\). Let
\[
    \eta_k\coloneqq (\tilde{v}_{k} - \tilde{v}_{k-1})\varphi(h\alpha_k^{-1}) \in \varphi(h\alpha_k^{-1})\cdot\rho_\Omega^k\rho_\ff^{k+1}\rho_{\Omega^\complement}^\infty\cX,
\]
and set
\begin{equation}
\label{EqTildeun}
    \tilde{u}_n\coloneqq \tilde{v}_0 + \sum_{k=1}^\infty\eta_k.
\end{equation}
Note that the sum is finite for every fixed $h>0$, and moreover \(\tilde{u}_h\) is continuous on $\R^d$ and smooth outside of \(\pa\Omega\). For each \(k\in\N\), pick \(\alpha_k\) small enough so that
\[
    \norm{\partial_h^j\partial_x^\alpha\eta_k|_{I^\circ}}_\infty + \norm{\partial_h^j\partial_x^\alpha\eta_k|_{E^\circ}} \le 2^{-k}h^{k/2}\quad\forall\,j\in\N_0,\ \alpha\in \N_0^d\text{ with
    }\abs{j}+\abs{\alpha}\le \frac{k}{2}-1.
\]
The bound by $h^{k/2}$ is automatic since $h\pa_h$ and $h\pa_{x^1},\ldots,h\pa_{x^d}$ lift to a smooth vector fields on $\tilde M$. The extra prefactor $2^{-k}$ can be obtained by choosing $\alpha_k$ small enough. Given any $k_0\in\N$, it then follows that $\sum_{k=2 k_0+2}^\infty\eta_k$ converges in $h^{k_0}\cC^{k_0}(\R^d\times[0,1))$. On the other hand, the finite sum $\tilde v_0+\sum_{k=1}^{2 k_0+1}\eta_k$ trivially defines an element of $\rho_{\Omega^\complement}^\infty\cX$. We also recall that each $\tilde v_0$ and $\eta_k$ has $x$-support in a fixed compact subset of $\R^d$.

We can similarly define the smooth function
\[
    \tilde\lambda_n \coloneqq \tilde\mu_0 + \sum_{k=1}^\infty(\tilde\mu_k -
    \tilde\mu_{k-1})\varphi(h\beta_k^{-1})
\]
with the \((\beta_k)_{k\in\N}\) chosen so that the series converges in $\CI([0,1)_h)$. For later use, we summarize our conclusions so far as
\begin{equation}
\label{EqTildeunMem}
  \tilde u_n \in \rho_{\Omega^\complement}^\infty\cX,\quad
  \tilde\lambda_n \in \CI([0,1)_h),\ \tilde\lambda_n(0)=\lambda_0.
\end{equation}

By construction of \(\tilde{u}_n\), points~\eqref{ItIQM1} and \eqref{ItIQM2} of Theorem~\ref{thm:exquasimodes} are satisfied. Consider point~\eqref{ItIQM3}. Since the $L^2$-norm of the tail of the series~\eqref{EqTildeun} for $k\geq 4$ is bounded by $\cO(h)$ (since this tail converges in $h\cC^1(\R^d\times[0,1))$), it suffices to observe that
\[
  \bigl| \|(\tilde u_0)_h\|_{L^2(\R^d)} - \|u_0\|_{L^2}\bigr| \leq \|\tilde u_0-u_0\|_{L^2},
\]
where $|\tilde u_0-u_0|\lesssim h$ is supported in a compact set and thus has $L^2$-norm $\lesssim h$; and each $\eta_k$ is similarly bounded by $|\eta_k|\lesssim h^k$ by part~\eqref{ItQM3} of Proposition~\ref{PropQM}, and hence has $L^2$-norm $\lesssim h^k\leq h$.

Turning to point~\eqref{ItIQM4} of Theorem~\ref{thm:exquasimodes}, we claim that
\begin{equation}
\label{EqIQM4Pf}
  (P-\tilde\lambda_n)\tilde u_n \in h^m\cY\quad\forall\,m\in\N.
\end{equation}
This membership holds if we only keep finitely (but sufficiently) many terms in the series definitions of $\tilde u_n$ and $\tilde\lambda_n$, as follows from Proposition~\ref{PropQM}\eqref{ItQM2}. Since a sufficiently late tail of the series for $\tilde u_n$ converges in $h^{m+2}\cY$, we obtain~\eqref{EqIQM4Pf}, and thus $(P-\tilde\lambda_n)\tilde u_n\in h^\infty\cY$. Let now \(N\in \N\) and \(f = (P -
\tilde\lambda_n)\tilde{u}_n\in h^N\es\) with the decomposition \(f = u_s + g\delta_B\), where \(u_s\) is
a compactly supported function on each level set of \(h\) and smooth on \(I\) and \(E\), and \(g\) is a
smooth function on \(B\) so that both \(u_s\) and \(g\) are pointwise of size \(\cO(h^N)\) as \(h\to 0\). Then
\[
    \norm{(P - \tilde\lambda_n)\tilde{u}_n}_{H^{-1}(\R^d)} \le \norm{u_s}_{H^{-1}(\R^d)} +
    h\norm{g}_{L^2(\R^d)} \lesssim h^N
\]
for \(h\) small enough. (Here we use $\delta_B=h\delta_{\pa\Omega}$ and $\norm{g\,\delta_{\pa\Omega}}_{H^{-1}(\R^d)}\lesssim h^N$, which follows from
\[
  \abs{\iprod{g\,\delta_{\pa\Omega}, f}} \lesssim \norm{g f|_{\pa\Omega}}_{L^2(\pa\Omega)}\lesssim
  \norm{g}_{L^\infty(\pa\Omega)}\norm{f|_{\pa\Omega}}_{L^2(\pa\Omega)} \lesssim h^N\norm{f}_{H^1(\R^d)}
\]
by duality; we use the trace theorem in the last inequality.) Since \(N\) was arbitrary, this shows the last point of Theorem~\ref{thm:exquasimodes} and thus finishes its proof. (We recall from~\S\ref{SI} that Lemma~\ref{LemmaQM} implies that $\spec(P_h)$ contains a point $\cO(h^\infty)$-close to $\tilde\lambda_n(h)$.)

\subsection{Explicit first order correction for eigenvalues: proof of Proposition~\ref{PropIFirstOrderExp}}
\label{SsExplicitFirstOrder}

We show how to explicitly compute the first order correction to the eigenvalue obtained via the procedure in the proof of Proposition~\ref{PropQM}. We make an ansatz for a correction inside the domain \(\Omega\), which vanishes at the boundary. Let \(\tilde{u}_0\) and \(\tilde\lambda_0=\lambda_0\) be as in the initial step in the proof of Proposition~\ref{PropQM}, so $\tilde{u}_0=\beta^*\cE u_0-h F_\ff^\chi((\pa_\nu u_0)G)$. Put then
\[
    \tilde{u}_{1,0}\coloneqq \tilde{u}_0 + h\beta^* E_0 w \in \qms
    \quad\text{and}\quad
    \tilde{\lambda}_1(h) = \tilde{\lambda}_0 + h\lambda_1 = \lambda_0 + h\lambda_1,
\]
where \(w\in \CI_0(\bar\Omega)\) and \(\lambda_1\in\R\) are yet to be chosen. Define \(g=F_\ff^\chi((\pa_\nu u_0)G)\), so on $\Omega$ and in normal coordinates, \(g=\partial_\nu u_0(y)\chi(\rho)\) on \(N_\delta\pa\Omega\cap\Omega\) and \(g=0\) on the rest of \(\Omega\). As in~\eqref{EqOmegaError}, we compute on $\Omega$ that
\[
    \begin{aligned}
        (P_h - \tilde{\lambda}_1(h))\tilde{u}_{1,0} =\
        &h[- (-\laplace - \lambda_0)g + (-\laplace - \lambda_0)w - \lambda_1 u_0] + h^2\lambda_1(g - w).
    \end{aligned}
\]
As in the iteration step in the proof of Proposition~\ref{PropQM}, we can determine \(w\) and \(\lambda_1\) by requiring the \(\cO(h)\)-term to vanish, i.e.,
\begin{equation}\label{EqFirstOrderCorr}
    (-\dirlap{\Omega} - \lambda_0) w - \lambda_1 u_0 = (-\laplace - \lambda_0)g.
\end{equation}
Fix some \(c\in\C\). Then a pair \((w, \lambda_1)\in \CI_0(\bar\Omega)\oplus \C\) solves
\eqref{EqFirstOrderCorr} if
\[
    \begin{pmatrix}
        -\dirlap{\Omega} - \lambda_0 & u_0\\
        \iprod{\cdot, u_0} & 0
        \end{pmatrix}\begin{pmatrix}
        w \\ -\lambda_1
        \end{pmatrix}
        =
        \begin{pmatrix}
            f\\ c
        \end{pmatrix},
\]
where \(f = (-\laplace - \lambda_0)g\in \CI(\bar\Omega)\). From Lemma~\ref{LemGrushinProblemQM} it follows that \((w, \lambda_1)\) exist and are unique. Moreover, by inspecting the proof of
Lemma~\ref{LemGrushinProblemQM} one sees that \(\lambda_1 u_0 + f\) must be orthogonal to \(u_0\) in
the \(L^2\)-inner product. Thus,
\begin{equation}\label{EqQFirstOrderComp}
    \begin{split}
        0 &= \lambda_1 + \iprod{(-\laplace - \lambda_0)g, u_0}_{L^2(\Omega)}\\
        &= \lambda_1 + \iprod{(-\laplace - \lambda_0)g, u_0}_{L^2(\Omega)} -
        \iprod{g, (-\laplace - \lambda_0)u_0}_{L^2(\Omega)}\\
        &= \lambda_1 + \norm{\partial_{\nu} u_0}_{L^2(\pa\Omega)}^2,
    \end{split}
\end{equation}
where we used that \(u_0\) is an eigenfunction and Green's identity as well as the definition of $g$. This proves Proposition~\ref{PropIFirstOrderExp}.

\section{Correcting quasimodes to eigenfunctions}
\label{STCorrectingQuasimodes}

Recall from~\eqref{EqTildeunMem} and~\eqref{EqIQM4Pf} and in the notation of Definitions~\ref{DefQMSpace} and \ref{DefESpace} that the proof of Theorem~\ref{thm:exquasimodes} produces
\[
  u \in \rho_{\Omega^\complement}^\infty\cX,\quad
  \lambda \in \CI([0,1)_h),
\]
such that $(P-\lambda)u\in h^\infty\cY$, and $\lambda(0)=\lambda_0$ is a simple Dirichlet eigenvalue of $\Omega$; here $u_{\wt\Omega}=u_0$ is the corresponding $L^2$-normalized Dirichlet eigenfunction. Our goal is to correct the quasimode \((u_h,\lambda_h)\) by \(\cO(h^\infty)\) correction terms to true eigenfunctions and eigenvalues of \(P_h\). 

Unlike the quasimodes $u_h$, the eigenfunctions of \(P_h\) cannot compactly supported by unique continuation for second order elliptic PDE, so we must seek the correction terms in a space larger than \(\qms\) (which required vanishing in \(\rho>\delta\)). Not aiming for maximum precision (e.g., one may conjecture the exponential decay of eigenfunctions as $\rho/h\to\infty$), the following space will be convenient:

\begin{definition}[Function space]
\label{DefCorr}
  We write \(\qms'\) for the space of functions \(v\) on \(\tilde M\) with the following properties:
  \begin{enumerate}
  \item \(v\) is continuous;
  \item \(v|_{I^\circ}\in\cC^\infty(I)\), \(v|_{E^\circ}\in\cC^\infty(E)\);
  \item for \(\rho\geq\delta\) and for all \(\alpha\in\N_0^d\) and \(N\in\N\), we have
    \[
        |\partial_x^\alpha v(x)| \leq C_{\alpha,N}\jbr{x}^{-N}.
    \]
  \end{enumerate}
\end{definition}

The correction terms will lie in $h^\infty\cX'$, and the eigenfunctions themselves will lie in $\cX'$:

\begin{thm}[Correction]
\label{ThmT}
  There exist \(h_0>0\) and \(v\in h^\infty\qms'\), \(\mu\in h^\infty\CI([0,1))\) such that, for \(0<h<h_0\),
  \begin{equation}
  \label{EqTEq}
    (P-(\lambda+\mu))(u+v)=0.
  \end{equation}
  That is, for all \(h\in(0,h_0)\), the function \(u_h+v_h\) is an eigenfunction of \(P_h\) with eigenvalue \(\lambda_h+\mu_h\).
\end{thm}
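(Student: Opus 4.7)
The plan is to linearize~\eqref{EqTEq} and solve the resulting Grushin-type system by Banach fixed point on $h$-dependent spaces adapted to $\tilde M$. Writing $f := (P-\lambda)u \in h^\infty\cY$ for the quasimode error, equation~\eqref{EqTEq} becomes
\[
  (P-\lambda)v - \mu u = \mu v - f.
\]
Since $P_h - \lambda_h$ is Fredholm of index $0$ (Lemma~\ref{LemmaSpec}) with an approximately one-dimensional near-kernel close to $\mathrm{span}(u_0)$ for small $h$ (Proposition~\ref{PropEVConv} and simplicity of $\lambda_0$), and $\iprod{u, u}_{L^2} = 1 + \cO(h)$, I would impose the normalization $\iprod{v, u}_{L^2} = 0$ and work with the matrix operator
\[
  \cG_h := \begin{pmatrix} P_h - \lambda_h & u_h \\ \iprod{\cdot, u_h}_{L^2} & 0 \end{pmatrix}.
\]

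The core step is to prove that $\cG_h$ is uniformly invertible as $h \to 0$ between a pair of $h$-dependent Hilbert spaces $X_h \to Y_h$ that reflect the structure of $\cX'$ and $\cY$ on $\tilde M$. Concretely, I would build $X_h$ from norms equivalent to $\|v\|_{H^1(\R^d)}^2 + h^{-2}\|\chfun_{\Omega^\complement}v\|_{L^2}^2$ together with higher-order control that in the coordinates $(\hat\rho,y)$ of~\eqref{EqSchrResolved} becomes uniform $H^2$-type control for the rescaled operator $h^2 P$; $Y_h$ should absorb the $\delta_B$ components of $\cY$ via the scaling $\delta_B = h\,\delta_{\pa\Omega}$, making $\|f\|_{Y_h} = \cO(h^N)$ for every $N$. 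The key elliptic estimate
\[
  \|v\|_{X_h} \lesssim \|(P_h - \lambda_h)v\|_{Y_h} + |\iprod{v, u_h}_{L^2}|,
\]
uniform for $h\in(0,h_0)$, I would prove by a compactness/contradiction argument: a blowup sequence violating it would, after normalization, produce a weak limit in $H^1_0(\Omega)$ annihilated by $-\dirlap{\Omega}-\lambda_0$ and orthogonal to $u_0$, hence zero, contradicting the normalization. Together with index $0$ Fredholmness, this gives $\|\cG_h^{-1}\|\leq C$ uniformly in $h$.

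With this in hand, the nonlinear step is a standard contraction mapping applied to
\[
  T(v,\mu) := \cG_h^{-1}\begin{pmatrix} \mu v - f \\ 0 \end{pmatrix},
\]
with output read as the next $(v, -\mu)$. The bound $\|f\|_{Y_h}\lesssim h^N$ and the bilinear estimate $\|\mu v\|_{Y_h}\lesssim |\mu|\,\|v\|_{X_h}$ show that $T$ maps the ball $\{\|v\|_{X_h}+|\mu|\leq C_N h^N\}$ into itself and is a contraction there for small $h$; its unique fixed point solves~\eqref{EqTEq} with size $\cO(h^N)$. Since $N$ is arbitrary and fixed points in nested balls coincide, we get $v = \cO(h^\infty)$ in $X_h$ and $|\mu|=\cO(h^\infty)$. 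To upgrade $\mu$ to $h^\infty\CI([0,1))$ and $v$ to $h^\infty\cX'$ in the sense of Definition~\ref{DefCorr}, I would differentiate the equation and the normalization in $h$ and inductively solve linearized Grushin problems for $\pa_h^k\mu$ and $\pa_h^k v$, using the smoothness of the coefficients of $h^2 P$ on $\tilde M$ and local elliptic regularity separately on $I^\circ$ and $E^\circ$.

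The main obstacle is the first step: designing $X_h, Y_h$ so that the elliptic estimate is uniform despite the blow-up of the potential $h^{-2}\chfun_{\Omega^\complement}$ and the system becoming singular at scale $h$ near $\pa\Omega$. This is precisely where the resolved space $\tilde M$ is indispensable — in the coordinates of~\eqref{EqSchrResolved}, $h^2 P$ is a smooth family of elliptic operators down to $h=0$, enabling estimates that are simply unavailable in the original $(x,h)$ variables.
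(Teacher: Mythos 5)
Your overall architecture --- rewrite \eqref{EqTEq} as an augmented (Grushin) system, prove uniform-in-$h$ invertibility of the linearization on $h$-dependent spaces adapted to $\tilde M$, close with a contraction, and then bootstrap $h$-regularity by differentiating the equation --- is exactly the paper's scheme (see \eqref{EqTFin}, Proposition~\ref{PropTFull}, Lemma~\ref{LemmaTPfContr}, and Step~3 of \S\ref{SsTPf}). A small difference: you augment with $\iprod{\cdot,u_h}_{L^2}$, whereas the paper uses a fixed $u_0^\sharp\in\CIc(\Omega)$ precisely because the compact support inside $\Omega$ makes $\chi_\Omega u_0^\sharp=u_0^\sharp$ and kills several localization error terms; the paper remarks that the self-adjoint pairing with $u_h$ can also be made to work, so this choice is cosmetic rather than a gap.

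The genuine gap is in the key step. You propose to prove the uniform estimate $\|v\|_{X_h}\lesssim\|(P_h-\lambda_h)v\|_{Y_h}+|\iprod{v,u_h}|$ by a compactness/contradiction argument, extracting a weak $H^1_0(\Omega)$ limit from a sequence $v_n$ normalized by $\|v_n\|_{X_{h_n}}=1$ and concluding from the simplicity of $\lambda_0$. This step fails as stated: the normalization does not force the weak limit to be nonzero. Since the $X_h$ norm is uniformly \emph{stronger} than the $H^1(\R^d)$ norm (it carries the weight $h^{-1}$ in $\Omega^\complement$ and the $0$-weighted derivatives near $\pa\Omega$), weak compactness in $H^1$ only gives a bounded subsequence, and the mass of $v_n$ can entirely escape to three regimes the interior limit does not see: the front-face regime (concentration at scale $h$ near $\pa\Omega$, where the correct limit problem is the model $L_{\ff,h}=-\pa_{\hat\rho}^2-h^2\Delta_{\pa\Omega}+H(\hat\rho)$ on $\R_{\hat\rho}\times\pa\Omega$, with its own nontrivial kernel considerations), the exterior regime (where the model is the uniformly coercive $-h^2\Delta+1$), and spatial infinity in $\Omega^\complement$. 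In each case the weak limit in $H^1_0(\Omega)$ is simply $0$ and no contradiction arises; one would have to run separate rescaled compactness arguments for each regime and show the model operators at $\ff$ and $\tilde\Omega^\complement$ are injective, which your sketch does not do. The paper sidesteps this altogether by a \emph{quantitative} three-regime gluing (Proposition~\ref{PropTFull}): it invokes the explicit uniform invertibility of the three model operators $L_\Omega^\aug$, $L_{\ff,h}$, $L_{\Omega^\complement,h}$ (Propositions~\ref{PropTOmega}, \ref{PropTff}, \ref{PropTOmegaC}), compares each to $P_h^\aug$ via Lemma~\ref{LemmaTApprox}, and uses the norm equivalences of Lemma~\ref{LemmaTNormEq} to patch the three local estimates, absorbing all $\cO(h^\eps)$ cross-terms. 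If you want to retain a compactness approach, you must at minimum supplement it with separate blowup analyses at $\ff$ and at $\tilde\Omega^\complement$ and an argument ruling out escape to infinity; as written, the proof does not go through.
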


\begin{cor}[True eigenfunctions and eigenvalues]
\label{CorT}
  For small \(h>0\), denote by \(\lambda_h\) the unique eigenvalue of \(P_h\) which is \(\cO(h)\)-close to the simple Dirichlet eigenvalue \(\lambda_0\) of $\Omega$. Then:
  \begin{enumerate}
  \item there exists \(\lambda\in\CI([0,h_0))\) such that $\lambda_h=\lambda(h)$ for $h\in(0,h_0)$;
  \item for a suitable choice \(u_h\in\ker(P_h-\lambda_h)\cap L^2(\R^d)\) of $L^2$-normalized eigenfunctions, we have \(u\in\qms'\) where \(u=u_h\) on the \(h\)-level set $\{h\}\times\R^d\subset\tilde M$ for \(h\in(0,h_0)\).
  \end{enumerate}
\end{cor}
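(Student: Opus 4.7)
The corollary follows from Theorem~\ref{ThmT} with some bookkeeping; the only analytically nontrivial step is a push-forward smoothness claim for the $L^2$-norm.

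For the eigenvalue part, set $\tilde\lambda := \lambda + \mu \in \CI([0, h_0))$, the sum of the quasimode eigenvalue $\lambda$ from Theorem~\ref{thm:exquasimodes} and the correction $\mu \in h^\infty\CI([0,1))$ of Theorem~\ref{ThmT}. Then $\tilde\lambda(0) = \lambda_0$ and $\tilde\lambda(h) \in \spec(P_h)$ for $h \in (0, h_0)$. Continuity gives $\tilde\lambda(h) = \lambda_0 + \cO(h)$ as $h \to 0$; since $\lambda_0$ is simple and the Dirichlet eigenvalues are discrete, Proposition~\ref{PropEVConv} provides a neighborhood of $\lambda_0$ containing no other $\lambda_m^{\rm D}$, in which for $h$ small exactly one eigenvalue $\lambda_h$ of $P_h$ lies. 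Thus $\tilde\lambda(h) = \lambda_h$ on $(0, h_0)$ after possibly shrinking $h_0$, yielding the smooth extension $\lambda := \tilde\lambda$.

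For the eigenfunction part, let $w := u + v \in \qms'$. Each slice $w_h$ (for $h \in (0, h_0)$) is an $H^2$-eigenfunction with eigenvalue $\lambda_h$ by elliptic regularity. Set $N(h)^2 := \|w_h\|_{L^2(\R^d)}^2$. Proposition~\ref{PropQM}\eqref{ItQM1} gives $u|_{\wt\Omega} = u_0$ (the $L^2$-normalized Dirichlet eigenfunction); combined with the vanishing of $u$ at $\wt\Omega^\complement$ (via the factor $\rho_{\Omega^\complement}^\infty$) and of $v \in h^\infty\qms'$ at $h = 0$, this yields $N(0) = 1$, so $N$ remains bounded away from $0$ on $[0, h_0)$ after shrinking $h_0$. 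Granting $N \in \CI([0, h_0))$, the normalized family $u_h := w_h / N(h)$ provides the desired $L^2$-normalized eigenfunctions with $u \in \qms'$, since $1/N \in \CI([0, h_0))$ multiplies into $\qms'$.

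The key technical assertion is $F(h) := N(h)^2 \in \CI([0, h_0))$, a push-forward statement along $\pi \colon \tilde M \to [0, h_0)_h$. Using a partition of unity on $\R^d$, decompose $F = F_{\mathrm{far}} + F_{\mathrm{near}}$ according as $|\rho| \geq \delta/2$ or $|\rho| < \delta$. The piece $F_{\mathrm{far}}$ is smooth in $h$ by dominated convergence, since $w$ is jointly smooth in $(h, x)$ away from the blow-up locus and decays rapidly in $x$ (Definition~\ref{DefCorr}). For $F_{\mathrm{near}}$, pass to coordinates $(\hat\rho, y) = (\rho/h, y)$ on a neighborhood of the front face; with $W(h, \hat\rho, y) := w(h, \Phi_\delta(h\hat\rho, y))$ smooth in $(h, \hat\rho, y)$ on $\tilde M$, one has
\[
  F_{\mathrm{near}}(h) = h \int_{\pa\Omega} \int_{-\delta/h}^{\delta/h} |W|^2\, J(h\hat\rho, y)\,\dd\hat\rho\,\dd y.
\]
The rapid decay of $u$ at $\wt\Omega^\complement$ and the $h^\infty$-vanishing of $v$ furnish uniform-in-$h$ integrability in $\hat\rho$, while smoothness at the corner $\ff \cap \wt\Omega$ controls behavior as $\hat\rho \to -\infty$; iteratively applying $\pa_h$, using that $h\pa_h$ lifts to a smooth b-vector field on $\tilde M$, preserves the form and establishes $F \in \CI([0, h_0))$. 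The main obstacle is precisely this push-forward at $h = 0$: the divergence of the $\hat\rho$-integration range as $h \to 0$ requires careful use of the asymptotic structure of $w$ at both corners of $\ff$, encoded respectively in the memberships $u \in \rho_{\Omega^\complement}^\infty\qms$ and $v \in h^\infty \qms'$.
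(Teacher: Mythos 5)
Your reduction of the eigenvalue claim to Theorem~\ref{ThmT} is correct, as is your reduction of the eigenfunction claim to smoothness of $F(h):=\|w_h\|_{L^2(\R^d)}^2$ with $w=u+v$, together with $F(0)=1$; the treatment of $F_{\mathrm{far}}$ is also fine. (The paper gives no proof of the corollary, so the comparison here is to what the argument would actually need.) However, the argument you sketch for $F_{\mathrm{near}}\in\CI([0,h_0))$ has a genuine gap at the corner $\ff\cap\wt\Omega$. Iterating $h\pa_h$ (the lift of a b-vector field) only establishes conormality of $F$, i.e.\ boundedness of $(h\pa_h)^k F$, which is strictly weaker than smoothness: $h^3\log h$ is conormal but not $\cC^3$. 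And indeed the push-forward along $\pi\colon\tilde M\to[0,h_0)_h$ of $|w|^2\,\dd x$ for a generic smooth $w$ on $\tilde M$ satisfying the hypotheses you cite is \emph{not} smooth. Concretely, Taylor-expanding $w|_I\sim\sum a_{ij}(y)\rho_\Omega^i\rho_\ff^j$ at $\ff\cap\wt\Omega$ and integrating $|w|^2$ over the level set where $\rho_\Omega\rho_\ff\sim h$ gives a contribution $\sim h^{i+k}\int\rho_\ff^{j+l-i-k}\,\dd\rho_\ff$, which produces $h^{i+k}\log h$ whenever $i+k=j+l+1$. The vanishing of $w$ on $\ff$ gives only $a_{i0}=0$ and thus rules out $h\log h$ and $h^2\log h$ but not, e.g., the term $a_{21}\bar a_{11}\,h^3\log h$. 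The memberships you invoke, $u\in\rho_{\Omega^\complement}^\infty\qms$ and $v\in h^\infty\qms'$, control the \emph{other} corner $\ff\cap\wt\Omega^\complement$ (where full flatness in $\rho_{\Omega^\complement}$ kills every such resonance), but give no control at $\ff\cap\wt\Omega$.

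What closes the gap is the sharper structure recorded in the ``Precision'' remark after Proposition~\ref{PropQM} and implicitly carried through the Borel summation: $u|_I$ is the pullback $\beta^*$ of a function in $\CI(\bar\Omega\times[0,h_0))$, and $v|_I$, being flat at $h=0$, likewise blows down to a smooth function on $\bar\Omega\times[0,h_0)$ that is flat at $h=0$. Hence $w|_I=\beta^* w_I$ with $w_I\in\CI(\bar\Omega\times[0,h_0))$, and $\int_\Omega|w_I(\cdot,h)|^2\,\dd x$ is smooth in $h$ by plain differentiation under the integral over the compact $\bar\Omega$; no blow-up analysis is needed on the $I$-side at all. (Equivalently, the descent forces $a_{ij}=0$ for $j<i$, which makes $i+k=j+l+1$ impossible.) You need to make this descent explicit; as written, your argument would ``prove'' smoothness equally well for a function whose push-forward genuinely carries a $h^3\log h$ term.
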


Perhaps somewhat surprisingly, equation~\eqref{EqTEq} is a \emph{nonlinear} equation in the unknowns \(\mu,v\); moving its linear part to the left-hand side, it reads
\begin{equation}
\label{EqTEq2}
  \begin{pmatrix} P-\lambda & -u \end{pmatrix} \begin{pmatrix} v \\ \mu \end{pmatrix} = -f + \mu v,\quad f:=(P-\lambda)u\in h^\infty\cY.
\end{equation}
The solution of this equation will not be unique: after all, given one solution \(v\), pick any
function \(\kappa\in h^\infty\CI([0,h_0))\); then also \((1+\kappa)(u+v)=u+(\kappa u+(1+\kappa)v)\) is
an eigenfunction of \(P-(\lambda+\mu)\). To fix this indeterminacy (which roughly amounts to adding to
\(v\) a multiple of \(u_0\), plus further lower order corrections), we augment~\eqref{EqTEq2} with the
condition \(\iprod{v,u_0^\sharp}_{L^2(\Omega)}=0\), where we fix any\footnote{The function \(u_0\)
satisfies the second, but not the first condition. The first condition will lead to convenient
simplifications in our arguments below.}
\[
    u_0^\sharp\in\CIc(\Omega),\quad \iprod{u_0,u_0^\sharp}_{L^2(\Omega)}=1.
\]
Altogether, we thus wish to solve
\begin{equation}
\label{EqTFin}
  P^\aug(v,\mu)=\begin{pmatrix}-f+\mu v \\ 0\end{pmatrix},\quad P^\aug \coloneqq \begin{pmatrix} P-\lambda & -u \\
  \iprod{\cdot,u_0^\sharp}_{L^2} & 0 \end{pmatrix}.
\end{equation}
We emphasize that this is a \emph{family} of \emph{nonlinear} equations, one for each \(h>0\). Our strategy for solving~\eqref{EqTFin} for small $h>0$ is as follows.
\begin{enumerate}
\item We shall apply the contraction mapping principle on an appropriate Sobolev space \emph{with \(h\)-dependent} norm, at once for all small \(h\), to get a solution \(v,\mu\) with finite regularity and \(h\)-decay as \(h\searrow 0\).
\item The infinite order vanishing of \(v,\mu\) as \(h\searrow 0\) will easily follow from that of the error term \(f\).
\item An elliptic bootstrap improves the regularity of \(v\), and differentiation in the parameter \(h\) gives the smoothness of $v,\mu$ in \(h\).
\end{enumerate}

The main work consists in proving uniform estimates for (the inverse of) the linear operator \(P^\aug\); this is the content of~\S\ref{SsTUnif}. The above three steps can afterwards be completed relatively quickly; see~\S\ref{SsTPf}.

\subsection{Uniform estimates for the augmented operator}
\label{SsTUnif}

In order to determine the correct (\(h\)-dependent) norms for the uniform analysis of \(P^\aug\), consider first \(P_h\) in the three regimes of interest.
\begin{enumerate}
\item\label{ItTUnifInt} Near \(\widetilde\Omega^\circ\), \(P_h=-\laplace\) has quadratic form \(\int
    |\nabla u|^2\odif{x}\), suggesting the use of standard derivatives \(\partial_x\) to measure regularity.
\item\label{ItTUnifExt} Near \((\widetilde\Omega^\complement)^\circ\), \(h^2 P_h=-h^2\laplace+1\) has
    quadratic form \(\int |u|^2+|h\nabla u|^2\odif{x}\), suggesting the use of semiclassical
    derivatives \(h\partial_x\) to measure regularity.
\item\label{ItTUnifff} Near \({\ff}^\circ\) and recalling \(\hat\rho=\frac{\rho}{h}\), we have (schematically) \(h^2 P_h=-\partial_{\hat\rho}^2+(h\partial_y)^2+H(\hat\rho)\); here \(\partial_y\) refers to derivatives tangential to \(\partial\Omega\). (See~\eqref{EqSchrResolved} for the full expression.)
  \begin{itemize}
  \item For \(\hat\rho\geq 0\), the associated quadratic form is coercive in \(u\),
      \(\partial_{\hat\rho}u=h\partial_\rho u\), \(h\partial_y u\).
  \item For \(\hat\rho\leq 0\) on the other hand, the quadratic form only controls
      \(\partial_{\hat\rho}u\) and \(h\partial_y u\). Getting control on \(u\) itself requires integrating
      in \(\hat\rho\), which (via the Hardy inequality) loses a power of \(\hat\rho\). We rephrase this
      by measuring regularity with respect to \(\jbr{\hat\rho}\partial_{\hat\rho}\),
      \(h\jbr{\hat\rho}\partial_y\) relative to an \(L^2\)-space weighted by powers of \(\jbr{\hat\rho}\).
  \end{itemize}
\end{enumerate}

In the transition from \({\ff}\) to \(\widetilde\Omega^\complement\), the derivatives \(h\partial_\rho\) and \(h\partial_y\) from~\eqref{ItTUnifff} match the semiclassical derivatives in~\eqref{ItTUnifExt}. On the other hand, the derivatives \(\partial_x\) suggested by~\eqref{ItTUnifInt} do not match up with the derivatives on \({\ff}\) in \(\hat\rho\leq 0\) from~\eqref{ItTUnifff}. Rather, upon changing coordinates from \(\hat\rho,y\) to \(\rho=h\hat\rho,y\), we have
\[
    \jbr{\hat\rho}\partial_{\hat\rho}=\Bigl(1+\frac{\rho^2}{h^2}\Bigr)^{\frac12}h\partial_\rho = (h^2+\rho^2)^{\frac12}\partial_\rho,\quad
    h\jbr{\hat\rho}\partial_y=(h^2+\rho^2)^{\frac12}\partial_y,
\]
which restrict to \(h=0\) as \(\rho\partial_\rho\), \(\rho\partial_y\). This suggests measuring regularity near \(\widetilde\Omega\) using these rescaled vector fields; they are very natural from the perspective of scaling \(\laplace\) near boundary points of \(\Omega\), and are indeed the vector fields naturally arising in interior Schauder estimates. In the standard parlance of geometric singular analysis, they are a basis of the space of \emph{0-vector fields} introduced by Mazzeo--Melrose \cite{MazzeoMelroseHyp}.

\begin{rmk}[Quasimode construction vs.\ uniform estimates]
\label{RmkTUnifQvsT}
  Unlike in the quasimode construction, where we could afford to drop the term \((h\partial_y)^2\) at \({\ff}\), we must keep it now. The reason is that our proof of uniform estimates for \(P_h\) below relies on its uniform ellipticity (with respect to appropriate basis vector fields); estimates for the operator \(-\partial_{\hat\rho}^2+H(\hat\rho)\) on the other hand can, of course, never recover \(2\) (weighted) full derivatives, since inverting this operator does not gain any tangential, i.e., $y$-, regularity at all.
\end{rmk}

Globally then, we shall test for regularity using the vector fields
\(\rho_{\ff}\rho_{\Omega^\complement}\partial_z\): they restrict to \(\widetilde\Omega\) and
\(\widetilde\Omega^\complement\) as \(\rho\partial_z\) and \(h\partial_z\), respectively, and to
\({\ff}\) as positive multiples of \(h\partial_z\) when \(\hat\rho\geq 0\) and
\(\jbr{\hat\rho}\partial_{\hat\rho}\), \(h\jbr{\hat\rho}\partial_y\) when \(\hat\rho\leq 0\). We are
thus led to define:

\begin{definition}[Function spaces and norms]
\label{DefTSpace}
  Let \(\alpha,\beta,\gamma\in\R\) and \(k\in\N_0\).
  \begin{enumerate}
  \item{\rm (\(\Omega\).)} For \(u\in\CIc(\Omega)\), we define\footnote{We write \(H_{\rm
      z}^k(\Omega)\), with `\({\rm z}\)' for `zero', to avoid confusion of this space with the closure
  \(H_0^k(\Omega)\) of \(\CIc(\Omega)\) with respect to the standard \(H^k\)-norm.}
    \begin{equation}
    \label{EqTSpacez}
    \norm{u}_{\rho^\alpha H_{\rm z}^k(\Omega)}^2 \coloneqq \sum_{|\zeta|\leq k} \norm{\rho^{-\alpha} (\rho\partial)^\zeta u}_{L^2(\Omega)}^2.
    \end{equation}
    The space \(\rho^\alpha H_{\rm z}^k(\Omega)\) is the completion of \(\CIc(\Omega)\) under this
    norm. We denote the dual space of \(\rho^\alpha H_{\rm z}^1(\Omega)\) by \(\rho^{-\alpha} H_{\rm
    z}^{-1}(\Omega)\). (It consists of all distributions on \(\Omega\) of the form \(\sum_{|\zeta|\leq
    1}\rho^{-\alpha}(\rho\partial)^\zeta u_\zeta\) where \(u_\zeta\in L^2(\Omega)\).)
  \item{\rm (\(\Omega^\complement\).)} We similarly define \(H_{0,h}^k(\Omega^\complement)\) as the completion of \(\CIc((\Omega^\complement)^\circ)\) with respect to the norm
    \[
      \norm{u}_{H_{0,h}^k(\Omega^\complement)}^2 \coloneqq \sum_{|\zeta|\leq k} \norm{ (h\partial)^\zeta u }_{L^2(\Omega^\complement)}^2.
    \]
    We write $H^k_h(\Omega^\complement)$ for the completion of $\CIc(\Omega^\complement)$ (i.e., without the requirement of vanishing at $\pa\Omega$) with respect to the same norm, now denoted $\|\cdot\|_{H_h^k(\Omega^\complement)}$. The space \(H^{-1}_{0,h}(\Omega^\complement)\) is the \(L^2\)-dual of \(H^1_{0,h}(\Omega)\).
  \item{\rm (\({\ff}\).)} Define \(L^2(\R\times\partial\Omega)\) using the product of the Lebesgue measure on \(\R\) and the volume measure on \(\partial\Omega\). Write \(\hat\rho_<\), \(\hat\rho_>\) for smooth positive functions of \(\hat\rho\) such that
    \begin{equation}
    \label{EqTHatRho}
      \hat\rho_< = \begin{cases} |\hat\rho|, & \hat\rho\leq -1, \\ 1, & \hat\rho\geq 0, \end{cases}\qquad
      \hat\rho_> = \begin{cases} 1, & \hat\rho\leq 0, \\ \hat\rho, & \hat\rho\geq 1. \end{cases}
    \end{equation}
    For functions \(u\in\CIc(\R\times\partial\Omega)\), we then define
    \begin{equation}
    \label{EqTHff}
      \norm{u}_{\hat\rho_<^{-\alpha}\hat\rho_>^{-\gamma}H_{{\ff},h}^k}^2 \coloneqq \sum_{j+m\leq k} \norm{ \hat\rho_<^\alpha\hat\rho_>^\gamma (\hat\rho_<\partial_{\hat\rho})^j (\hat\rho_<h\nabla_{\pa\Omega})^m u }_{L^2(\R\times\partial\Omega)}^2
    \end{equation}
    The space \(\hat\rho_<^\alpha\hat\rho_>^\gamma H_{{\ff},h}^{-1}\) is the \(L^2\)-dual of \(\hat\rho_<^{-\alpha}\hat\rho_>^{-\gamma} H_{{\ff},h}^1\).
  \item{\rm (Combination.)} We let
    \[
      \norm{u}_{\rho_\Omega^\alpha\rho_{\ff}^\beta\rho_{\Omega^\complement}^\gamma X_h^k}^2 \coloneqq \sum_{|\zeta|\leq k} \norm{ \rho_\Omega^{-\alpha}\rho_{\ff}^{-\beta}\rho_{\Omega^\complement}^{-\gamma} (\rho_{\ff}\rho_{\Omega^\complement}\partial)^\zeta u}_{L^2(\R^d)}^2.
    \]
    The space \(\rho_\Omega^{-\alpha}\rho_{\ff}^{-\beta}\rho_{\Omega^\complement}^{-\gamma}X_h^{-1}\) is the \(L^2\)-dual of \(\rho_\Omega^\alpha\rho_{\ff}^\beta\rho_{\Omega^\complement}^\gamma X_h^1\).
  \end{enumerate}
\end{definition}

Given the preceding discussion, we have simple relationships between the (norms of the) spaces in Definition~\ref{DefTSpace}. To state them, let us fix cutoff functions \(\chi_\Omega,\chi_{\ff},\chi_{\Omega^\complement}\in\CI(M)\) by
\begin{equation}
\label{EqTCutoffs}
  \chi_\Omega = \chi_0(\rho_\Omega),\quad
  \chi_{\ff} = \chi_0(\rho_{\ff}),\quad
  \chi_{\Omega^\complement} = \chi_0(\rho_{\Omega^\complement});
\end{equation}
here \(\chi_0\in\CI([0,\infty))\) equals \(1\) on \([0,\frac12]\) and \(0\) on \([\frac34,\infty)\), and the defining functions are as in~\eqref{EqIDefFn}. This means that \(\chi_\Omega\) equals \(1\) near \(\widetilde\Omega\), and transitions to \(0\) in the interval \(-h\lesssim\rho\leq -h\) (i.e., \(-1\lesssim\hat\rho\leq -1\)), analogously for \(\chi_{\Omega^\complement}\); and \(\chi_{\ff}\) equals \(1\) for \(\rho\) near \(0\), and \(0\) for \(\rho\) sufficiently far from \(0\). We emphasize that the functions~\eqref{EqTCutoffs} depend on \(h\). See Figure~\ref{fig:suppbndry2}.

\begin{figure}
    \centering
    \begin{tikzpicture}[scale=1.5]
        \draw[thick, -{Latex[length=2mm]}] (-2.0, 0.0) -- node[below, midway] {\(\wt\Omega\)} (-0.5, 0.0) arc
            [start angle=180, end angle=0, radius=0.5] --  node[below, midway]
            {\(\wt\Omega^\complement\)} (2.2, 0.0) node[anchor=south west] {\(\rho\)};
        \draw[thick, -{Latex[length=2mm]}] (0.0, 0.5) -- (0.0, 2.0) node[anchor=south west] {\(h\)};
        \fill[opacity=0.2] ({90 + atan(1)}:{-2 / cos(90 + atan(1))}) -- ({90 + atan(1)}:0.5) arc
            [start angle={90 + atan(1)}, end angle={90 + atan(2)}, radius=0.5] -- ({90 +
            atan(2)}:{-2 / cos(90 + atan(2))}) -- cycle;

        \fill[opacity=0.3] (-2.0, 0) -- (-0.5, 0) arc [start angle=180, end
            angle={90 + atan(2)}, radius=0.5] -- ({90 + atan(2)}:{-2 / cos(90 + atan(2))}) --
            cycle;

        \node[] at (-1.25, 0.25) {\(\supp\chi_\Omega\)};

        \fill[opacity=0.2] ({90 - atan(1)}:{2 / cos(90 - atan(1))}) -- ({90 - atan(1)}:0.5) arc
            [start angle={90 - atan(1)}, end angle={90 - atan(2)}, radius=0.5] -- ({90 -
            atan(2)}:{2 / cos(90 - atan(2))}) -- cycle;

        \fill[opacity=0.3] (2.0, 0) -- (0.5, 0) arc [start angle=0, end
            angle={90 - atan(2)}, radius=0.5] -- ({90 - atan(2)}:{2 / cos(90 - atan(2))}) --
            cycle;

        \node[] at (1.35, 0.25) {\(\supp\chi_{\Omega^\complement}\)};

    \end{tikzpicture}
    \caption{The supports of \(\chi_\Omega\) and \(\chi_{\Omega^\complement}\) are the shaded
        regions on the left- and the right-hand side, respectively. The darker regions show where
    the cutoffs are equal to \(1\), while the lighter shaded regions show the transition regions.}\label{fig:suppbndry2}
\end{figure}
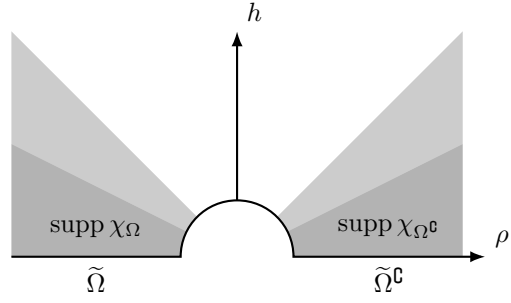

\begin{lemma}[Norm equivalences]
\label{LemmaTNormEq}
  Let \(\alpha,\beta,\gamma\in\R\), \(k\in\N_0\). Then we have the uniform norm equivalences\footnote{In~\eqref{EqTNormEqOmega}, we omit weights at \(\widetilde\Omega^\complement\) since \(\rho_{\Omega^\complement}=1\) on \(\supp\chi_\Omega\); analogously for~\eqref{EqTNormEqOmegaC} and weights at \(\widetilde\Omega\).}
  \begin{align}
  \label{EqTNormEqOmega}
    \norm{\chi_\Omega u}_{\rho_{\ff}^\alpha X_h^k} &\sim \norm{\chi_\Omega u}_{\rho^\alpha H^k_{\rm z}(\Omega)}, \\
  \label{EqTNormEqOmegaC}
    \norm{\chi_{\Omega^\complement}u}_{\rho_{\ff}^\beta\rho_{\Omega^\complement}^\beta X_h^k} &\sim h^{-\beta}\norm{\chi_{\Omega^\complement} u}_{H_{0,h}^k(\Omega^\complement)}, \\
  \label{EqTNormEqff}
    \norm{\chi_{\ff} u}_{\rho_\Omega^\alpha\rho_{\ff}^\beta\rho_{\Omega^\complement}^\gamma X_h^k} &\sim h^{-\beta+\frac12} \norm{\chi_{\ff} u}_{\hat\rho_<^{-\alpha+\beta}\hat\rho_>^{-\gamma+\beta}H_{{\ff},h}^k}.
  \end{align}
\end{lemma}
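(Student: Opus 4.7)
The plan is to verify each of the three equivalences by restricting to the support of the relevant cutoff, where the defining functions $\rho_\Omega,\rho_\ff,\rho_{\Omega^\complement}$ admit uniform (in $h$) comparisons in terms of simple coordinate expressions. As a preliminary step I would fix the following local representatives, read off from the coordinate charts of~\S\ref{SsBlowup} and the normalization~\eqref{EqIDefFn}: in the interior $\ff$-chart $(\hat\rho,y,h)$, $\rho_\ff\sim h\hat\rho_<\hat\rho_>$, $\rho_\Omega\sim\hat\rho_<^{-1}$, $\rho_{\Omega^\complement}\sim\hat\rho_>^{-1}$; in the projective corner chart $(\rho,y,\hat h=h/|\rho|)$ near $\ff\cap\wt\Omega$, $\rho_\Omega=\hat h$, $\rho_\ff=|\rho|$, $\rho_{\Omega^\complement}=1$; and symmetrically near $\ff\cap\wt\Omega^\complement$. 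In particular, $\rho_\Omega\rho_\ff\sim h$ and $\rho_\ff\rho_{\Omega^\complement}\sim h$ on the respective corners.

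For~\eqref{EqTNormEqOmega}, on $\supp\chi_\Omega=\{\rho_\Omega\leq 3/4\}$ the normalization~\eqref{EqIDefFn} forces $\rho<-h$, so $\rho_{\Omega^\complement}=1$ identically and $\rho_\ff$ agrees with a classical boundary defining function of $\Omega$ (it equals $|\rho|$ near $\pa\Omega$ and is bounded below away from $\pa\Omega$). Using the Gauss-lemma structure~\eqref{EqTApproxDelta} to identify $\{\partial_{x^j}\}$ with $\{\partial_\rho,\partial_{y^k}\}$ up to smooth bounded coefficients, the family $\{\rho_\ff\rho_{\Omega^\complement}\partial_{x^j}\}=\{\rho_\ff\partial_{x^j}\}$ then spans the $0$-vector fields $|\rho|\partial_\rho,|\rho|\partial_{y^k}$ on $\supp\chi_\Omega$ up to smooth bounded coefficients, while the standard Lebesgue measure is used on both sides. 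This immediately yields~\eqref{EqTNormEqOmega}.

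The argument for~\eqref{EqTNormEqOmegaC} is analogous. On $\supp\chi_{\Omega^\complement}$, the condition $\rho_{\Omega^\complement}\leq 3/4$ forces $\rho>h$, so $\chi_{\Omega^\complement}u$ is automatically supported in $(\Omega^\complement)^\circ$ and membership in $H^k_{0,h}(\Omega^\complement)$ is free. In the overlap with the corner chart near $\ff\cap\wt\Omega^\complement$ one has $\rho_\ff\rho_{\Omega^\complement}=\rho\cdot(h/\rho)=h$; in the region $\rho\geq\delta$ one has $\rho_\ff=1$ and $\rho_{\Omega^\complement}\sim h$; hence $\rho_\ff\rho_{\Omega^\complement}\sim h$ and $(\rho_\ff\rho_{\Omega^\complement})^\beta\sim h^\beta$ throughout. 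Pulling the factor $h^{-\beta}$ out of the norm then gives~\eqref{EqTNormEqOmegaC}.

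Only~\eqref{EqTNormEqff} requires genuine computation. I would change coordinates from $x$ to $(\hat\rho,y)$ on $\supp\chi_\ff$ (splitting into the interior-$\ff$ chart and the two corner charts, patched together via the $\hat\rho_<,\hat\rho_>$ notation). The weight simplifies as
\[
  \rho_\Omega^{-\alpha}\rho_\ff^{-\beta}\rho_{\Omega^\complement}^{-\gamma}\sim h^{-\beta}\hat\rho_<^{\alpha-\beta}\hat\rho_>^{\gamma-\beta},\qquad \rho_\ff\rho_{\Omega^\complement}\sim h\hat\rho_<.
\]
Using $\partial_\rho=h^{-1}\partial_{\hat\rho}$ together with~\eqref{EqTApproxDelta}, the Cartesian vector fields $\rho_\ff\rho_{\Omega^\complement}\partial_{x^j}$ transform into smooth bounded combinations of $\hat\rho_<\partial_{\hat\rho}$ and $h\hat\rho_<\partial_{y^k}$, which is exactly the basis used in the definition~\eqref{EqTHff} of $H^k_{\ff,h}$. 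The Jacobian of the change of variables is $\odif{x}=h\,g(h\hat\rho,y)\,\odif{\hat\rho}\odif{y}$, contributing one extra power of $h$; combining this with the squared weight and taking square roots produces the overall prefactor $h^{1/2-\beta}$ in~\eqref{EqTNormEqff}. The main (and really only) obstacle in the entire lemma is the preliminary patching step: verifying that the local representatives of the three defining functions on the different coordinate charts can be chosen so as to satisfy the global uniform equivalences listed above. This is a routine but mildly tedious manifold-with-corners bookkeeping exercise; once it is done, the three equivalences reduce to direct term-by-term comparison.
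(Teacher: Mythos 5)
Your proof follows essentially the same route as the paper: on each cutoff's support you identify the three defining functions with explicit local representatives ($\rho_\ff\sim|\rho|$ and $\rho_{\Omega^\complement}=1$ on $\supp\chi_\Omega$; $\rho_\ff\rho_{\Omega^\complement}\sim h$ on $\supp\chi_{\Omega^\complement}$; $\rho_\Omega\sim\hat\rho_<^{-1}$, $\rho_{\Omega^\complement}\sim\hat\rho_>^{-1}$, $\rho_\Omega\rho_\ff\rho_{\Omega^\complement}\sim h$ on $\supp\chi_\ff$), match the rescaled vector fields, and compare the measures, with the $h^{1/2}$ coming from $\odif{\rho}=h\odif{\hat\rho}$. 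The paper organizes the $\ff$-case as a chain of reductions ($\beta\to 0$, then $\alpha,\gamma\to 0$, then $k\to 0$) rather than writing out the combined weight $h^{-\beta}\hat\rho_<^{\alpha-\beta}\hat\rho_>^{\gamma-\beta}$ in one go, but the content is identical.
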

\begin{proof}
  On \(\supp\chi_\Omega\), we have \(\rho_{\ff}=a\rho\) and thus also \(\rho_{\ff}\partial=a\rho\partial\) where \(a\coloneqq\frac{\rho_{\ff}}{\rho}\) and \(a^{-1}\) are smooth functions on \(\supp\chi_\Omega\). (This follows from the fact that \(\rho\) and \(\rho_{\ff}\) are both defining functions of \({\ff}\) on \(\supp\chi_\Omega\).) This implies~\eqref{EqTNormEqOmega}. One similarly obtains~\eqref{EqTNormEqOmegaC} from \(\rho_{\ff}\rho_{\Omega^\complement}=b h\) where \(b,b^{-1}\) are smooth functions on \(\supp\chi_{\Omega^\complement}\).

  Finally, consider~\eqref{EqTNormEqff}. Since \(\rho_\Omega\rho_{\ff}\rho_{\Omega^\complement}=c h\)
  with \(c,c^{-1}\in\CI(M)\), it suffices to prove~\eqref{EqTNormEqff} for \(\beta=0\). On
  \(\supp\chi_{\ff}\), we may moreover take \(\rho_\Omega=\hat\rho_<^{-1}\) and
  \(\rho_{\Omega^\complement}=\hat\rho_>^{-1}\) for the weights; thus we may further
  reduce to the case \(\alpha=\gamma=0\). On \(\supp\chi_{\ff}\), we may instead of
  \(\rho_{\ff}\rho_{\Omega^\complement}\partial\) consider derivatives along
  \(\rho_\Omega^{-1}h\partial\), or equivalently \(\hat\rho_<h\partial\), i.e., \(\hat\rho_<\partial_{\hat\rho}\) and \(\hat\rho_<h\nabla_{\pa\Omega}\). It thus
  remains to consider the case \(k=0\), which follows from the fact that the Lebesgue measure is a
  bounded positive multiple of \(\odif{\rho,y}\) where \(\odif{y}\) is the surface measure on
  \(\partial\Omega\), and
  \[
      \norm{u}_{L^2(\R_\rho\times\partial\Omega)}^2 = \iint |u(\rho,\omega)|^2\odif{\rho, y} = \iint
      h|u(h\hat\rho,\omega)|^2\odif{\hat\rho, y} =
      h\norm{u}_{L^2(\R_{\hat\rho}\times\partial\Omega)}^2,\quad \hat\rho=\frac{\rho}{h}.
  \]
  This completes the proof.
\end{proof}

\subsubsection{Grushin problem at \texorpdfstring{\(\widetilde\Omega\)}{Omega}}

Formally restricting the operator \(P^\aug\) in~\eqref{EqTFin} to \(\widetilde\Omega\) yields the operator
\begin{equation}
\label{EqTModelOmega}
L^\aug_\Omega \coloneqq \begin{pmatrix} -\laplace-\lambda_0 & -u_0 \\ \iprod{\cdot,u_0^\sharp}_{L^2(\Omega)} & 0 \end{pmatrix}.
\end{equation}
We shall prove:

\begin{prop}[Inversion of \(L_\Omega^\aug\)]
\label{PropTOmega}
  The operator
  \[
    L^\aug_\Omega \colon \rho H_{\rm z}^1(\Omega) \oplus \C \to \rho^{-1}H_{\rm z}^{-1}(\Omega) \oplus \C
  \]
  is invertible.
\end{prop}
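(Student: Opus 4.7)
\smallskip

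The plan is to reduce this to the standard Fredholm theory for $-\dirlap{\Omega}-\lambda_0$ on $\Omega$, after identifying the weighted zero-Sobolev spaces with familiar Sobolev spaces with Dirichlet boundary conditions. The key observation is that, by Hardy's inequality for the smooth bounded domain $\Omega$, the norm $\|u\|_{\rho H_{\rm z}^1(\Omega)}^2=\|\rho^{-1}u\|_{L^2}^2+\|\partial u\|_{L^2}^2$ is equivalent to the standard $H_0^1(\Omega)$-norm. Indeed, Hardy's inequality $\|\rho^{-1}u\|_{L^2(\Omega)}\lesssim\|\nabla u\|_{L^2(\Omega)}$ holds for $u\in H_0^1(\Omega)$, and conversely any $u\in\rho H_{\rm z}^1(\Omega)$ satisfies $\rho^{-1}u\in L^2$, forcing $u$ to have vanishing trace. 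Thus $\rho H_{\rm z}^1(\Omega)=H_0^1(\Omega)$ as topological vector spaces, and dually $\rho^{-1}H_{\rm z}^{-1}(\Omega)=H^{-1}(\Omega)$.

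With this identification, it suffices to prove that $L_\Omega^\aug\colon H_0^1(\Omega)\oplus\C\to H^{-1}(\Omega)\oplus\C$ is invertible. The operator $-\dirlap{\Omega}-\lambda_0\colon H_0^1(\Omega)\to H^{-1}(\Omega)$ is self-adjoint in the quadratic form sense, and Fredholm of index $0$ since $\lambda_0$ is an isolated eigenvalue of finite multiplicity in the discrete spectrum of the Dirichlet Laplacian. Therefore $L_\Omega^\aug$, obtained by adding the rank-one map $\gamma\mapsto -\gamma u_0$ and the rank-one functional $v\mapsto\iprod{v,u_0^\sharp}_{L^2}$, is a compact perturbation (as a map $H_0^1\oplus\C\to H^{-1}\oplus\C$) and hence also Fredholm of index $0$.

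It then remains to establish injectivity. Suppose $L_\Omega^\aug(v,\gamma)=0$, i.e., $(-\dirlap{\Omega}-\lambda_0)v=\gamma u_0$ in $H^{-1}(\Omega)$ and $\iprod{v,u_0^\sharp}_{L^2}=0$. Pairing the first equation with $u_0\in H_0^1(\Omega)$ and using that $u_0$ is an eigenfunction with eigenvalue $\lambda_0$,
\[
\gamma = \gamma\iprod{u_0,u_0}_{L^2} = \iprod{(-\dirlap{\Omega}-\lambda_0)v,u_0}_{H^{-1},H_0^1} = \iprod{v,(-\dirlap{\Omega}-\lambda_0)u_0}_{L^2} = 0.
\]
Hence $v\in\ker_{H_0^1}(-\dirlap{\Omega}-\lambda_0)=\R u_0$ by the simplicity hypothesis on $\lambda_0$, say $v=c u_0$; the normalization $\iprod{u_0,u_0^\sharp}_{L^2}=1$ then forces $c=\iprod{v,u_0^\sharp}_{L^2}=0$. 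Thus $L_\Omega^\aug$ is injective, and being Fredholm of index $0$, it is invertible. The only genuine subtlety is the norm equivalence in the first step; once Hardy's inequality is in hand the argument parallels the classical Grushin reduction from Lemma~\ref{LemGrushinProblemQM}.
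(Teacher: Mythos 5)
Your proof is correct, but it proceeds by a genuinely different route from the paper's. You reduce to the Fredholm alternative: after identifying $\rho H_{\rm z}^1(\Omega)=H^1_0(\Omega)$ via Hardy (the content of Lemma~\ref{LemmaTH0} in the paper), you observe that $L^\aug_\Omega$ is a rank-two perturbation of $\operatorname{diag}(-\dirlap{\Omega}-\lambda_0,\,0)$, hence Fredholm of index $0$, and then you verify injectivity directly by pairing the homogeneous equation against $u_0$ and using $\iprod{u_0,u_0^\sharp}=1$. The paper instead proves a quantitative a~priori estimate $\|u\|_{\rho H_{\rm z}^1}\lesssim\|(-\laplace-\lambda_0)u\|_{\rho^{-1}H_{\rm z}^{-1}}+|\iprod{u,u_0^\sharp}|$ by Cauchy--Schwarz and compactness (dropping the subleading term on a complement of $\mathspan\{u_0\}$), and then establishes surjectivity by an explicit construction using the $L^2$-orthogonal decomposition of the range. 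Both arguments hinge on the same two facts — Hardy's inequality and the compactness of $H^1_0(\Omega)\hookrightarrow H^{-1}(\Omega)$ — so they are close in spirit; your version is shorter because Fredholm index theory absorbs the surjectivity for free, while the paper's version is more in the style of the uniform estimates it will need later (notably in Proposition~\ref{PropTFull}), where abstract index arguments would not suffice because one needs constants uniform in $h$.
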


We first clarify the space \(\rho H_{\rm z}^1(\Omega)\):
\begin{lemma}[Function space equality]
\label{LemmaTH0}
  \(H_0^1(\Omega)=\rho H_{\rm z}^1(\Omega)\), and this space is moreover the closure of \(\CIc(\Omega)\) with respect to \(\norm{\nabla u}_{L^2(\Omega)}\).
\end{lemma}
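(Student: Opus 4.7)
The plan is to show the two norm equivalences on $\CIc(\Omega)$:
\[
\|\nabla u\|_{L^2(\Omega)} \sim \|u\|_{H^1(\Omega)} \sim \|u\|_{\rho H_{\rm z}^1(\Omega)},
\]
after which all three completions coincide by abstract nonsense. The first equivalence is the standard Poincar\'e inequality on the bounded domain $\Omega$, which gives $\|u\|_{L^2}\lesssim\|\nabla u\|_{L^2}$ for $u\in\CIc(\Omega)$, combined with the trivial bound $\|\nabla u\|_{L^2}\leq\|u\|_{H^1}$. This identifies $H_0^1(\Omega)$ with the closure of $\CIc(\Omega)$ under $\|\nabla\cdot\|_{L^2}$.

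For the second equivalence, I would unpack the definition~\eqref{EqTSpacez} with $\alpha=1$, $k=1$: for $u\in\CIc(\Omega)$,
\[
\|u\|_{\rho H_{\rm z}^1(\Omega)}^2 = \|\rho^{-1}u\|_{L^2(\Omega)}^2 + \|\nabla u\|_{L^2(\Omega)}^2,
\]
since $\rho^{-1}\rho\partial_j = \partial_j$. The direction $\|\nabla u\|_{L^2}\leq\|u\|_{\rho H_{\rm z}^1}$ is immediate. For the reverse direction, the key input is the Hardy inequality on the smooth bounded domain $\Omega$: for all $u\in\CIc(\Omega)$,
\[
\Bigl\| \frac{u}{\rho} \Bigr\|_{L^2(\Omega)} \lesssim \|\nabla u\|_{L^2(\Omega)},
\]
where $\rho$ is (comparable to) the distance to $\pa\Omega$ (smoothness of $\pa\Omega$, as assumed throughout \S\ref{ssec:fermi}, is more than enough). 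Combining these gives $\|u\|_{\rho H_{\rm z}^1}\lesssim\|\nabla u\|_{L^2}$ and hence the equivalence.

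With both equivalences established on the common dense subspace $\CIc(\Omega)$, the three completions (under $\|\cdot\|_{H^1}$, under $\|\nabla\cdot\|_{L^2}$, and under $\|\cdot\|_{\rho H_{\rm z}^1}$) coincide as topological vector spaces; by definition these are, respectively, $H_0^1(\Omega)$, the closure asserted in the lemma, and $\rho H_{\rm z}^1(\Omega)$. The main technical input is thus the Hardy inequality; since no weaker regularity of $\pa\Omega$ is being claimed and smoothness is standing throughout, I expect no real obstacle, and the proof reduces to citing Hardy and Poincar\'e.
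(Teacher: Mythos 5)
Your argument is correct and mirrors the paper's: both reduce the claim to a Hardy-type estimate controlling $\|u/\rho\|_{L^2(\Omega)}$ by $\|\nabla u\|_{L^2(\Omega)}$, together with the Poincar\'e inequality for the final statement. The only real difference is that the paper keeps the argument self-contained by proving the needed Hardy inequality from scratch: after observing that $\rho^{-1}(\rho\partial)=\partial$ reduces the issue to the $\zeta=0$ term of~\eqref{EqTSpacez}, it notes that the bound is delicate only near $\pa\Omega$ and there, in normal coordinates, it boils down to the elementary one-dimensional estimate
\[
\int_0^\infty x^{-2}|u(x)|^2\,\dd x \le 4\int_0^\infty |u'(x)|^2\,\dd x,\qquad u\in\CIc((0,\infty)),
\]
which it proves via Minkowski's integral inequality applied to $u(x)=\int_0^1 x u'(tx)\,\dd t$. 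You instead invoke the Hardy inequality on a smooth bounded domain as a black box, which is perfectly valid under the standing smoothness hypothesis on $\pa\Omega$; what you lose is the self-containment and the explicit constant, neither of which matters here. Both routes complete by the abstract observation that equivalent norms on the dense subspace $\CIc(\Omega)$ yield identical completions.
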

\begin{proof}
  Since \(\rho^{-1}(\rho\partial)=\partial\), it suffices to control the \(\zeta=0\) term of~\eqref{EqTSpacez}, which is delicate only near the boundary. There, we use the elementary inequality
  \begin{alignat*}{2}
      &\int_0^\infty x^{-2}|u(x)|^2\odif{x} = \biggl\| \int_0^1 u'(t\cdot)\,\dd t\biggr\|_{L^2([0,\infty)_x)}^2 \\
      &\qquad\leq \biggl(\int_0^1 \|u'(t\cdot)\|_{L^2([0,\infty)_x)}\,\dd t\biggr)^2 = \biggl(\int_0^1 t^{-1/2} \|u'\|_{L^2}\,\dd t\biggr)^2 &&= 4\int_0^\infty |u'(x)|^2\odif{x} \\
      &\qquad&&=4\int_0^\infty x^{-2}|x u'(x)|^2\odif{x},
  \end{alignat*}
  valid for \(u\in\CIc((0,\infty))\). The final statement follows from the Poincar\'e inequality.
\end{proof}

Thus, the space \(H^{-1}(\Omega)=H_0^1(\Omega)^*\) is equal to \(\rho^{-1}H_{\rm z}^{-1}(\Omega)\).

\begin{proof}[Proof of Proposition~\usref{PropTOmega}]
    For \(u\in\CIc(\Omega)\), we have \(\iprod{-\laplace u,u}_{L^2(\Omega)}=\int_\Omega |\nabla
    u|^2\odif{x}\). By Lemma~\ref{LemmaTH0}, this is equivalent to (i.e., bounded from above and
    below by universal constants times) the squared \(\rho H^1_{\rm z}(\Omega)\)-norm of \(u\). By
    Cauchy--Schwarz, we therefore have
  \[
    \norm{u}_{\rho H^1_{\rm z}(\Omega)} \lesssim \norm{\laplace u}_{\rho^{-1}H^{-1}_{\rm z}(\Omega)} \lesssim \norm{(-\laplace-\lambda_0)u}_{\rho^{-1}H_{\rm z}^{-1}(\Omega)} + \norm{u}_{\rho^{-1}H_{\rm z}^{-1}(\Omega)}.
  \]
  Note that the space \(\rho H_{\rm z}^1(\Omega)\cap\ker(-\laplace-\lambda_0)\) consists of the
  Dirichlet eigenfunctions of \(-\laplace\) with eigenvalue \(\lambda_0\), and hence it is 1-dimensional and
  spanned by \(u_0\). Since the inclusion \(\rho H^1_{\rm z}(\Omega)\to\rho^{-1}H_{\rm z}^{-1}(\Omega)\)
  is compact (as it factors through the compact inclusion $\rho H^1_{\rm z}(\Omega)=H^1_0(\Omega)\hra L^2(\Omega)$), the final term on the right can be dropped (upon increasing the constant further) when
  \(u\) lies in a fixed subspace of \(\rho H_{\rm z}^1(\Omega)\) that is complementary to \(\mathspan\{u_0\}\). Since
  the functional \(\iprod{\cdot,u_0^\sharp}\) is nonzero on \(\mathspan\{u_0\}\), we conclude
  \begin{equation}
  \label{EqTOmega1}
    \norm{u}_{\rho H_{\rm z}^1(\Omega)} \lesssim \norm{(-\laplace-\lambda_0)u}_{\rho^{-1}H_{\rm
    z}^{-1}(\Omega)} + |\iprod{u,u_0^\sharp}|.
  \end{equation}
  Moreover, the \(L^2\)-orthogonal complement of \((-\laplace-\lambda_0)(\rho H_{\rm z}^1(\Omega))\) is spanned by \(u_0\). Therefore,
  \begin{align*}
    \norm{(u,c)}_{\rho H^1_{\rm z}(\Omega)\oplus\C} &\sim \norm{u}_{\rho H^1_{\rm z}(\Omega)} + |c| \\
      &\lesssim \norm{(-\laplace-\lambda_0)u - c u_0}_{\rho^{-1}H_{\rm z}^{-1}(\Omega)} +
      |\iprod{u,u_0^\sharp}| \sim \norm{L_\Omega^\aug(u,c)}_{\rho^{-1}H^{-1}_{\rm z}(\Omega)\oplus\C}.
  \end{align*}

  Surjectivity is proved as follows. Given \(f\in\rho^{-1}H^{-1}_{\rm z}(\Omega)\), \(w\in\C\), let
  \(c\in\C\) denote the unique constant such that \(\iprod{f+c u_0,u_0}=0\) and solve
  \((-\laplace-\lambda_0)v=f+c u_0\) for \(v\in\rho H^1_{\rm z}(\Omega)\). Then \(L_\Omega^{\rm
  aug}(u,c)=(f,w)\) for \(u=v+c'u_0\) where \(c'\coloneqq w-\iprod{v,u_0^\sharp}\).
\end{proof}

For the proof of higher regularity, we record the following (interior Schauder) estimate:

\begin{lemma}[Higher regularity]
\label{LemmaTOmegaHi}
  Let \(k\geq 1\). If \(u\in\rho H_{\rm z}^1(\Omega)\) solves \((-\laplace-\lambda)u=f\in\rho^{-1}H_{\rm z}^{k-2}(\Omega)\), then \(u\in\rho H_{\rm z}^k(\Omega)\) with
  \[
    \norm{u}_{\rho H_{\rm z}^k(\Omega)} \lesssim \norm{f}_{\rho^{-1}H_{\rm z}^{k-2}(\Omega)} + \norm{u}_{\rho H_{\rm z}^1(\Omega)}.
  \]
  The implicit constant can be taken to be uniform when \(\lambda\) varies in a fixed compact subset of \(\C\).
\end{lemma}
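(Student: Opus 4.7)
The plan is to recognize the lemma as a standard (weighted) 0-elliptic regularity statement in the sense of Mazzeo--Melrose \cite{MazzeoMelroseHyp}. The key structural observation is that $\rho^2(-\laplace-\lambda)$, although singular as a classical differential operator near $\pa\Omega$, is a smooth, 0-elliptic second-order differential operator up to $\pa\Omega$: in the normal coordinates $(\rho,y)$ of~\S\ref{ssec:fermi} and using~\eqref{EqTApproxDelta},
\[
\rho^2(-\laplace - \lambda) = -(\rho\pa_\rho)^2 + (1 - \rho a)(\rho\pa_\rho) - g^{j k}(\rho\pa_{y^j})(\rho\pa_{y^k}) - \rho b^l(\rho\pa_{y^l}) - \lambda\rho^2,
\]
with coefficients smooth up to $\{\rho=0\}$ and principal 0-symbol the Euclidean dual metric on the 0-cotangent bundle. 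The equation to analyze then reads $\rho^2(-\laplace-\lambda) u = \rho^2 f$, and the weight bookkeeping gives $\rho^2 f \in \rho H^{k-2}_{\mathrm{z}}(\Omega)$, matching the target $u \in \rho H^k_{\mathrm{z}}(\Omega)$.

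The proof proceeds by induction on $k$, with $k=1$ being the hypothesis. I would use a partition of unity $1 = \chi_{\mathrm{int}} + \chi_\pa$ separating $\Omega^\circ$ from a collar of $\pa\Omega$. On $\supp\chi_{\mathrm{int}}$ the function $\rho$ is bounded above and below, so 0-norms are uniformly equivalent to standard Sobolev norms and $\chi_{\mathrm{int}} u$ is controlled by classical interior $H^k$-elliptic regularity applied to $(-\laplace-\lambda)(\chi_{\mathrm{int}}u)$, with the commutator $[-\laplace,\chi_{\mathrm{int}}] u$ absorbed into $\|u\|_{\rho H^{k-1}_{\mathrm{z}}(\Omega)}$ via the inductive hypothesis. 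Near $\pa\Omega$ I would run the standard 0-calculus commutator bootstrap: for each 0-vector field $V \in \{\rho\pa_\rho, \rho\pa_{y^j}\}_j$,
\[
\rho^2(-\laplace-\lambda)\bigl(V(\chi_\pa u)\bigr) = V\bigl(\rho^2(\chi_\pa f + [\laplace,\chi_\pa]u + \lambda\chi_\pa u)\bigr) - [V, \rho^2(-\laplace-\lambda)](\chi_\pa u),
\]
and the crucial point is that $[V, \rho^2(-\laplace-\lambda)] \in \Diff_0^2$ since this class is closed under commutation with 0-vector fields. Iterating at most $k-1$ times and invoking the inductive hypothesis upgrades the regularity from $\rho H^{k-1}_{\mathrm{z}}$ to $\rho H^{k}_{\mathrm{z}}$. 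The uniformity of constants as $\lambda$ ranges over a compact set is automatic, since $\lambda$ enters only through the smooth lower-order coefficient $\lambda\rho^2$.

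The principal (mild) obstacle is the induction base at $k=2$, namely the estimate $\|u\|_{\rho H^2_{\mathrm{z}}(\Omega)} \lesssim \|f\|_{\rho^{-1}L^2(\Omega)} + \|u\|_{\rho H^1_{\mathrm{z}}(\Omega)}$. This is provable by a G\aa rding-type integration by parts using the explicit formula above and the 0-ellipticity of the principal part $-(\rho\pa_\rho)^2 - g^{jk}(\rho\pa_{y^j})(\rho\pa_{y^k})$, pairing the equation with suitably weighted second-order 0-derivatives of $u$; a secondary subtlety is to track that the Dirichlet vanishing encoded by the weight $\rho$ is preserved by 0-vector fields, which it is since $V\colon \rho^\alpha H^k_{\mathrm{z}}(\Omega) \to \rho^\alpha H^{k-1}_{\mathrm{z}}(\Omega)$ is bounded for each $V$.
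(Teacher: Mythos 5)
Your proof takes a genuinely different route from the paper's. The paper proves the lemma by a direct scaling argument: away from $\pa\Omega$ it is classical interior elliptic regularity, and near $\pa\Omega$ one observes that $\rho^2(-\laplace-\lambda)$, when rescaled by $\rho_0$ (i.e.\ in coordinates $X=\rho/\rho_0$, $Y=y/\rho_0$) on unit cubes centered at distance $\sim\rho_0$ from $\pa\Omega$, is uniformly elliptic with uniformly bounded coefficients; applying standard interior $H^k$ elliptic estimates on each such unit cube and summing over a Whitney-type cover gives the weighted estimate for all $k$ simultaneously, with no separate base case. Your proof instead goes through the Mazzeo--Melrose $0$-calculus: you correctly identify $\rho^2(-\laplace-\lambda)\in\Diff_0^2$ and its $0$-ellipticity, and run a commutator bootstrap using that $[\cV_0,\Diff_0^2]\subset\Diff_0^2$. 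That argument is sound in outline, but notice that the real analytic content then gets pushed into the base case $k=2$: to estimate $\|V W u\|_{L^2}$ for $V,W\in\{\rho\pa_\rho,\rho\pa_{y^j}\}$ from the equation and $u\in\rho H^1_{\rm z}(\Omega)$, you cannot simply commute and apply Proposition~\ref{PropTOmega} to $Vu$ (which is a priori only in $L^2$, not $\rho H^1_{\rm z}$), so you do need the G\aa rding-type integration by parts (or a difference-quotient variant) that you flag but do not carry out. The paper's scaling approach sidesteps this entirely, since the gain from $H^1$ to $H^k$ on a unit cube is a single application of classical interior regularity. A minor slip: your commutator identity carries an extraneous $\lambda\chi_\pa u$ term (you wrote $-\laplace(\chi_\pa u)$ where $(-\laplace-\lambda)(\chi_\pa u)$ is intended), but since that term is lower order and controlled it does not affect the argument.
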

\begin{proof}
  Away from \(\partial\Omega\), this is standard elliptic regularity. Near \(\partial\Omega\), we note that \(\rho^2(-\laplace-\lambda_0)\), in size \(\rho_0\)-cubes centered around points at distance \(2\rho_0\) from \(\partial\Omega\), is, in the coordinates \(X=\rho/\rho_0\) and \(Y=y/\rho_0\) (with \(y\) denoting local coordinates on \(\partial\Omega\)), uniformly elliptic in unit cubes in \(X,Y\). Thus, elliptic regularity in such unit cubes gives the desired estimate.
\end{proof}

\subsubsection{Estimates in \texorpdfstring{\(\widetilde\Omega^\complement\)}{the complement of Omega}}

We next solve the Dirichlet problem for
\begin{equation}
\label{EqTModelOmegaC}
  L_{\Omega^\complement,h} \coloneqq -h^2\laplace+1
\end{equation}
in \(\Omega^\complement\):

\begin{prop}[Inversion of \(L_{\Omega^\complement,h}\)]
\label{PropTOmegaC}
  The operator
  \[
    L_{\Omega^\complement,h} \colon H^1_{0,h}(\Omega^\complement) \to H^{-1}_{0,h}(\Omega^\complement)
  \]
  is invertible, and its inverse is uniformly bounded, i.e.,
  \begin{equation}
  \label{EqTOmegaC}
    \norm{u}_{H^1_{0,h}(\Omega^\complement)} \lesssim \norm{L_{\Omega^\complement,h}u}_{H^{-1}_{0,h}(\Omega^\complement)}.
  \end{equation}
\end{prop}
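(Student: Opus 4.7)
The plan is to identify the bilinear form associated with $L_{\Omega^\complement,h}$ with the inner product on $H^1_{0,h}(\Omega^\complement)$ itself, which makes the uniformity in $h$ automatic.

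First I would observe that, by definition,
\[
    \norm{u}_{H^1_{0,h}(\Omega^\complement)}^2 = \norm{u}_{L^2(\Omega^\complement)}^2 + h^2\norm{\nabla u}_{L^2(\Omega^\complement)}^2,
\]
so the sesquilinear form
\[
    a_h(u,v) \coloneqq \int_{\Omega^\complement} \bigl(h^2\nabla u\cdot\overline{\nabla v} + u\bar v\bigr)\odif{x}
\]
coincides with the Hilbert inner product of $H^1_{0,h}(\Omega^\complement)$. In particular $a_h$ is bounded and coercive on $H^1_{0,h}(\Omega^\complement)$ with constants independent of $h$ (namely both equal to $1$). An integration by parts shows that for $u\in\CIc((\Omega^\complement)^\circ)$ and $v\in H^1_{0,h}(\Omega^\complement)$,
\[
    a_h(u,v) = \iprod{L_{\Omega^\complement,h}u,v}_{L^2(\Omega^\complement)},
\]
and this extends by density to $u\in H^1_{0,h}(\Omega^\complement)$ with $L_{\Omega^\complement,h}u$ interpreted as an element of $H^{-1}_{0,h}(\Omega^\complement)$.

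Next I would apply the Riesz representation theorem (or equivalently Lax--Milgram) to $a_h$: given $f\in H^{-1}_{0,h}(\Omega^\complement)$, there is a unique $u\in H^1_{0,h}(\Omega^\complement)$ such that $a_h(u,v)=\iprod{f,v}$ for all $v\in H^1_{0,h}(\Omega^\complement)$, i.e.\ $L_{\Omega^\complement,h}u=f$. This directly gives bijectivity. For the uniform bound, the testing $v=u$ yields
\[
    \norm{u}_{H^1_{0,h}(\Omega^\complement)}^2 = a_h(u,u) = \iprod{L_{\Omega^\complement,h}u,u} \leq \norm{L_{\Omega^\complement,h}u}_{H^{-1}_{0,h}(\Omega^\complement)} \norm{u}_{H^1_{0,h}(\Omega^\complement)},
\]
from which \eqref{EqTOmegaC} follows after dividing by $\norm{u}_{H^1_{0,h}(\Omega^\complement)}$ (the case $u=0$ being trivial).

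There is essentially no obstacle here: because the two copies of the parameter $h$ appear symmetrically in both the operator and the norm, the coercivity constant is exactly $1$ and independent of $h$. The only point requiring minor care is that $H^1_{0,h}(\Omega^\complement)$ is defined as the closure of $\CIc((\Omega^\complement)^\circ)$ rather than of $H^1(\Omega^\complement)\cap\{u|_{\pa\Omega}=0\}$, so the weak formulation above automatically encodes the Dirichlet condition on $\pa\Omega$ and no Poincar\'e-type inequality is needed (the lower-order term $u$ in $a_h$ supplies coercivity by itself).
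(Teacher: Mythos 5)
Your proposal is correct and takes essentially the same approach as the paper: both identify the quadratic form $\iprod{L_{\Omega^\complement,h}u,u}$ with $\norm{u}_{H^1_{0,h}(\Omega^\complement)}^2$ and deduce invertibility with operator-norm $1$. You simply spell out the Lax--Milgram/Riesz step and the density argument, which the paper leaves implicit.
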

\begin{proof}
  For \(u\in\CIc(\Omega^\complement)\), we have
  \[
      \iprod{L_{\Omega^\complement,h}u,u}_{L^2} = \norm{h\nabla u}^2 + \norm{u}^2 =
      \norm{u}_{H_{0,h}^1(\Omega^\complement)}^2,
  \]
  and therefore \(L_{\Omega^\complement,h}\colon H_{0,h}^1(\Omega^\complement)\to
  H_{0,h}^{-1}(\Omega^\complement)\) is invertible, with inverse having operator norm \(1\) (in
  particular, uniformly bounded in \(h\)).
\end{proof}

We record the following analogue of Lemma~\ref{LemmaTOmegaHi}:

\begin{lemma}[Higher regularity]
\label{LemmaTOmegaCHi}
  For \(k\geq 1\), the operator \(L_{\Omega^\complement,h}\colon H_h^k(\Omega^\complement)\cap H_{0,h}^1(\Omega^\complement)\to H_h^{k-2}(\Omega^\complement)\) is invertible, with uniformly bounded inverse. That is,
  \[
    \norm{u}_{H_h^k(\Omega^\complement)} \lesssim \norm{L_{\Omega^\complement,h}u}_{H_h^{k-2}(\Omega^\complement)}.
  \]
\end{lemma}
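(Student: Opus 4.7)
The plan is to induct on $k$; the base case $k=1$ is Proposition~\ref{PropTOmegaC}. For $k\geq 2$, I would localize with a partition of unity $1=\chi_0+\sum_{i=1}^N\chi_i$ on $\overline{\Omega^\complement}$: $\chi_0$ has support disjoint from $\pa\Omega$, and each $\chi_i$ is supported in a chart $U_i$ in which a local diffeomorphism straightens $\pa\Omega\cap U_i$ to $\{y_d=0\}$ and $\Omega^\complement\cap U_i$ to the half-space $\{y_d\geq 0\}$. It then suffices to prove uniform-in-$h$ estimates
\[
  \|\chi_j u\|_{H_h^k(\Omega^\complement)}\lesssim\|f\|_{H_h^{k-2}(\Omega^\complement)}+\|u\|_{H_h^{k-1}(\Omega^\complement)},
\]
as the last term is absorbed via the inductive hypothesis.

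For $v=\chi_0 u$ extended by zero to $\R^d$, the constant-coefficient structure of $L_{\Omega^\complement,h}=-h^2\laplace+1$ yields the uniform-in-$h$ Plancherel identity
\[
  \|v\|_{H_h^k(\R^d)}^2\sim\int(1+|h\xi|^2)^k|\wh v|^2\dd\xi=\int(1+|h\xi|^2)^{k-2}|(h^2|\xi|^2+1)\wh v|^2\dd\xi\sim\|L_{\Omega^\complement,h}v\|_{H_h^{k-2}(\R^d)}^2.
\]
Applied to $L_{\Omega^\complement,h}(\chi_0 u)=\chi_0 f+[L_{\Omega^\complement,h},\chi_0]u$, with $[L_{\Omega^\complement,h},\chi_0]=-h^2(2\nabla\chi_0\cdot\nabla+\laplace\chi_0)$ a first-order semiclassical operator with compactly supported smooth coefficients, this yields the required bound for $\chi_0 u$.

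For a boundary piece $w\coloneqq\chi_i u$, pull back to the half-space. The operator becomes $\wt L=-h^2\sum a^{pq}(y)\pa_p\pa_q+h^2\sum b^p(y)\pa_p+1$ with smooth coefficients and uniformly elliptic principal part, and $w|_{y_d=0}=0$. Tangential semiclassical derivatives $h\pa_{y_j}$ ($j<d$) preserve this Dirichlet condition, and a direct computation gives $[\wt L,h\pa_{y_j}]=h\cdot Q_j$, where $Q_j$ is a second-order semiclassical differential operator with smooth coefficients; iterating, $[\wt L,(h\pa_{y'})^\alpha]$ carries an explicit factor of $h$ in front of a semiclassical operator of order $|\alpha|+1$. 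For $|\alpha|\leq k-1$, applying the $k=1$ estimate to $(h\pa_{y'})^\alpha w\in H_{0,h}^1$ and using this commutator bound yields
\[
  \|(h\pa_{y'})^\alpha w\|_{H_h^1}\lesssim\|f\|_{H_h^{k-2}}+h\|u\|_{H_h^{k-1}},
\]
controlling all semiclassical derivatives of $w$ of total order $\leq k$ that involve at most one normal factor. Purely higher-order normal derivatives are recovered from the equation itself: ellipticity ($a^{dd}>0$) allows one to solve $(h\pa_{y_d})^2 w=-(a^{dd})^{-1}(\wt L w-w+h^2\sum_{(p,q)\neq(d,d)}a^{pq}\pa_p\pa_q w-h^2\sum b^p\pa_p w)$, and iterating this identity in the order of normal differentiation controls all remaining derivatives of $w$ of total order $\leq k$.

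The main obstacle is maintaining uniformity in $h$, especially in the boundary argument. The crucial feature is that each commutator of $\wt L$ with a tangential semiclassical derivative carries an extra factor of $h$ from differentiating the smooth coefficients of $\wt L$; combined with the inductive hypothesis $\|u\|_{H_h^{k-1}}\lesssim\|f\|_{H_h^{k-2}}$, this lets the commutator terms be absorbed rather than producing a circular estimate. The iterative recovery of normal derivatives via the equation must be carefully organized in increasing order of $\pa_{y_d}$-differentiation, so that at each step the right-hand side has already been estimated.
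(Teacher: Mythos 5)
Your proof is correct and follows essentially the same route the paper takes: the paper's own proof is a one-sentence reference to the standard boundary regularity argument via tangential finite differences rescaled by $h$, which is precisely what you spell out (straighten $\partial\Omega$, apply $h\pa_{y'}$ tangentially, observe that $[\wt L, h\pa_{y_j}]$ gains a factor of $h$ so commutator terms are absorbed uniformly, recover pure normal derivatives from the equation by ellipticity of $a^{dd}$). The only cosmetic gap is that you work with the tangential derivatives $h\pa_{y'}$ directly rather than the difference quotients the paper names — the latter is what justifies a priori that $(h\pa_{y'})^\alpha w$ actually lies in $H^1_{0,h}$ before you invoke the $k=1$ estimate — but this is the standard finite-difference step you implicitly invoke, and does not affect the correctness of the argument.
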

\begin{proof}
  This follows as in the standard proof of higher regularity near the boundary \(\partial\Omega^\complement\) based on finite tangential differences, now rescaled by \(h\).
\end{proof}

\subsubsection{Estimates on \texorpdfstring{\({\ff}\)}{the front face}}

The model operator we study here is
\begin{equation}
\label{EqTModelff}
  L_{{\ff},h} \coloneqq -\partial_{\hat\rho}^2 - h^2\laplace_{\partial\Omega} + H(\hat\rho),
\end{equation}
where $\laplace_{\pa\Omega}$ is as in \eqref{EqTApproxDelta}. This differs from the model operator $\hat P$ studied in~\S\ref{SsModelff} by the presence of the boundary Laplacian, which will allow us to control ($h$-rescaled) tangential derivatives here.

\begin{lemma}[Inversion of \(L_{{\ff},h}\): particular weights]
\label{LemmaTff}
  The operator
  \begin{equation}
  \label{EqTff}
    L_{{\ff},h} \colon \hat\rho_< H_{{\ff},h}^1 \to \hat\rho_<^{-1} H_{{\ff},h}^{-1}
  \end{equation}
  is invertible; its operator norm as well as that of its inverse are uniformly bounded.
\end{lemma}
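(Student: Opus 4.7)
The plan is to deduce invertibility from the Lax--Milgram theorem applied to the sesquilinear form associated with $L_{\ff,h}$,
\[
  B_h(u,v) \coloneqq \int_{\R\times\pa\Omega}\bigl(\pa_{\hat\rho}u\,\overline{\pa_{\hat\rho}v} + h^2\,\nabla_{\pa\Omega}u\cdot\overline{\nabla_{\pa\Omega}v} + H(\hat\rho)\,u\,\bar v\bigr),
\]
on the Hilbert space $\hat\rho_< H^1_{\ff,h}$, which by the definitions unwinds to $\|u\|_{\hat\rho_< H^1_{\ff,h}}^2 = \|\hat\rho_<^{-1}u\|^2 + \|\pa_{\hat\rho}u\|^2 + \|h\nabla_{\pa\Omega}u\|^2$. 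Boundedness of $B_h$ on this space (uniformly in $h$), and hence of $L_{\ff,h}\colon \hat\rho_<H^1_{\ff,h}\to\hat\rho_<^{-1}H^{-1}_{\ff,h}$, follows immediately from Cauchy--Schwarz: the first two terms of $B_h(u,v)$ are handled directly, while for the third we use that $\hat\rho_<\equiv 1$ on $\{\hat\rho>0\}$, so that $\|\chfun_{\{\hat\rho>0\}}u\|_{L^2}\le\|\hat\rho_<^{-1}u\|_{L^2}\le\|u\|_{\hat\rho_< H^1_{\ff,h}}$.

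The heart of the argument is the uniform coercivity estimate on $\CIc(\R\times\pa\Omega)$,
\[
  \|u\|_{\hat\rho_< H^1_{\ff,h}}^2 \lesssim B_h(u,u) = \|\pa_{\hat\rho}u\|^2 + \|h\nabla_{\pa\Omega}u\|^2 + \|u\|_{L^2(\{\hat\rho>0\}\times\pa\Omega)}^2.
\]
The second and third summands on the left already appear literally on the right, so everything reduces to controlling $\|\hat\rho_<^{-1}u\|_{L^2}$. On $\{\hat\rho>0\}$ the weight $\hat\rho_<$ is $\equiv 1$ and this is exactly the potential piece of $B_h(u,u)$, so only $\|\hat\rho_<^{-1}u\|_{L^2(\{\hat\rho<0\}\times\pa\Omega)}$ requires work. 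For this I would establish, pointwise in $y\in\pa\Omega$ for $v(\hat\rho):=u(\hat\rho,y)$, a one-dimensional inequality
\[
  \int_{-\infty}^0 \frac{|v(\hat\rho)|^2}{\hat\rho_<^2}\,\dd\hat\rho \lesssim \int_{\R}|\pa_{\hat\rho}v|^2\,\dd\hat\rho + \int_0^\infty |v|^2\,\dd\hat\rho
\]
by combining (i) the standard Hardy inequality on $(-\infty,-1)$ applied to compactly supported $v$ via $v(\hat\rho)=\int_{-\infty}^{\hat\rho}\pa_s v\,\dd s$, with (ii) a Poincar\'e-type bound on $[-1,0]$ estimating $\|v\|_{L^2(-1,0)}$ by $\|v\|_{L^2(0,1)} + \|\pa_{\hat\rho}v\|_{L^2(-1,1)}$, the trace-like passage through $\hat\rho=0$ being controlled by a one-dimensional Sobolev embedding. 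Integrating in $y$ and a standard density argument extend coercivity to all of $\hat\rho_< H^1_{\ff,h}$.

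With boundedness and coercivity in hand, Lax--Milgram produces, for each $f\in\hat\rho_<^{-1}H^{-1}_{\ff,h}$, a unique $u\in\hat\rho_< H^1_{\ff,h}$ with $B_h(u,\cdot)=\la f,\cdot\ra$, i.e.\ $L_{\ff,h}u=f$, and the solution operator norm is the reciprocal of the coercivity constant. Crucially, no step introduces any $h$-dependent loss: the parameter $h$ enters only through the combination $h\nabla_{\pa\Omega}$, which appears identically on both sides of every estimate. The main obstacle I anticipate is the Hardy/Poincar\'e step above, whose subtlety lies in matching the mixed behavior of the weight $\hat\rho_<$ (linear as $\hat\rho\to-\infty$, bounded near $0$) against the fact that the Heaviside potential $H(\hat\rho)$ provides $L^2$-control only on $\{\hat\rho>0\}$, forcing the missing $L^2$-weight on the negative side to be bought entirely from the derivative term together with a trace passage through $\hat\rho=0$.
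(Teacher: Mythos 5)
Your overall plan is the same as the paper's: prove a uniform coercivity estimate for the quadratic form (i.e., that \(\iprod{L_{\ff,h}u,u}\gtrsim\norm{u}_{\hat\rho_< H^1_{\ff,h}}^2\)), deduce \(\norm{u}\lesssim\norm{L_{\ff,h}u}\) by Cauchy--Schwarz, and obtain invertibility; whether one invokes Lax--Milgram by name or reads off the a priori estimate directly is purely a matter of presentation. Both proofs identify the only nontrivial ingredient as the control of \(\norm{\hat\rho_<^{-1}u}_{L^2(\{\hat\rho<0\})}\), and both recognize that this must be manufactured from the derivative term plus a passage through \(\hat\rho=0\) where the Heaviside potential finally gives \(L^2\)-control.

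There is, however, a concrete flaw in your step (i). The Hardy inequality you need on \((-\infty,-1]\) with weight \(\hat\rho^{-2}\) and unweighted derivative on the right is \emph{not} available for compactly supported \(v\) via \(v(\hat\rho)=\int_{-\infty}^{\hat\rho}\partial_s v\,\dd s\): that representation encodes vanishing at \(-\infty\), which is the wrong endpoint. Indeed, taking \(v'\approx\chfun_{(-N,-1)}\) shows that the ratio \(\int_{-\infty}^{-1}\hat\rho^{-2}|v|^2\,\dd\hat\rho\big/\int|v'|^2\,\dd\hat\rho\) grows like \(N\), so the estimate fails without further input. What the Hardy inequality on \([1,\infty)_x\) (with \(x=-\hat\rho\)) actually gives, via integration by parts against \(-(1/x)'=x^{-2}\), is
\[
  \int_1^\infty \frac{|w|^2}{x^2}\,\dd x \;\leq\; |w(1)|^2 + 2\Bigl(\int_1^\infty \frac{|w|^2}{x^2}\,\dd x\Bigr)^{1/2}\Bigl(\int_1^\infty |w'|^2\,\dd x\Bigr)^{1/2},
\]
so one \emph{must} account for the boundary contribution \(|w(1)|^2=|v(-1)|^2\). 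The paper sidesteps this by applying Hardy not to \(u\) but to \(\chi u\), with \(\chi=0\) on \([-1,\infty)\), so the boundary term vanishes and the commutator produces a harmless \(\int_{-2}^{-1}|u|^2\,\dd\hat\rho\), which is then absorbed by the trace/Poincar\'e step. Your route can be repaired identically (either introduce a cutoff, or keep the \(|v(-1)|^2\) term and bound it by \(\norm{v}_{L^2(0,1)}^2+\norm{\partial_{\hat\rho}v}_{L^2(-1,1)}^2\)), but as written your step (i) is not a correct self-contained statement.
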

\begin{proof}
  The uniform boundedness of \(L_{{\ff},h}\) follows directly from the definition of the function spaces.

  For \(u\in\CIc(\R\times\partial\Omega)\), we have
  \[
    Q \coloneqq \norm{\partial_{\hat\rho}u}_{L^2}^2 + \norm{h\nabla_{\partial\Omega}u}_{L^2}^2 + \norm{
    H(\hat\rho)u}_{L^2}^2 = \iprod{L_{{\ff},h}u,u} \leq \norm{u}_{\hat\rho_< H^1_{{\ff},h}}
    \norm{L_{{\ff},h}u}_{\hat\rho_<^{-1} H^{-1}_{{\ff},h}}.
  \]
  It thus suffices to show the estimate \(Q\gtrsim\norm{u}_{\hat\rho_< H_{{\ff},h}^1}^2\). Since \(\partial_{\hat\rho}=\hat\rho_<^{-1}\cdot\hat\rho_<\partial_{\hat\rho}\) and \(h\nabla_{\partial\Omega}=\hat\rho_<^{-1}\cdot\hat\rho_< h\nabla_{\partial\Omega}\), we only need to bound \(\norm{\hat\rho_<^{-1}u}_{L^2}^2\lesssim Q\). Now, \(Q\) directly controls \(u\) in \(L^2(\{\hat\rho\geq 0\})\), and via the control of \(\norm{\partial_{\hat\rho}u}_{L^2}^2\) also in \(L^2(\{\hat\rho\geq -2\})\), say. Let \(\chi\in\CI(\R)\) be equal to \(1\) on \((-\infty,-2]\) and \(0\) on \([-1,\infty)\). Dropping the boundary variables, note then that
  \[
      \int_{-\infty}^{-1} |\hat\rho|^{-2}|\chi(\hat\rho)u(\hat\rho)|^2\odif{x}\leq
      4\int_{-\infty}^{-1} |\hat\rho|^{-2}|\hat\rho\partial_{\hat\rho}(\chi u)|^2\odif{\hat\rho}
      \lesssim \int_{-\infty}^{-1} |\partial_{\hat\rho}u|^2\odif{\hat\rho} + \int_{-2}^{-1}
      |u|^2\odif{\hat\rho}.
  \]
  This provides the required weighted control of \(u\) in \(\hat\rho<0\).
\end{proof}

We do not state a higher regularity statement analogous to Lemmas~\ref{LemmaTOmegaHi} and
\ref{LemmaTOmegaCHi} here. The reason is that the error term \(f\) in~\eqref{EqTFin} and~\eqref{EqTEq2} arising from our quasimode construction is permitted to have a \(\delta(\rho/h)\)-singularity (with a coefficient which is a
smooth function on \(\partial\Omega\)); it thus lies in an \(H^{-1}\)-type space but not in \(H^0\). We
shall instead prove higher one-sided regularity in \(\pm\hat\rho\geq 0\) rather directly
in~\S\ref{SsTPf}; see Lemma~\ref{LemmaTPfImpr}.

It will in fact be important to have some flexibility in the weights in~\eqref{EqTff}:
\begin{prop}[Inversion of \(L_{{\ff},h}\)]
\label{PropTff}
  There exists \(\eps_0>0\) such that the following holds for all \(\alpha,\gamma\in\R\) with \(|\alpha|,|\gamma|<\eps_0\): the operator
  \[
    L_{{\ff},h} \colon \hat\rho_<^{1+\alpha}\hat\rho_>^\gamma H_{{\ff},h}^1 \to \hat\rho_<^{-1+\alpha}\hat\rho_>^\gamma H_{{\ff},h}^{-1}
  \]
  is invertible, with uniformly bounded inverse.
\end{prop}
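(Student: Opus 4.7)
The strategy is a weight conjugation reducing the assertion to the base case $\alpha=\gamma=0$ of Lemma~\ref{LemmaTff}. Set $w:=\hat\rho_<^\alpha\hat\rho_>^\gamma$, a smooth positive function of $\hat\rho$ alone. Since the commutators $[\hat\rho_<\partial_{\hat\rho},w]=\hat\rho_<w'\cdot$ are bounded multiplication operators with bound $O(|\alpha|+|\gamma|)$, and $[h\nabla_{\pa\Omega},w]=0$, multiplication by $w$ is a uniform (in $h$ and in small $\alpha,\gamma$) isomorphism $\hat\rho_<H_{\ff,h}^1\to\hat\rho_<^{1+\alpha}\hat\rho_>^\gamma H_{\ff,h}^1$, and analogously on the dual side. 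Thus the proposition is equivalent to invertibility, with uniformly bounded inverse, of the conjugated operator
\[
  L_{\ff,h}^{(\alpha,\gamma)} := w^{-1}L_{\ff,h}w\colon\hat\rho_<H_{\ff,h}^1\to\hat\rho_<^{-1}H_{\ff,h}^{-1}.
\]

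Because $-h^2\Delta_{\pa\Omega}$ and $H(\hat\rho)$ commute with $w$, a direct computation gives the $h$-independent identity
\[
  L_{\ff,h}^{(\alpha,\gamma)} = L_{\ff,h} + E_{\alpha,\gamma},\qquad E_{\alpha,\gamma} := -2(w^{-1}w')\partial_{\hat\rho} - (w^{-1}w''),
\]
where $w^{-1}w'=\alpha(\log\hat\rho_<)'+\gamma(\log\hat\rho_>)'$. The key pointwise estimates, computed directly from~\eqref{EqTHatRho}, are
\[
  \|\hat\rho_<(w^{-1}w')\|_{L^\infty(\R)} + \|\hat\rho_<^2(w^{-1}w'')\|_{L^\infty(\R)} \leq C(|\alpha|+|\gamma|);
\]
morally, the extra factor of $\hat\rho_<$, resp.\ $\hat\rho_<^2$, absorbs the decay of $w^{-1}w'$, resp.\ $w^{-1}w''$, of order $\langle\hat\rho\rangle^{-1}$, resp.\ $\langle\hat\rho\rangle^{-2}$, as $|\hat\rho|\to\infty$ in the relevant regions.

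Writing $\hat\rho_<E_{\alpha,\gamma}u=-2\hat\rho_<(w^{-1}w')\cdot\partial_{\hat\rho}u-\hat\rho_<^2(w^{-1}w'')\cdot\hat\rho_<^{-1}u$ and invoking the definition of $\|\cdot\|_{\hat\rho_<H_{\ff,h}^1}$, these pointwise bounds yield $\|\hat\rho_<E_{\alpha,\gamma}u\|_{L^2}\lesssim(|\alpha|+|\gamma|)\|u\|_{\hat\rho_<H_{\ff,h}^1}$; combined with the continuous embedding $\hat\rho_<^{-1}L^2\hookrightarrow\hat\rho_<^{-1}H_{\ff,h}^{-1}$ (obtained by duality from the trivial inclusion $\hat\rho_<H_{\ff,h}^1\hookrightarrow\hat\rho_<L^2$), this gives the uniform operator bound $\|E_{\alpha,\gamma}\|_{\hat\rho_<H_{\ff,h}^1\to\hat\rho_<^{-1}H_{\ff,h}^{-1}}\lesssim|\alpha|+|\gamma|$. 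Combining with Lemma~\ref{LemmaTff}, a Neumann series applied to $L_{\ff,h}^{(\alpha,\gamma)}=L_{\ff,h}(I+L_{\ff,h}^{-1}E_{\alpha,\gamma})$ then yields the desired uniform inverse provided $|\alpha|+|\gamma|<\eps_0$ for some $h$-independent $\eps_0>0$; conjugating back gives the proposition. The main point requiring care is the verification of the pointwise bounds above and their translation into a uniform operator estimate—crucially, $w$ commutes with the semiclassical tangential derivatives $h\nabla_{\pa\Omega}$, so no $h$-dependence enters $E_{\alpha,\gamma}$, preserving uniformity in $h$ throughout.
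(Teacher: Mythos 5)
Your proposal is correct and follows essentially the same route as the paper: conjugate $L_{\ff,h}$ by the weight $w=\hat\rho_<^\alpha\hat\rho_>^\gamma$, show the conjugation error is $O(|\alpha|+|\gamma|)$ uniformly in $h$ via explicit commutator bounds (the paper also proves the stronger $\hat\rho_<H_{\ff,h}^1\to\hat\rho_<^{-1}H_{\ff,h}^0$ bound), and close with a Neumann series off Lemma~\ref{LemmaTff}. One small imprecision to note: the commutator $[\hat\rho_<\partial_{\hat\rho},w]=\hat\rho_< w'$ is not itself a bounded multiplication operator — what is uniformly $O(|\alpha|+|\gamma|)$ is $w^{-1}[\hat\rho_<\partial_{\hat\rho},w]=\hat\rho_<(w^{-1}w')$, which is the quantity you in fact use everywhere else, so the argument is unaffected.
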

\begin{proof}
  Set
  \[
    L_{{\ff},h}^{\alpha,\gamma} \coloneqq \hat\rho_<^{-\alpha}\hat\rho_>^{-\gamma}L_{{\ff},h}\hat\rho_<^\alpha\hat\rho_>^\gamma.
  \]
  Let
  \(C\coloneqq\sup_{h\in(0,1)}\norm{L_{{\ff},h}^{-1}}_{\cL(\hat\rho_<^{-1}H_{{\ff},h}^{-1},\hat\rho_<
  H_{{\ff},h}^1)}<\infty\). It suffices to show that there exists \(\eps_0>0\) such that
  \begin{equation}
  \label{EqTffDiff}
    \norm{L_{{\ff},h}-L_{{\ff},h}^{\alpha,\gamma}}_{\cL(\hat\rho_<H_{{\ff},h}^1,\hat\rho_<^{-1}H_{{\ff},h}^{-1})} \leq \frac{1}{2 C}
  \end{equation}
  for all \(h\in(0,1)\) whenever \(|\alpha|,|\gamma|<\eps_0\) since we can then invert
  \[
    L_{{\ff},h}^{\alpha,\gamma} = L_{{\ff},h} \bigl( I + L_{{\ff},h}^{-1}(L_{{\ff},h}^{\alpha,\gamma}-L_{{\ff},h})\bigr) \colon \hat\rho_< H_{{\ff},h}^1 \to \hat\rho_> H_{{\ff},h}^{-1}
  \]
  using a Neumann series, and the norm of the inverse is uniformly bounded by \(2 C\).

  We proceed to establish~\eqref{EqTffDiff} by proving the slightly stronger statement
  \begin{equation}
  \label{EqTffDiff2}
    \sup_{h\in(0,1)}\norm{L_{{\ff},h}-L_{{\ff},h}^{\alpha,\gamma}}_{\cL(\hat\rho_<
    H_{{\ff},h}^1,\hat\rho_<^{-1}H_{{\ff},h}^0)} \xra{\alpha,\gamma\to 0} 0.
  \end{equation}
  Since \(\laplace_{\partial\Omega}\) and \(H(\hat\rho)\) are unchanged by the conjugation by \(\hat\rho_<^{-\alpha}\hat\rho_>^{-\gamma}\), we may replace \(L_{{\ff},h}\) by \(-\partial_{\hat\rho}^2\) for the purpose of showing~\eqref{EqTffDiff2}, and we thus need to control the operator norm of
  \begin{align*}
    \hat\rho_<^{-\alpha}\hat\rho_>^{-\gamma}[\partial_{\hat\rho}^2,\hat\rho_<^\alpha\hat\rho_>^\gamma] &= 2\bigl(\alpha(\partial_{\hat\rho}\hat\rho_<)\hat\rho_<^{-1} + \gamma(\partial_{\hat\rho}\hat\rho_>)\hat\rho_>^{-1}\bigr)\partial_{\hat\rho} \\
      &\quad + \alpha(\alpha-1)(\partial_{\hat\rho}\hat\rho_<)^2\hat\rho_<^{-2} + \gamma(\gamma-1)(\partial_{\hat\rho}\hat\rho_>)^2\hat\rho_>^{-2} + 2\alpha\gamma(\partial_{\hat\rho}\hat\rho_<)(\partial_{\hat\rho}\hat\rho_>)\hat\rho_<^{-1}\hat\rho_>^{-1} \\
      &\quad + \alpha(\partial_{\hat\rho}^2\hat\rho_<)\hat\rho_<^{-1} + \gamma(\partial_{\hat\rho}^2\hat\rho_>)\hat\rho_>^{-1}.
  \end{align*}
  Now, \(\partial_{\hat\rho}\colon\hat\rho_< H_{{\ff},h}^1\to H_{{\ff},h}^0\) is uniformly bounded.
  Moreover, \(\partial_{\hat\rho}\hat\rho_<\) and \(\partial_{\hat\rho}\hat\rho_<\) are bounded, and
  \(\partial_{\hat\rho}\hat\rho_>=0\) for \(\hat\rho<-1\); therefore
  \((\partial_{\hat\rho}\hat\rho_<)\hat\rho_<^{-1}\) and
  \((\partial_{\hat\rho}\hat\rho_>)\hat\rho_>^{-1}\) are uniformly bounded maps
  \(H_{{\ff},h}^0\to\hat\rho_<^{-1}H_{{\ff},h}^0\). Therefore, the first line on the right is
  uniformly bounded by \(|\alpha|+|\gamma|\). Arguing similarly for the second and third lines
  (using also that \(\partial_{\hat\rho}^2\hat\rho_<\) and \(\partial_{\hat\rho}^2\hat\rho_>\) are
  compactly supported) gives the uniform (in \(h\)) bound
  \[
    \norm{L_{{\ff},h}-L_{{\ff},h}^{\alpha,\gamma}}_{\cL(\hat\rho_< H_{{\ff},h}^1,\hat\rho_<^{-1} H_{{\ff},h}^0)} \lesssim |\alpha|+|\gamma|,
  \]
  proving~\eqref{EqTffDiff2}.
\end{proof}

With more effort, one can show that the conclusion of Proposition~\ref{PropTff} remains valid for all \(\alpha\in(-\frac12,\frac12)\) and \(\gamma\in\R\), with the bound on the operator norm on the inverse being uniform in \(h\) (for fixed \(\alpha,\gamma\)).

\subsubsection{Combination}

We now return to the study of the operator
\[
    P_h^\aug = \begin{pmatrix} P_h-\lambda_h & -u_h \\ \iprod{\cdot,u_0^\sharp}_{L^2} & 0 \end{pmatrix}
\]
introduced in~\eqref{EqTFin}, where we now make the \(h\)-dependence explicit in the notation. We shall prove:

\begin{prop}[Inversion of \(P^\aug\)]
\label{PropTFull}
  There exists \(h_0>0\) such that for all \(h\in(0,h_0]\), the operator \(P_h^\aug\colon H^1(\R^d)\oplus\C\to H^{-1}(\R^d)\oplus\C\) is invertible; moreover, regarded as a map
  \begin{equation}
  \label{EqTFullMap}
    P_h^\aug \colon \rho_{\ff}\rho_{\Omega^\complement}X^1_h \to \rho_{\ff}^{-1}\rho_{\Omega^\complement}^{-1}X^{-1}_h,
  \end{equation}
  its inverse is uniformly bounded for \(h\in(0,h_0]\).
\end{prop}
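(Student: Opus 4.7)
The strategy is to paste the inverses of the three model operators from Propositions~\ref{PropTOmega}, \ref{PropTff}, and \ref{PropTOmegaC} into a parametrix for $P_h^\aug$, and then correct the $o(1)$ remainder by a uniform Neumann series. Concretely, I would choose a partition of unity $\phi_\Omega+\phi_\ff+\phi_{\Omega^\complement}\equiv 1$ on $\tilde M$ subordinate to the three natural regions, together with thickened cutoffs $\tilde\phi_\bullet$ satisfying $\tilde\phi_\bullet\phi_\bullet=\phi_\bullet$, arranged so that $\phi_\ff$ and $\phi_{\Omega^\complement}$ vanish on $\supp u_0^\sharp\subset\Omega$ (possible since $u_0^\sharp\in\CIc(\Omega)$ while the transitions occur in an $\cO(h)$-neighborhood of $\pa\Omega$). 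Define then
\[
  Q_h(f,c) \coloneqq \tilde\phi_\Omega L_\Omega^{\aug,-1}(\phi_\Omega f,c) + \bigl(\tilde\phi_\ff L_{\ff,h}^{-1}(\phi_\ff f),0\bigr) + \bigl(\tilde\phi_{\Omega^\complement}L_{\Omega^\complement,h}^{-1}(\phi_{\Omega^\complement}f),0\bigr).
\]
Using Lemma~\ref{LemmaTNormEq} with weights $(\alpha,\beta,\gamma)=(0,1,1)$ on the $X_h^1$-side and $(0,-1,-1)$ on the $X_h^{-1}$-side (and invoking Proposition~\ref{PropTff} at $\alpha=\gamma=0$), the three factor inverses combine to give a uniformly bounded $Q_h\colon \rho_\ff^{-1}\rho_{\Omega^\complement}^{-1}X_h^{-1}\oplus\C\to\rho_\ff\rho_{\Omega^\complement}X_h^1\oplus\C$.

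The main work is to control the remainder $R_h\coloneqq P_h^\aug Q_h-I$. Writing $P_h^\aug=L_\bullet^\aug+E_\bullet$ on $\supp\tilde\phi_\bullet$, with $L_\ff$, $L_{\Omega^\complement,h}$ extended to act as $0$ on the $\C$-component: on $\wt\Omega$ one has $\lambda_h-\lambda_0,\,u_h-u_0\in h\CI$, so $\|E_\Omega\|=\cO(h)$; on $\ff$, the expansion~\eqref{EqSchrResolved} together with $\rho=h\hat\rho$ and the smoothness of $g^{jk}$, $b^l$ yields $h^2(P_h-\lambda_h)-L_{\ff,h}=\cO(h)$ as a map $\hat\rho_<H_{\ff,h}^1\to\hat\rho_<^{-1}H_{\ff,h}^{-1}$; on $\wt\Omega^\complement$, $h^2(P_h-\lambda_h)-L_{\Omega^\complement,h}=-h^2\lambda_h=\cO(h^2)$. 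The commutator terms $[P_h,\tilde\phi_\bullet]L_\bullet^{-1}\phi_\bullet$ are supported in the $\cO(h)$-wide transition zones between regions; I would handle them by noting that the two adjacent model parametrices both approximately solve the same leading-order equation $\hat P u=0$ in those zones, so the commutator contribution can be rewritten as a telescoping difference whose pieces are again $\cO(h)$. Altogether $\|R_h\|=o(1)$ as $h\to 0$, so $I+R_h$ is invertible by Neumann series for $h\leq h_0$ small, and $Q_h(I+R_h)^{-1}$ is a uniformly bounded right inverse of $P_h^\aug$.

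For each fixed $h>0$ the defining functions $\rho_\bullet$ are smooth and bounded between positive constants, so $\rho_\ff^{\pm 1}\rho_{\Omega^\complement}^{\pm 1}X_h^{\pm 1}=H^{\pm 1}(\R^d)$ as sets. Since $P_h^\aug$ is a rank-two bounded perturbation of the self-adjoint operator $P_h-\lambda_h$ (Fredholm of index $0$ as $H^1(\R^d)\to H^{-1}(\R^d)$ by Lemma~\ref{LemmaInvH1}), it too is Fredholm of index $0$ on $H^1(\R^d)\oplus\C\to H^{-1}(\R^d)\oplus\C$; combined with surjectivity coming from the right inverse this forces invertibility, and the uniform weighted bound follows from the bound on $Q_h(I+R_h)^{-1}$. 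The main obstacle is the control of the commutators in the transition zones: ambient $x$-derivatives of $\tilde\phi_\bullet$ are $\cO(h^{-1})$ while only the intrinsic weighted derivatives $\rho_\ff\rho_{\Omega^\complement}\pa$ are $\cO(1)$, so smallness cannot come from derivative bounds on the cutoffs themselves but must be extracted from the matching of neighboring model parametrices (all three models reduce to $-\pa_{\hat\rho}^2+H(\hat\rho)$ plus lower-order terms near $\pa\Omega\times\{0\}$), which is what makes the gluing work.
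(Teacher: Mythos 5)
Your parametrix-then-Neumann strategy is the one the paper sketches for the surjectivity part; the paper's principal argument is instead a cascade of localized a priori estimates (see \eqref{EqTFull0}--\eqref{EqTFullEst}). The parametrix route can work in principle, but the step where you assert $\|R_h\|=o(1)$ has a genuine gap. You correctly dispose of the model-discrepancy terms $\tilde\phi_\bullet(P_h-L_\bullet)L_\bullet^{-1}\phi_\bullet$ via Lemma~\ref{LemmaTApprox}, and you correctly identify the commutators $[P_h,\tilde\phi_\bullet]L_\bullet^{-1}\phi_\bullet$ as the obstacle. But the resolution you propose -- that adjacent model parametrices ``approximately solve the same leading-order equation $\hat{P}u=0$'' on the transition zones, so the commutator contributions ``telescope'' -- does not hold up: the model inverses carry different boundary conditions (Dirichlet at $\pa\Omega$ for $L_\Omega^\aug$, decay at $\hat\rho=\pm\infty$ for $L_{\ff,h}$), so on a transition zone their outputs differ by $\cO(1)$, not $\cO(h)$, and nothing cancels. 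A direct estimate, using that on the $\wt\Omega$--$\ff$ transition one has $\rho\sim h$ (so $\nabla\tilde\phi_\Omega\sim h^{-1}$, $\Delta\tilde\phi_\Omega\sim h^{-2}$), gives $\norm{[P_h,\tilde\phi_\Omega]L_\Omega^{\aug,-1}(\phi_\Omega f,c)}_{\rho_\ff^{-1}\rho_{\Omega^\complement}^{-1}X_h^{-1}}\lesssim\norm{(f,c)}$ with a constant of size $\cO(1)$, not $o(1)$, if one only uses $L_\Omega^{\aug,-1}$ as a map $\rho^{-1}H_{\rm z}^{-1}(\Omega)\oplus\C\to\rho H_{\rm z}^1(\Omega)\oplus\C$.

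The mechanism that actually produces smallness is the $\eps$-weight shift, which you discard by taking $\alpha=\gamma=0$ in Proposition~\ref{PropTff}. In the paper's cascade, each commutator is absorbed into an error norm carrying an extra $\rho_\bullet^{-\eps}$ weight at the face from which the commutator coefficients are supported away; after cycling through all three models the cumulative weight is $\rho_\Omega^{-\eps}\rho_\ff^{-\eps}\rho_{\Omega^\complement}^{-\eps}\sim h^{-\eps}$, which by the relation $\rho_\Omega\rho_\ff\rho_{\Omega^\complement}\sim h$ yields the factor $h^\eps$ that closes the argument. For your parametrix to give $\|R_h\|\lesssim h^\eps$ one needs the model inverses on $\eps$-shifted weighted spaces: Proposition~\ref{PropTff} supplies exactly this for $L_{\ff,h}$ (the freedom $|\alpha|,|\gamma|<\eps_0$ that you set to zero), and one additionally needs an $\eps$-shifted analogue of Proposition~\ref{PropTOmega}, namely invertibility of $L_\Omega^\aug\colon\rho^{1+\eps}H_{\rm z}^1(\Omega)\oplus\C\to\rho^{-1+\eps}H_{\rm z}^{-1}(\Omega)\oplus\C$ (unstated in the paper, but true for small $\eps$ since the indicial roots of $\rho^2\pa_\rho^2$ at $\pa\Omega$ are $0$ and $1$). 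Without these shifted mapping properties, the commutator contribution to $R_h$ stays $\cO(1)$, the Neumann series does not converge uniformly in $h$, and the proof does not close.
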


We first investigate how well the model operators \(L_\Omega^\aug\) (from~\eqref{EqTModelOmega}), resp.\ \(L_{\Omega^\complement,h}\) (from~\eqref{EqTModelOmegaC}) and \(L_{{\ff},h}\) (from~\eqref{EqTModelff}) approximate \(P^\aug_h\), resp.\ (rescaled versions of) \(P_h-\lambda_h\), respectively.

\begin{lemma}[Approximations]
\label{LemmaTApprox}
  Recall the cutoff functions \(\chi_\Omega,\chi_{\ff},\chi_{\Omega^\complement}\) from~\eqref{EqTCutoffs}. Let \(\tilde\chi_0\in\CI([0,\infty))\) be equal to \(1\) on \([0,\frac34]\) and \(0\) on \([1,\infty)\), and set \(\tilde\chi_\Omega=\tilde\chi_0(\rho_\Omega)\) (so $\tilde\chi_\Omega=1$ on $\supp\chi_\Omega$). Let \(\alpha,\gamma\in\R\). Then
  \begin{align}
  \label{EqTApproxOmega}
    \norm*{\begin{pmatrix} \tilde\chi_\Omega & 0 \\ 0 & 1 \end{pmatrix}(P_h^\aug-L_\Omega^\aug)\begin{pmatrix} \chi_\Omega u \\ c \end{pmatrix}}_{\rho^{-1}H_{\rm z}^{-1}(\Omega)\oplus\C} &\lesssim h(\norm{\chi_\Omega u}_{\rho^{-1}H_{\rm z}^{-1}(\Omega)}+|c|), \\
  \label{EqTApproxOmegaC}
    \norm{\bigl(h^2(P_h-\lambda_h)-L_{\Omega^\complement,h}\bigr)(\chi_{\Omega^\complement}u)}_{H_{0,h}^{-1}(\Omega^\complement)}
    &\lesssim h^2\norm{\chi_{\Omega^\complement}u}_{H_{0,h}^{-1}(\Omega^\complement)}, \\
  \label{EqTApproxff}
    \norm*{\bigl(h^2(P_h-\lambda_h) - L_{{\ff},h}\bigr)(\chi_{\ff}
    u)}_{\hat\rho_<^{-1-\alpha}\hat\rho_<^{-\gamma}H_{{\ff},h}^{-1}} &\lesssim
    \norm{\rho_{\ff}\chi_{\ff} u}_{\hat\rho_<^{1-\alpha}\hat\rho_<^{-\gamma}H_{{\ff},h}^1}
  \end{align}
  for all \(u\in H^1(\R^d)\).
\end{lemma}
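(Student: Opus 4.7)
The proof handles the three estimates independently. For \eqref{EqTApproxOmega}, I use that $\supp\chi_\Omega\subset\Omega$ and $\tilde\chi_\Omega\equiv 1$ on $\supp\chi_\Omega$, so $P_h = -\laplace$ there and the difference $\tilde\chi_\Omega(P_h^\aug - L_\Omega^\aug)$ applied to $(\chi_\Omega u, c)$ reduces to first entry $(\lambda_0-\lambda_h)\chi_\Omega u - (u_h-u_0)c$ and vanishing second entry (both operators use the same functional $\iprod{\cdot,u_0^\sharp}_{L^2}$). Writing $u_h = u(\cdot, h)$ and $\lambda_h = \lambda(h)$ for the $h$-slices of the quasimode data from Theorem~\ref{thm:exquasimodes}, we have $\lambda_h - \lambda_0 = \cO(h)$ (since $\lambda\in\CI([0,1))$ with $\lambda(0) = \lambda_0$) and $u_h - u_0 = \cO(h)$ uniformly on compact subsets of $\bar\Omega$ (from smoothness of the quasimode on $I$), which combined with $L^2(\Omega)\hra H^{-1}(\Omega) = \rho^{-1}H^{-1}_{\rm z}(\Omega)$ (Lemma~\ref{LemmaTH0}) gives the desired bound.

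For \eqref{EqTApproxOmegaC}, the containment $\supp\chi_{\Omega^\complement}\subset\Omega^\complement$ gives $h^2 P_h\chi_{\Omega^\complement} u = (-h^2\laplace + 1)\chi_{\Omega^\complement} u = L_{\Omega^\complement,h}\chi_{\Omega^\complement} u$, so the difference reduces to multiplication by the bounded scalar $-h^2\lambda_h$, with operator norm $\lesssim h^2$ on $H^{-1}_{0,h}(\Omega^\complement)$.

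For \eqref{EqTApproxff}, the main computation, I combine \eqref{EqSchrResolved} for $h^2 P_h$ with \eqref{EqTApproxDelta} for $\laplace_{\pa\Omega}$ to obtain, in the coordinates $(\hat\rho, y, h)$ covering a neighborhood of $\supp\chi_\ff$,
\[
    h^2(P_h - \lambda_h) - L_{\ff,h} = -ha(\rho,y)\pa_{\hat\rho} - h^2\bigl[g^{jk}(\rho,y) - g^{jk}(0,y)\bigr]\pa_{y^j}\pa_{y^k} - h^2\bigl[b^l(\rho,y)-b^l(0,y)\bigr]\pa_{y^l} - h^2\lambda_h.
\]
With $\rho = h\hat\rho$, Taylor expansion in $\rho$ gives $g^{jk}(\rho,y) - g^{jk}(0,y) = h\hat\rho\,\tilde g^{jk}(\rho,y)$ with $\tilde g^{jk}$ smooth and bounded, and likewise for $b^l$. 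Rewriting $h^2\pa_{y^j}\pa_{y^k} = (h\pa_{y^j})(h\pa_{y^k})$ so that derivatives match those in the $H^k_{\ff,h}$ norms, each term then carries a coefficient comparable on $\supp\chi_\ff$ to $\rho_\ff$ (which behaves like $h$ in the interior of $\ff$ and like $h\langle\hat\rho\rangle$ near the corners $\ff\cap\wt\Omega$ and $\ff\cap\wt\Omega^\complement$)---precisely the gain supplied by the $\rho_\ff$-factor on the right-hand side. The four terms are then bounded separately: the first-order derivative terms via the embedding $H^0_{\ff,h}\hra H^{-1}_{\ff,h}$ combined with $\pa_{\hat\rho} = \hat\rho_<^{-1}(\hat\rho_<\pa_{\hat\rho})$; the second-order $\pa_{y^j}\pa_{y^k}$-term by integrating by parts once so that only one $h\pa_{y^j}$-derivative falls on the test function paired with in $H^{-1}_{\ff,h}$; and the $-h^2\lambda_h$-term by plain multiplication combined with the pointwise inequality $h\hat\rho_<\hat\rho_>\lesssim 1$ on $\supp\chi_\ff$ (coming from $\rho_\ff\leq 3/4$).

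The main obstacle is the weight bookkeeping in \eqref{EqTApproxff}: one must verify term-by-term that the $\rho_\ff$-gain, combined with the weight shifts from the derivatives ($\pa_{\hat\rho}$ and $h\pa_{y^l}$ each costing a factor of $\hat\rho_<^{-1}$), correctly matches the disparity between $\hat\rho_<^{-1-\alpha}\hat\rho_>^{-\gamma}$ on the left and $\hat\rho_<^{1-\alpha}\hat\rho_>^{-\gamma}$ on the right. Once the alignment is verified, the inequalities reduce to Cauchy--Schwarz and routine pointwise comparisons on $\supp\chi_\ff$.
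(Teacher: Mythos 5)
Your argument follows the same structure as the paper's proof of Lemma~\ref{LemmaTApprox} and is essentially correct; the differences are minor and worth noting. For \eqref{EqTApproxOmega}, you bound $u_h - u_0$ by $\cO(h)$ uniformly on $\bar\Omega$ and invoke the embedding $L^2(\Omega)\hookrightarrow\rho^{-1}H^{-1}_{\rm z}(\Omega)$; the paper instead uses the structural membership $\tilde\chi_\Omega(u_h-u_0)\in\rho_\Omega\CI$ together with $\rho\rho_\Omega\sim h$ on $\supp\tilde\chi_\Omega$, which is a slightly more refined weighted bound (via Hardy/Lemma~\ref{LemmaTH0}) but yields the same conclusion. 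Both are valid here because the quasimode construction indeed gives $u_h-u_0 = \cO(h)$ uniformly on $\bar\Omega$ (as you correctly infer from Proposition~\ref{PropQM}\eqref{ItQM3} and the Borel summation); your phrase ``uniformly on compact subsets of $\bar\Omega$'' is mildly redundant since $\bar\Omega$ is compact, but the claim is right. For \eqref{EqTApproxOmegaC}, you and the paper agree exactly. For \eqref{EqTApproxff}, your computation correctly retains the $-h^2\lambda_h$ term, which the paper's displayed formula appears to omit (though it is easily absorbed by the same weight estimates, as you observe via $h\hat\rho_<\hat\rho_>\lesssim 1$ on $\supp\chi_\ff$). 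For the second-order tangential term you integrate by parts once to land in the dual space; the paper instead states directly that $\hat\rho_<^{-2}\cdot(\hat\rho_< h\pa_y)^2$ is uniformly bounded $\hat\rho_<^{1-\alpha}\hat\rho_>^{-\gamma}H^1_{\ff,h}\to\hat\rho_<^{-1-\alpha}\hat\rho_>^{-\gamma}H^{-1}_{\ff,h}$ — these are the same bound packaged differently. The key observation that each coefficient carries a factor comparable to $\rho_\ff\sim h\hat\rho_<\hat\rho_>$ (after accounting for the $\hat\rho_<^{-1}$ cost of each weighted derivative) is the crux, and you identify it correctly.
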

\begin{proof}
  We have
  \[
    \begin{pmatrix} \tilde\chi_\Omega & 0 \\ 0 & 1 \end{pmatrix}(P_h^\aug-L_\Omega^\aug)\begin{pmatrix} \chi_\Omega u \\ c \end{pmatrix} = \begin{pmatrix} -(\lambda_h-\lambda_0)\chi_\Omega u & -\tilde\chi_\Omega (u_h-u_0)c \\ 0 & 0 \end{pmatrix}.
  \]
  Note then that \(|\lambda_h-\lambda_0|\lesssim h\) and \(\tilde\chi_\Omega(u_h-u_0)\in\rho_\Omega\CI(M)\). On \(\supp\tilde\chi_\Omega\), the function \(\rho\) is a defining function of \({\ff}\), and hence \(\rho\rho_\Omega\sim h\) there; therefore, \(\norm{\tilde\chi_\Omega(u_h-u_0)}_{\rho^{-1}H_{\rm z}^{-1}(\Omega)}\lesssim\norm{\rho\tilde\chi_\Omega(u_h-u_0)}_{H_{\rm z}^0(\Omega)}\lesssim h\). This implies~\eqref{EqTApproxOmega}.

  The estimate~\eqref{EqTApproxOmegaC} follows from \((h^2(P_h-\lambda_h)-L_{\Omega^\complement,h})\chi_{\Omega^\complement}=-h^2\lambda_h\chi_{\Omega^\complement}\) and \(|h^2\lambda_h|\lesssim h^2\).

  To prove~\eqref{EqTApproxff}, we recall the splitting of the Laplacian from \eqref{EqTApproxDelta}. Passing to \(\hat\rho=\frac{\rho}{h}\), so \(h\partial_\rho=\partial_{\hat\rho}\), we thus obtain, on $\supp\chi_\ff$,
  \begin{align*}
    h^2(-\laplace+h^{-2}\chfun_{\Omega^\complement}) - L_{{\ff},h} &= -h^2\laplace + (\partial_{\hat\rho}^2 + h^2\laplace_{\partial\Omega}) \\
      &= -h a(\rho,y)\partial_{\hat\rho} - \rho\tilde g^{j k}(\rho,y)h\partial_{y^j}h\partial_{y^k} - h\rho\tilde b^l(\rho,y)h\partial_{y^l}
  \end{align*}
  for smooth \(\tilde g^{j k},\tilde b^l\). Consider the first term, which we rewrite as \(-h\hat\rho_< a(\rho,y)\cdot\hat\rho_<^{-1}\partial_{\hat\rho}\): since
  \[
    \hat\rho_<^{-1}\partial_{\hat\rho} \colon \hat\rho_<^{1-\alpha}\hat\rho_>^{-\gamma}H_{{\ff},h}^1 \to \hat\rho_<^{-1-\alpha}\hat\rho_>^{-\gamma}H_{{\ff},h}^0
  \]
  is uniformly bounded and multiplication by smooth functions of \((\rho,y)\) is uniformly bounded on every weighted \(H_{{\ff},h}^k\) space, we obtain
  \[
    \norm{ h a \partial_{\hat\rho} (\chi_{\ff} u) }_{\hat\rho_<^{-1-\alpha}\hat\rho_>^{-\gamma}H_{{\ff},h}^0} \lesssim \norm{h\hat\rho_<\chi_{\ff} u}_{\hat\rho_<^{1-\alpha}\hat\rho_>^{-\gamma}H_{{\ff},h}^1} \lesssim \norm{\rho_{\ff}\chi_{\ff} u}_{\hat\rho_<^{1-\alpha}\hat\rho_>^{-\gamma}H_{{\ff},h}^1};
  \]
  we use here that \(h\hat\rho_<\), as a function on \(M\), is a smooth multiple of \(\rho_{\ff}\) (in fact, of \(\rho_{\ff}\rho_{\Omega^\complement}\)). We write the second term schematically as a smooth function times \(\rho h^2\partial_y^2=\rho\cdot\hat\rho_<^{-2}\cdot\hat\rho_<^2(h\partial_y)^2\); the uniform boundedness of \(\hat\rho_<^{-2}\cdot\hat\rho_<^2(h\partial_y)^2\colon\hat\rho_<^{1-\alpha}\hat\rho_>^{-\gamma}H_{{\ff},h}^1\to\hat\rho_<^{-1-\alpha}\hat\rho_>^{-\gamma}H_{{\ff},h}^{-1}\) and the fact that \(\rho\) is a smooth multiple of \(\rho_{\ff}\) on \(\supp\chi_{\ff}\) give the bound
  \[
    \norm{\rho\tilde g^{j k}h\partial_{y^j}h\partial_{y^k}u}_{\hat\rho_<^{-1-\alpha}\hat\rho_<^{-\gamma}H_{{\ff},h}^{-1}} \lesssim \norm{\rho_{\ff}\chi_{\ff} u}_{\hat\rho_<^{1-\alpha}\hat\rho_>^{-\gamma}H_{{\ff},h}^1}.
  \]
  The third term obeys an analogous estimate since it schematically equals \(h\hat\rho_<\rho\cdot\hat\rho_<^{-2}\,\hat\rho_< h\partial_y\), with \(h\hat\rho_<\rho\) a smooth multiple of \(\rho_{\ff}\).
\end{proof}

We can now give:

\begin{proof}[Proof of Proposition~\usref{PropTFull}]
  Let \(\eps\coloneqq\eps_0/2\) in the notation of Proposition~\ref{PropTff}. We begin by estimating
  \begin{equation}
  \label{EqTFull0}
  \begin{split}
    \norm{u}_{\rho_{\ff}\rho_{\Omega^\complement}X_h^1} &\lesssim \norm{\chi_\Omega u}_{\rho_{\ff}\rho_{\Omega^\complement}X_h^1} + \norm{(1-\chi_\Omega)u}_{\rho_{\ff}\rho_{\Omega^\complement}X_h^1} \\
      &\lesssim \norm{\chi_\Omega u}_{\rho H_{\rm z}^1(\Omega)} + \norm{(1-\chi_\Omega)u}_{\rho_\Omega^{-\eps}\rho_{\ff}\rho_{\Omega^\complement}X_h^1} \\
      &\lesssim \norm{\chi_\Omega u}_{\rho H_{\rm z}^1(\Omega)} + \norm{u}_{\rho_\Omega^{-\eps}\rho_{\ff}\rho_{\Omega^\complement}X_h^1};
  \end{split}
  \end{equation}
  in the passage to the second line we used~\eqref{EqTNormEqOmega} for the first term and the fact that \(\rho_\Omega\) is bounded away from \(0\) on \(\supp(1-\chi_\Omega)\) for the second term (which allows us to insert any fixed \(\rho_\Omega\)-weight, and we choose $\rho_\Omega^{-\eps}$ here).

  \pfstep{Inversion of the \(\widetilde\Omega\)-model.} We apply Proposition~\ref{PropTOmega} to treat the first term, plus $|c|$; using also Lemma~\ref{LemmaTApprox}, we get
  \begin{align}
  \label{EqTFullOmega}
    \norm{\chi_\Omega u}_{\rho H_{\rm z}^1(\Omega)} + |c| &\lesssim \norm{L_\Omega^\aug(\chi_\Omega u,c)}_{\rho^{-1}H_{\rm z}^{-1}(\Omega)\oplus\C} \\
  \label{EqTFullOmega1}
      &\leq \norm*{\begin{pmatrix} \tilde\chi_\Omega & 0 \\ 0 & 1 \end{pmatrix} P_h^\aug\begin{pmatrix} \chi_\Omega u \\ c \end{pmatrix}}_{\rho^{-1}H_{\rm z}^{-1}(\Omega)\oplus\C} \\
  \label{EqTFullOmega2}
      &\qquad + \norm*{\begin{pmatrix} \tilde\chi_\Omega & 0 \\ 0 & 1 \end{pmatrix}(P_h^\aug-L_\Omega^\aug)\begin{pmatrix}\chi_\Omega u \\ c \end{pmatrix}}_{\rho^{-1}H^{-1}_{\rm z}(\Omega)\oplus\C} \\
  \label{EqTFullOmega3}
      &\qquad + \norm*{\begin{pmatrix} 1-\tilde\chi_\Omega & 0 \\ 0 & 0 \end{pmatrix} L_\Omega^\aug\begin{pmatrix} \chi_\Omega u \\ c \end{pmatrix}}_{\rho^{-1}H_{\rm z}^{-1}(\Omega)\oplus\C}.
  \end{align}
  The first term on the right is
  \begin{align*}
    &\lesssim
    \norm*{\begin{pmatrix}\tilde\chi_\Omega&0\\0&1\end{pmatrix}P_h^\aug\begin{pmatrix}u\\c\end{pmatrix}}_{\rho^{-1}H_{\rm
    z}^{-1}(\Omega)\oplus\C} + \norm*{ \begin{pmatrix}\tilde\chi_\Omega&0\\0&1\end{pmatrix}\left[P_h^\aug,\begin{pmatrix} \chi_\Omega & 0 \\ 0 & 1 \end{pmatrix}\right]\begin{pmatrix}u\\c\end{pmatrix}}_{\rho^{-1}H_{\rm z}^{-1}(\Omega)\oplus\C} \\
    &\lesssim \norm*{P^\aug_h(u,c)}_{\rho_{\ff}^{-1}\rho_{\Omega^\complement}^{-1}X_h^{-1}} + \norm*{ \begin{pmatrix} -[\laplace,\chi_\Omega] & -\tilde\chi_\Omega(1-\chi_\Omega)u_h \\ 0 & 0 \end{pmatrix}\begin{pmatrix} u \\ c \end{pmatrix}}_{\rho^{-1}H_{\rm z}^{-1}(\Omega)\oplus\C};
  \end{align*}
  here we use that \(\chi_\Omega u_0^\sharp=u_0^\sharp\) for all sufficiently small \(h\). (This is where the compact support of \(u_0^\sharp\) in \(\Omega\) is useful.) Now, \([\laplace,\chi_\Omega]\) is uniformly bounded from \(\rho H_{\rm z}^1(\Omega)\) (or even \(\rho H_{\rm z}^0(\Omega)\)) to \(\rho^{-1} H_{\rm z}^{-1}(\Omega)\). Furthermore, \(\tilde\chi_\Omega(1-\chi_\Omega)\) vanishes near \(\widetilde\Omega\), thus \(\rho\tilde\chi_\Omega(1-\chi_\Omega)\) is a smooth multiple of \(h\), and hence
  \begin{equation}
  \label{EqTFullOmega1Pf}
    \norm{c\tilde\chi_\Omega(1-\chi_\Omega)u_h}_{\rho^{-1}H_{\rm z}^{-1}(\Omega)} \sim |c| \norm{ \rho\tilde\chi_\Omega(1-\chi_\Omega)u_h}_{H_{\rm z}^{-1}(\Omega)} \lesssim h|c|.
  \end{equation}
  Altogether, we have then shown that
  \[
    \eqref{EqTFullOmega1} \lesssim \norm{P^\aug_h(u,c)}_{\rho_{\ff}^{-1}\rho_{\Omega^\complement}^{-1}X_h^{-1}} + \norm{u}_{\rho_\Omega^{-\eps}\rho_{\ff}\rho_{\Omega^\complement}X_h^1} + h|c|;
  \]
  the second term here arises from \([\laplace,\chi_\Omega]\) whose coefficients have supports disjoint from \(\widetilde\Omega\) and \(\widetilde\Omega^\complement\), which allows us to insert arbitrary \(\rho_\Omega\)- and \(\rho_{\Omega^\complement}\)-weights. (We choose \(-\eps\) and \(1\) here for consistency with the right-hand side of~\eqref{EqTFull0}.)

  Next, we can apply~\eqref{EqTApproxOmega} to bound (a fortiori)
  \[
    \eqref{EqTFullOmega2} \lesssim h\norm{u}_{\rho_{\ff}\rho_{\Omega^\complement}X_h^1} + h|c|.
  \]

  Finally, to estimate~\eqref{EqTFullOmega3}, we note that
  \[
    \begin{pmatrix} 1-\tilde\chi_\Omega & 0 \\ 0 & 0 \end{pmatrix} L_\Omega^\aug \begin{pmatrix} \chi_\Omega & 0 \\ 0 & 1 \end{pmatrix} = \begin{pmatrix} 0 & -(1-\tilde\chi_\Omega)u_0 \\ 0 & 0 \end{pmatrix}.
  \]
  Similarly to~\eqref{EqTFullOmega1Pf}, the action of this on \((u,c)\) is bounded in \(\rho^{-1}H_{\rm z}^{-1}(\Omega)\oplus\C\) by \(h|c|\). Plugging the estimates thus obtained into~\eqref{EqTFull0} and absorbing the terms of size \(\cO(h)\) into the left-hand side, we have now proved
  \begin{equation}
  \label{EqTFull1}
    \norm{u}_{\rho_{\ff}\rho_{\Omega^\complement}X_h^1} + |c| \lesssim \norm{P_h^\aug(u,c)}_{\rho_{\ff}^{-1}\rho_{\Omega^\complement}^{-1}X_h^{-1}\oplus\C} + \norm{u}_{\rho_\Omega^{-\eps}\rho_{\ff}\rho_{\Omega^\complement}X_h^1}.
  \end{equation}

  \pfstep{Inversion of the \(\widetilde\Omega^\complement\)-model.} We now localize the second term on the right in~\eqref{EqTFull1} near \(\widetilde\Omega^\complement\), so using Lemma~\ref{LemmaTNormEq}
  \begin{align*}
    \norm{u}_{\rho_\Omega^{-\eps}\rho_{\ff}\rho_{\Omega^\complement}X_h^1} &\leq \norm{\chi_{\Omega^\complement}u}_{\rho_{\ff}\rho_{\Omega^\complement}X_h^1} + \norm{(1-\chi_{\Omega^\complement})u}_{\rho_\Omega^{-\eps}\rho_{\ff}\rho_{\Omega^\complement}X_h^1} \\
      &\lesssim h^{-1}\norm{\chi_{\Omega^\complement}u}_{H_{0,h}^1(\Omega^\complement)} + \norm{u}_{\rho_\Omega^{-\eps}\rho_{\ff}\rho_{\Omega^\complement}^{1-\eps}X_h^1}.
  \end{align*}
  We use here that \(\rho_\Omega\gtrsim 1\) on \(\supp\chi_{\Omega^\complement}\) and \(\rho_{\Omega^\complement}\gtrsim 1\) on \(\supp(1-\chi_{\Omega^\complement})\), as well as the vanishing of $\chi_{\Omega^\complement}u$ near $\pa\Omega$ for every $h>0$. Using Proposition~\ref{PropTOmegaC}, we estimate
  \begin{align*}
    h^{-1}\norm{\chi_{\Omega^\complement}u}_{H_{0,h}^1(\Omega^\complement)} &\lesssim h^{-1}\norm{L_{\Omega^\complement,h}(\chi_{\Omega^\complement}u)}_{H_{0,h}^{-1}(\Omega^\complement)} \\
      &\lesssim h\norm{\chi_{\Omega^\complement}(P_h-\lambda_h)u}_{H_{0,h}^{-1}(\Omega^\complement)} + h^{-1}\norm{[h^2(P_h-\lambda_h),\chi_{\Omega^\complement}]u}_{H_{0,h}^{-1}(\Omega^\complement)} \\
      &\qquad + h^{-1}\norm{(h^2(P_h-\lambda_h)-L_{\Omega^\complement,h})(\chi_{\Omega^\complement}u)}_{H_{0,h}^{-1}(\Omega^\complement)}.
  \end{align*}
  The first term is \(\lesssim\norm{\chi_{\Omega^\complement}(P_h-\lambda_h)u}_{\rho_{\ff}^{-1}\rho_{\Omega^\complement}^{-1}X_h^{-1}}\). In the second term, note that \([h^2(P_h-\lambda_h),\chi_{\Omega^\complement}]\) is a sum of terms of the form \(a h\partial\) and \(b\) where \(a,b\) are bounded (together with all derivatives along \(\rho_{\ff}\partial\)) and vanish near \(\widetilde\Omega,\widetilde\Omega^\complement\); thus this term is \(\lesssim\norm{u}_{\rho_\Omega^{-\eps}\rho_{\ff}\rho_{\Omega^\complement}^{1-\eps}X_h^1}\) (a fortiori). The third term can be bounded using~\eqref{EqTApproxOmegaC} and is thus \(\lesssim h\norm{\chi_{\Omega^\complement}u}_{H_{0,h}^{-1}(\Omega^\complement)}\lesssim h^2\norm{u}_{\rho_\Omega^{-\eps}\rho_{\ff}\rho_{\Omega^\complement}X_h^1}\).

  Plugging these estimates into~\eqref{EqTFull1}, we have now proved
  \[
    \norm{u}_{\rho_{\ff}\rho_{\Omega^\complement}X_h^1} + |c| \lesssim \norm{P_h^\aug(u,c)}_{\rho_{\ff}^{-1}\rho_{\Omega^\complement}^{-1}X_h^{-1}\oplus\C} + \norm{\chi_{\Omega^\complement}(P_h-\lambda_h)u}_{\rho_{\ff}^{-1}\rho_{\Omega^\complement}^{-1}X_h^{-1}} + \norm{u}_{\rho_\Omega^{-\eps}\rho_{\ff}\rho_{\Omega^\complement}^{1-\eps}X_h^1}.
  \]
  It remains to relate the second term here to \(P_h^\aug(u,c)\); to this end, note that
  \[
    \begin{pmatrix} \chi_{\Omega^\complement}(P_h-\lambda_h)u \\ 0 \end{pmatrix} - \begin{pmatrix} \chi_{\Omega^\complement} & 0 \\ 0 & 0 \end{pmatrix} P_h^\aug \begin{pmatrix} u \\ c \end{pmatrix} = \begin{pmatrix} c\chi_{\Omega^\complement}u_h \\ 0 \end{pmatrix}.
  \]
  Since \(\chi_{\Omega^\complement}u_h\) is smooth on \(M\), vanishes near \(\widetilde\Omega\),
  vanishes (to infinite order) at \(\widetilde\Omega^\complement\), and has uniformly compact
  support, its \(L^2(\R^d)\)-norm is \(\lesssim h^{\frac12}\),\footnote{The relevant computation is $\int_{-1}^1 \la\rho/h\ra^{-N}\,\dd\rho\leq h\int_{-\infty}^\infty \la\hat\rho\ra^{-N}\,\dd\hat\rho\lesssim h$ for $N>1$.} and thus a fortiori its \(\rho_{\ff}^{-1}\rho_{\Omega^\complement}^{-1}X_h^0\)-norm (and thus also its \(\rho_{\ff}^{-1}\rho_{\Omega^\complement}^{-1}X_h^{-1}\)-norm) has the same bound. Absorbing the resulting \(h^{\frac12}|c|\) term, we thus altogether get
  \begin{equation}
  \label{EqTFull2}
    \norm{u}_{\rho_{\ff}\rho_{\Omega^\complement}X_h^1} + |c| \lesssim \norm{P_h^\aug(u,c)}_{\rho_{\ff}^{-1}\rho_{\Omega^\complement}^{-1}X_h^{-1}\oplus\C} + \norm{u}_{\rho_\Omega^{-\eps}\rho_{\ff}\rho_{\Omega^\complement}^{1-\eps}X_h^1}.
  \end{equation}

  \pfstep{Inversion of the \({\ff}\)-model.} We finally localize the error term in~\eqref{EqTFull2} near \({\ff}\), so applying~\eqref{EqTNormEqff} with \(\alpha=-\eps\), \(\beta=1\), and \(\gamma=1-\eps\), we start with
  \[
    \norm{u}_{\rho_\Omega^{-\eps}\rho_{\ff}\rho_{\Omega^\complement}^{1-\eps}X_h^1} \lesssim h^{-\frac12}\norm{\chi_{\ff} u}_{\hat\rho_<^{1+\eps}\hat\rho_>^\eps H_{{\ff},h}^1} + \norm{u}_{\rho_\Omega^{-\eps}\rho_{\ff}^{1-\eps}\rho_{\Omega^\complement}^{1-\eps}X_h^1}.
  \]
  We bound the first term using Proposition~\ref{PropTff} by
  \begin{align*}
    h^{-\frac12}\norm{\chi_{\ff} u}_{\hat\rho_<^{1+\eps}\hat\rho_>^\eps H_{{\ff},h}^1} &\lesssim h^{-\frac12}\norm{L_{{\ff},h}(\chi_{\ff} u)}_{\hat\rho_<^{-1+\eps}\hat\rho_>^\eps H_{{\ff},h}^{-1}} \\
      &\lesssim h^{\frac32}\norm{\chi_{\ff}(P_h-\lambda_h)u}_{\hat\rho_<^{-1+\eps}\hat\rho_>^\eps H_{{\ff},h}^{-1}} + h^{\frac32}\norm{ [P_h-\lambda_h,\chi_{\ff}]u}_{\hat\rho_<^{-1+\eps}\hat\rho_>^\eps H_{{\ff},h}^{-1}} \\
      &\qquad + h^{-\frac12} \norm{ (h^2(P_h-\lambda_h)-L_{{\ff},h})(\chi_{\ff} u)}_{\hat\rho_<^{-1+\eps}\hat\rho_>^\eps H_{{\ff},h}^{-1}}.
  \end{align*}
  Using~\eqref{EqTNormEqff}, the first term is \(\lesssim\norm{\chi_{\ff}(P_h-\lambda_h)u}_{\rho_\Omega^{-\eps}\rho_{\ff}^{-1}\rho_{\Omega^\complement}^{-1-\eps}X_h^{-1}}\). Similarly, the second term is
  \[
    \lesssim \norm{[P_h-\lambda_h,\chi_{\ff}]u}_{\rho_\Omega^{-\eps}\rho_{\ff}^{-1}\rho_{\Omega^\complement}^{-1-\eps}X_h^{-1}};
  \]
  since the coefficients of \([P_h-\lambda_h,\chi_{\ff}]\) are supported in \(|\rho|\gtrsim 1\) (so away from $\ff$), smooth in \(x\), and uniformly bounded in \(h\), this is bounded (a fortiori) by \(\norm{u}_{\rho_\Omega^{-\eps}\rho_{\ff}^{1-\eps}\rho_{\Omega^\complement}^{1-\eps}X_h^1}\). The third term, finally, can be estimated using~\eqref{EqTApproxff} by
  \[
    h^{-\frac12}\norm{\rho_{\ff}\chi_{\ff} u}_{\hat\rho_<^{1+\eps}\hat\rho_>^\eps H_{{\ff},h}^1} \sim \norm{\rho_{\ff}\chi_{\ff} u}_{\rho_\Omega^{-\eps}\rho_{\ff}\rho_{\Omega^\complement}^{1-\eps}X_h^1} \lesssim \norm{u}_{\rho_\Omega^{-\eps}\rho_{\ff}^{1-\eps}\rho_{\Omega^\complement}^{1-\eps}X_h^1}.
  \]
  (The \(\rho_{\ff}\)-weight on the right can be taken to be \(0\) even.) Plugging these estimates into~\eqref{EqTFull2} yields
  \begin{equation}
  \label{EqTFull3}
  \begin{split}
    \norm{u}_{\rho_{\ff}\rho_{\Omega^\complement}X_h^1} + |c| &\lesssim \norm{P_h^\aug(u,c)}_{\rho_{\ff}^{-1}\rho_{\Omega^\complement}^{-1}X_h^{-1}\oplus\C} \\
      &\qquad + \norm{\chi_{\ff}(P_h-\lambda_h)u}_{\rho_\Omega^{-\eps}\rho_{\ff}^{-1}\rho_{\Omega^\complement}^{-1-\eps}X_h^{-1}}  + \norm{u}_{\rho_\Omega^{-\eps}\rho_{\ff}^{1-\eps}\rho_{\Omega^\complement}^{1-\eps}X_h^1}.
  \end{split}
  \end{equation}

  Similarly to the previous step, we need to relate the second term on the right to \(P_h^\aug(u,c)\). Note then that the error term caused by this replacement involves the operator
  \[
    \begin{pmatrix} \chi_{\ff}(P_h-\lambda_h)u \\ 0 \end{pmatrix} - \begin{pmatrix} \chi_{\ff} & 0 \\ 0 & 0 \end{pmatrix} P_h^\aug \begin{pmatrix} u \\ c \end{pmatrix} = \begin{pmatrix} c\chi_{\ff} u_h \\ 0 \end{pmatrix};
  \]
  since \(\rho_\Omega\rho_{\ff}\rho_{\Omega^\complement}\) is a smooth multiple of \(h\), its norm is
  \[
    \norm{c\chi_{\ff} u_h}_{\rho_\Omega^{-\eps}\rho_{\ff}^{-1}\rho_{\Omega^\complement}^{-1-\eps}X_h^{-1}} \lesssim h^\eps|c| \norm{u_h}_{L^2} \lesssim h^\eps|c|.
  \]
  Absorbing this into the left-hand side of~\eqref{EqTFull3} gives, for all sufficiently small \(h>0\),
  \begin{equation}
  \label{EqTFull4}
    \norm{u}_{\rho_{\ff}\rho_{\Omega^\complement}X_h^1} + |c| \lesssim \norm{P_h^\aug(u,c)}_{\rho_{\ff}^{-1}\rho_{\Omega^\complement}^{-1}X_h^{-1}\oplus\C} + \norm{u}_{\rho_\Omega^{-\eps}\rho_{\ff}^{1-\eps}\rho_{\Omega^\complement}^{1-\eps}X_h^1}.
  \end{equation}

  \pfstep{Conclusion.} The error term in~\eqref{EqTFull4} is \(\lesssim h^\eps\norm{u}_{\rho_{\ff}\rho_{\Omega^\complement}X_h^1}\) and can thus be absorbed into the left-hand side for small \(h>0\); this establishes the existence of \(h_0>0\) such that we have the uniform estimate
  \begin{equation}
  \label{EqTFullEst}
    \norm{(u,c)}_{\rho_{\ff}\rho_{\Omega^\complement}X_h^1\oplus\C} \lesssim \norm{P_h^\aug(u,c)}_{\rho_{\ff}^{-1}\rho_{\Omega^\complement}^{-1}X_h^{-1}\oplus\C},\quad 0<h\leq h_0.
  \end{equation}
  This proves the injectivity (with uniform estimates) of the map~\eqref{EqTFullMap}.

  It remains to show its surjectivity; we sketch the proof. To start, fix a partition of unity \(\psi_\Omega+\psi_{\ff}+\psi_{\Omega^\complement}=1\) such that \(\supp\psi_\bullet\subset\{\chi_\bullet=1\}\) for \(\bullet=\Omega,{\ff},\Omega^\complement\). Given \(f\in H^{-1}(\R^d)\) and \(w\in\C\), set then
  \[
    (u_\approx,c_\approx) \coloneqq \begin{pmatrix} \chi_\Omega & 0 \\ 0 & 1 \end{pmatrix}
    (L_\Omega^\aug)^{-1}\begin{pmatrix} \psi_\Omega f \\ w \end{pmatrix} + \begin{pmatrix}
    \chi_{\ff} h^2 L_{{\ff},h}^{-1}(\psi_{\ff} f) \\ 0 \end{pmatrix} + \begin{pmatrix}
\chi_{\Omega^\complement} h^2 L_{\Omega^\complement,h}^{-1}(\psi_{\Omega^\complement}f) \\ 0
\end{pmatrix}.
  \]
  That is, we attempt to invert \(P_h^\aug\) by inverting each of its model operators in the respective regimes of validity. The resulting map \(Q_h\colon(f,w)\mapsto(u_\approx,c_\approx)\) is uniformly bounded from \(\rho_{\ff}^{-1}\rho_{\Omega^\complement}^{-1}X_h^{-1}\) to \(\rho_{\ff}\rho_{\Omega^\complement}X_h^1\), and \(P_h^\aug Q_h=I+R_h\) where the operator norm of \(R_h\) on \(\rho_{\ff}^{-1}\rho_{\Omega^\complement}^{-1}X_h^{-1}\) is \(\lesssim h^\eps\) (as follows from estimates completely analogous to those that led to~\eqref{EqTFull4}), and thus \(\leq\frac12\) for sufficiently small \(h\). This gives the right inverse \(Q_h(I+R_h)^{-1}\) for \(P_h^\aug\).

  (A more elegant approach to surjectivity would be to work not with \(P_h^\aug\) but with the formally self-adjoint operator
  \[
      \tilde P_h^\aug \coloneqq \begin{pmatrix} P_h-\lambda_h & -u_h \\ -\iprod{\cdot,u_h}_{L^2} & 0 \end{pmatrix}.
  \]
  For this operator we also have the estimate~\eqref{EqTFullEst} by essentially the same proof. The only difference is that the fact that \(\chi_\Omega u_h\neq u_h\) causes additional error terms which however vanish as \(h\searrow 0\). The surjectivity then follows from \(\tilde P_h^\aug=(\tilde P_h^\aug)^*\). We leave the details to the interested reader.)
\end{proof}

\subsection{Proof of Theorem~\ref{ThmT}}
\label{SsTPf}

Fix \(h_0>0\) from Proposition~\ref{PropTFull}. We now return to the problem of solving~\eqref{EqTFin}.

\pfstep{Step~1. Solution in low regularity spaces.} The key input is:
\begin{equation}
\label{EqfBound}
  \norm{f_h}_{\rho_{\ff}^{-1}\rho_{\Omega^\complement}^{-1}X_h^{-1}} \leq C_N h^N\quad\forall\,N.
\end{equation}
This follows easily by cutting \(f_h\) into three pieces with support near \(\widetilde\Omega\), \(\widetilde\Omega^\complement\), and \(\hat\rho=0\), and bounding the norms in the three regimes using Lemma~\ref{LemmaTNormEq}.

\begin{lemma}[Contraction]
\label{LemmaTPfContr}
  For \(h\in(0,h_0]\), define
  \[
    S_h \colon \begin{pmatrix} v \\ \mu \end{pmatrix} \mapsto (P_h^\aug)^{-1}\begin{pmatrix} -f_h + \mu v \\ 0 \end{pmatrix}
  \]
  Then there exists \(h_1\in(0,h_0]\) such that for all \(h\in(0,h_1]\), the map
  \[
    S_h \colon \rho_{\ff}\rho_{\Omega^\complement}X_h^1\oplus\C \to \rho_{\ff}\rho_{\Omega^\complement}X_h^1\oplus\C
  \]
  maps the \(h\)-ball into itself and is a contraction.
\end{lemma}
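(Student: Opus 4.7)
The plan is a direct contraction-mapping argument. By Proposition~\ref{PropTFull}, the inverse $(P_h^\aug)^{-1}\colon \rho_\ff^{-1}\rho_{\Omega^\complement}^{-1}X_h^{-1}\oplus\C \to \rho_\ff\rho_{\Omega^\complement}X_h^1\oplus\C$ is uniformly bounded for $h\in(0,h_0]$; let $C_0$ denote this bound. Combining this with the key input~\eqref{EqfBound} immediately handles the linear term: $\|(P_h^\aug)^{-1}(-f_h,0)\| \leq C_0 C_N h^N$ for any $N$.

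The nonlinear term $\mu v$ is controlled by the uniform (in $h$) continuous embedding
\begin{equation*}
  \rho_\ff\rho_{\Omega^\complement}X_h^1 \hookrightarrow \rho_\ff^{-1}\rho_{\Omega^\complement}^{-1}X_h^{-1},
\end{equation*}
which follows by factoring through $L^2(\R^d)$: for $u,\varphi$ in $\rho_\ff\rho_{\Omega^\complement}X_h^1$, one has $\|u\|_{L^2} \leq \|\rho_\ff\rho_{\Omega^\complement}\|_\infty \|\rho_\ff^{-1}\rho_{\Omega^\complement}^{-1}u\|_{L^2} \lesssim \|u\|_{\rho_\ff\rho_{\Omega^\complement}X_h^1}$, and dualizing via $|\langle u,\varphi\rangle| \leq \|u\|_{L^2}\|\varphi\|_{L^2}$ yields the claim with an $h$-independent constant $C_1$. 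Since $\mu\in\C$ acts as a scalar, this gives
\begin{equation*}
  \|\mu v\|_{\rho_\ff^{-1}\rho_{\Omega^\complement}^{-1}X_h^{-1}} = |\mu|\,\|v\|_{\rho_\ff^{-1}\rho_{\Omega^\complement}^{-1}X_h^{-1}} \leq C_1 |\mu|\,\|v\|_{\rho_\ff\rho_{\Omega^\complement}X_h^1}.
\end{equation*}

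For the ball-to-ball property, suppose $\|(v,\mu)\|_{\rho_\ff\rho_{\Omega^\complement}X_h^1\oplus\C}\leq h$. Then
\begin{equation*}
  \|S_h(v,\mu)\|_{\rho_\ff\rho_{\Omega^\complement}X_h^1\oplus\C} \leq C_0\bigl(C_N h^N + C_1 |\mu|\,\|v\|_{\rho_\ff\rho_{\Omega^\complement}X_h^1}\bigr) \leq C_0 C_N h^N + C_0 C_1 h^2,
\end{equation*}
which is $\leq h$ once $N\geq 2$ is fixed and $h$ is small enough. For the contraction property, I would write $\mu_1 v_1-\mu_2 v_2 = \mu_1(v_1-v_2)+(\mu_1-\mu_2)v_2$; applying the same product bound gives
\begin{equation*}
  \|S_h(v_1,\mu_1)-S_h(v_2,\mu_2)\| \leq C_0 C_1\bigl(|\mu_1|\,\|v_1-v_2\| + |\mu_1-\mu_2|\,\|v_2\|\bigr) \leq 2 C_0 C_1 h\,\|(v_1,\mu_1)-(v_2,\mu_2)\|.
\end{equation*}
Choosing $h_1\in(0,h_0]$ so small that $2 C_0 C_1 h_1 \leq \frac12$ (and so that the self-mapping estimate above holds) completes the proof. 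I do not expect any real obstacle here: the only nontrivial ingredient beyond Proposition~\ref{PropTFull} and the quasimode error bound~\eqref{EqfBound} is the $h$-uniform embedding above, which is an elementary consequence of the boundedness of $\rho_\ff\rho_{\Omega^\complement}$.
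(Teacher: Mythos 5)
Your proof is correct and follows essentially the same route as the paper: bound the linear term via Proposition~\ref{PropTFull} combined with~\eqref{EqfBound}, bound the bilinear term using the $h$-uniform embedding $\rho_\ff\rho_{\Omega^\complement}X_h^1\hookrightarrow\rho_\ff^{-1}\rho_{\Omega^\complement}^{-1}X_h^{-1}$, and split $\mu v-\mu'v'=\mu(v-v')+(\mu-\mu')v'$ for the contraction estimate. The only cosmetic difference is that you make the embedding explicit (factoring through $L^2$), whereas the paper absorbs it into the constant $C'$.
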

\begin{proof}
  We have
  \[
    \norm{S_h(v,\mu)}_{\rho_{\ff}\rho_{\Omega^\complement}X_h^1\oplus\C} \leq C\norm{-f_h+\mu v}_{\rho_{\ff}^{-1}\rho_{\Omega^\complement}^{-1}X_h^{-1}} \leq C\norm{f_h}_{\rho_{\ff}^{-1}\rho_{\Omega^\complement}^{-1}X_h^{-1}} + C'|\mu|\norm{v}_{\rho_{\ff}\rho_{\Omega^\complement}X_h^1}.
  \]
  For first term on the right, we use the bound by $C_2 h^2$ from~\eqref{EqfBound}. When \(|\mu|+\norm{v}_{\rho_{\ff}\rho_{\Omega^\complement}X_h^1}\leq h\), we thus obtain
  \[
    \norm{S_h(v,\mu)}_{\rho_{\ff}\rho_{\Omega^\complement}X_h^1\oplus\C} \leq C C_2 h^2 + C'h^2 \leq h
  \]
  for all \(h\leq h_2\coloneqq(C C_2+C')^{-1}\). Furthermore,
  \begin{align*}
    \norm{S_h(v,\mu)-S_h(v',\mu')}_{\rho_{\ff}\rho_{\Omega^\complement}X_h^1\oplus\C} &\leq C\norm{\mu v-\mu'v'}_{\rho_{\ff}^{-1}\rho_{\Omega^\complement}^{-1}X_h^{-1}} \\
      &\leq C''\bigl( |\mu| \norm{ v-v' }_{\rho_{\ff}\rho_{\Omega^\complement}X_h^1} + |\mu-\mu'| \norm{v}_{\rho_{\ff}\rho_{\Omega^\complement}X_h^1}\bigr) \\
      &\leq 2 C''h \norm{(v,\mu)-(v',\mu')}_{\rho_{\ff}\rho_{\Omega^\complement}X_h^1\oplus\C} \\
      &\leq \frac12 \norm{(v,\mu)-(v',\mu')}_{\rho_{\ff}\rho_{\Omega^\complement}X_h^1\oplus\C}
  \end{align*}
  provided \(h\leq h_3\coloneqq(4 C'')^{-1}\). The conclusion thus holds for \(h_1\coloneqq\min(h_2,h_3)\).
\end{proof}

Applying the Banach fixed point theorem to the map \(S_h\) for each \(h\in(0,h_1]\) produces $v_h\in H^1(\R^d)$ and $\mu_h\in\R$ with
\begin{equation}
\label{EqTPfBd1}
  \norm{v_h}_{\rho_{\ff}\rho_{\Omega^\complement}X_h^1}+|\mu_h|\leq h
\end{equation}
such that
\begin{equation}
\label{EqTPfEq}
  P_h^\aug(v_h,\mu_h)=(-f_h+\mu_h v_h,0),\quad \text{that is,}\quad \begin{cases}
  \bigl(-\laplace+h^{-2}\chfun_{\Omega^\complement} - (\lambda_h+\mu_h)\bigr)(u_h+v_h) = 0, \\
    \iprod{v_h,u_0^\sharp}_{L^2(\Omega)} = 0. \end{cases}
\end{equation}

As discussed after~\eqref{EqTFin}, the remaining task is to improve on~\eqref{EqTPfBd1}; we do this by bootstrapping.

\pfstep{Step~2. Improving decay in \(h\).} Given~\eqref{EqTPfBd1} and \(f\in h^\infty\es\), we have
\[
  \norm{-f_h+\mu_h v_h}_{\rho_{\ff}^{-1}\rho_{\Omega^\complement}^{-1}X_h^{-1}} \leq C\bigl(\norm{f_h}_{\rho_{\ff}^{-1}\rho_{\Omega^\complement}^{-1}X_h^{-1}} + h^2\bigr) \leq C'h^2.
\]
Proposition~\ref{PropTFull} thus gives the improvement \(\norm{(v_h,\mu_h)}_{\rho_{\ff}\rho_{\Omega^\complement}X_h^1}\leq C''h^2\) over~\eqref{EqTPfBd1}. Iterating this gives
\begin{equation}
\label{EqTPfBd2}
  \norm{v_h}_{\rho_{\ff}\rho_{\Omega^\complement}X_h^1}+|\mu_h|\leq C_N h^N\quad\forall\,N.
\end{equation}

\pfstep{Step~3.1. Higher regularity away from \(\hat\rho=0\).} We use simple elliptic estimates. Concretely, using the notation \(\tilde\chi_\Omega\) from Lemma~\ref{LemmaTApprox}, we localize equation \((-\laplace-\lambda_h)v_h=-f_h+\mu_h(u_h+v_h)\), valid for \(\rho<0\), to
\[
  (-\laplace-\lambda_h)(\chi_\Omega v_h) = \chi_\Omega\bigl(-f_h+\mu_h(u_h+v_h)\bigr) + [-\laplace-\lambda_h,\chi_\Omega](\tilde\chi_\Omega v_h).
\]
Since \([-\laplace-\lambda_h,\chi_\Omega]\colon\rho H_{\rm z}^1(\Omega)\to\rho^{-1}H_{\rm z}^0(\Omega)\) is uniformly bounded, Lemma~\ref{LemmaTOmegaHi} gives, for any \(N\),
\[
  \norm{\chi_\Omega v_h}_{\rho H_{\rm z}^2(\Omega)} \leq \bigl(C_N h^N + \norm{\tilde\chi_\Omega v_h}_{\rho^{-1}H_{\rm z}^1(\Omega)}\bigr) + \norm{\chi_\Omega v_h}_{\rho H_{\rm z}^1(\Omega)}.
\]
The right-hand side is \(\leq C'_N h^N\) for all \(N\). Iterating this argument and relabeling the cutoff functions gives
\[
  \norm{\tilde\chi_\Omega v_h}_{\rho H_{\rm z}^k(\Omega)} \leq C_{N,k}h^N\quad\forall\,N,k.
\]
A completely analogous argument, now using \(P_h-\lambda_h\) and Lemma~\ref{LemmaTOmegaCHi}, gives higher regularity in the exterior region; to wit,
\begin{equation}
\label{EqTPfExt}
  \norm{\tilde\chi_{\Omega^\complement}v_h}_{H_h^k(\Omega^\complement)} \leq C_{N,k}h^N\quad\forall\,N,k.
\end{equation}

\pfstep{Step~3.2. Higher regularity near \(\hat\rho=0\).} It remains to control \(\psi_{\ff} v\) where \(\psi_{\ff}\coloneqq1-\chi_\Omega-\chi_{\Omega^\complement}\) vanishes near \(\widetilde\Omega\cup\widetilde\Omega^\complement\) and thus localizes near a compact subset of \({\ff}^\circ\). Consider then
\begin{equation}
\label{EqTPfff}
  h^2(P_h-\lambda_h)(\psi_{\ff} v_h) = h^2\psi_{\ff} \bigl( -f_h + \mu_h(u_h+v_h)\bigr) + [h^2(P_h-\lambda_h),\psi_{\ff}]v_h.
\end{equation}

Given the \(\delta\)-singularity of \(f_h\) along \(\hat\rho=0\), we certainly do not have \(H^2\)-membership of \(\psi_{\ff} v_h\). However, we can prove \emph{tangential} regularity relative to $H_{\ff,h}^1$ as well as higher \emph{one-sided} regularity in \(\pm\hat\rho\geq 0\). For the precise statement, define the norm
\[
  \norm{w}_{H_{{\ff},h,\pm}^k}^2 \coloneqq \sum_{j+m\leq k} \norm{\partial_{\hat\rho}^j(h\nabla_{\pa\Omega})^m w}_{L^2(\pm(0,\infty)\times\partial\Omega)}^2.
\]
We omit weights in \(\hat\rho_<\), \(\hat\rho_>\) since we only consider this norm for $w$ with with compact support in \(\hat\rho\) (such as $w=\psi_\ff v_h$).

\begin{lemma}[Improved regularity of \(\psi_{\ff} v_h\)]
\label{LemmaTPfImpr}
  For all \(k,N\in\N\), we have
  \begin{equation}
  \label{EqTPfImpr}
      \sum_{j=0}^{k-1} \norm{(h\nabla_{\partial\Omega})^j(\psi_{\ff} v_h)}_{H_{{\ff},h}^1} + \sum_\pm \norm{\psi_{\ff} v_h}_{H_{{\ff},h,\pm}^k} \leq C_{N,k}h^N.
  \end{equation}
\end{lemma}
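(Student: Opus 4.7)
My plan is to combine Lemma~\ref{LemmaTff} (tangential elliptic estimate relative to $H_{\ff,h}^1$) with a bootstrap using the equation~\eqref{EqTPfff} to trade tangential regularity for normal regularity on each side of $\{\hat\rho=0\}$. Since $\psi_\ff$ has compact support in a region where the weights $\hat\rho_<$, $\hat\rho_>$ are bounded above and below by positive constants, weighted and unweighted $H^k_{\ff,h}$ norms are equivalent on $\supp\psi_\ff$, and I will use the unweighted norms throughout.

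\textbf{Part 1 (tangential regularity).} I prove by induction on $k$ that $\sum_{j\leq k-1}\norm{(h\nabla_{\pa\Omega})^j(\psi_\ff v_h)}_{H^1_{\ff,h}}\leq C_{N,k}h^N$ for every $N$. The base case $k=1$ follows from the bound~\eqref{EqTPfBd2} together with Lemma~\ref{LemmaTNormEq}~\eqref{EqTNormEqff} (with $\alpha=0$, $\beta=1$, $\gamma=1$), which converts the $\rho_\ff\rho_{\Omega^\complement}X_h^1$ bound into an $H^1_{\ff,h}$ bound with an extra factor of $h^{1/2}$. For the inductive step, let $T$ denote a product of $j$ operators of the form $h\pa_{y^k}$. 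Apply $T$ to~\eqref{EqTPfff} and use the identity
\[
L_{\ff,h} T = T L_{\ff,h} + [L_{\ff,h}, T],
\]
iterated $j$ times. Since $L_{\ff,h}=-\pa_{\hat\rho}^2 - h^2\Delta_{\pa\Omega}+H(\hat\rho)$ and $\pa_{\hat\rho}$ and $H(\hat\rho)$ commute with $T$, each single commutator equals $-[h^2\Delta_{\pa\Omega},h\pa_{y^k}]=h\cdot B$ where $B$ is a second-order differential operator in $(h\nabla_{\pa\Omega})$ with smooth bounded coefficients. Thus $[L_{\ff,h},T^j]$ is a differential operator of order $j+1$ in $h\nabla_{\pa\Omega}$ whose every term carries at least one extra factor of $h$. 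Applying Lemma~\ref{LemmaTff} to $T^j(\psi_\ff v_h)$ gives
\[
\norm{T^j(\psi_\ff v_h)}_{H^1_{\ff,h}} \lesssim \norm{T^j L_{\ff,h}(\psi_\ff v_h)}_{H^{-1}_{\ff,h}} + \norm{[L_{\ff,h},T^j](\psi_\ff v_h)}_{H^{-1}_{\ff,h}}.
\]
The first term is bounded by $C_N h^N$ using~\eqref{EqTPfff}, the bounds on $f$, $u_h$, $\mu_h$, $v_h$ from~\eqref{EqfBound}--\eqref{EqTPfBd2}, and the approximation estimate~\eqref{EqTApproxff} to replace $L_{\ff,h}$ by $h^2(P_h-\lambda_h)$. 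The second term, thanks to the extra $h$-factor in each commutator, is bounded by $h$ times a sum of norms $\norm{T^{j'}(\psi_\ff v_h)}_{H^1_{\ff,h}}$ with $j'\leq j$, which are controlled by the induction hypothesis.

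\textbf{Part 2 (one-sided normal regularity).} Once tangential regularity is known, I induct on $j$ (the number of $\hat\rho$-derivatives) to bound $\pa_{\hat\rho}^j(h\nabla_{\pa\Omega})^m(\psi_\ff v_h)$ in $L^2(\pm(0,\infty)\times\pa\Omega)$. On each side $\pm\hat\rho>0$, equation~\eqref{EqTPfff} reads
\[
-\pa_{\hat\rho}^2(\psi_\ff v_h) = h^2\Delta_{\pa\Omega}(\psi_\ff v_h) - c_\pm(\psi_\ff v_h) + R_\pm,
\]
with $c_+=1$, $c_-=0$, and $R_\pm$ consisting of the RHS of~\eqref{EqTPfff} plus the error from $h^2(P_h-\lambda_h)-L_{\ff,h}$ (applied to $\psi_\ff v_h$); Part~1, the approximation estimate~\eqref{EqTApproxff}, and the smoothness of the coefficients of $h^2(P_h-\lambda_h)-L_{\ff,h}$ away from $\hat\rho=0$ show that $R_\pm$ is $\cO(h^N)$ in every $H^k_{\ff,h,\pm}$. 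For $j\geq 2$, I use the above to express $\pa_{\hat\rho}^j(h\nabla_{\pa\Omega})^m(\psi_\ff v_h)$ as a linear combination of $\pa_{\hat\rho}^{j-2}(h\nabla_{\pa\Omega})^{m'}(\psi_\ff v_h)$ with $m'\leq m+2$ plus $\pa_{\hat\rho}^{j-2}(h\nabla_{\pa\Omega})^m R_\pm$; iterating $\lfloor j/2\rfloor$ times reduces to the cases $\pa_{\hat\rho}^0$ and $\pa_{\hat\rho}^1$. The former is controlled by Part~1, and the latter by the $H^1_{\ff,h}$-bound on tangentially-differentiated $\psi_\ff v_h$ from Part~1.

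\textbf{Main obstacle.} The delicate point is the commutator analysis in Part~1: one must verify that each iteration genuinely produces an extra factor of $h$ (not merely $h^0$) and that the finite loss of tangential regularity incurred at each step is compensated by the pre-existing bound $\cO(h^N)$ for every $N$. This rests on the structural observation that $[L_{\ff,h},h\pa_{y^k}] = -h^3[\Delta_{\pa\Omega},\pa_{y^k}]$ is second order in $h\nabla_{\pa\Omega}$ and carries an explicit factor of $h$; combined with the infinite-order vanishing of the base estimate~\eqref{EqTPfBd2}, this allows the induction to close at every order.
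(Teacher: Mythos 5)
Your plan — prove tangential regularity first by induction, then trade normal for tangential derivatives via the equation — is the same skeleton the paper uses. The way you execute Part~1, however, differs in two respects worth commenting on, and one of these is a genuine gap.

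\emph{Different mechanism for the elliptic estimate.} The paper localizes to a coordinate chart with a cutoff $\psi_\pa$ and performs elliptic regularity directly on the variable-coefficient operator $\pa_{\hat\rho}^2 + g^{jk}(\rho,y)h\pa_{y^j}h\pa_{y^k}$ (the principal part of $h^2(P_h-\lambda_h)$ near $\ff$), moving the remaining terms to the right-hand side as $\cO_{H^0_{\ff,h}}(h^\infty)$ errors. You instead invoke the global a priori estimate of Lemma~\ref{LemmaTff} for the constant-coefficient model $L_{\ff,h}$, and use~\eqref{EqTApproxff} to pass from one operator to the other. This is a legitimate alternative, but note that the error term $T^j(h^2(P_h-\lambda_h)-L_{\ff,h})(\psi_\ff v_h)$ which you produce is \emph{not} simply $\cO(h^N)$: by~\eqref{EqTApproxff} and the fact that $\rho_\ff\sim h$ on $\supp\psi_\ff$, it is bounded by $Ch\norm{T^j(\psi_\ff v_h)}_{H^1_{\ff,h}}$ — i.e., a small multiple of the very quantity you are estimating. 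Your claim that "the first term is bounded by $C_N h^N$" is therefore a little too quick; an absorption step (for $h$ small) is required to close the estimate, and you should say so. The paper's choice of operator avoids this extra loop.

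\emph{The genuine gap: a priori regularity.} You apply the differential operator $T^j = (h\nabla_{\pa\Omega})^j$ directly to $\psi_\ff v_h$ and then feed $T^j(\psi_\ff v_h)$ into the a priori estimate of Lemma~\ref{LemmaTff}. But that estimate is only available for functions already known to lie in (a dense subset of) $\hat\rho_< H^1_{\ff,h}$; for $j\geq 1$, the membership $T^j(\psi_\ff v_h)\in H^1_{\ff,h}$ is precisely what the inductive step is supposed to establish, not something that may be assumed. Without this, the absorption step mentioned above is also vacuous (one cannot subtract an infinite quantity from itself). The standard repair — which the paper explicitly invokes — is to replace $h\pa_{y^l}$ by its rescaled finite-difference quotient, obtain uniform bounds, and pass to the limit; you do not mention this at all in Part~1 (or Part~2). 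This is the missing idea. Your commutator bookkeeping ($[L_{\ff,h},T^j]$ carrying an explicit extra factor of $h$) is correct and would carry over verbatim to the difference-quotient version, so the repair is routine, but as written the proof of Part~1 is circular.

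Part~2 and the overall $k$-induction match the paper.
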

\begin{proof}
  The case $k=1$ follows from~\eqref{EqTPfBd2}. Consider thus the case $k=2$. Since the coefficients of \([P_h-\lambda_h,\psi_{\ff}]\) are contained in \(\supp\tilde\chi_\Omega\cup\supp\tilde\chi_{\Omega^\complement}\), we have
  \[
    \norm{q_h}_{H_{{\ff},h}^k} \leq C_{N,k}h^N\quad\forall\,N,k,\qquad q_h\coloneqq[h^2(P_h-\lambda_h),\psi_{\ff}]v_h.
  \]
  Denoting by \(\psi_\partial=\psi_\partial(y)\) a cutoff to a coordinate chart on \(\partial\Omega\), say with \(|y|<1\), we localize~\eqref{EqTPfff} further: setting
  \[
    w_h\coloneqq\psi_\partial\psi_{\ff} v_h,
  \]
  we have
  \begin{equation}
  \label{EqTPfImprw}
    h^2(P_h-\lambda_h)w_h = -h^2\psi_{\ff}\psi_\partial f_h+h^2\psi_{\ff}\psi_\partial\mu_h(u_h+v_h) + \psi_\partial q_h+[h^2(P_h-\lambda_h),\psi_\partial](\psi_{\ff} v_h)
  \end{equation}
  All terms on the right, with the exception of the first, are bounded in \(H_{{\ff},h}^0\) by \(C_N h^N\). Consider now the local coordinate expression~\eqref{EqTApproxDelta}, so
  \[
    h^2(P_h-\lambda_h) = -\partial_{\hat\rho}^2 - g^{j k}(\rho,y)h\partial_{y^j}h\partial_{y^k} - h a(\rho,y)\partial_{\hat\rho} - h b^l(\rho,y)h\partial_{y^l} - h^2\lambda_h + H(\hat\rho).
  \]
  Upon acting on \(w_h\), let us move all terms except for the first two to the right-hand side; this gives
  \begin{equation}
  \label{EqTPfff2}
    (\partial_{\hat\rho}^2+g^{j k}(\rho,y)h\partial_{y^j}h\partial_{y^k})w_h = h^2\psi_{\ff}\psi_\partial f_h + \cO_{H_{{\ff},h}^0}(h^\infty).
  \end{equation}
  The usual proof of tangential regularity (approximating \(h\partial_{y^l}\) by finite differences, now rescaled by \(h\)) applies since differentiation in \(y\) preserves membership in the space \(h^\infty\es\) in which \(f\) lies; since upon application of \(h\partial_{y^l}\) (or the approximating finite differences) the right-hand side lies in \(\cO_{H_{{\ff},h}^{-1}}(h^\infty)\), we therefore obtain
  \begin{equation}
  \label{EqTPfImprTan}
    \norm{h\partial_{y^l}w_h}_{H_{{\ff},h}^1} \leq C_N h^N\quad\forall\,N.
  \end{equation}
  Since therefore \(h\partial_{y^j}h\partial_{y^k}w_h=\cO_{H_{{\ff},h}^0}(h^\infty)\), we deduce from~\eqref{EqTPfff2} that
  \[
    \partial_{\hat\rho}^2 w_h = h^2\psi_{\ff}\psi_\partial f_h + \cO_{H_{{\ff},h}^0}(h^\infty).
  \]
  This implies one-sided Sobolev bounds: since \(\norm{\psi_\ff\psi_\partial f_h}_{H_{{\ff},h,\pm}^k}\leq C_{N,k}h^N\) for all \(N,k\) (though at present we only need this for \(k=0\)), we obtain
  \begin{equation}
  \label{EqTPfImprOne}
    \norm{w_h}_{H_{{\ff},h,\pm}^2} \leq C_N h^N\quad\forall\,N.
  \end{equation}
  Summing the bounds~\eqref{EqTPfBd2}, \eqref{EqTPfImprTan}, and \eqref{EqTPfImprOne} over a cover of \(\partial\Omega\) by coordinate charts yields the estimate~\eqref{EqTPfImpr} for \(k=2\).

  Higher regularity follows inductively (up to shrinking the support of cutoff function \(\psi_{\ff}\)
  slightly, which we omit from our discussion) by the same token; thus, we shall be brief. Consider \(k\geq 3\), and consider again
  the equation~\eqref{EqTPfImprw}. Apply \(D\coloneqq h\partial_{y^{l_1}}\cdots
  h\partial_{y^{l_{k-1}}}\) to it. (More precisely, \(h\partial_{y^{l_1}}\) needs to be replaced by a
  finite difference quotient, but we omit this part of the argument.) Using the inductive hypothesis
  (i.e., \eqref{EqTPfImpr} for \(k-1\) in place of \(k\)), one then finds that \(D w_h\) solves
  \((\partial_{\hat\rho}^2+g^{j k}(\rho,y)h\partial_{y^j}h\partial_{y^k})(D
  w_h)=\cO_{H_{{\ff},h}^{-1}}(h^\infty)\). This gives
  \[
    \norm{(h\nabla_{\partial\Omega})^{k-1}(\psi_{\ff} v_h)}_{H_{{\ff},h}^1}\leq C_N h^N\quad\forall\,N.
  \]
  Using the PDE~\eqref{EqTPfImprw}, we obtain \(\partial_{\hat\rho}^2(D w_h)=\cO_{H_{{\ff},h}^0}(h^\infty)\), and therefore
  \[
    \norm{(h\nabla_{\partial\Omega})^{k-1}(\psi_{\ff} v_h)}_{H_{{\ff},h,\pm}^2} \leq C_N h^N\quad\forall\,N.
  \]
  Using~\eqref{EqTPfImprw}, we can continue trading tangential (\(h\nabla_{\partial\Omega}\)) by normal (\(\partial_{\hat\rho}\)) derivatives and thus obtain~\eqref{EqTPfImpr} as stated.
\end{proof}

We now apply Sobolev embedding to~\eqref{EqTPfImpr}: away from \(\hat\rho=0\), we have pointwise bounds \(|\partial^\alpha v_h|\leq C_{\alpha,N}h^N\) for all \(\alpha\in\N_0^d\). Near \(\hat\rho=0\) on the other hand, the infinite tangential regularity captured in~\eqref{EqTPfImpr} implies that Sobolev embedding in the \(\hat\rho\)-variable gives the continuity of \(v_h\) across \(\hat\rho=0\), while the infinite one-sided regularity gives smoothness from the left and the right. Thus:

\begin{cor}[Smoothness of \(v_h\)]
\label{CorTPfCI}
  \(v_h\) is continuous, and \(v_h|_{\rho<0}\in\cC^\infty(\bar\Omega)\) and \(v_h|_{\rho>0}\in\cC^\infty(\overline{\Omega^\complement})\) for all \(h\in(0,h_1]\); moreover, \(|v_h|\leq C_N h^N\) for all \(N\), and \(v_h|_{\pm\rho>0}\) and all its coordinate derivatives are bounded by \(C_N h^N\) for all \(N\).
\end{cor}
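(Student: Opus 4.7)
The plan is to derive the statement directly from the bounds established in Step~3.1 and Lemma~\ref{LemmaTPfImpr} via Sobolev embedding, with a small amount of care needed for the continuity across $\{\rho=0\}$.

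First, away from $\pa\Omega$: the bounds $\norm{\tilde\chi_\Omega v_h}_{\rho H_{\rm z}^k(\Omega)}\le C_{N,k}h^N$ and $\norm{\tilde\chi_{\Omega^\complement}v_h}_{H_h^k(\Omega^\complement)}\le C_{N,k}h^N$ for all $k,N$ imply, upon combining with standard elliptic estimates on any compact subset of $\Omega^\circ$ or $(\Omega^\complement)^\circ$ and Sobolev embedding $H^k\hookrightarrow C^m$ for $k-m>d/2$, that $v_h$ is smooth in the interiors of $\Omega$ and $\Omega^\complement$, with all coordinate derivatives bounded by $C_{\alpha,N}h^N$ for all $\alpha,N$.

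Next, near $\pa\Omega$: I fix $h\in(0,h_1]$, so that $\hat\rho=\rho/h$ differs from $\rho$ only by a fixed positive factor, and work in local coordinates $(\hat\rho,y)$ on $\supp\psi_\ff$ near a chart of $\pa\Omega$. Lemma~\ref{LemmaTPfImpr} provides arbitrary tangential regularity (in $y$) together with arbitrary one-sided normal regularity (in $\pm\hat\rho\ge0$) for $\psi_\ff v_h$, each bounded by $C_{N,k}h^N$. Sobolev embedding on the product domain $\pm[0,\infty)\times(\text{chart of }\pa\Omega)$ (or rather on a bounded product box containing $\supp\psi_\ff$) thus shows that $\psi_\ff v_h|_{\pm\hat\rho>0}$ extends to a $C^\infty$ function on the closed half-space; patching with the interior regularity above, this yields $v_h|_{\bar\Omega}\in C^\infty(\bar\Omega)$ and $v_h|_{\overline{\Omega^\complement}}\in C^\infty(\overline{\Omega^\complement})$ with the claimed $\cO(h^N)$ bounds on all derivatives.

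For the continuity of $v_h$ across $\pa\Omega$: since $v_h\in H^1(\R^d)$ by construction (Step~1 of~\S\ref{SsTPf}), it has a well-defined trace in $H^{1/2}(\pa\Omega)$, which necessarily agrees with the (boundary values of the) smooth one-sided extensions from $\pm\rho>0$ constructed in the previous paragraph; since both one-sided extensions are continuous up to $\pa\Omega$ and share the same trace, the joint function $v_h$ on $\R^d$ is continuous. Finally, the pointwise bound $|v_h|\le C_N h^N$ follows from the uniform one-sided bounds on $\bar\Omega$ and $\overline{\Omega^\complement}$ combined with the (already established) interior bound $C_{N,k}h^N$ on $\psi_\ff v_h$.

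The only mildly delicate point is keeping track of how the tangential bound $(h\nabla_{\pa\Omega})^j(\psi_\ff v_h)\in H^1_{\ff,h}$ of infinite order in $j$, together with the one-sided $H^k_{\ff,h,\pm}$ bounds, combine under Sobolev embedding to yield pointwise smoothness up to $\{\hat\rho=0\}$ from either side; this is a routine product-Sobolev argument, so I will not belabor the computation. The crucial input Lemma~\ref{LemmaTPfImpr} has already handled the one substantive obstacle, namely the $\delta$-type singularity of $f_h$ at the interface, by decoupling tangential from normal regularity; the present corollary is essentially a bookkeeping exercise atop it.
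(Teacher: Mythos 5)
Your proof is correct and takes essentially the same route as the paper: apply Sobolev embedding to the bounds from Step~3.1, \eqref{EqTPfExt}, and Lemma~\ref{LemmaTPfImpr}, obtaining interior smoothness with $\cO(h^N)$ bounds directly, and using the infinite one-sided $H^k_{\ff,h,\pm}$ regularity together with the tangential regularity to get smoothness up to $\{\hat\rho=0\}$ from either side. The one small variation is your treatment of continuity across $\pa\Omega$ via the $H^1$ trace of $v_h$, whereas the paper's sketch observes that the tangential $H^1_{\ff,h}$ bound in \eqref{EqTPfImpr} gives $H^1$-regularity in the single variable $\hat\rho$, hence continuity across $\hat\rho=0$ by one-dimensional Sobolev embedding — both are valid and morally identical.
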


We also recall the uniform \(L^2\)-integrability of \(v_h\) (with \(h^N\) bounds for all \(N\)) in the exterior region from~\eqref{EqTPfExt}. We can use this and the equation satisfied by \(v_h\) there to improve the exterior decay of \(v_h\):

\begin{lemma}[Exterior decay of \(v_h\)]
\label{LemmaTPfExtDec}
  For \(\rho\geq\delta\) and for all \(\alpha\in\N_0^d\), we have
  \[
      |\partial^\alpha v_h(x)| \leq C_{\alpha,N}h^N\jbr{x}^{-N}.
  \]
\end{lemma}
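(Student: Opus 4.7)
The plan rests on the observation that for \(\rho(x)\ge\delta\) the quasimode \(u_h\) vanishes identically: we have \(u\in\rho_{\Omega^\complement}^\infty\qms\), and every element of \(\qms\) is supported in \(I\cup\beta^*(N_\delta\pa\Omega\times[0,1))\). Equation~\eqref{EqTPfEq} therefore collapses on \(\{\rho\ge\delta\}\) to the homogeneous massive Helmholtz equation
\[
    (-\laplace + h^{-2} - \lambda_h - \mu_h)v_h = 0,
\]
in which the ``potential'' satisfies \(h^{-2}-\lambda_h-\mu_h\ge\tfrac12 h^{-2}\) for \(h\) small, since \(\lambda_h+\mu_h=\lambda_0+\cO(h)\). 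The strategy will be: (i) to prove weighted \(L^2\) decay of the form \(\|e^{\kappa d(x)/h}v_h\|_{L^2(\{\rho\ge 2\delta\})}\le C_N h^N\), for every \(N\) and some fixed \(\kappa>0\), via a Combes--Thomas / Agmon-type weighted energy estimate; (ii) to upgrade this to pointwise bounds on \(\partial^\alpha v_h\) by standard interior elliptic regularity on unit \(x\)-balls; and (iii) to convert the exponential decay at rate \(\kappa/h\) into the claimed polynomial decay via the elementary inequality \(e^{-t}\le N!\,t^{-N}\) with \(t=\kappa|x|/h\).

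Concretely, I would fix \(\kappa\in(0,\tfrac12)\) and a smooth monotone approximation \(d_R\ge 0\) to \((|x|-R_0)_+\) satisfying \(|\nabla d_R|\le 1\), \(d_R\equiv 0\) on \(\{|x|\le R_0\}\supset\{\rho\le 2\delta\}\), and \(d_R\) constant for \(|x|\ge R\), so that \(\phi_R\coloneqq e^{\kappa d_R/h}\) is bounded on \(\R^d\). Let \(\chi\in\CI(\R^d)\) equal \(1\) on \(\{\rho\ge 2\delta\}\) and vanish on \(\{\rho\le\tfrac54\delta\}\). Testing the equation against \(\chi^2\phi_R^2 v_h\), integrating by parts, and using \(2|\phi_R(\nabla\phi_R)\cdot(\nabla v_h)|\le\epsilon\phi_R^2|\nabla v_h|^2+\epsilon^{-1}\kappa^2 h^{-2}\phi_R^2 v_h^2\) together with \(\phi_R\equiv 1\) on \(\supp\nabla\chi\), one obtains, uniformly in \(R\),
\[
    (1-\epsilon)\,\|\chi\phi_R\nabla v_h\|_{L^2}^2 + (\tfrac12-\epsilon^{-1}\kappa^2)\,h^{-2}\|\chi\phi_R v_h\|_{L^2}^2 \lesssim \|v_h\|_{H^1(\{\tfrac54\delta\le\rho\le 2\delta\})}^2.
\]
Choosing \(\epsilon=\tfrac12\) makes the prefactor \(\tfrac12-2\kappa^2\) strictly positive; the right-hand side is \(\le C_N h^N\) for every \(N\) by~\eqref{EqTPfExt}. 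Letting \(R\to\infty\) yields \(\|e^{\kappa(|x|-R_0)_+/h}v_h\|_{L^2(\{\rho\ge 2\delta\})}\le C_N h^N\) for all \(N\). Interior elliptic regularity applied to \(\laplace v_h=(h^{-2}-\lambda_h-\mu_h)v_h\) on unit \(x\)-balls \(B(x_0,1)\subset\{\rho\ge 3\delta\}\) then gives \(|\partial^\alpha v_h(x_0)|\lesssim h^{-C_\alpha}\|v_h\|_{L^2(B(x_0,1))}\) (the exact finite power \(C_\alpha\) being absorbed by enlarging \(N\)); combining with the weighted \(L^2\) bound, absorbing the \(e^{2\kappa/h}\) variation of the weight across \(B(x_0,1)\), and applying \(e^{-\kappa t}\le N!\,(\kappa t)^{-N}\) yields the claim for \(|x|\ge 2R_0\). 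On the remaining bounded region \(\{\rho\ge\delta\}\cap\{|x|\le 2R_0\}\), the same elliptic bootstrap from~\eqref{EqTPfExt} already gives \(|\partial^\alpha v_h|\le C_{\alpha,N}h^N\), with the \(\jbr{x}^{-N}\) factor trivially harmless.

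The only delicate point is the balance in the weighted energy estimate: both the gain from the mass term and the loss from differentiating the exponential weight are of order \(h^{-2}\), so exponential decay at rate \(\kappa/h\) can be established only for \(\kappa\) strictly below a fixed positive critical value independent of \(h\). This is not an obstruction here, since the lemma demands only polynomial decay of arbitrary order.
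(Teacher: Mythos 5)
Your argument is correct (modulo some harmless bookkeeping about ball sizes in the elliptic regularity step), but it takes a genuinely different route from the paper. The paper picks a cutoff \(\psi\in\CI(\R^d)\) with \(\psi=0\) for \(\rho\le\delta/2\) and \(\psi=1\) for \(\rho\ge\delta\); then \(\psi v_h\) solves \((-h^2\laplace+1-h^2(\lambda_h+\mu_h))(\psi v_h)=q_h\) where the source \(q_h=[-h^2\laplace,\psi]v_h-\psi(-h^2\laplace+1-h^2(\lambda_h+\mu_h))u_h\) is supported in a fixed compact set and, by Corollary~\ref{CorTPfCI}, has all derivatives bounded by \(C_N h^N\). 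Since the operator here has constant coefficients, one can invert directly on the Fourier side: \(\cF q_h\) is Schwartz with \(\cO(h^\infty)\) seminorms, the multiplier \((h^2|\xi|^2+1-h^2(\lambda_h+\mu_h))^{-1}\) and its \(\xi\)-derivatives are uniformly bounded for small \(h\), so \(\cF(\psi v_h)\) has \(\cO(h^\infty)\) Schwartz seminorms, yielding the claimed rapid decay immediately. Your Combes--Thomas/Agmon approach is longer but has two advantages: it works unchanged for variable-coefficient operators, and it actually establishes exponential decay at rate \(\kappa/h\), from which the polynomial decay follows as a much weaker corollary. The paper's argument is shorter precisely because it exploits the constant-coefficient structure of the exterior operator, and because Corollary~\ref{CorTPfCI}, which was proved anyway for the \(h\)-regularity bootstrap, already provides the pointwise \(\cO(h^\infty)\) control of the source that makes the Fourier inversion immediate.
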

\begin{proof}
  Let \(\psi\in\CI(\R^d)\) be equal to \(0\) for \(\rho\leq\delta/2\) and \(1\) for \(\rho\geq\delta\). Then the equation \((-h^2\laplace+1-h^2(\lambda_h+\mu_h))(u_h+v_h)=0\) satisfied by \(v_h\) in \(\rho>0\) implies
  \[
    \bigl(-h^2\laplace+1-h^2(\lambda_h+\mu_h)\bigr)(\psi v_h) = q_h \coloneqq [-h^2\laplace,\psi]v_h - \psi\bigl(-h^2\Delta+1-h^2(\lambda_h+\mu_h)\bigr)u_h.
  \]
  Note that \(\supp q_h\subset\supp\dd\psi\cup\supp u_h\) is contained in a fixed compact set for all $h$. Corollary~\ref{CorTPfCI} implies \(h^N\)
  bounds for \(q_h\) and all of its derivatives. Therefore, the Fourier transform\footnote{It would be
  more natural to utilize the semiclassical Fourier transform here. Due to the \(\cO(h^\infty)\)
size of all functions involved, working with the standard Fourier transform ultimately makes no
difference here.} \(\cF q_h\) of \(q_h\) is Schwartz, with all seminorms bounded by \(h^N\) for all \(N\);
and thus the same is true for \(\cF(\psi v_h)(\xi)=(h^2|\xi|^2+1-h^2(\lambda_h+\mu_h))^{-1}(\cF q_h)(\xi)\). This implies
\(|\partial^\alpha v_h|\leq C_{\alpha,N}h^N\jbr{x}^{-N}\) for all \(\alpha,N\).
\end{proof}

\pfstep{Step~3. Regularity in \(h\).} In the argument thus far, which involved only regularity considerations in the \(x\)-variable, we have only needed the bounds~\eqref{EqTPfBd1}; and we did not use any \(h\)-regularity of \(u_h,\lambda_h,f_h\) either. Recall however from the discussion following~\eqref{EqTEq2} that \(v_h\) and \(\mu_h\) are not unique if all we require is the validity of~\eqref{EqTEq2}. To prove regularity in \(h\), it is therefore necessary to use the \emph{full} equation~\eqref{EqTPfEq}. Formally differentiating~\eqref{EqTPfEq} along \(h\partial_h\) yields the system
\[
  P_h^\aug(\dot v_h,\dot\mu_h) = (-\dot f_h+\dot\mu_h v_h+\mu_h\dot v_h,0) - [h\partial_h,P_h^\aug](v_h,\mu_h)
\]
where \(\dot v_h=h\partial_h v\) etc. This is equivalent to
\begin{equation}
\label{EqTPfExth}
  \begin{pmatrix} -\laplace + h^{-2}\chfun_{\Omega^\complement} - (\lambda_h+\mu_h) & -(u_h+v_h) \\
      \iprod{\cdot,u_0^\sharp} & 0 \end{pmatrix}\begin{pmatrix} \dot v_h \\ \dot\mu_h \end{pmatrix}
      = \begin{pmatrix} -\dot f_h+(2 h^{-2}\chfun_{\Omega^\complement}+\dot\lambda_h)v_h + \mu_h\dot u_h
      \\ 0 \end{pmatrix}.
\end{equation}
In view of the infinite order of vanishing of \(\mu_h\) and \(v_h\) as \(h\searrow 0\), our analysis (starting with Proposition~\ref{PropTFull}) applies to the above equation as well; note also that \(\dot u_h\in h^\infty\qms\) and \(\dot f_h\in h^\infty\es\). Thus, \(\dot v_h\) satisfies the conclusions of Corollary~\ref{CorTPfCI} and Lemma~\ref{LemmaTPfExtDec}, and \(|\dot\mu_h|\leq C_N h^N\) for all \(N\). This argument can be made rigorous by considering the equation satisfied by the finite difference quotients \((v_{(1+\eta)h}-v_h)/\eta\) and \((\mu_{(1+\eta)h}-\mu_h)/\eta\) and letting \(\eta\searrow 0\).

In order to proceed, note again that the equation~\eqref{EqTPfExth} has the same structure as~\eqref{EqTPfEq}; since for the solution of~\eqref{EqTPfEq} we were just able to show one order of \(h\partial_h\)-regularity, the same arguments then apply also to~\eqref{EqTPfExth}. In other words, for the solution of~\eqref{EqTPfExth} we also have the \(\cO(h^\infty)\) bounds of Corollary~\ref{CorTPfCI} and Lemma~\ref{LemmaTPfExtDec} also for \(h\partial_h\dot v_h\) and \(h\partial_h\dot\mu_h\); and so on.

In view of the \(\cO(h^\infty)\) bounds on \(v_h,\mu_h\), infinite regularity with respect to \(h\partial_h\) is equivalent to infinite regularity with respect to \(\partial_h\). We therefore conclude that the function \(v\), defined by \(v_h\) on \(h\)-level sets of \(M\) for \(h\in(0,h_1]\), and the function $\mu$, defined by \(\mu(h)=\mu_h\), satisfy
\[
  v_h\in\cC^0(M),\ v_h|_{I^\circ}\in h^\infty\cC^\infty(I),\ v_h|_{E^\circ}\in h^\infty\cC^\infty(E),\quad
  \mu\in h^\infty\CI([0,h_1]).
\]
In other words, \(v_h\in h^\infty\qms'\) and \(\mu\in h^\infty\CI([0,h_1))\). This completes the proof of Theorem~\ref{ThmT}.

\appendix
\section{Some remarks on eigenvalues with multiplicity}\label{SRemarksMultiplicity}

In~\S\ref{SConstructionQuasimodes}, we proved the existence of quasimodes of infinite order for \emph{simple} eigenvalues. The treatment of eigenvalues of $-\dirlap{\Omega}$ with higher multiplicity is more delicate since a multiple eigenvalue could split into several distinct ones for \(h > 0\). If an eigenvalue $\lambda_0$ splits into at least two branches for \(h > 0\), this means that as we approach the final eigenspace at \(h = 0\), one might expect the eigenfunctions of \(P_h\) to converge to very particular subspaces of the $\lambda_0$-eigenspace of \(-\dirlap{\Omega}\). Conversely, this means that a quasimode or eigenfunction which restricts to $\wt\Omega$ as an eigenfunction \(u_0\) corresponding to $\lambda_0$   can only exist when $u_0$ lies in one of these subspaces. However, the splitting could, a priori, occur at any order of \(h\), and one would thus need to determine the coefficients and corrections up to this order to determine which linear combination one needed to choose for \(u_0\) in the initial step. We shall illustrate the analysis of the first order term in the following by using multiplicity 2 as a representative case.

Suppose \(\lambda_0\) is an eigenvalue of \(-\dirlap{\Omega}\) with multiplicity \(2\) and let
\(\psi_1,\psi_2\in \CI_0(\bar\Omega)\) be two corresponding \(L^2\)-normalized eigenfunctions which form
an orthonormal basis of the eigenspace associated to \(\lambda_0\). We start with the ansatz
\[
    u_0 = z_1\psi_1 + z_2\psi_2
\]
for some \(z_1, z_2\in \R\) with \(z_1^2 + z_2^2 = 1\) that are to be determined. As in \eqref{EqFirstOrderCorr} we obtain the following equation for the
first order correction pair \((w, \lambda_1)\)
\begin{equation}\label{EqMFirstOrder}
    (-\dirlap{\Omega} - \lambda_0)w = \lambda_1 u_0 + f
\end{equation}
where the function
\[
    f = (-\laplace - \lambda_0)F_\ff^\chi(\partial_\nu u_0) \in \CI(\bar\Omega)
\]
itself depends linearly on the eigenfunction \(u_0\) we choose. Let us introduce the operator \(T \coloneqq (-\laplace - \lambda_0)F_\ff^\chi(\partial_\nu \cdot)\), then \(f =
z_1T\psi_1 + z_2T\psi_2\).

A necessary and sufficient condition for \eqref{EqMFirstOrder} to have a solution is that the right-hand side be orthogonal to the kernel of \(-\dirlap{\Omega} - \lambda_0\). This amounts to
\[
    \iprod{f, \psi_i} + \lambda_1\iprod{u_0, \psi_i} = 0,\quad i =1,2.
\]
Expanding both \(u_0\) and \(f\) in terms of \(\psi_0, \psi_1\), this is equivalent to the following system of equations for \(z_1, z_2\) and \(\lambda_1\):
\begin{equation}\label{EqMPertSystem}
    \begin{aligned}
        z_1^2 + z_2^2 &= 1\\
        \iprod{T\psi_1, \psi_1}z_1 + \iprod{T\psi_2, \psi_1}z_2 + \lambda_1z_1 &= 0\\
        \iprod{T\psi_1, \psi_2}z_1 + \iprod{T\psi_2, \psi_2}z_2 + \lambda_1z_2 &= 0,
    \end{aligned}
\end{equation}
By essentially the same computation as in \eqref{EqQFirstOrderComp},
\[
    \iprod{T\psi_i, \psi_j} = \iprod{\partial_\nu \psi_i, \partial_\nu
    \psi_j}_{L^2(\pa\Omega)},\quad i,j\in \set{1,2}.
\]
Writing \(A\) for the symmetric \(2\times2\)-matrix with $(i,j)$-entry \(\iprod{\partial_\nu
\psi_i, \partial_\nu \psi_j}_{L^2(\pa\Omega)}\) for \(i, j\in\set{1,2}\), we see that
\eqref{EqMPertSystem} can be rewritten as an eigenvalue equation
\[
    A\begin{pmatrix}
        z_1\\z_2
    \end{pmatrix} = -\lambda_1\begin{pmatrix}
        z_1\\z_2
    \end{pmatrix},
\]
where \(-\lambda_1\) is an eigenvalue of the matrix \(A\) and \(z_1, z_2\) are the components of an
associated normalized eigenvector. This means that we can determine \(\lambda_1\) by finding the roots
of the characteristic polynomial of \(A\), which is a polynomial of degree \(2\); the components
\(z_1, z_2\) can then readily be determined by finding a normalized vector in the kernel of \(A +
\lambda_1I\). More generally, if we work with a multiplicity \(m\) eigenvalue of \(-\dirlap{\Omega}\), the
same computations will yield that \(\lambda_1\) is determined by finding a root of the characteristic
polynomial of degree \(m\) of a symmetric matrix.

In principle, it is possible for the eigenspace of \(-\lambda_1\) with respect to \(A\) to be \(2\)-dimensional so that every normalized vector \((z_1, z_2)\in\R^2\) solves~\eqref{EqMPertSystem}. On the level of the operator \(P_h\), this means that the eigenvalues of \(P_h\) converging to \(\lambda_0\) as \(h\to 0\) can split at most at order \(\cO(h^2)\) (if at all); and one can construct $\cO(h^2)$ quasimodes starting with any choice of $u_0$. To determine the coefficients \(z_1, z_2\) for which $\cO(h^3)$ quasimodes exist, one would then have to move on to analyze the \(h^2\)-order analogue of \eqref{EqMFirstOrder}, and so on.

Suppose that \(-\lambda_1\) is a double eigenvalue of \(A\) and write \(A_{ij} = \iprod{\pa_\nu
\psi_i, \pa_\nu\psi_j}_{L^2(\pa\Omega)}\) for \(i, j =1,2\). Then we must have the equality of polynomials
\[
    (x - A_{11})(x - A_{22}) - A_{12}^2 = (x + \lambda_1)^2.
\]
Expanding and comparing coefficients yields
\[
    \lambda_1 = -\frac{A_{11} + A_{22}}{2}\quad\text{ and }\quad \lambda_1^2 =
    A_{11}A_{22} - A_{12}^2.
\]
Plugging \(\lambda_1\) from the first into the second equation, one finds
\[
    0\le (A_{11} - A_{22})^2 = -A_{12}^2\le 0.
\]
This is only possible if both sides are \(0\) and thus \(A_{11} = A_{22}\) and \(A_{12} = A_{21} = 0\), i.e., the normal derivatives of the \(\psi_1\) and \(\psi_2\) are \(L^2\)-orthogonal functions on \(\pa\Omega\) and the \(L^2\)-norms of their normal derivatives are the same.

\begin{qu}[Orders of non-trivial splitting]
    For what values of $N\in\N\cup\{+\infty\}$ is it possible for two eigenvalues \(\lambda_j^h\) and \(\lambda_{j+1}^h\) of \(P_h\) to be such that
    \(\lim_{h\to 0}\lambda_j^h = \lim_{h\to 0}\lambda_{j+1}^h\) and $\lambda_{j+1}^h-\lambda_j^h=\cO(h^N)$?
\end{qu}

\section{Eigenvalues for intervals}\label{SAppIntervals}
Here we want to derive a secular equations for the eigenvalues \(\lambda_n^h\) when \(\Omega\) is an
interval in \(\R\). By shifting \(\Omega\) if necessary, we may assume that \(\Omega = (-a, a)\) for some
\(a > 0\). We then want to find \(\lambda \in (0, h^{-2})\) so that there exists a solution \(u\in
\cC^1(\R)\cap L^2(\R)\) to
\begin{equation}\label{EqEigfunc1D}
    \begin{cases}
        -\pa_x^2 u + h^{-2}u = \lambda u &\text{on }(-\infty, -a],\\
        -\pa_x^2 u = \lambda u &\text{on }(-a, a),\\
        -\pa_x^2 u + h^{-2}u = \lambda u &\text{on }[a, \infty).
    \end{cases}
\end{equation}
Taking into account that \(u\in L^2(\R)\), we must have
\begin{equation}\label{EqInterval}
    u(x) = \begin{cases}
        A\exp\left((h^{-2} - \lambda)^{\frac12}x\right) & \text{if }x\in (-\infty, -a],\\
        B\cos(\lambda^{\frac12}x) + C\sin(\lambda^{\frac12}x) & \text{if }(-a, a),\\
        D\exp\left(-(h^{-2} - \lambda)^{\frac12}x\right) & \text{if }x\in [a, \infty),
    \end{cases}
\end{equation}
where the coefficients \(A, B, C, D\in\R\) still need to be determined. Since \(u\in \cC^1(\R)\), we must
have
\begin{equation}\label{EqBoundaryConditions}
    \begin{aligned}
        \lim_{x\searrow -a}u(x) &= \lim_{x\nearrow -a} u(x), &\qquad \lim_{x\searrow a}u(x) &= \lim_{x\nearrow a} u(x), \\
        \lim_{x\searrow -a}u^\prime(x) &= \lim_{x\nearrow -a} u^\prime(x), &\qquad \lim_{x\searrow a}u^\prime(x) &= \lim_{x\nearrow a} u^\prime(x).
    \end{aligned}
\end{equation}
Note that \eqref{EqEigfunc1D} is invariant under the transformations \(u(x)\mapsto u(-x)\) and
\(u(x)\mapsto -u(-x)\). Thus, any solution \(u\) maps to another solution under these transformations.
It hence suffices to look for solutions \(u\) which are invariant under either transformation. If \(u\)
is invariant under the first transformation, it is even, so \(A = D\) and \(C = 0\). Using
\eqref{EqBoundaryConditions}, we then obtain the linear system of equations
\[
    \begin{pmatrix}
        \exp(-(h^{-2}-\lambda)^{\frac12}a) & -\cos(\lambda^{\frac12}a)\\
        -(h^{-2}-\lambda)^{\frac12}\exp(-(h^{-2}-\lambda)^{\frac12}a) & \lambda^{\frac12}\sin(\lambda^{\frac12}a)
    \end{pmatrix}
    \begin{pmatrix}
        A\\B
    \end{pmatrix}
     =
    \begin{pmatrix}
        0\\0
    \end{pmatrix}.
\]
Computing the determinant and simplifying yields the equation
\[
    (h^{-2} - \lambda)^{\frac12}\cos(\lambda^{\frac12}a) - \lambda^{\frac12}\sin(\lambda^{\frac12}a) = 0,
\]
which \(\lambda\in (0,h^{-2})\) has to satisfy in order for it to be an eigenvalue corresponding to an
even eigenfunction. We call the left-hand side of this equation the \emph{even secular function}.

Similarly, if \(u\) is invariant under the second transformation, it is odd and thus \(A = -D\) and \(B
= 0\). Using \eqref{EqBoundaryConditions} yields
\[
    \begin{pmatrix}
        \exp(-(h^{-2}-\lambda)^{\frac12}a) & \sin(\lambda^{\frac12}a)\\
        (h^{-2}-\lambda)^{\frac12}\exp(-(h^{-2}-\lambda)^{\frac12}a) & -\lambda^{\frac12}\cos(\lambda^{\frac12}a)
    \end{pmatrix}
    \begin{pmatrix}
        A\\C
    \end{pmatrix}
     =
    \begin{pmatrix}
        0\\0
    \end{pmatrix}.
\]
Again computing the determinant and simplifying yields the equation
\[
    \lambda^{\frac12}\cos(\lambda^{\frac12}a) + (h^{-2} - \lambda)^{\frac12}\sin(\lambda^{\frac12}a) = 0.
\]
We call the left-hand side of the previous equation the \emph{odd secular
function}.

Thus, in total, the eigenvalues \(\lambda\) can be found by determining the zeros of the \emph{secular
function}
\[
    \begin{aligned}
        S_h(\lambda) &= \left((h^{-2} - \lambda)^{\frac12}\cos(\lambda^{\frac12}a) -
    \lambda^{\frac12}\sin(\lambda^{\frac12}a)\right)\left(\lambda^{\frac12}\cos(\lambda^{\frac12}a) + (h^{-2} -
    \lambda)^{\frac12}\sin(\lambda^{\frac12}a)\right)\\
    &= \lambda^{\frac12}(h^{-2} - \lambda)^{\frac12}\cos(2\lambda^{\frac12}a) + \frac{h^{-2} -
    2\lambda}{2}\sin(2\lambda^{\frac12}a),
    \end{aligned}
\]
which is the product of the even and the odd secular function.

The Dirichlet Laplace eigenvalues on the interval \((-a, a)\) are easily computed to be
\[
    \spec\left(-\dirlap{(-a, a)}\right) = \set*{\frac{\pi^2k^2}{4a^2} : k\in \N};
\]
all of these eigenvalues have multiplicity \(1\).

\section{Eigenvalues for disks}\label{SAppDisks}

We now move one dimension up and provide a family of secular
functions for a disk \(B_a(0) = \set{x\in \R^2 : \norm{x} < a}\subset \R^2\) of radius \(a > 0\). We want
to find \(\lambda\in (0,h^{-2})\) so that there exists a solution \(u\in \cC^1(\R^2)\cap L^2(\R^2)\) to
\begin{equation}\label{EqEigfunc2D}
    \begin{cases}
        -\laplace u = \lambda u & \text{on }B_a(0)\\
        -\laplace u = (\lambda - h^{-2}) u & \text{on }B_a(0)^\complement.
    \end{cases}
\end{equation}
Let \((r, \varphi)\) denote polar coordinates. Since this problem is radially symmetric, we separate
variables in the polar coordinates and write \(u(r, \varphi) = U(r)Y(\varphi)\) for
\(U\colon [0,\infty)\to \R\) and a \(2\pi\)-periodic function \(Y\colon [0, 2\pi] \to \R\). Using the
polar form of the Laplacian, we find that \eqref{EqEigfunc2D} can be rewritten as
\[
    \begin{cases}
        r^2\frac{\pa_r^2U}{U} + r\frac{\pa_r U}{U} + \lambda r^2 = -\frac{\pa_\varphi^2 Y}{Y} &
        \text{for } r\in [0, a),\\
        r^2\frac{\pa_r^2U}{U} + r\frac{\pa_r U}{U} - (h^{-2} - \lambda) r^2 = -\frac{\pa_\varphi^2 Y}{Y} &
        \text{for } r\in [a, \infty).
    \end{cases}
\]
(Since $u\in\cC^1(\R^2)$, we do not need to consider the case where $Y$ is different for $r<a$ and $r\geq a$.) The right-hand sides must equal to some eigenvalue of the operator \(-\pa_\varphi^2\) on the interval \([0, 2\pi]\) with periodic boundary conditions, and thus equal to \(\nu^2\) for some \(\nu\in\Z\). Thus, there exists $\nu\in\N_0$ with
\[
    \begin{cases}
        r^2\pa_r^2U + r\pa_r U + (\lambda r^2 - \nu^2)U = 0 &
        \text{for } r\in [0, a),\\
        r^2\pa_r^2U + r\pa_r U - ((h^{-2} - \lambda) r^2 + \nu^2)U = 0 &
        \text{for } r\in [a, \infty).
    \end{cases}
\]
These are readily seen to be rescaled versions of the Bessel and modified Bessel differential
equation for the interior and the exterior, respectively. Taking into account that \(u\in L^2(\R^2)\),
we can make the ansatz
\[
    U(r) = \begin{cases}
        AJ_\nu(\lambda^{\frac12} r) & \text{for }r\in [0,a), \\
        BK_\nu((h^{-2} - \lambda)^{\frac12} r) & \text{for }r\in [a,\infty),
    \end{cases}
\]
where \(J_\nu\) denotes the \(\nu\)-th order Bessel function of the first kind, \(K_\nu\) denotes the
\(\nu\)-th order modified Bessel function of the second kind and \(A, B\in\R\) are some parameters that
are to be determined. Again invoking our requirement \(u\in\cC^1(\R^2)\), we must have that the two
parts are continuously differentiable at \(r = a\), so setting functions and derivatives equal we
obtain the equations
\[
    \begin{aligned}
        AJ_\nu(\lambda^{\frac12}a) &= BK_\nu((h^{-2} - \lambda)^{\frac12}a),\\
        A\lambda^{\frac12}J_\nu^\prime(\lambda^{\frac12}a) &= B(h^{-2} - \lambda)^{\frac12}K_\nu^\prime((h^{-2}
        - \lambda)^{\frac12}a).
    \end{aligned}
\]
This has a solution if and only if the determinant vanishes, that is,
\[
    \lambda^{\frac12}J_\nu^\prime(\lambda^{\frac12}a)K_\nu((h^{-2} - \lambda)^{\frac12}a)
    - (h^{-2} - \lambda)^{\frac12}J_\nu(\lambda^{\frac12}a)K_\nu^\prime((h^{-2} - \lambda)^{\frac12}a) = 0.
\]
These give us a family of secular functions indexed by \(\nu\in \N_0\).

By a similar calculation, one can show that
\[
    \spec\left(-\dirlap{B_a(0)}\right) = \set*{\frac{j_{l, \nu}^2}{a^2}: j_{l, \nu} \text{ is the
    \(l\)-th positive root of } J_\nu},
\]
where every eigenvalue has multiplicity \(1\) if it corresponds to a zero of \(J_0\) and else
multiplicity \(2\). Moreover, by rotational symmetry considerations, the corresponding eigenvalues of the particle-in-well operator have the same multiplicity structure.

\section{Eigenvalues for \texorpdfstring{$d$}{d}-balls}\label{SAppBalls}

Let now again \(a>0\) and \(d\in \N\) with \(d \ge 3\). For completeness, we shall also give the secular
function for eigenvalues of \(P_h\) for balls \(B_a(0)\subset \R^d\). The Laplacian can then be
represented in spherical coordinates \((r, \omega)\in (0,\infty)\times \mathbb{S}^{d-1}\) as
\[
    \laplace = \pa_r^2 + \frac{d - 1}{r}\pa_r + \frac{1}{r^2}\laplace_{\mathbb{S}^{d-1}},
\]
where \(\laplace_{\mathbb{S}^{d-1}}\) is the Laplace Beltrami operator on the sphere
\(\mathbb{S}^{d-1}\). Separating \(u(r, \omega) = U(r)Y(\omega)\) for some \(U\colon (0,\infty)\to \R\)
and \(Y\colon \mathbb{S}^{d-1}\to \R\), we obtain, as in the two-dimensional case, the equations
\[
    \begin{cases}
        r^2\frac{\pa_r^2U}{U} + (d - 1)r\frac{\pa_r U}{U} + \lambda r^2 =
        \frac{-\laplace_{\mathbb{S}^{d-1}} Y}{Y} &
        \text{for } r\in [0, a)\\
        r^2\frac{\pa_r^2U}{U} + (d - 1)r\frac{\pa_r U}{U} - (h^{-2} - \lambda) r^2 = \frac{-\laplace_{\mathbb{S}^{d-1}} Y}{Y} &
        \text{for } r\in [a, \infty).
    \end{cases}
\]
Again, both equations must be constant and \(\cC^1\) on \(\pa B_a(0)\) and thus both left-hand sides
must equal to the same eigenvalue of \(-\laplace_{\mathbb{S}^{d-1}}\), which are given by
\[
    \spec\left(-\laplace_{\mathbb{S}^{d-1}}\right) = \set*{\nu(\nu + d - 2): l\in \N_0}.
\]
Thus, for a fixed \(\nu\in\N_0\) we want to solve
\[
    \begin{cases}
        r^2\pa_r^2U + (d-1)r\pa_r U + (\lambda r^2 - \nu(\nu + d - 2))U = 0 &
        \text{for } r\in [0, a),\\
        r^2\pa_r^2U + (d-1)r\pa_r U - ((h^{-2} - \lambda) r^2 + \nu(\nu + d - 2))U = 0 &
        \text{for } r\in [a, \infty).
    \end{cases}
\]
One can check that these equations as well as the requirement that \(u\in \cC^1(\R^d)\cap L^2(\R^d)\) are satisfied if
\[
    U(r) = \begin{cases}
        A\mathfrak{J}_{d, \nu, \lambda}(r) & \text{for }r\in [0,a),\\
        B\mathfrak{K}_{d, \nu, \lambda, h}(r) & \text{for }r\in [a,\infty),
    \end{cases}
\]
where
\[
    \mathfrak{J}_{d, \nu, \lambda}(r) = r^{1 - d/2}J_{\nu + d/2 - 1}(\lambda^{\frac12}
    r)\quad\text{and}\quad
\mathfrak{K}_{d, \nu, \lambda, h}(r) = r^{1 - d/2}K_{\nu + d/2 - 1}((h^{-2} - \lambda)^{\frac12} r),
\]
and \(A, B\in\R\) are coefficients that are to be determined. Continuing as in the $2$-dimensional
case, we find the following family of secular functions indexed by \(\nu\in\N_0\):
\[
    \mathfrak{J}_{d, \nu, \lambda}^\prime(a)\mathfrak{K}_{d, \nu, \lambda, h}(a) - \mathfrak{J}_{d,
    \nu, \lambda}(a)\mathfrak{K}_{d, \nu, \lambda, h}^\prime(a) = 0.
\]

Again the eigenvalues of the Dirichlet Laplacian on \(B_a(0)\) and the eigenvalues of the
particle-in-well operators have the same multiplicity structure.

\bibliographystyle{alphaurl}
\bibliography{bibliography}

\end{document}